\documentclass[12pt,psamsfonts]{article}

\usepackage{amsmath}
\usepackage{amsthm}
\usepackage{amssymb}
\usepackage[all]{xy}
\usepackage{comment}
\usepackage{graphics}
\usepackage{hyperref}
\usepackage{graphicx,tikz}

\newcommand{\di}{\displaystyle}
\newcommand{\CC}{\mathbb C}
\newcommand{\FF}{\mathbb F}

\newcommand{\PP}{\mathbb P}
\newcommand{\QQ}{\mathbb Q}
\newcommand{\RR}{\mathbb R}
\newcommand{\ZZ}{\mathbb Z}

\newcommand{\DDD}{\mathbb D}
\newcommand{\SSS}{\mathbb S}
\newcommand{\TT}{\mathbb T}

\theoremstyle{plain}

\newtheorem{theorem}{Theorem}[section]
\newtheorem{lemma}[theorem]{Lemma}
\newtheorem{proposition}[theorem]{Proposition}
\newtheorem{corollary}[theorem]{Corollary}

\theoremstyle{remark}
\newtheorem{remark}[theorem]{Remark}
\newtheorem{example}[theorem]{Example}

\theoremstyle{definition}
\newtheorem{definition}[theorem]{Definition}

\begin{document}

\title{Signatures in algebra, topology and dynamics}
\date{\today}
\author{\'Etienne Ghys, Andrew Ranicki}

\maketitle

\begin{center}
\includegraphics[width=.7\textwidth]{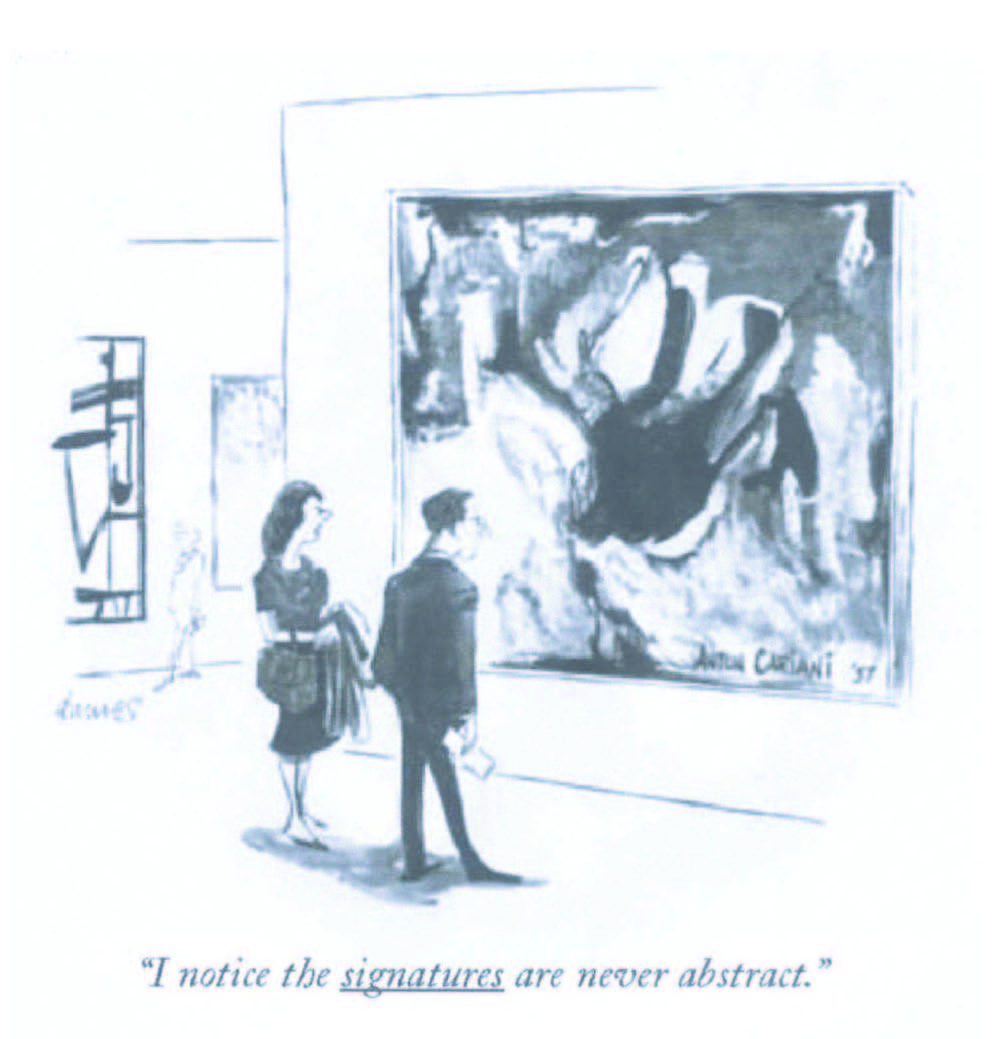}
\end{center}

\bigskip

\begin{minipage}[b]{0.45\linewidth}
{\footnotesize
E.G

UMPA UMR 5669 CNRS

ENS Lyon

Site Monod

46 All\'ee d'Italie

69364 Lyon

France

etienne.ghys@ens-lyon.fr
}
\end{minipage}
\hfill
\begin{minipage}[b]{0.45\linewidth}

{\footnotesize

A.R.

School of Mathematics,

University of Edinburgh

James Clerk Maxwell Building

Peter Guthrie Tait Road

Edinburgh EH9 3FD

Scotland, UK

a.ranicki@ed.ac.uk}
\end{minipage}

\newpage

\tableofcontents

\newpage

\section*{Introduction}

There is no doubt that quadratic forms are among the most basic and important objects in mathematics.
The distinction between an ellipse, a parabola and a hyperbola belongs to the standard curriculum of undergraduate students in mathematics,
and Sylvester's 1852 {\it Law of Inertia} for quadratic forms represents the formalization of this fact.
In a suitable basis, any quadratic form in $\RR^n$ can be written as
$$x_1^2+x_2^2+ \dots +x_p^2 -x_{p+1}^2 - \dots - x_{p+q}^2$$
for some integers $0\leqslant p,q \leqslant n$ with $p+q\leqslant n$.
These integers are uniquely determined by the quadratic form, and are the same for two forms \emph{if and only if} the forms are linearly congruent.
The sum $p+q$ is the {\it rank} and the difference $p-q$ is the {\it signature}
of the quadratic form.
In other words, Sylvester's Law of Inertia asserts that for any $n \geqslant   1$ the number of linear congruence classes of quadratic forms in $\RR^n$ is the number of ordered pairs  $(p,q)$ with $0 \leqslant p,q \leqslant n$, $p+q \leqslant n$ (which is equal to $(n+1)(n+2)/2$).

It is not a surprise that from these humble beginnings  the theory of quadratic forms has percolated to the full body of mathematics, in many different guises.
It would require a large encyclopedia to keep track of all these appearances, and it is certainly not our goal to provide such a compendium here.

\medskip

In this survey paper, we would like to present some {\it morceaux choisis}  with the main purpose of serving as a general introduction to the other papers in these volumes, which deal with the signatures of braids.
As the title suggests, we shall concentrate on algebra, topology, and dynamics.
Even though all the material that we describe is standard, it is usually scattered in the literature.
For instance, books and papers dealing with Hamiltonian dynamics usually do not discuss Wall non additivity of signatures.

\medskip

{\it Section 1} contains very standard facts about quadratic forms in $\RR^n$.
We tried to give some historical flavour to our presentation.
A common thread will be Sturm's theorem, counting real roots of polynomials.
From its original formulation as a number of sign variations, we shall describe its reformulation by Sylvester as the signature of a symmetric matrix, and then as an equality in a suitable Witt group.

{\it Section 2} is topological. The homology of a manifold is equipped with intersection forms.
Therefore signatures came into play, around 1940, and serve as invariants of manifolds.

{\it Section 3} deals with the Maslov class. One could easily write thick books on this topic (that probably nobody would read)
and we had to limit ourselves to the basics. It is amazing how ubiquitous this class can be and it is sometimes difficult to recognize it.
There are connections with mathematical physics, topology, number theory, dynamics, geometry, bounded cohomology etc.

\medskip

In the final sections, we choose three typical applications, among many other possibilities:

{\it Section 4} presents some aspects of dynamical systems, specifically hamiltonian, for which signatures are relevant.

{\it Section 5} goes deeper into some topological aspects. In particular, we describe the link between the Wall non additivity and the Maslov class.

{\it Section 6} describes some fascinating connections with number theory.

\medskip

Finally, {\it Section 7}~~is an appendix on the algebraic $L$-theory used in surgery obstruction theory, which generalizes some of the algebraic techniques presented here to forms over (noncommutative) rings with involution, in particular localization.

\medskip

We thank the {\it American Institute of Mathematics} in Palo Alto, at which we organized a meeting on the ``Many facets of the Maslov index'' in April 2014.

\medskip

Here are our signatures, as a symbolic renewal of the {\it Auld Alliance} between Scotland and France in 1295.

\medskip

\begin{center}
\includegraphics[width=.8\textwidth]{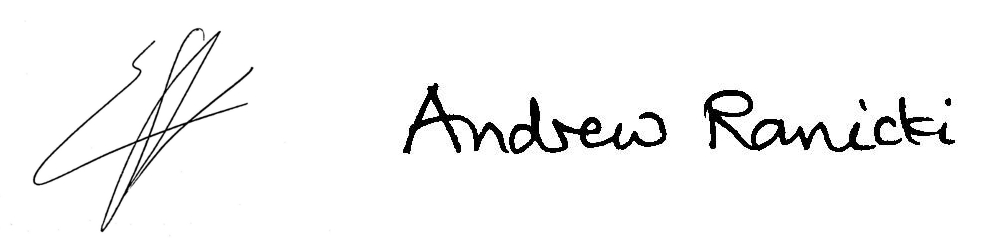}
\end{center}

\newpage

\section{Algebra}\label{algebra}

\subsection{The basic definitions}

Let us begin with some very basic definitions from linear algebra.
In this section we shall be mainly considering matrices over $\RR$.

An $m \times n$ {\it matrix} $S=(s_{ij})$ with entries $s_{ij} \in \RR$ ($1 \leqslant i \leqslant m$, $1 \leqslant j \leqslant n$) corresponds to a bilinear form (or pairing) of real vector spaces:
$$
S: ((x_1,x_2,\dots,x_m),(y_1,y_2,\dots,y_n))  \in \RR^m \times \RR^n  \mapsto \sum
\limits^m_{i=1}\sum\limits^n_{j=1} s_{ij}x_iy_j \in \RR.
$$

The {\it transpose} of an $m \times n$ matrix $S=(s_{ij})$ is the $n \times m$ matrix $S^*=(s^*_{ji})$ with $s^*_{ji}=s_{ij}.$

An $n \times n$ matrix $S$ is {\it symmetric} if $S^*=S$.

{\it Quadratic forms} correspond to symmetric matrices $S$. For $v = (x_1, \dots, x_n)  \in \RR^n$, we set
$$Q(v) =S(v,v)/2,\, {\rm with } \,\, S(v_1,v_2)=Q(v_1+v_2)-Q(v_1)-Q(v_2).$$

A quadratic form $Q(v)$ (or the corresponding symmetric matrix $S$) is {\it  positive definite} (resp. {\it negative definite}) if $Q(v) >0$ (resp. $Q(v)<0$) for all non zero $v\in \RR^n$.

Two symmetric $n \times n$ matrices $S,T$ are {\it linearly  congruent} if  $T=A^*SA$ for an invertible $n \times n$ matrix $A$.
A linear congruence can be regarded as an isomorphism $A:\RR^n \to \RR^n$ of real vector spaces such that
$$T(v_1,v_2)=S(Av_1,Av_2) \in \RR.$$

The {\it spectrum} of an $n\times n$ matrix $S$ is the set of eigenvalues, i.e. the roots $\lambda \in \CC$ of the characteristic polynomial $\det(X I_n-S) \in \RR[X]$.

In general, linearly congruent symmetric matrices $S,T$ do not have the same characteristic polynomial and spectrum.
However, it is significant (and by no means obvious) that symmetric matrices have real eigenvalues, and that the eigenvalues of linearly congruent symmetric matrices have the same signs.

Two $n \times n$ matrices $S,T$ are {\it conjugate} if $T=A^{-1}SA$ for an invertible $n \times n$ matrix $A$, in which case $S,T$ have the same characteristic polynomial $\det(X I_n-T)=\det(X I_n-S)$ and hence the same eigenvalues.
An $n \times n$ matrix $A$ is {\it orthogonal} if it is invertible and  $A^{-1}=A^*$.
This is equivalent to the preservation of the standard quadratic form $x_1^2+\dots+x_n^2$ in $\RR^n$,
i.e. to being an automorphism $A:(\RR^n,\langle~,~\rangle) \to (\RR^n,\langle~,~\rangle)$
of the standard symmetric form
$$\langle (x_1,\dots,x_n),(y_1,\dots,y_n)\rangle=\sum\limits^n_{i=1}x_iy_i \in \RR$$
corresponding to the identity $n \times n$  matrix $I_n$.
(However, note that the quadratic form associated to the standard symmetric form is
$Q(x_1,\dots,x_n)=(x_1^2+\dots+x_n^2)/2$.)
Two $n \times n$ matrices $S,T$ are {\it orthogonally congruent} if $T=A^*SA$ for an orthogonal $n \times n$ matrix $A$, in which case they are both linearly congruent and conjugate.

\subsection{Linear congruence}\label{linear}

The first basic theorem is that any quadratic form $Q(x_1, \dots, x_n)$ with real coefficients can be written as a sum, or difference, of squares of linear forms.
We refer to Bourbaki~\cite[Chapter 9]{bourbaki} for a description of the complex history of this theorem, with deep roots in the traditional geometry of conic sections and quadrics.
Usually, the theorem is attributed to Lagrange who stated it and proved it in full generality in 1759~\cite{lagrange1759}.

\bigskip
\begin{center}
\includegraphics[width=.4 \textwidth]{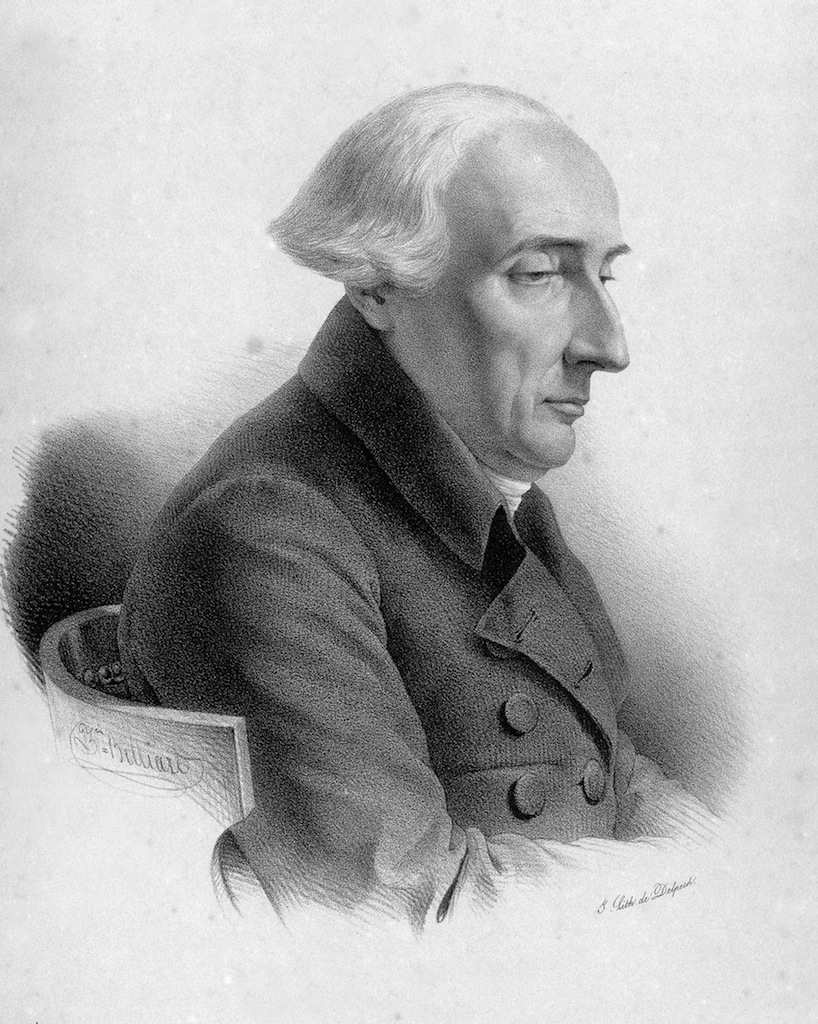}

J. L. Lagrange (1736--1813)
\end{center}
\bigskip

\begin{theorem}[18th century] \label{century}
A symmetric matrix is linearly congruent to a diagonal matrix.
\end{theorem}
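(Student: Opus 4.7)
The plan is to prove the statement by induction on $n$, the size of the matrix, using what is essentially Lagrange's method: extract a single diagonal entry by means of an anisotropic vector and then iterate on its orthogonal complement. The base case $n = 1$ is vacuous, since every $1 \times 1$ matrix is already diagonal.

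For the inductive step, let $S$ be a symmetric $n \times n$ matrix with $n \geqslant 2$. If $S = 0$ there is nothing to do, so assume $S \neq 0$. The first key step is to produce a vector $v_1 \in \RR^n$ with $S(v_1, v_1) \neq 0$. This is not automatic, because all the diagonal entries $s_{ii} = S(e_i, e_i)$ can vanish while $S \neq 0$, as happens for $\begin{pmatrix} 0 & 1 \\ 1 & 0 \end{pmatrix}$. To handle this I would invoke the polarization identity $S(v_1, v_2) = Q(v_1 + v_2) - Q(v_1) - Q(v_2)$: if $S(v, v) = 0$ for every $v$ then $Q \equiv 0$, and the identity forces $S \equiv 0$, a contradiction. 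Concretely, when $s_{ii} = 0$ for all $i$ but $s_{ij} \neq 0$ for some $i \neq j$, the vector $v_1 = e_i + e_j$ satisfies $S(v_1, v_1) = 2 s_{ij} \neq 0$.

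Next I would introduce the $S$-orthogonal hyperplane $W = \{ v \in \RR^n : S(v_1, v) = 0 \}$, which is the kernel of a nonzero linear functional and hence $(n-1)$-dimensional. Since $v_1 \notin W$, there is a direct sum decomposition $\RR^n = \RR v_1 \oplus W$. Picking any basis of $W$ exhibits the restriction $S|_W$ as a symmetric $(n-1) \times (n-1)$ matrix, which by the inductive hypothesis admits a basis $v_2, \dots, v_n$ of $W$ in which it is diagonal. Together with $v_1$ this gives a basis $v_1, v_2, \dots, v_n$ of $\RR^n$ in which $S(v_i, v_j) = 0$ for all $i \neq j$: the entries $S(v_1, v_j)$ with $j \geqslant 2$ vanish by definition of $W$, the entries $S(v_i, v_1)$ by symmetry, and the remaining ones by the inductive diagonalization on $W$.

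To conclude, I would let $A$ be the invertible $n \times n$ matrix whose columns are the coordinate vectors of $v_1, \dots, v_n$ in the standard basis. Then $(A^* S A)_{ij} = S(Ae_i, Ae_j) = S(v_i, v_j)$, so $A^* S A$ is the required diagonal matrix and $S$ is linearly congruent to it. The main obstacle, and really the only nontrivial point of the argument, is the polarization step — recognising that over $\RR$ (where $2$ is invertible) a nonzero symmetric form must possess an anisotropic vector. Once that is secured, the inductive reduction via the $S$-orthogonal splitting is essentially forced.
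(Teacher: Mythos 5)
Your proof is correct and follows essentially the same route as the paper: induction on $n$, locate an anisotropic vector $v_1$ (using polarization when all diagonal entries vanish), split $\RR^n$ as $\RR v_1$ plus its $S$-orthogonal complement, and recurse. You spell out the polarization step and the direct-sum decomposition more explicitly than the paper, which instead simply notes "it may be assumed $S$ is invertible" and then that $Q\neq 0$ gives some $v$ with $Q(v)\neq 0$, but the underlying argument is the same.
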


\begin{proof}
Of course, this fact was not expressed in terms of matrices  but in terms of quadratic forms:
The proof is very easy and is best seen in an example:
$$
Q(x_1,x_2) = x_1^2+2x_1x_2+8x_2^2= (x_1+x_2)^2 + 7 x_2^2
$$
involving completing the square. Formally, the proof goes by induction, and it may be assumed that $S$ is invertible. As $Q \neq 0$ there exists some $v\in \RR^n$ such that $Q(v) =S(v,v)/2 \neq 0$.
Then $S$ is linearly congruent to $\begin{pmatrix} S(v,v) & 0 \\ 0 & S'\end{pmatrix}$ with $S'$ the $(n-1)\times (n-1)$-matrix of $S$ restricted to the $(n-1)$-dimensional subspace $v^{\perp}=\{w \in \RR^n\, \vert\, S(v,w)=0\} \subset \RR^n$.
\end{proof}

Note that the same proof generalizes to a symmetric matrix over {\it any field of characteristic $\neq 2$}.
This hypothesis is hidden in the formula $Q(v)=S(v,v)/2$.

Since any positive real number is a square, one can restate the previous result as:

\begin{theorem}
A symmetric matrix is linearly congruent to  some
$$I_{p,q}=\begin{pmatrix} I_p & 0 & 0 \\ 0 & 0  & 0 \\ 0 & 0 & -I_q \end{pmatrix}.$$
\end{theorem}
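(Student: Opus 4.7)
The plan is to build directly on Theorem \ref{century}, which already reduces the problem to the diagonal case, and then exploit the specifically real feature that every positive real number has a real square root.

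First I would invoke Theorem \ref{century} to find an invertible matrix $A_1$ such that $A_1^*SA_1=D$ is diagonal, say with entries $d_1,d_2,\dots,d_n \in \RR$ on the diagonal. Next, I would apply a permutation matrix $P$ (which is orthogonal, hence $P^*=P^{-1}$, so conjugation by $P$ is a linear congruence) to reorder the diagonal so that the strictly positive $d_i$ appear first, the strictly negative ones appear last, and the zeros are in the middle. This gives a diagonal matrix $D'$ with entries $d'_1,\dots,d'_p>0$, then $p+q+1\le i\le n$ zeros, then $d'_{n-q+1},\dots,d'_n<0$, for some $p,q\ge 0$ with $p+q\le n$.

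Then I would normalize by a diagonal congruence. Let $A_2$ be the diagonal matrix whose $i$-th diagonal entry is $1/\sqrt{d'_i}$ for $1\le i \le p$, equal to $1/\sqrt{-d'_i}$ for $n-q+1\le i \le n$, and equal to $1$ otherwise. Then $A_2$ is invertible, and $A_2^*D'A_2$ has diagonal entries $1$ (repeated $p$ times), followed by $0$, followed by $-1$ (repeated $q$ times), which is precisely $I_{p,q}$. Composing, $A=A_1 P A_2$ gives $A^*SA=I_{p,q}$.

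There is no real obstacle here: the whole step beyond Theorem \ref{century} is a rescaling argument. The essential ingredient is that in $\RR$ every positive number is a square, which is exactly what allows us to turn an arbitrary positive diagonal entry into $+1$ and an arbitrary negative one into $-1$. I should note that uniqueness of the pair $(p,q)$ is \emph{not} claimed in this statement — that is the content of Sylvester's Law of Inertia, to be addressed separately — and also that the same argument fails over fields like $\QQ$ where not every positive element is a square, which foreshadows the richer classification by Witt groups mentioned in the introduction.
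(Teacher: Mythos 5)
Your proof is correct and follows exactly the same route the paper intends: the paper derives this as an immediate consequence of Theorem \ref{century} via the one-line remark that every positive real is a square, and your write-up simply makes explicit the permutation and diagonal-rescaling congruences hiding behind that remark.
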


The {\it crucial observation} is that the integers $p$ and $q$ are invariant under linear congruence:

\begin{figure}[ht]
\centering
\begin{minipage}[b]{0.45\linewidth}
\includegraphics[width=.9\textwidth]{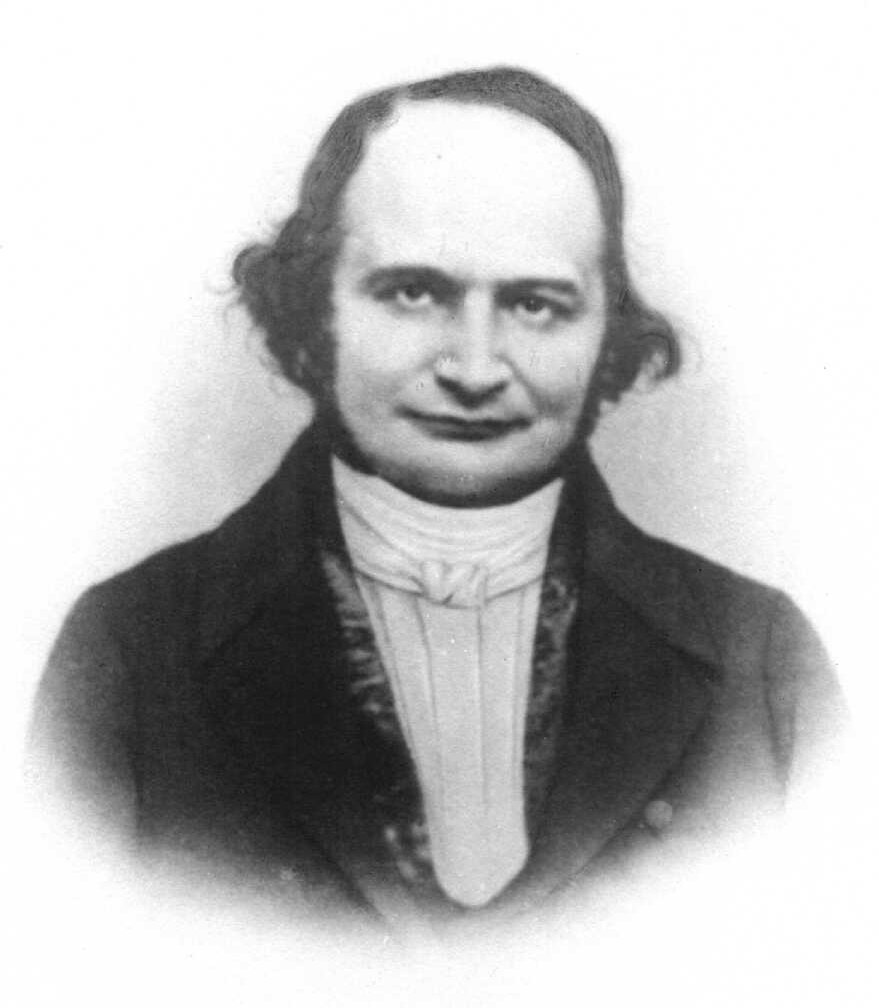}
\center{Carl Gustav Jacob Jacobi (1804-1851)}
\end{minipage}
\hfill
\begin{minipage}[b]{0.45\linewidth}
\includegraphics[width=.85\textwidth]{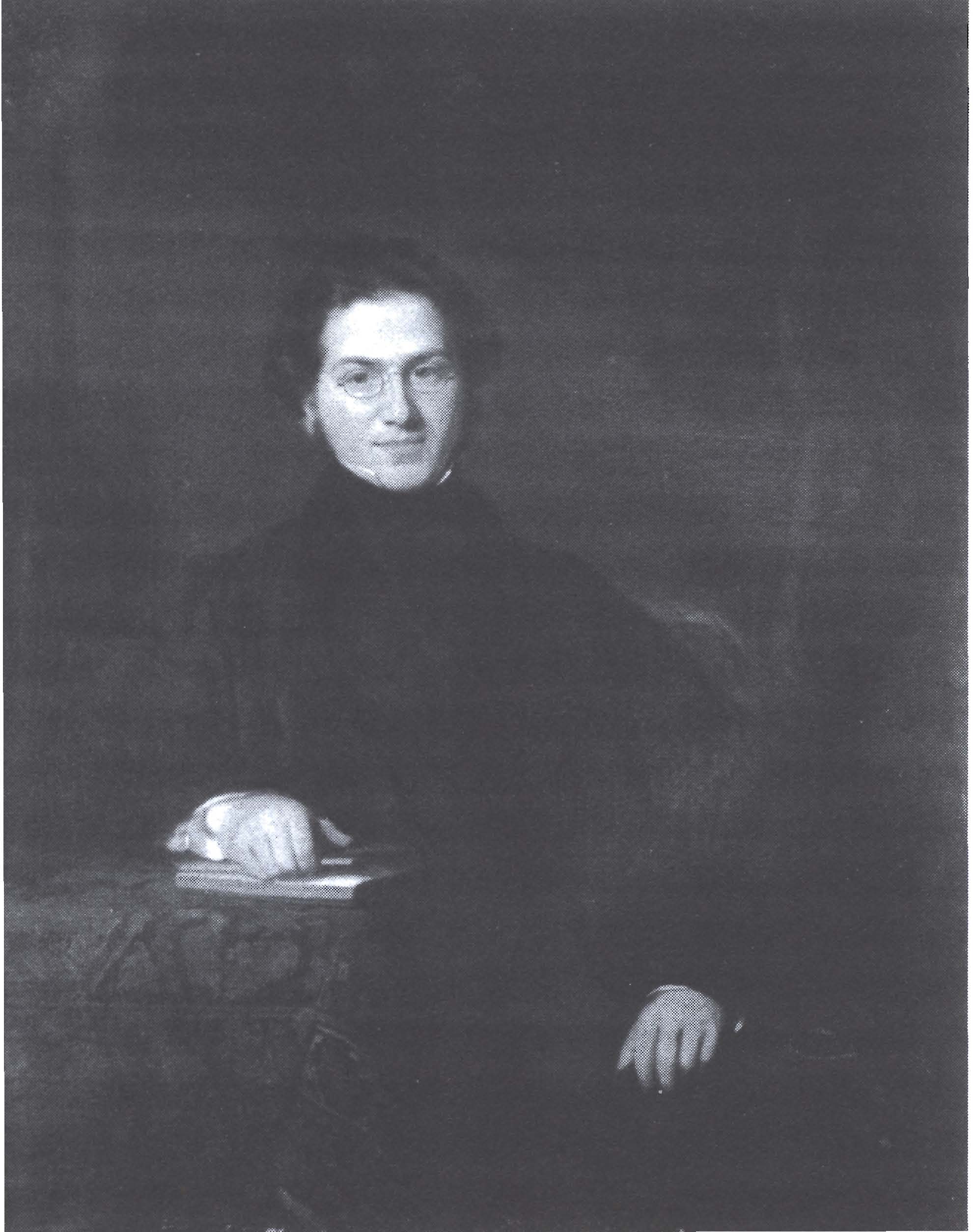}
\center{James Joseph Sylvester (1814-1897) }
\end{minipage}
\end{figure}

\begin{theorem}[Sylvester's Law of Inertia, 1852-3,~\cite{sylvester1852, sylvester1853b}] \label{sylvester}
Any symmetric $n \times n$ matrix $S$ over $\RR$ is linearly congruent to a \emph{unique} matrix of the form $I_{p,q}$.
\end{theorem}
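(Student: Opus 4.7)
The existence part is already given by the previous theorem, so the task reduces to showing that the pair $(p,q)$ is uniquely determined by $S$ up to linear congruence. My plan is to exhibit $p$ and $q$ as intrinsic invariants of the congruence class of $S$, namely as extremal dimensions of subspaces on which the associated quadratic form has a prescribed sign.

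First I would note that $p+q$ equals the rank of $S$, and rank is manifestly invariant under linear congruence (since the defining relation $T = A^*SA$ with $A$ invertible shows $\mathrm{rank}(T)=\mathrm{rank}(S)$). So it suffices to pin down $p$ alone.

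The intrinsic characterization I would use is:
\[
p \;=\; \max\bigl\{\dim W \,:\, W \subseteq \RR^n \text{ a linear subspace with } Q(v)>0 \text{ for all } v \in W\setminus\{0\}\bigr\}.
\]
To verify this when $S = I_{p,q}$, write $\RR^n = V^+ \oplus V^0 \oplus V^-$ for the standard decomposition into the first $p$, middle $n-p-q$, and last $q$ coordinates. On $V^+$ (of dimension $p$) the form is strictly positive, so the max is at least $p$. Conversely, if $W$ is any subspace of dimension $p'>p$ on which $Q$ is strictly positive, then $W \cap (V^0 \oplus V^-)$ has dimension at least $p' + (n-p) - n = p'-p > 0$, yielding a nonzero vector $v$ with $Q(v)>0$ and simultaneously $Q(v) \leqslant 0$, a contradiction. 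Hence the maximum equals $p$. Since the quantity on the right-hand side depends only on the bilinear form up to isomorphism (a congruence $A:\RR^n \to \RR^n$ carries positive-definite subspaces to positive-definite subspaces of the same dimension), $p$ is a congruence invariant, and $q = \mathrm{rank}(S)-p$ then is as well.

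The only slightly delicate step is the dimension-counting argument for the upper bound on $p$; everything else is bookkeeping. I do not expect any genuine obstacle, since the mechanism is the standard $\dim(U\cap V) \geqslant \dim U + \dim V - n$ inequality applied to a maximal positive-definite subspace and the complementary non-positive subspace supplied by the normal form $I_{p,q}$.
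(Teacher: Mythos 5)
Your proof is correct and uses essentially the same argument as the paper: the paper also supposes $p' > p$, takes a $p'$-dimensional subspace on which the form is positive definite, and intersects it with the $(n-p)$-dimensional subspace $\{0\}\times\RR^{n-p}$ to produce a contradiction. Your packaging (framing $p$ as the maximal dimension of a positive-definite subspace, then noting rank pins down $q$) is a minor stylistic reorganization of the same dimension-counting mechanism.
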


\begin{proof}
One has to show that if $I_{p,q}$ and $I_{p',q'}$ are congruent, one has necessarily $p=p'$ and $q=q'$.
By contradiction, assume for instance that $p'>p$.
This would imply that there exists a subspace $V_+ \subseteq \RR^n$ of dimension $p'>p$ with $I_{p,q} (v,v)>0$ for all $v \in V_+\backslash \{0\}$.
Such a $V_{+}$ intersects non trivially any subspace of dimension $n-p$, and in particular $\{ 0 \} \times \RR^{n-p}$.
This implies that there is a vector $v \in V_+\backslash \{0\}$ with $I_{p,q}(v,v)\leqslant 0$: a contradiction.
\end{proof}

The original formulation of the Law of Inertia by Sylvester in 1852~\cite{sylvester1852} was as a ``remark $\dots$ easily proved'' (but for which he did not provide a proof).

$$\begin{array}{c}
\includegraphics[width=.9\textwidth]{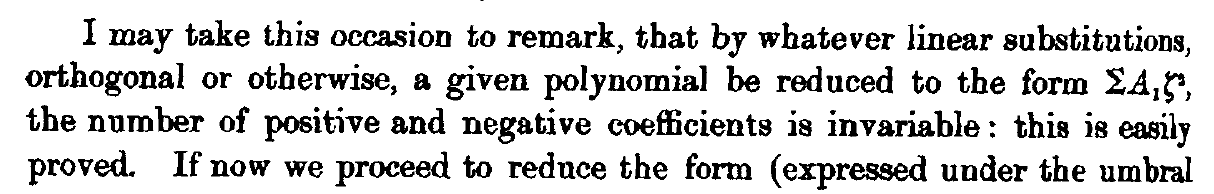}\\
***\\
\includegraphics[width=.9\textwidth]{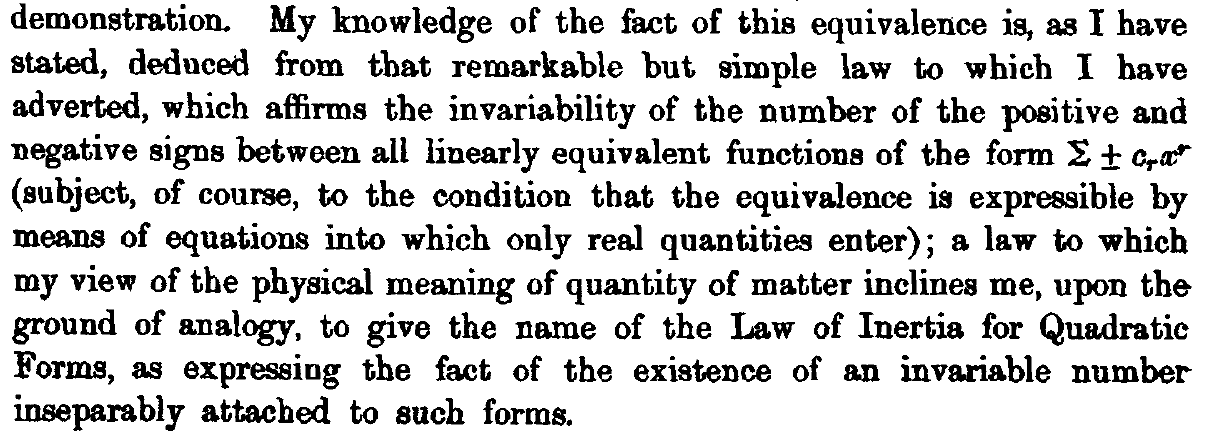}\\
\end{array}$$

Here is Sylvester's original 1853 proof~\cite{sylvester1853b} of the Law of Inertia:

\begin{center}
\includegraphics[width=1\textwidth]{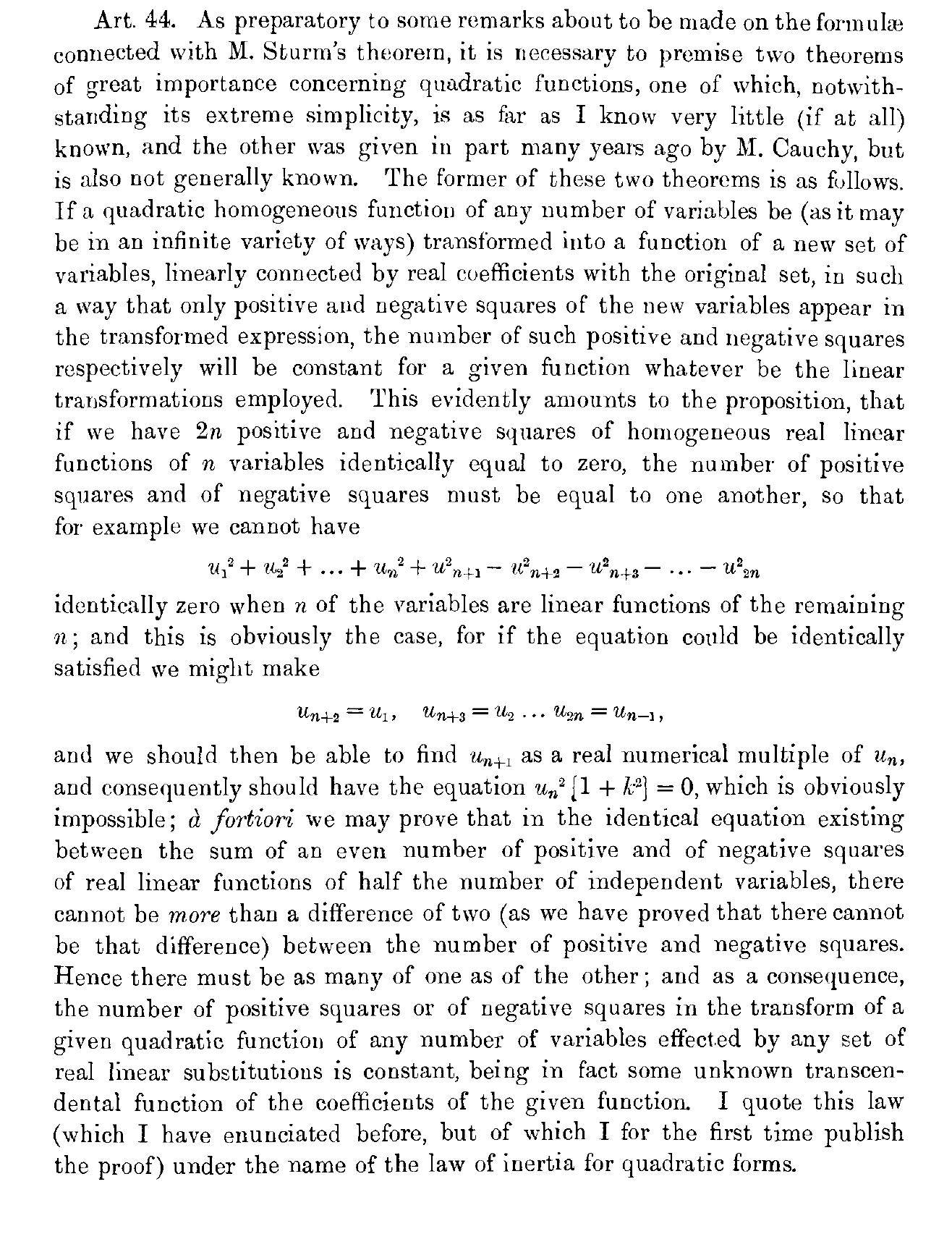}
\end{center}

Sylvester could not have been aware of an earlier unpublished proof by Jacobi~\cite{jacobi3}.
The proofs by Jacobi and Sylvester are essentially identical.
The paper of Jacobi was published in 1857, after his death in 1851,
along with a letter by Hermite~\cite{hermite} and a comment by
Borchardt~\cite{borchardt2}.
Moreover, Gauss included the Law of Inertia in his 1846 lectures, according to his student Riemann (see~\cite{bourbaki}).
Perhaps, one should speak of the Gauss-Jacobi-Sylvester law.
One could even suggest the addition of the name of Hermite ``qui l'a trouv\'ee en m\^eme temps que Sylvester'', according to Picard~\cite[page xix]{hermiteoeuvres}.
A student in the twenty-first century might be surprised to discover that so many great names had to join their efforts to prove such a simple theorem.
But he/she should not forget however that the simple proof uses concepts such as vector spaces, linear maps, ranks of matrices etc. which were far from clear in the middle of the nineteenth century.

\subsection{The signature of a symmetric matrix}\label{regular1}

\begin{definition} For an $n \times n$ symmetric matrix $S$ the integer $p$ in Sylvester's Law of Inertia~\ref{sylvester} is called the \emph{positive index} of $S$ and denoted by $\tau_+(S)$.
Similarly $q$ is called the  \emph{negative index} of $S$ and denoted by $\tau_-(S)$.
The \emph{signature} (also known as the \emph{index of inertia}) of  a symmetric $n \times n$ matrix $S$ with entries in $\RR$ is the difference $$\tau(S)=\tau_+(S)-\tau_-(S) \in \{-n,-n+1,\dots,-1,0,1,\dots,n\}.$$
\end{definition}

The signature was only implicit in Sylvester~\cite{sylvester1852}.
The terminology was introduced by Frobenius~\cite{frobenius}.

The {\it rank} of $S$  is the sum $${\rm dim}_{\RR}(S(\RR^n))=\tau_+(S)+\tau_-(S) \in \{0,1,2,\dots,n\}.$$
Note that $S$ is invertible if and only if $\tau_+(S)+\tau_-(S)=n$.

\begin{definition}
\label{regular} \leavevmode

1. For $k=1,2,\dots,n$ the {\it principal $k \times k$ minor} of an $n \times n$ matrix $S=(s_{ij})_{1 \leqslant i,j \leqslant n}$ is
$$\mu_k(S)={\rm det}(S_k) \in \RR$$
with $S_k=(s_{ij})_{1 \leqslant i,j \leqslant k}$ the principal $k \times k$ submatrix:
$$S=\begin{pmatrix} S_k & \dots \\ \vdots & \ddots \end{pmatrix}.$$
By convention, we set $\mu_0(S)=1$.

2. An $n \times n$ matrix $S$ is {\it regular} if $\mu_k(S) \neq 0$ for $1 \leqslant k \leqslant n,$
that is if each $S_k$ is invertible. In particular, $S_n=S$ is invertible. This is a generic assumption, satisfied on an open dense set of matrices.
\end{definition}

Jacobi~\cite{jacobi2} and Sylvester~\cite{sylvester1852,sylvester1853} initiated the expression of the signature of a symmetric matrix as the number of changes of sign in the principal minors.

\begin{definition} \label{var}
The  \emph{variation} of $\mu=(\mu_0,\mu_1,\dots,\mu_n) \in \RR^{n+1}$ with each $\mu_k \neq 0$ is
$${\rm var}(\mu)=\text{number of changes of sign in $\mu_0,\mu_1,\dots,\mu_n$} \in \{0,1,2,\dots,n\}.$$
\end{definition}

We shall make use of the following two very elementary properties of the variation.

\begin{proposition} \label{variation} \leavevmode

1. The variation of $\mu$ is related to the signs of the successive quotients $\mu_1/\mu_0$, $\mu_2/\mu_1$, $\dots$, $\mu_n/\mu_{n-1}$ by the identity
$$\sum\limits^n_{k=1} {\rm sign}(\mu_k/\mu_{k-1})=
n-2\,{\rm var}(\mu) .$$

2. If the components  of $\mu$, $\mu'$ have the same signs except for the $k$th entry, with ${\rm sign}(\mu'_k)=-{\rm sign}(\mu_k)$, then
$${\rm var}(\mu')-{\rm var}(\mu)=
\begin{cases}
{\rm sign}(\mu_k/\mu_{k-1})+{\rm sign}(\mu_{k+1}/\mu_k)&
\text{if $1 \leqslant k \leqslant n-1$}\\
{\rm sign}(\mu_1/\mu_0)&\text{if $k=0$}\\
{\rm sign}(\mu_n/\mu_{n-1})&\text{if $k=n$}.
\end{cases}$$

\end{proposition}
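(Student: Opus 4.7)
The plan is to treat both parts as a direct bookkeeping of consecutive sign comparisons, using the trivial identity $\operatorname{sign}(\mu_k/\mu_{k-1})\in\{+1,-1\}$ (which is legitimate because all $\mu_k\neq 0$).

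For part 1, I would partition the $n$ consecutive pairs $(\mu_{k-1},\mu_k)$ into those with equal signs and those with opposite signs. By the definition of $\operatorname{var}(\mu)$, the number of opposite-sign pairs is exactly $\operatorname{var}(\mu)$, so the number of equal-sign pairs is $n-\operatorname{var}(\mu)$. Since $\operatorname{sign}(\mu_k/\mu_{k-1})=+1$ in the first case and $-1$ in the second, the sum equals $(n-\operatorname{var}(\mu))-\operatorname{var}(\mu)=n-2\operatorname{var}(\mu)$, as claimed.

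For part 2, the key observation is that flipping only the $k$th entry changes the sign of at most two consecutive ratios, namely $\mu_k/\mu_{k-1}$ and $\mu_{k+1}/\mu_k$ (with the appropriate boundary conventions when $k=0$ or $k=n$). A single consecutive pair $(\mu_{j-1},\mu_j)$ contributes $\tfrac12\bigl(1-\operatorname{sign}(\mu_j/\mu_{j-1})\bigr)$ to $\operatorname{var}(\mu)$. When the sign of a ratio is reversed, this contribution changes by exactly $\operatorname{sign}(\mu_j/\mu_{j-1})$ (original value). Summing over the one or two affected pairs gives the three cases of the formula.

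I do not anticipate a real obstacle here; both statements are essentially tautological once one writes the variation as $\sum_{k=1}^n \tfrac12(1-\operatorname{sign}(\mu_k/\mu_{k-1}))$. The only care required is the boundary bookkeeping in part 2 (when $k=0$ or $k=n$ only one adjacent ratio exists), which is handled by the piecewise definition in the statement.
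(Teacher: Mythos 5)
Your proof is correct and, once unpacked, rests on the same core observation as the paper's: each consecutive ratio $\mu_k/\mu_{k-1}$ is $+1$ exactly when there is no sign change and $-1$ exactly when there is. You arrive at part 1 by a direct partition of the $n$ pairs, whereas the paper phrases it as an induction on $n$ starting from the $n=1$ case; these are the same count presented differently. For part 2 the paper says ``apply 1. twice'' (write the identity of part 1 for both $\mu$ and $\mu'$ and subtract), and your direct tracking of the at most two affected ratios, using $\operatorname{var}(\mu)=\sum_{k=1}^n\tfrac12\bigl(1-\operatorname{sign}(\mu_k/\mu_{k-1})\bigr)$, is precisely the result of carrying out that subtraction explicitly. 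So the reasoning matches the paper's; your presentation is arguably a bit more transparent since the reformulation of $\operatorname{var}$ makes part 2 an immediate local computation rather than a global cancellation.
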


\begin{proof}
\indent 1. Consider first the special case $n=1$: if  $\mu_0,\mu_1$ have the same (resp. different) signs than ${\rm var}(\mu_0,\mu_1)=0$ (resp. 1)
and the equation is $1=1-0$ (resp. $-1=1-2$).  For the general case assume inductively true for $n$ and note that ${\rm sign}(\mu_{n+1}/\mu_n)=1-2\,{\rm var}(\mu_n,\mu_{n+1})$ by the special case.

2. Apply 1. twice.
\end{proof}

We shall now prove that a regular symmetric matrix $S$ is linearly congruent to the diagonal matrix with entries $\mu_k(S)/\mu_{k-1}(S)$ ($1 \leqslant k \leqslant n$).
The proof is by  the ``algebraic plumbing" of matrices -- the algebraic analogue of the geometric plumbing of manifolds, discussed in section~\ref{topology} below.

\begin{definition} \label{plumbing}
The \emph{plumbing} of a regular symmetric $n \times n$ matrix $S$ with respect to $v \in \RR^n$, $w \neq vS^{-1}v^* \in \RR$ is the regular symmetric $(n+1) \times (n+1)$ matrix
$$S'=\begin{pmatrix} S & v^* \\ v & w \end{pmatrix}.$$
\end{definition}

\begin{figure}[ht]
\centering
\begin{minipage}[b]{0.425\linewidth}
\includegraphics[width=.9\textwidth]{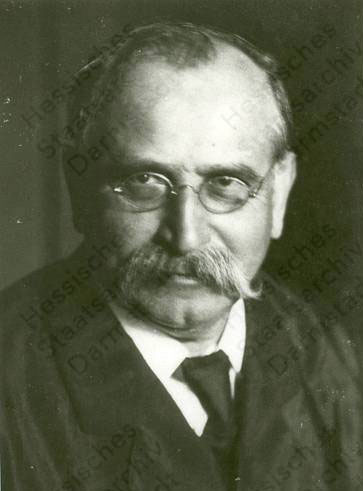}
\center{Sigmund Gundelfinger (1846-1910)}
\end{minipage}
\hfill
\begin{minipage}[b]{0.4\linewidth}
\includegraphics[width=.9\textwidth]{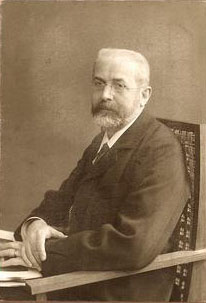}
\center{Ferdinand Georg Frobenius (1849-1917)}
\end{minipage}
\end{figure}

The {\it Sylvester-Jacobi-Gundelfinger-Frobenius theorem} gives a very simple way of calculating the signature, at least in the case of a regular matrix.
\begin{theorem}[Sylvester~\cite{sylvester1852}, Jacobi~\cite{jacobi2}, Gundelfinger~\cite{gundelfinger}, Frobenius~\cite{frobenius}] \label{sjgf}
The signature of a regular symmetric $n \times n$ matrix $S$ is given by:
$$
\begin{array}{ll}
\tau(S)&=\sum\limits^n_{k=1} {\rm sign}(\mu_k(S)/\mu_{k-1}(S))\\
&=n-2\,{\rm var}(\mu_0(S),\mu_1(S),\dots,\mu_n(S))\in \{-n,-n+1,\dots,n\}.
 \end{array}
$$
  \end{theorem}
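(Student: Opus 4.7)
The plan is to prove the first equality $\tau(S)=\sum_{k=1}^n {\rm sign}(\mu_k(S)/\mu_{k-1}(S))$ by induction on $n$, using the plumbing construction of Definition~\ref{plumbing}. The second equality $\sum_{k=1}^n {\rm sign}(\mu_k(S)/\mu_{k-1}(S))=n-2\,{\rm var}(\mu_0(S),\dots,\mu_n(S))$ is nothing more than Proposition~\ref{variation}(1) applied to the vector $\mu=(\mu_0(S),\dots,\mu_n(S))\in\RR^{n+1}$, so no further work is required for that part. The base case $n=1$ is immediate: $\mu_0(S)=1$ and $\mu_1(S)=s_{11}$, and $\tau(S)={\rm sign}(s_{11})={\rm sign}(\mu_1(S)/\mu_0(S))$.

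For the inductive step I would write a regular symmetric $(n+1)\times(n+1)$ matrix $S'$ as a plumbing
$$S'=\begin{pmatrix} S & v^* \\ v & w \end{pmatrix},$$
where $S$ is its principal $n\times n$ submatrix and $v\in\RR^n$, $w\in\RR$ are the extra data. Since $\mu_k(S')=\mu_k(S)$ for $1\leqslant k\leqslant n$, the regularity of $S'$ forces $S$ to be regular, so the inductive hypothesis applies to $S$. The key observation is that $S'$ is linearly congruent to the block diagonal matrix
$$\begin{pmatrix} S & 0 \\ 0 & w-vS^{-1}v^* \end{pmatrix}$$
via the invertible matrix
$$A=\begin{pmatrix} I_n & -S^{-1}v^* \\ 0 & 1 \end{pmatrix},$$
as a direct computation of $A^*S'A$ shows (using $(S^{-1})^*=S^{-1}$ since $S$ is symmetric). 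Consequently
$$\tau(S')=\tau(S)+{\rm sign}(w-vS^{-1}v^*),$$
while taking determinants in the same factorization yields $\mu_{n+1}(S')=\det(S')=\mu_n(S)(w-vS^{-1}v^*)=\mu_n(S')(w-vS^{-1}v^*)$. Hence the additional sign is exactly ${\rm sign}(\mu_{n+1}(S')/\mu_n(S'))$, and combining with the inductive hypothesis for $S$ closes the induction on the first equality.

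The main (mild) obstacle is guessing the correct block-triangular congruence $A$ that clears the off-diagonal block $v^*$ against the regularity of $S$; once $A$ is in hand the rest is routine block-matrix manipulation, and the stability $\mu_k(S')=\mu_k(S)$ for $k\leqslant n$ makes the bookkeeping on minors essentially free. This is exactly the algebraic incarnation of the plumbing principle flagged before the theorem's statement.
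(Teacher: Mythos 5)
Your proposal is correct and follows essentially the same route as the paper: both perform an induction by plumbing off one row and column at a time, and both clear the off-diagonal block with the same block-triangular congruence (your $A$ is exactly the inverse of the upper-triangular factor in the paper's identity $S'=L\,\mathrm{diag}(S,\,w-vS^{-1}v^*)\,L^*$, so the two computations are equivalent). The only difference is presentational: you spell out the base case and the appeal to Proposition~\ref{variation}(1) explicitly, whereas the paper leaves those steps implicit.
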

 \begin{proof}
 Since every regular symmetric $n \times n$ matrix $S$ is obtained from the $0 \times 0$ matrix by $n$ successive plumbings, it suffices to calculate the jump in signature under one plumbing. The matrix identity
$$S'=\begin{pmatrix} S & v^* \\ v & w \end{pmatrix}=
\begin{pmatrix} 1 & 0 \\ vS^{-1} & 1 \end{pmatrix}
\begin{pmatrix} S & 0\\ 0 & w-vS^{-1}v^* \end{pmatrix}
\begin{pmatrix} 1 & S^{-1}v^* \\ 0 & 1 \end{pmatrix}$$
shows that the symmetric $(n+1)\times (n+1)$ matrix $S'$ is linearly congruent to $\begin{pmatrix}
S & 0 \\ 0 & w - vS^{-1}v^* \end{pmatrix}$ with
$$\mu_k(S')=\mu_k(S)~(1 \leqslant k \leqslant n),~\mu_{n+1}(S')=
\mu_n(S)(w-vS^{-1}v^*).$$
\end{proof}

Another way of expressing the same result is the following.
The positive index of $S$ is the number of indices $k=1, \dots, n$ such that  $\mu_{k-1}(S)$ and $\mu_k(S)$ have the same signs
$$\tau_+(S)=p=n-{\rm var}(\mu_0(S),\mu_1(S),\dots,\mu_n(S)),$$
the negative index of $S$ is the number of indices for which the signs are different
$$\tau_-(S)=q={\rm var}(\mu_0(S),\mu_1(S),\dots,\mu_n(S)).$$
and the signature is the difference
$$\tau(S)=\tau_+(S)-\tau_-(S)=p-q=n-2\,{\rm var}(\mu_0(S),\mu_1(S),\dots,\mu_n(S)).$$

We could even add the name of Gauss to the previous theorem since he proved it in 1801 when $n=3$ in his {\it Disquisitiones Arithmeticae}~\cite[page 305]{gauss}!

\begin{corollary} If $S$ is an \emph{invertible} symmetric $n \times n$ matrix which is not regular then for sufficiently small $\epsilon \neq 0$ the symmetric $n \times n$ matrix $S_{\epsilon}=S+\epsilon I_n$ is regular, and
$$\begin{array}{l}
\tau(S)=\tau(S_\epsilon)=\sum\limits^n_{k=1}
{\rm sign}(\mu_k(S_\epsilon)/\mu_{k-1}(S_\epsilon)) \in \{-n,-n-1,\dots,n-1,n\}.
\end{array}$$
\end{corollary}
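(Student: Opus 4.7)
The plan is to establish two things in sequence: first, that $S_{\epsilon}$ is regular for sufficiently small $\epsilon \neq 0$; second, that the signature is unchanged under the perturbation $S \mapsto S_{\epsilon}$. Once both are in hand, the formula for $\tau(S_{\epsilon})$ follows by applying the Sylvester--Jacobi--Gundelfinger--Frobenius theorem (Theorem~\ref{sjgf}) directly to $S_{\epsilon}$.

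For regularity, I would observe that each principal minor $\mu_k(S_\epsilon) = \det(S_k + \epsilon I_k)$ is, as a function of $\epsilon$, a monic polynomial of degree $k$ (the characteristic polynomial of $-S_k$ evaluated at $\epsilon$). Being monic it is not the zero polynomial, hence has at most $k$ real roots. Taking the union over $k=1,\ldots,n$ of these finite sets of bad values, there exists $\delta > 0$ such that $\mu_k(S_\epsilon) \neq 0$ for all $k$ whenever $0 < |\epsilon| < \delta$. Thus $S_\epsilon$ is regular in that punctured neighborhood of $0$.

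For the invariance of the signature, the cleanest argument uses that $S$ is symmetric with only real, and by hypothesis nonzero, eigenvalues $\lambda_1,\ldots,\lambda_n$. The eigenvalues of $S_\epsilon = S + \epsilon I_n$ are exactly $\lambda_1 + \epsilon, \ldots, \lambda_n + \epsilon$. If we further shrink $\delta$ so that $\delta < \min_i |\lambda_i|$, then for $|\epsilon| < \delta$ each $\lambda_i + \epsilon$ has the same sign as $\lambda_i$. Since Sylvester's Law of Inertia (Theorem~\ref{sylvester}) identifies $\tau_+$ and $\tau_-$ with the numbers of positive and negative eigenvalues respectively, we get $\tau(S_\epsilon) = \tau(S)$.

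Combining these two ingredients, we pick $\epsilon$ with $0 < |\epsilon| < \delta$ satisfying both conditions simultaneously. Regularity lets us invoke Theorem~\ref{sjgf} to express $\tau(S_\epsilon)$ as $\sum_{k=1}^n \mathrm{sign}(\mu_k(S_\epsilon)/\mu_{k-1}(S_\epsilon))$, and the invariance then gives $\tau(S) = \tau(S_\epsilon)$, yielding the full chain of equalities. The only mildly subtle point is verifying that $\mu_k(S_\epsilon)$ really is a nonzero polynomial in $\epsilon$ when $\mu_k(S) = 0$; this is where monicity (coefficient $1$ in front of $\epsilon^k$) does the work, so I do not expect any genuine obstacle in carrying out the argument.
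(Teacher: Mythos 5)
Your proof is correct. The paper states this corollary without proof, so there is no paper argument to compare against, but the route you take --- genericity of regularity via the nonvanishing of the finitely many polynomials $\epsilon\mapsto\det(S_k+\epsilon I_k)$, combined with eigenvalue perturbation to show signature invariance under small shifts --- is exactly the natural one suggested by the surrounding material. Both steps are sound: the monicity observation correctly rules out the zero polynomial even when $\mu_k(S)=0$, and shrinking $\delta$ below $\min_i|\lambda_i|$ (using the invertibility hypothesis to ensure this minimum is positive) guarantees the eigenvalues of $S_\epsilon$ retain their signs. One small attribution note: the identification of $\tau_\pm(S)$ with the numbers of positive and negative eigenvalues requires the Spectral Theorem in addition to Sylvester's Law of Inertia, and in the paper this is recorded as Corollary~\ref{corsign} rather than as part of Theorem~\ref{sylvester} itself; you implicitly invoke the Spectral Theorem when you assert the eigenvalues are real, so the ingredients are all present, but the citation should point to the corollary.
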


\subsection{Orthogonal congruence}\label{orthogonal}

\bigskip
\begin{center}
\includegraphics[width=.35 \textwidth]{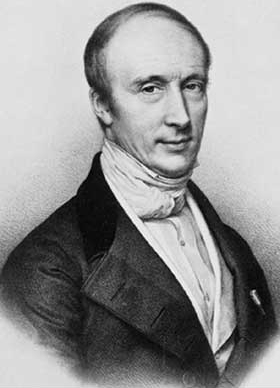}

Augustin Cauchy (1789--1857)
\end{center}
\bigskip

The next step in the understanding of symmetric matrices concerns {\it orthogonal congruence} rather than linear congruence.

\begin{theorem}[Spectral theorem, Cauchy 1829,~\cite{cauchy1829}]
\label{spectral} \leavevmode

1. Any symmetric matrix $S$ with real coefficients is \emph{orthogonally} congruent to a diagonal matrix.
The eigenvalues of $S$ are real numbers.

2. Two symmetric $n \times n$ matrices $S,T$ are orthogonally congruent if and only if their eigenvalues are the same.
\end{theorem}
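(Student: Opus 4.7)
The plan is to prove part 1 first by a short spectral argument (reality of eigenvalues plus an inductive orthogonal diagonalization), and then deduce part 2 as an easy corollary.

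To show that the eigenvalues of a real symmetric $S$ are real, I would pass temporarily to $\mathbb{C}^n$ equipped with the standard Hermitian inner product $\langle u,v\rangle = \sum_i \bar{u}_i v_i$. If $\lambda \in \mathbb{C}$ is a root of $\det(XI_n - S)$, choose $v \in \mathbb{C}^n \setminus \{0\}$ with $Sv=\lambda v$. Using $S^* = S$ and $S$ real, I compute $\langle Sv, v\rangle = \langle v, Sv\rangle$ in two ways, getting $\bar\lambda \langle v,v\rangle = \lambda \langle v,v\rangle$; since $\langle v,v\rangle > 0$, this forces $\lambda \in \RR$. (Alternatively, one can argue variationally by maximizing $Q(v)$ on the unit sphere of $\RR^n$, which guarantees a real eigenvector directly; both routes work.)

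Next, I would establish orthogonal diagonalization by induction on $n$. The case $n=1$ is trivial. For general $n$, the previous step provides a real eigenvalue $\lambda_1$ and hence a real unit eigenvector $v_1 \in \RR^n$ with $Sv_1 = \lambda_1 v_1$. The key observation is that $v_1^\perp$ is $S$-invariant: if $\langle v_1, w\rangle = 0$, then using $S = S^*$,
\[
\langle v_1, Sw\rangle \;=\; \langle Sv_1, w\rangle \;=\; \lambda_1 \langle v_1, w\rangle \;=\; 0.
\]
Restricting $S$ to the $(n-1)$-dimensional Euclidean space $v_1^\perp$ gives a symmetric endomorphism to which the inductive hypothesis applies, producing an orthonormal basis $v_2, \dots, v_n$ of $v_1^\perp$ consisting of eigenvectors. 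Assembling $v_1, v_2, \dots, v_n$ as the columns of a matrix $A$ yields an orthogonal $A$ with $A^* S A = A^{-1} S A = \mathrm{diag}(\lambda_1, \dots, \lambda_n)$. This simultaneously diagonalizes $S$ orthogonally and exhibits the $\lambda_i$ as the eigenvalues of $S$, completing part 1.

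For part 2, the "only if" direction is immediate: orthogonal congruence $T = A^* S A$ with $A$ orthogonal is in particular conjugation $T = A^{-1} S A$, so $T$ and $S$ have the same characteristic polynomial and thus the same eigenvalues. For "if", suppose $S$ and $T$ have the same eigenvalues (with multiplicity). By part 1, there exist orthogonal matrices $A$ and $B$ with $A^* S A = D_S$ and $B^* T B = D_T$, where $D_S$ and $D_T$ are diagonal matrices carrying the eigenvalues of $S$ and $T$ respectively. After post-composing $A$ (or $B$) with a permutation matrix, which is orthogonal, I may arrange $D_S = D_T$; then $(AB^{-1})^* T (AB^{-1}) = S$ with $AB^{-1}$ orthogonal, giving orthogonal congruence.

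The main obstacle is the reality of eigenvalues together with the $S$-invariance of $v_1^\perp$; once these are in hand, the rest is induction and bookkeeping. The small subtlety to be careful about is the distinction between working over $\RR$ and $\CC$ when extracting the first eigenvector, since the characteristic polynomial is guaranteed a root only in $\CC$, and it is precisely the symmetry of $S$ that forces this root to be real.
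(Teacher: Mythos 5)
Your proof is correct and is essentially Cauchy's argument, which is the second of the nine proofs surveyed in the paper, combined with the standard induction on $v_1^\perp$ that the paper implicitly presupposes when it observes that ``eigenspaces are orthogonal so that the point of the Theorem is to prove that the spectrum is real.'' The one cosmetic difference is that you use the Hermitian inner product $\langle u,v\rangle = \sum_i \bar u_i v_i$ on $\CC^n$, whereas the paper's version of Cauchy's proof extends the real scalar product to $\CC^n$ as a \emph{bilinear} form and pairs $v$ with $\bar v$; the two computations are equivalent. Part 2 is handled correctly (the paper does not spell it out), and your permutation step to align the diagonals is the right bit of bookkeeping.
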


There are many proofs of this truly fundamental fact.
We shall hint at nine proofs!

For a history of this result, we recommend Hawkins~\cite{hawkins}.
It is interesting (and important) to compare this spectral theorem with theorem~\ref{century}.
The classification of real quadratic forms under congruence is elementary and its proof extends to any field with characteristic different from $2$.
In the case of $\RR$, there is a {\it finite} number of normal forms for each dimension $n$.
On the other hand, the spectral theorem is specific to the field of real numbers and involves an infinite number of normal forms since the eigenvalues are invariant under conjugacy.
Historically, the subtle difference between these two important theorems has not always been clear to the founding fathers of linear algebra.

An immediate corollary of the Law of Inertia and the spectral theorem is that the signs of the eigenvalues of a symmetric matrix are invariant under congruence.

\begin{corollary}[of Sylvester's Law of Inertia~\ref{sylvester}]\label{sylvestercor2}  \label{corsign} \leavevmode

1. Two symmetric matrices $S,T$ are \emph{linearly congruent} if and only if their eigenvalues have the same signs.

2. The positive (resp. negative) index $\tau_+(S)$ (resp. $\tau_-(S)$) of a symmetric $n \times n$ matrix $S$ is the number of eigenvalues $\lambda_k>0$ (resp. $\lambda_k<0$). The signature
of $S$ is the sum of the signs of the eigenvalues
$$\tau(S)=\sum\limits^n_{k=1}{\rm sign}(\lambda_k) \in \{-n,-n+1,\dots,n-1,n\}.$$

\end{corollary}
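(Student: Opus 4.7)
The plan is to combine the Spectral Theorem~\ref{spectral} with Sylvester's Law of Inertia~\ref{sylvester} directly: the former produces an orthogonal (hence linear) congruence between $S$ and a diagonal matrix of its eigenvalues, and the latter identifies the resulting $I_{p,q}$-type normal form in terms of signs.

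First I would argue part 2. Given a symmetric $n\times n$ matrix $S$ with eigenvalues $\lambda_1,\dots,\lambda_n$ (real, by Theorem~\ref{spectral}), part 1 of the spectral theorem provides an orthogonal matrix $P$ with $P^*SP=D$, where $D={\rm diag}(\lambda_1,\dots,\lambda_n)$. Since orthogonal congruence is a special case of linear congruence, $S$ is linearly congruent to $D$. Next I would rescale: let $A$ be the diagonal matrix with entries $A_{kk}=|\lambda_k|^{-1/2}$ when $\lambda_k\neq 0$ and $A_{kk}=1$ otherwise; then $A^*DA$ is diagonal with entries ${\rm sign}(\lambda_k)\in\{+1,0,-1\}$. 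A suitable permutation matrix (orthogonal, hence giving a further linear congruence) rearranges these entries into the canonical order, producing $I_{p,q}$, where $p$ is the number of indices with $\lambda_k>0$ and $q$ the number with $\lambda_k<0$. By the \emph{uniqueness} clause of Sylvester's Law of Inertia~\ref{sylvester}, $p$ and $q$ must coincide with $\tau_+(S)$ and $\tau_-(S)$ respectively, and hence
$$\tau(S)=\tau_+(S)-\tau_-(S)=\sum_{k=1}^n {\rm sign}(\lambda_k).$$

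For part 1, I would deduce the equivalence from part 2. If $S$ and $T$ are linearly congruent, then they have the same $I_{p,q}$-normal form, so by part 2 they have the same number of positive, negative, and zero eigenvalues (that is, their eigenvalues, counted with multiplicity, have the same collection of signs). Conversely, if their eigenvalues have the same signs in this sense, then part 2 shows they share the same $(p,q)$, hence are both linearly congruent to $I_{p,q}$, and therefore to each other.

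The argument is essentially a bookkeeping exercise once the spectral theorem is in hand; the only point requiring a modicum of care is the phrasing of ``the eigenvalues have the same signs,'' which must be interpreted with multiplicity to rule out the trivial counterexample where, say, $\lambda_1=1$ with multiplicity $2$ in $S$ but multiplicity $1$ in $T$. Once this is understood, the proof is immediate, and no further obstacle arises.
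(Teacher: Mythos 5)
Your proof is correct and follows exactly the route the paper intends: the paper introduces this corollary immediately after the Spectral Theorem~\ref{spectral} with the remark that it is ``an immediate corollary of the Law of Inertia and the spectral theorem,'' and leaves the bookkeeping (diagonalize orthogonally, rescale by $|\lambda_k|^{-1/2}$, permute into $I_{p,q}$ form, invoke uniqueness in Theorem~\ref{sylvester}) implicit. Your explicit write-up, including the observation that ``same signs'' must be read with multiplicity, fills in precisely those details and nothing more.
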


We shall now describe many proofs of the spectral theorem, following more or less the chronological order.
Each of these proofs casts some new light on the theorem.

We denote by $\langle~,~\rangle$ the usual scalar product on $\RR^n$.
A symmetric $n \times n$ matrix $S=(s_{ij})$ determines an endomorphism
$$S:v=(x_1,\dots,x_n) \in \RR^n \mapsto S(v)=(\sum\limits^n_{i=1}S_{ij}x_j) \in \RR^n$$
such that $\langle S(v),w \rangle=S(v,w)=\langle v,S(w)\rangle \in \RR$ for $v,w \in \RR^n$. 
The orthogonal of any $S$-invariant subspace $V \subseteq \RR^n$ is an $S$-invariant subspace $V^{\perp}=\{w \in \RR^n\,\vert\, S(v,w)=0\,{\rm for~all}\, v \in V\}$.
The (real) eigenvalues of $S$ are the eigenvalues $\lambda \in \RR$ of $S$, with
$S(v)=\lambda v \in \RR^n$ for some eigenvector $v\neq 0 \in \RR^n$, and
the eigenspace ${\rm ker}(\lambda I - S:\RR^n \to \RR^n)$ is $S$-invariant.
Observe also that two eigenvectors $v_1,v_2$ with different real eigenvalues $\lambda_1,\lambda_2$ have to be orthogonal for $\langle~ ,~ \rangle$.
Indeed
$$\langle S(v_1),v_2 \rangle= \langle v_1,S(v_2) \rangle = \lambda_1 \langle v_1,v_2 \rangle = \lambda_2 \langle v_1,v_2 \rangle.$$
so that $\langle v_1 , v_2 \rangle =0$.
In other words, eigenspaces are orthogonal so that  the point of the Theorem is to prove that the spectrum is real.

\bigskip

The spectral theorem is trivial in dimension 1 and very easy in dimension 2.
Indeed, if $n=2$, one can for example subtract from $S$ a suitable  multiple of the identity matrix so that one can assume that
$$S=\begin{pmatrix} a & b \\ b& -a \end{pmatrix}$$
which has $\pm \sqrt{a^2+b^2}$ as eigenvalues.
This fact has been known for a very long time and was initially expressed by saying that a conic section with a centre of symmetry has two perpendicular axes of symmetry.

Going from dimension 2 to dimension 3 is easy {\it today}, since every $3\times 3$ matrix has at least a real eigenvalue.
In the case of a symmetric matrix, one can look at the $2$ dimensional invariant plane orthogonal to an eigenvector and use the theorem in dimension 2. 
The first rigorous exposition of this proof in dimension 3, was published in 1810 by Hachette and Poisson~\cite{hachettepoisson}\footnote{A very early example of a joint paper.}, who were describing axes of symmetries for quadric surfaces in $3$-space.

\bigskip

We now turn to the proof in the $n$-dimensional case.

\bigskip

{\it The heuristic first ``proof''} is physical and half convincing for a contemporary mathematician but it was considered solid by Lagrange in 1762~\cite{lagrange}.
Let us explain it in modern language.
Suppose first that the quadratic form $U(v) =\frac{1}{2}\langle S(v) ,v \rangle$ is positive definite and consider this function as a mechanical potential, producing a force $- {\rm grad}\, U (v) = -S(v) $.
Assuming a unit mass, the equation of motion is
$$
\frac{d^2 v}{dt^2} = -S(v) .
$$
Conservation of energy yields
$$
\frac{1}{2} \Big\vert \! \Big\vert \frac{dv}{dt}\Big\vert \! \Big\vert ^2 + U(v)  = {\rm Constant}
$$
and this implies stability: any trajectory stays in a bounded neighborhood of the origin since $U(v) $ remains bounded.
Recall that the differential equation $d^2 v / dt^2 + \omega^2 v =0$ has solutions $v = a \exp(\pm i \omega t)$.
These solutions can only be bounded (as the time runs over $\RR$) if $\omega$ is a real number.
Indeed  $\exp( i (\alpha+i \beta )t)=  \exp ( i \alpha t) \exp(-bt)$ is bounded if an only if $b=0$.
``Therefore'', $S$ has only real (and positive) eigenvalues.
If $S$ is not positive definite, just consider $S + \lambda Id$ with a large positive real number $\lambda$.

\begin{quote}
{\it Au reste, quoiqu'il soit difficile, peut-\^etre impossible, de d\'eterminer en g\'en\'eral les racines de l'\'equation $P=0$, on peut cependant s'assurer, par la nature m\^eme du probl\`eme, que ces racines sont n\'ecessairement toutes r\'eelles [...]; car sans cela les valeurs de $y', y'',y''',\dots$ pourraient cro\^{\i}tre \`a l'infini, ce qui serait absurde~~\cite{lagrange}.}
\end{quote}

\bigskip

{\it Cauchy's proof} (1829)~\cite{cauchy1829}.
Let  $\lambda$ be a complex eigenvalue of $S$ and let $v$ be a non zero eigenvector in $\CC^n$.
Of course, $\bar{v}$ is an eigenvector with eigenvalue $\bar{\lambda}$.
We still denote by the same symbol  $\langle~ ,~ \rangle$ the extension to $\CC^n$ of the scalar product as a {\it bilinear} form (not as a hermitian form).
We have
$$
\langle S(v) ,\bar{v} \rangle = \lambda \langle v, \bar{v} \rangle = \langle v, S(\bar{v} )\rangle = \bar{\lambda} \langle v, \bar{v} \rangle \in \CC.
$$
Since $v\neq 0$, we have $\langle v, \bar{v} \rangle \neq 0$. This implies that $\lambda = \bar{\lambda}$ so that $\lambda$ is indeed real.

Amazingly, this proof, which looks crystal clear today, was not so convincing at the beginning of the nineteenth century, probably because complex numbers still sounded mysterious (they were called {\it imaginary} or even  {\it  impossible}).

\bigskip

{\it A more algebraic proof.} If $n\geqslant   3$, one uses the fact that any  matrix with real entries has an invariant $2$ dimensional subspace $E$.
Of course, since $S$ is symmetric, the orthogonal of $E$ is also invariant and the proof goes by induction.
One could argue that the standard proof of the existence of an invariant $2$ dimensional subspace uses complex numbers so that this proof is very close to Cauchy's proof.

\bigskip
{\it Just for fun}, let us describe a magic proof given by Sylvester (1852)~\cite{sylvester1852}.
Consider the characteristic polynomial $P(X) = \det (X I_n-S)$.
Note that $P(X) P(-X) = \det (X^2 I_n-S^2)$.
Since $S$ is symmetric, all diagonal entries of $S^2$ are sums of squares and are therefore non negative.
Let us write
$$P(X) P(-X) = (-X^2)^n  + a_1 (-X^2)^{n-1}  + a_2 (-X^2)^{n-2} + \ldots + a_n.$$
The first coefficient $a_1$ is the trace of $S^2$, the sum of the squares of the entries of $S$, and is therefore non negative.
More generally $a_k$ is the sum of squares of the principal $k\times k$ minors of $S$ and is therefore non negative as well.
Sylvester concludes that this implies that $P$ cannot have an imaginary non real root.
Assuming some root of the form $\alpha+i\beta$, with $\alpha,\beta$ real,  one could replace $S$ by $S- \alpha I_n$ and reduce the problem to proving that a purely imaginary root $i \beta $ is not possible.
Now, that would imply that the positive number $\beta^2$ would be a root of $X^n + a_1X^{n-1} + a_2 X^{n-2} + \ldots + a_n = 0 $ which is clearly impossible.
$$
\includegraphics[width=.9\linewidth]{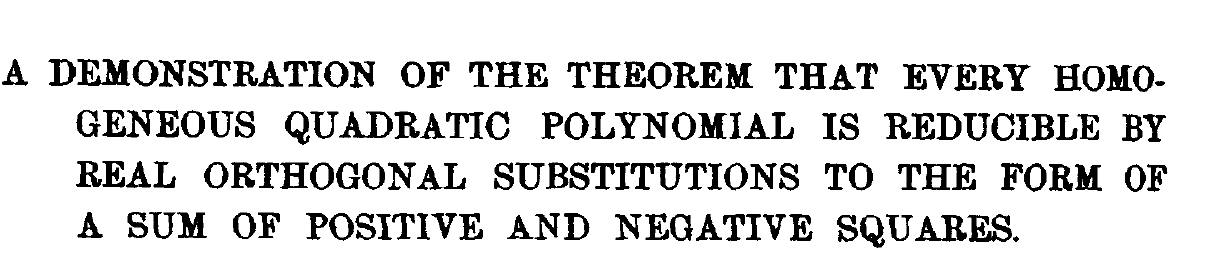}
$$
Sylvester describes his proof as ``very simple".
He was aware of the fact that Cauchy had published a proof ``somewhere'' (in his words) but one can understand why he preferred his own proof.
Indeed this approach is truly algebraic and does not allow any intrusion of the complex numbers.
It only uses the fact that a sum of squares is positive and that a positive number is not zero.

\bigskip

{\it The classical topological proof.}
One considers a vector $v$ that maximizes the value of $Q(v)=S(v,v)/2$ on the unit sphere of $\RR^n$.
Note that $Q(v+w)= Q(v) +  S(v,w) + Q(w)$ so that the differential of $Q$ at $v$ is $dQ_v(w)= S(v,w)$.
A point $v$ is critical for the restriction of $Q$ to the unit sphere if $S(v,w)=0$ for all vectors $w$ in the tangent space of the unit sphere at $v$, i.e. for all $w$ such that $\langle v,w\rangle=0$.
In other words, the critical points are the eigenvectors of the matrix $S$.
Therefore, the maximum of $Q$ on the unit sphere provides such an eigenvector.
One concludes by induction.
\bigskip

Let $Q$ be a quadratic form on some $n$-dimensional real vector space $E$ equipped with a positive definite scalar product $\langle , \rangle$ and let $S$ be the symmetric endomorphism associated to $Q$, i.e. such that $Q(v)  = \langle S(v) ,v\rangle$. (Do not confuse the endomorphism $S:E\to E$ with the symmetric bilinear pairing $S:E \times E \to \RR$ determined by $Q$).
Denote by $\lambda_1(S) \leqslant \dots \leqslant \lambda_n(S)$ the $n$ real eigenvalues of $S$ (whose existence follows from the spectral theorem).
Clearly, $\lambda_n(S)$ is the maximum value of the quotient ${Q(v) }/{ \langle v, v \rangle}$ as $v$ describes $E \backslash \{0\}$.
More generally, the \emph{Rayleigh minimax theorem} asserts that
$$
\lambda_k(S) = \min_{F \subseteq E}\max_{v \in F} \{ {Q(v) }/{\langle v, v \rangle}~ \vert~ v \neq 0 ; \text{dim}(F) = k\}.
$$

\bigskip
Actually, one can get a more pedantic (sixth) proof of the spectral theorem using this minimax idea and some modern algebraic topology.
This proof is not by induction.
Note that if $S$ is invertible, the function $f(v)=\langle S(v),v \rangle/\langle v,v\rangle= Q(v)/\langle v,v\rangle$ is constant on lines and therefore defines a function on the real projective space $\mathbb{RP}^{n-1}$.
As before, critical points of $F$ are eigenvectors of $S$.
If $S(v)= \lambda v$, it is easy to compute the second derivative of $F$ at $v$: one finds $d^2(w)= Q(w)-\lambda \langle w,w\rangle$ (which is indeed a quadratic form on the tangent space of $\mathbb{RP}^{n-1}$ at $\RR v$, which is  $\RR^n / \RR v$).
If the eigenvalue $\lambda$ is simple, we conclude that $d^2F$ is non degenerate so that $v$ is a Morse critical point.
A knowledgeable reader knows that a Morse function on a compact manifold must have at least as many critical points as the sum of the Betti numbers (for instance modulo 2).
In this case, the sum is equal to $n$ and each of these critical points yields an eigenvector.
We conclude that a symmetric matrix with a simple spectrum has real eigenvalues.
Clearly, the set of symmetric matrices with a simple spectrum is open and dense and the set of real matrices with real spectrum is closed, so that we get a proof of the spectrum theorem in the general case.

\bigskip

Let $S$ be a symmetric $n \times n$ matrix, and let $T$ be the $(n-1)\times (n-1)$ minor defined by the determinant of the submatrix obtained by
deleting the $k$th row and $k$th column, for $k=1,2,\dots,n$.
It is a corollary of Rayleigh minimax that the $(n-1)$ eigenvalues 
$\lambda_1(T),\lambda_2(T),\dots,\lambda_{n-1}(T)$ are {\it interlaced} with the $n$ eigenvalues $\lambda_1(S),\lambda_2(S),\dots,\lambda_n(S)$, that is up to reordering
$$\lambda_1(S) < \lambda_1(T) < \lambda_2(S) < \dots < \lambda_{n-1}(T) < \lambda_n(S)~.$$
One can give a simple algebraic proof of this fact, which will provide our seventh proof of the spectral theorem.
This proof is due to Cauchy~\cite{cauchy2} for $n=3$ and was generalized by Hermite~\cite{hermitecours}.
It goes by induction.
Assume that all $(n-1)\times (n-1)$ symmetric matrices are diagonalizable in an orthonormal basis.
Let $S$ be an $n\times n$ symmetric matrix.
We can therefore assume it has the following form in some orthonormal basis.
$$S=\begin{pmatrix}
\lambda_1 & 0 & 0 & \dots & 0 & a_1 \\
0 & \lambda_2 & 0 & \dots & 0 & a_2 \\
0 & 0& \lambda_3 & \dots & 0 & a_3 \\
\vdots & \vdots & \vdots & \ddots & \vdots & \vdots \\
0 & 0 & 0 & \dots  & \lambda_{n-1} & a_{n-1}\\
a_1 & a_2 & a_3 & \dots & a_{n-1} & \lambda_n
\end{pmatrix}$$
with $\lambda_1 \leqslant \lambda_2 \leqslant \dots \leqslant \lambda_{n-1}$ the eigenvalues of the principal $(n-1)\times (n-1)$ minor $T=S_{n-1}$.
Let us evaluate the characteristic polynomial $P(X)= \det(X I_n-S)$ of $S$.
$$
P(X) = - \sum_{i=1}^{n-1} a_i^2 \prod_{j\neq i ;  1 \leqslant j \leqslant n-1} (X-\lambda_j)+ \prod_{i=1}^n(X-\lambda_i).
$$
Let us first make the generic assumption that all the $\lambda_i$ are distinct and that the $a_i$'s are not zero.
It follows that the signs of $P(-\infty)$, $P(\lambda_1), \dots,  P(\lambda_{n-1}),\allowbreak  P(+\infty)$ alternate.
Hence there exist $n$ real roots in the corresponding intervals so that $S$ is indeed diagonalizable.
The general case follows since the set of real polynomials having all their roots in $\RR$ is closed.

\bigskip

As a bonus, we provide a {\it eighth proof}, from Appell (1925)~\cite[Chapter 1]{appell}, even though it is probably neither the best nor the easiest!
Let $S=(s_{ij})$ be a real symmetric $n \times n$ matrix.
As before we denote by $\mu_k(S)={\rm det}(S_k)$ the principal minors.
Denote by $C=(c_{ij})$ the \emph{comatrix} of $S$ so that $c_{ij}$ is $(-1)^{i+j}$ times the determinant of the $(n-1)\times (n-1)$ matrix obtained from $S$ by deleting the $i^{th}$ row and the $j^{th}$ column.
We claim that
$$
\begin{vmatrix}
c_{n-1,n-1} & c_{n-1,n}\\
c_{n,n-1} & c_{n,n}
\end{vmatrix}
=\mu_n (S)  \mu_{n-2} (S).
$$
This follows from the computation of the product of the two determinants
$$
\begin{vmatrix}
1 & 0 & \dots & 0 & 0 & 0 \\
0 & 1 &  \dots & 0 & 0 & 0 \\
. & .  & \dots  & . & .  & .  \\
\vdots & \vdots & \ddots & \vdots & \vdots & \vdots \\
0 & 0 & \dots  & 1 & 0 & 0\\
c_{n-1,1} & c_{n-1,2}  & \dots & c_{n-1,n-2} & c_{n-1,n-1}  & c_{n-1,n} \\
c_{n,1} & c_{n,2}  & \dots & c_{n,n-2} & c_{n,n-1}  & c_{n,n}
\end{vmatrix}
=
\begin{vmatrix}
c_{n-1,n-1} & c_{n-1,n}\\
c_{n,n-1} & c_{n,n}
\end{vmatrix}
$$
and
$$
\begin{vmatrix}
s_{1,1}  & s_{1,2}  & \dots & s_{1,n-2}  & s_{1,n-1}  & s_{1,n}  \\
s_{2,1}  & s_{2,2}  &  \dots & s_{2,n-2}  & s_{2,n-1}  & s_{2,n}  \\
. & .  & \dots  & . & .  & .  \\
\vdots & \vdots & \ddots & \vdots & \vdots & \vdots \\
s_{n-2,1}  & s_{n-2,2}  & \dots  & s_{n-2,n-2}  & s_{n-2,n-1}  & s_{n-2,n} \\
s_{n-1,1} & s_{n-1,2}  & \dots & s_{n-1,n-2} & s_{n-1,n-1}  & s_{n-1,n} \\
s_{n,1} & s_{n,2}  & \dots & s_{n,n-2} & s_{n,n-1}  & s_{n,n}
\end{vmatrix} = \mu_n(S)
$$
and the observation that $C.S= (\det S) I_n = \mu_n(S) I_n$.
Let us make the generic assumption that the polynomials $\mu_k(X I_n-S)_{1\leqslant k \leqslant n}$ have distinct roots and they are all distinct.

Let us prove by induction on $k$ that $\mu_k(X I_n-S)$ has $k$ real roots,
and that there is a unique root of $\mu_{k-1}(X I_n-S)$ between two roots of $\mu_{k}(X I_n-S)$.
Assume this is true up to $k=n-1$ and denote by $\alpha_1 < \dots < \alpha_{n-1}$ the real roots of $\mu_{n-1}(X I_n-S)$.

Apply the previous identity to the symmetric matrix $\alpha_i I_n-S$ (so that $\mu_{n-1}(\alpha_i I_n-S)= 0$).
We get that $\mu_{n}(\alpha_i I_n-S ) \mu_{n-2}(\alpha_i I_n-S)$ is negative.
Hence there is at least one root of $\mu_n(X I_n-S)$ in each interval $[\alpha_i, \alpha_{i+1}]$.
A degree consideration also shows that there is also at least a root $< \alpha_1$ and another root $> \alpha_{n-1}$ and this concludes the inductive step.

One should still get rid of the generic assumption on the roots of $\mu_k(X I_n-S)_{1\leqslant k \leqslant n}$.
As in the sixth proof, this follows from the fact that the set of real polynomials having all their roots in $\RR$ is closed.

\bigskip

The ninth proof will be explained later (see~\ref{eighth}).

\bigskip

The spectral theorem expresses the fact that there is a basis in $\RR^n$ which is orthonormal for both $\langle, \rangle$ and $S$.
A more invariant way of expressing the same theorem is the following.
Let $S,T$ be two quadratic forms on a real vector space.
If $T$ is positive definite, then there exists a basis in which the matrix of $T$ is the identity (i.e. orthonormal for $T$) and the matrix of $S$ is diagonal.

There is a nice generalization of the previous statement due to Milnor (cf. Greub~\cite[Chapter IX.3]{greub}).
Let $S,T$ be two quadratic forms on a real vector space $\RR^n$ of {\it dimension at least $3$}.
Assume that there is no vector $v$ which is simultaneously isotropic for $S$ and $T$, i.e. such that $S(v,v)=T(v,v)=0$.
Then there is a basis in which both $S$ and $T$ are diagonal.

The proof is the following.
Consider the map from the unit sphere $\SSS^{n-1}$ to $\CC \backslash \{0\}$ sending $v$ to $S(v,v) + i T(v,v)$.
Since $n\geqslant    3$, the sphere ${\mathbb S}^{n-1}$ is simply connected so that there is a continuous determination of the argument of $S(v,v) + i T(v,v)$ as a continuous function on ${\mathbb S}^{n-1}$ with values in $\RR$.
Hence, there must exist a point on ${\mathbb S}^{n-1}$ for which this argument achieves its maximum.
One shows easily that such a point $v$ is a simultaneous eigenvector for $S$ and $T$.
One then proceeds by induction on $n$.

Even though this is not the purpose of this paper, one should remember that one of the most important motivations for the spectral theorem comes from mechanics.
In the simplest situation, if a rigid body rotates around a fixed point, with some angular velocity $\Omega \in \RR^3$, its kinetic energy is a (positive definite) quadratic form in $\Omega$.
An orthonormal basis in which this form is diagonal defines the three principal inertial axes and the eigenvalues are the {\it principal inertial moments}.
For more information, see Arnold~\cite[Chapter 6]{arnold4}.

This spectral theorem had not only a profound impact on mathematics but also on the Moon!
Lagrange, Cauchy, Jacobi and Sylvester have their craters on the Moon.
\begin{figure}[ht]
\centering
\begin{minipage}[b]{0.45\linewidth}
\includegraphics[height=.9\textwidth]{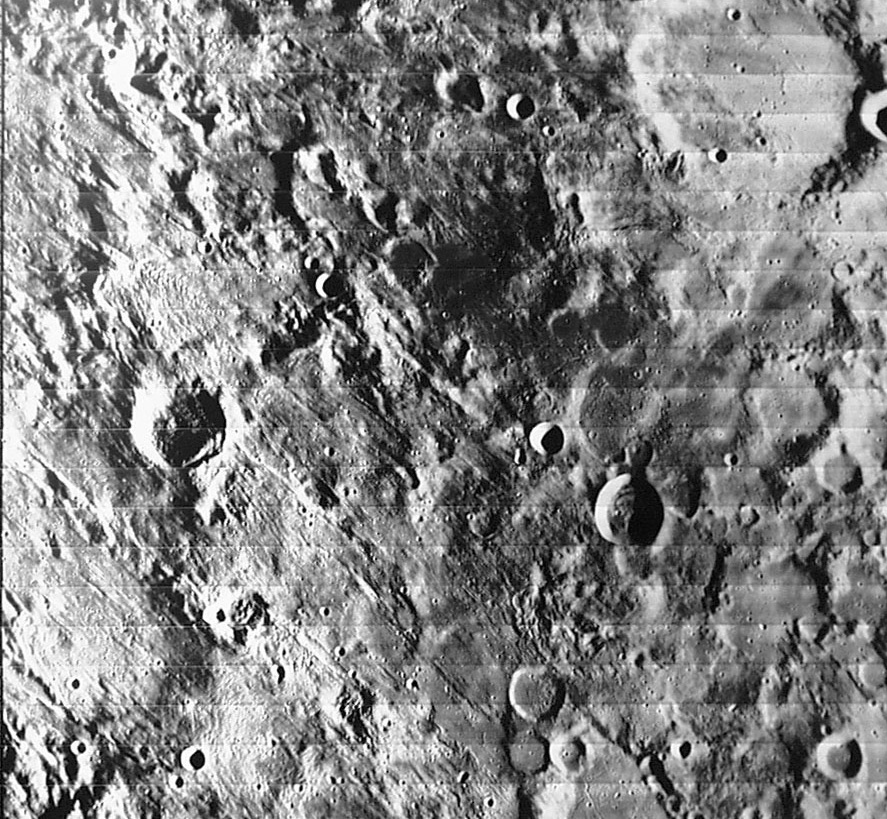}
\center{Lagrange}
\end{minipage}
\hfill
\begin{minipage}[b]{0.45\linewidth}
\includegraphics[height=.9\textwidth]{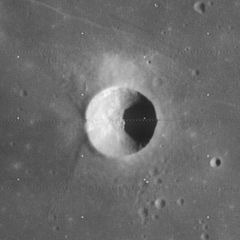}
\center{Cauchy }
\end{minipage}
\end{figure}
\begin{figure}[ht]
\centering
\begin{minipage}[b]{0.45\linewidth}
\includegraphics[height=.9\textwidth]{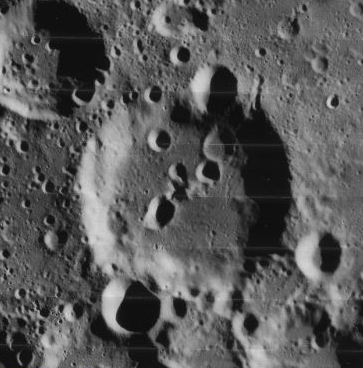}
\center{Jacobi}
\end{minipage}
\hfill
\begin{minipage}[b]{0.45\linewidth}
\includegraphics[height=.9\textwidth]{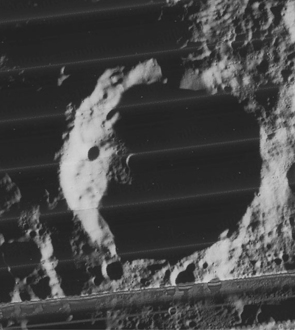}
\center{Sylvester }
\end{minipage}
\end{figure}

\subsection{Tridiagonal matrices, continued fractions and signatures}\label{tri}

In 1853 Sylvester~\cite{sylvester1853} established a link between the 1829 theorem of Sturm on
the number of real roots of a real polynomial, the signature of a tridiagonal matrix and improper continued fractions. We shall now describe the connections between tridiagonal matrices, continued fractions and signatures, going on to discuss the links with Sturm's theorem in section~\ref{sturm} below.

\begin{definition}
The {\it tridiagonal symmetric $n \times n$ matrix} of $\chi=(\chi_1,\chi_2,\ldots,\chi_n)\in \RR^n$ is
$${\rm Tri}(\chi)=\begin{pmatrix}
\chi_1 & 1 & 0 & \dots & 0 & 0 \\
1 & \chi_2 & 1 & \dots & 0 & 0 \\
0 & 1& \chi_3 & \dots & 0 & 0 \\
\vdots & \vdots & \vdots & \ddots & \vdots & \vdots \\
0 & 0 & 0 & \dots  & \chi_{n-1} & 1\\
0 & 0 & 0 & \dots & 1 & \chi_n
\end{pmatrix}.$$
\end{definition}

The study of tridiagonal matrices was initiated by Jacobi~\cite{jacobi1}, who proved that every symmetric matrix over $\ZZ$ is linearly congruent (over $\ZZ$)
to a tridiagonal symmetric matrix. Tridiagonal matrices are ubiquitous in mathematics,  in recurrences, continued fractions, Sturm theory,  numerical analysis, orthogonal polynomials, integrable systems and in the Hirzebruch-Jung resolution of singularities.

\begin{definition} A vector $\chi=(\chi_1,\chi_2,\ldots,\chi_n)\in \RR^n$ is {\it regular} if
${\rm Tri}(\chi)$ is regular in the sense of Definition~\ref{regular}, that is if
$$\mu_k({\rm Tri}(\chi))\neq 0 \in \RR~(1 \leqslant k \leqslant n)~.$$
\end{definition}

\begin{proposition} \label{signtri0}
Given a regular $\chi=(\chi_1,\chi_2,\ldots,\chi_n)\in \RR^n$ let $\mu=(\mu_0,\mu_1,\dots,\mu_n)$ with $\mu_0=1$
and
$$\mu_k~=~\mu_k({\rm Tri}(\chi))~=~{\rm det}({\rm Tri}(\chi_1,\chi_2,\dots,\chi_k))$$
the $k$th minor of ${\rm Tri}(\chi)$. The signature of ${\rm Tri}(\chi)$ is
$$\begin{array}{ll}
\tau({\rm Tri}(\chi))&=~\sum\limits^n_{k=1}{\rm sign}(\mu_k/\mu_{k-1})\\[1ex]
&=~n-2\,{\rm var}(\mu) \in \{-n,-n+1,\dots,n-1,n\}~.
\end{array}$$
\end{proposition}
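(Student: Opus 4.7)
The proposition is essentially a direct specialisation of the Sylvester--Jacobi--Gundelfinger--Frobenius theorem (Theorem~\ref{sjgf}) to the tridiagonal case, so the plan is short.

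First I would observe that ${\rm Tri}(\chi)$ is by hypothesis a regular symmetric $n \times n$ matrix in the sense of Definition~\ref{regular}, since the regularity of $\chi$ is defined to be exactly the non-vanishing of the principal minors $\mu_k({\rm Tri}(\chi))$. Theorem~\ref{sjgf} then applies verbatim and yields
$$\tau({\rm Tri}(\chi)) = \sum_{k=1}^{n} {\rm sign}(\mu_k/\mu_{k-1}),$$
with $\mu_0 = 1$ by convention. The second equality, $\sum_{k=1}^n {\rm sign}(\mu_k/\mu_{k-1}) = n - 2\,{\rm var}(\mu)$, is precisely assertion~1 of Proposition~\ref{variation}, and the fact that $\tau \in \{-n,-n+1,\dots,n\}$ follows from ${\rm var}(\mu)\in\{0,1,\dots,n\}$.

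If a more self-contained argument is desired rather than a pure reference to Theorem~\ref{sjgf}, I would retrace the plumbing argument from that theorem in the tridiagonal setting. The point is that ${\rm Tri}(\chi_1,\dots,\chi_k)$ is obtained from ${\rm Tri}(\chi_1,\dots,\chi_{k-1})$ by plumbing with respect to $v = (0,\dots,0,1) \in \RR^{k-1}$ and $w = \chi_k$ (cf.\ Definition~\ref{plumbing}). Expanding the determinant along the last row immediately gives the classical three-term recurrence
$$\mu_k = \chi_k \mu_{k-1} - \mu_{k-2} \qquad (\mu_{-1}=0,\;\mu_0 = 1),$$
and, exactly as in the proof of Theorem~\ref{sjgf}, each plumbing step contributes $+1$ or $-1$ to the signature according to the sign of $\mu_k/\mu_{k-1}$. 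Summing over $k=1,\dots,n$ gives the first expression for $\tau({\rm Tri}(\chi))$.

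There is really no main obstacle here: once regularity is unpacked, the result is a direct corollary of Theorem~\ref{sjgf} combined with Proposition~\ref{variation}. The only mild verification is the three-term recurrence for the minors $\mu_k$, which is a one-line cofactor expansion along the last row.
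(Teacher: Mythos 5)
Your proof is correct and follows essentially the same route as the paper's: both cite Theorem~\ref{sjgf} as giving the result immediately, and both then spell out the plumbing argument in the tridiagonal case, using the cofactor expansion to obtain the three-term recurrence $\mu_k = \chi_k\mu_{k-1}-\mu_{k-2}$ and observing that each plumbing step contributes ${\rm sign}(\mu_k/\mu_{k-1})$ to the signature. The paper goes slightly further in writing out the explicit congruence matrix $A=A_n(A_{n-1}\oplus I_1)\cdots(A_1\oplus I_{n-1})$ that diagonalizes ${\rm Tri}(\chi)$ with entries $\mu_k/\mu_{k-1}$ before invoking Sylvester's Law of Inertia, but this is just making explicit the same computation you describe.
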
 
\begin{proof} Immediate from the Sylvester-Jacobi-Gundelfinger-Frobenius Theorem \ref{sjgf}. More directly, as in the proof write
 $${\rm Tri}(\chi_1,\chi_2,\dots,\chi_n)~=~\begin{pmatrix} S & v^* \\ v & w \end{pmatrix}$$
 with $S={\rm Tri}(\chi_1,\chi_2,\dots,\chi_{n-1})$, $v=(0 \dots 0~1)$,
 $w=\chi_n$. Expanding the determinant $\mu_n={\rm det}({\rm Tri}(\chi_1,\chi_2,\dots,\chi_n))$ by the last  column gives
 $$\mu_n~=~\chi_n \mu_{n-1}-\mu_{n-2}~(\mu_{-1}=0)$$
so that 
 $$w-vS^{-1}v^*~=~\chi_n- \mu_{n-2}/\mu_{n-1}~=~\mu_n/\mu_{n-1}~.$$
The invertible $n \times n$ matrix
 $$A_n~=~\begin{pmatrix} I_{n-1} & -S^{-1} v^* \\  0 & 1 \end{pmatrix}$$
 is such that
 $$A_n^*{\rm Tri}(\chi_1,\chi_2,\dots,\chi_n)A_n~=~\begin{pmatrix}
 {\rm Tri}(\chi_1,\chi_2,\dots,\chi_{n-1}) & 0 \\ 0 & \mu_n/\mu_{n-1}
 \end{pmatrix}~.$$
with  
 $$-S^{-1}v^*~=~\begin{pmatrix} (-1)^{n-1}\mu_0/\mu_{n-1} \\
 (-1)^{n-2}\mu_1/\mu_{n-1} \\
 \vdots \\
-\mu_{n-2}/\mu_{n-1}
\end{pmatrix}$$
(cf. Usmani~\cite{usmani}). The product invertible $n \times n$  matrix
$$\begin{array}{ll}
A&=~A_n(A_{n-1} \oplus I_1) \dots (A_1 \oplus I_{n-1})\\[1ex]
&=~\begin{pmatrix} 
1 & -\mu_0/\mu_1 & \mu_0/\mu_2 & \dots & (-1)^{n-1}\mu_0/\mu_{n-1}\\ 0 & 1 & -\mu_1/\mu_2 & \dots & (-1)^{n-2}\mu_1/\mu_{n-1}\\
0 & 0 & 1 & \dots & (-1)^{n-3}\mu_2/\mu_{n-1} \\
\vdots & \vdots & \vdots & \ddots & \vdots \\
0 & 0 & 0 &\dots & 1 
\end{pmatrix}
\end{array}$$
is such that
$$A^*{\rm Tri}(\chi_1,\chi_2,\dots,\chi_n)A~=~
\begin{pmatrix} \mu_1/\mu_0 & 0 & \dots & 0\\
0 & \mu_2/\mu_1 & \dots & 0 \\
\vdots & \vdots & \ddots & \vdots \\
0 & 0 & \dots &\mu_n/\mu_{n-1} \end{pmatrix}~.$$
Now apply the Sylvester Law of Inertia \ref{sylvester}.
\end{proof}

The {\it proper continued fraction} of a vector
$\chi=(\chi_1,\chi_2,\dots,\chi_n) \in \RR^n$ is the real number
$$\chi_1+\dfrac{1}{\chi_2+\dfrac{1}{\chi_3+\ddots\lower10pt\hbox{$+\dfrac{1}{\chi_n}$} }} \in \RR$$
assuming there are no divisions by 0.

Continued fractions have a long and distinguished history -- we refer to Karpenkov~\cite{karpenkov} for a recent account.

However, in dealing  with  signatures we need to be more concerned with improper continued fractions, following Sylvester.
The {\it improper continued fraction} of  a vector $\chi=(\chi_1,\chi_2,\dots,\chi_n) \in \RR^n$
is the real number
$$[\chi_1,\chi_2,\dots,\chi_n]=\chi_1-\dfrac{1}{\chi_2-\dfrac{1}{\chi_3-\ddots\lower10pt\hbox{$-\dfrac{1}{\chi_n}$} }} \in \RR$$
assuming there are no divisions by 0.

\begin{definition} The {\it reverse} of a vector $\chi=(\chi_1,\chi_2,\dots,\chi_n) \in \RR^n$ is the
vector $\chi^*=(\chi^*_1,\chi^*_2,\dots,\chi^*_n) \in \RR^n$ with
$$\chi^*_k~=~\chi_{n-k+1}~( 1 \leqslant k \leqslant n)~.$$
\end{definition}

The relationship between the tridiagonal matrices ${\rm Tri}(\chi)$ 
and ${\rm Tri}(\chi^*)$ was the key to Sylvester's reformulation of Sturm's 
root-counting formula in terms of signatures.

\begin{proposition} \label{minor}
Let $\chi=(\chi_1,\chi_2,\dots,\chi_n) \in \RR^n$, and let
$$\begin{array}{l}
\mu_k~=~\mu_k({\rm Tri}(\chi))~=~{\rm det}({\rm Tri}(\chi_1,\chi_2,\dots,\chi_k))\\[1ex]
\mu^*_k~=~\mu_k({\rm Tri}(\chi^*))~=~{\rm det}({\rm Tri}(\chi_n,\chi_{n-1},\dots,\chi_{n-k+1}))~,
\end{array}$$
so that
$$\chi_k \mu_{k-1}~=~\mu_k+\mu_{k-2}~,~\chi^*_k \mu^*_{k-1}~=~\mu^*_k+\mu^*_{k-2}~(1 \leqslant k \leqslant n)$$
with $\mu_0=\mu^*_0=1$, $\mu_{-1}=\mu^*_{-1}=0$.\\
1. The improper continued fractions $[\chi_1,\chi_2,\dots,\chi_k]$  $(1 \leqslant k \leqslant n)$ are well-defined if and only if
$${\rm det}({\rm Tri}(\chi_2,\chi_3,\dots,\chi_k)) \neq 0~(2 \leqslant k \leqslant n)~,$$
in which case 
$$[\chi_1,\chi_2,\dots,\chi_k]~=~
\dfrac{\mu_k}{{\rm det}({\rm Tri}(\chi_2,\chi_3,\dots,\chi_k))}~.$$
Similarly for the reverse $\chi^*\in \RR^n$, with
$$[\chi_n,\chi_{n-1},\dots,\chi_{n-k+1}]~=~
\dfrac{\mu^*_k}{{\rm det}({\rm Tri}(\chi_{n-1},\chi_{n-2},\dots,\chi_{n-k+1}))}~.$$
2. $\chi$ is regular if and only if
$$\chi_k \neq \mu_{k-2}/ \mu_{k-1}\in \RR~(1 \leqslant k \leqslant n),$$
if and only if each  $[\chi_k,\chi_{k-1},\dots,\chi_1]$ $(1 \leqslant k \leqslant n)$ is well-defined. If these conditions are satisfied
$$\begin{array}{ll}
\mu_k/\mu_{k-1}&=\dfrac{{\rm det}({\rm Tri}(\chi_1,\chi_2,\dots,\chi_k))}
{{\rm det}({\rm Tri}(\chi_1,\chi_2,\dots,\chi_{k-1}))}\\[1ex]
&= [\chi_k,\chi_{k-1},\dots,\chi_1] \neq 0 \in \RR~(1 \leqslant k \leqslant n).
\end{array}$$
Similarly for $\chi^*$.\\
\end{proposition}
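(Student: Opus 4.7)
The plan is to induct on $k$, leveraging two expansions of $\mu_k={\rm det}({\rm Tri}(\chi_1,\chi_2,\dots,\chi_k))$: the recurrence $\mu_k = \chi_k\mu_{k-1}-\mu_{k-2}$ already noted in the proposition (expansion along the last row or column) and the dual identity
$$\mu_k~=~\chi_1\,{\rm det}({\rm Tri}(\chi_2,\dots,\chi_k))-{\rm det}({\rm Tri}(\chi_3,\dots,\chi_k))$$
obtained by expanding along the first row or column instead. The definition of the improper continued fraction is itself recursive, $[\chi_1,\chi_2,\dots,\chi_k]=\chi_1-1/[\chi_2,\dots,\chi_k]$, so both parts reduce to matching this recursion against a recurrence of determinant ratios.

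For Part 1, I would induct on $k$ with the hypothesis that $[\chi_1,\chi_2,\dots,\chi_k]$ is well-defined precisely when the relevant determinants are nonzero, and in that case equals $\mu_k/{\rm det}({\rm Tri}(\chi_2,\dots,\chi_k))$. The base case $k=1$ is $[\chi_1]=\chi_1=\mu_1/1$. For the inductive step, apply the inductive formula to the shifted sequence $(\chi_2,\dots,\chi_k)$ to get $[\chi_2,\dots,\chi_k]={\rm det}({\rm Tri}(\chi_2,\dots,\chi_k))/{\rm det}({\rm Tri}(\chi_3,\dots,\chi_k))$; then substitute into the recursive definition and use the dual expansion of $\mu_k$ to obtain
$$[\chi_1,\dots,\chi_k]~=~\chi_1-\frac{{\rm det}({\rm Tri}(\chi_3,\dots,\chi_k))}{{\rm det}({\rm Tri}(\chi_2,\dots,\chi_k))}~=~\frac{\mu_k}{{\rm det}({\rm Tri}(\chi_2,\dots,\chi_k))}.$$
Well-definedness at stage $k$ requires $[\chi_2,\dots,\chi_k]$ to be defined and nonzero, which by the inductive formula amounts to ${\rm det}({\rm Tri}(\chi_2,\dots,\chi_k))\neq 0$; collecting these conditions across $1\leq k\leq n$ yields the stated hypothesis. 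The reverse case follows by applying Part 1 verbatim to the relabelled sequence $\chi^*$.

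For Part 2, the recurrence $\mu_k=\chi_k\mu_{k-1}-\mu_{k-2}$ gives, under the inductive assumption $\mu_0,\dots,\mu_{k-1}\neq 0$, that $\mu_k\neq 0$ is equivalent to $\chi_k\neq \mu_{k-2}/\mu_{k-1}$; iterating this yields the first stated equivalence with regularity. For the continued-fraction identity, induct again on $k$: the base case is $[\chi_1]=\chi_1=\mu_1/\mu_0$, and for the step
$$[\chi_k,\chi_{k-1},\dots,\chi_1]~=~\chi_k-\frac{1}{[\chi_{k-1},\dots,\chi_1]}~=~\chi_k-\frac{\mu_{k-2}}{\mu_{k-1}}~=~\frac{\chi_k\mu_{k-1}-\mu_{k-2}}{\mu_{k-1}}~=~\frac{\mu_k}{\mu_{k-1}}$$
by the inductive hypothesis and the recurrence. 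Well-definedness at the $k$th step requires the previous value $[\chi_{k-1},\dots,\chi_1]=\mu_{k-1}/\mu_{k-2}$ to be nonzero, i.e.\ $\mu_{k-1}\neq 0$, which is exactly the regularity condition on the preceding initial segment.

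The main obstacle is not any one identity but the bookkeeping required to align the recursive well-definedness conditions for the continued fractions with the nonvanishing of the principal minors in the proposition. Once the two tridiagonal expansions are in hand, the arithmetic identities come out directly from the recursive definition of $[\,\cdot\,]$ and the recurrence for $\mu_k$; the whole proof is essentially a careful double induction that uses the first-row expansion for Part 1 and the last-row expansion for Part 2.
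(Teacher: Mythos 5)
Your proposal is correct and follows essentially the same route as the paper: Part 1 is proved by induction on $k$ using the first-row expansion of $\det({\rm Tri}(\chi_1,\dots,\chi_k))$ together with the recursion $[\chi_1,\dots,\chi_k]=\chi_1-[\chi_2,\dots,\chi_k]^{-1}$, and Part 2 follows from the last-row recurrence $\mu_k=\chi_k\mu_{k-1}-\mu_{k-2}$ matched against the recursion for $[\chi_k,\dots,\chi_1]$. The paper records the two key identities for Part 1 and simply states the equality $\chi_k-\mu_{k-2}/\mu_{k-1}=\mu_k/\mu_{k-1}=[\chi_k,\dots,\chi_1]$ for Part 2, which is exactly what your more explicit double induction unwinds.
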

\begin{proof} 1. By induction on $k$, noting that
$$\begin{array}{l}
[\chi_1,\chi_2,\dots,\chi_k]~=~\chi_1-[\chi_2,\chi_3,\dots,\chi_k]^{-1}~,\\[1ex]
{\rm det}({\rm Tri}(\chi_1,\chi_2,\dots,\chi_k))\\[1ex]
\hskip75pt =~
\chi_1{\rm det}({\rm Tri}(\chi_2,\chi_3,\dots,\chi_k))-{\rm det}({\rm Tri}(\chi_3,\chi_4,\dots,\chi_k))~.
\end{array}$$
2. Immediate from the identity
$$\chi_k - \mu_{k-2}/\mu_{k-1}~=~\mu_k/\mu_{k-1}~=~[\chi_k,\chi_{k-1},\dots,\chi_1]~.$$
\end{proof}

The connection between tridiagonal matrices, continued fractions and variations was established by:

\begin{theorem} [Duality Theorem, Sylvester \cite{sylvester1853}]\label{sylvestertheorem}
Let $\chi=(\chi_1,\chi_2,\dots,\chi_n)\in \RR^n$ and its reverse  $\chi^*=(\chi_n,\chi_{n-1},\dots,\chi_1) \in \RR^n$  be regular.
The tridiagonal symmetric matrices 
$${\rm Tri}(\chi)=\begin{pmatrix}
\chi_1 & 1 & \dots &  0 \\
1 & \chi_2 & \dots & 0 \\
\vdots & \vdots & \ddots &  \vdots \\
0 & 0 & \dots &  \chi_n
\end{pmatrix}~,~{\rm Tri}(\chi^*)=\begin{pmatrix}
\chi_n & 1 &  \dots &  0 \\
1 & \chi_{n-1} &  \dots &  0 \\
\vdots & \vdots & \ddots &  \vdots \\
0 & 0 & \dots & \chi_1
\end{pmatrix}$$
are orthogonally congruent, and have the same signatures
$$\tau({\rm Tri}(\chi))~=~\tau({\rm Tri}(\chi^*)) \in \ZZ~.$$
Furthermore, the variations of the sequences $\mu=(\mu_0,\mu_1,\dots,\mu_n)$, $\mu^*=(\mu^*_0,\mu^*_1,\allowbreak\dots,\mu^*_n)$ of minors 
$$\mu_k~=~{\rm det}({\rm Tri}(\chi_1,\chi_2,\dots,\chi_k))~,~
\mu^*_k~=~{\rm det}({\rm Tri}(\chi_n,\chi_{n-1},\dots,\chi_{n-k+1}))$$ 
$($with $\mu_0=\mu^*_0=1$$)$ are the same
$${\rm var}(\mu)~=~(n-\tau({\rm Tri}(\chi)))/2~=~(n-\tau({\rm Tri}(\chi^*)))/2~=~{\rm var}(\mu^*)$$
and
$$\begin{array}{ll}
\tau({\rm Tri}(\chi))&=\tau({\rm Tri}(\chi^*))\\[1ex]
&=\sum\limits^n_{k=1}{\rm sign}(\mu_k/\mu_{k-1})=\sum\limits^n_{k=1}{\rm sign}([\chi_k,\chi_{k-1},\dots,\chi_1])\\
&=\sum\limits^n_{k=1}{\rm sign}(\mu^*_k/\mu^*_{k-1})=\sum\limits^n_{k=1}{\rm sign}([\chi_{n-k+1},\chi_{n-k+2},\dots,\chi_n])\\
&=~n-2\,{\rm var}(\mu)~=~n-2\,{\rm var}(\mu^*) \in   \{-n,-n+1,\dots,n\}
\end{array}$$
\end{theorem}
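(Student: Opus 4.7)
The plan is to reduce the whole theorem to a single elementary observation: $\mathrm{Tri}(\chi^*)$ is obtained from $\mathrm{Tri}(\chi)$ by conjugation with the anti-diagonal permutation matrix, which is orthogonal. Everything else then follows mechanically from results already in the paper.

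First I would introduce the reversal permutation $J \in \mathrm{GL}_n(\RR)$ with $J_{ij}=1$ if $i+j=n+1$ and $0$ otherwise. Since $J$ is a permutation matrix it satisfies $J^* = J = J^{-1}$, so $J$ is orthogonal. The key entry-by-entry check is that the $(i,j)$ entry of $J^* \mathrm{Tri}(\chi) J$ equals $\mathrm{Tri}(\chi)_{n-i+1,\,n-j+1}$, which is $\chi_{n-i+1} = \chi^*_i$ on the diagonal, equals $1$ when $|i-j|=1$, and vanishes otherwise. Hence $J^* \mathrm{Tri}(\chi) J = \mathrm{Tri}(\chi^*)$, and $\mathrm{Tri}(\chi)$ and $\mathrm{Tri}(\chi^*)$ are orthogonally congruent.

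Next, orthogonal congruence is in particular linear congruence, so Sylvester's Law of Inertia (Theorem \ref{sylvester}) gives $\tau(\mathrm{Tri}(\chi)) = \tau(\mathrm{Tri}(\chi^*))$. Since $\chi$ and $\chi^*$ are both regular by hypothesis, Proposition \ref{signtri0} applies to each and yields the displayed identities
$$\tau(\mathrm{Tri}(\chi)) = \sum_{k=1}^n \mathrm{sign}(\mu_k/\mu_{k-1}) = n - 2\,\mathrm{var}(\mu),$$
and similarly for $\mu^*$. Combining these with $\tau(\mathrm{Tri}(\chi)) = \tau(\mathrm{Tri}(\chi^*))$ forces $\mathrm{var}(\mu) = \mathrm{var}(\mu^*)$.

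Finally, the continued fraction versions of the sums follow at once from Proposition \ref{minor}(2): regularity of $\chi$ gives $\mu_k/\mu_{k-1} = [\chi_k,\chi_{k-1},\dots,\chi_1]$ for every $1 \leqslant k \leqslant n$, and likewise $\mu^*_k/\mu^*_{k-1} = [\chi_{n-k+1},\chi_{n-k+2},\dots,\chi_n]$ from the corresponding statement applied to $\chi^*$. Substituting these into the sign-sum expressions above completes the chain of equalities in the theorem. The only non-mechanical step is the orthogonal congruence via $J$; once that is in hand the Sylvester-Jacobi-Gundelfinger-Frobenius theorem and Proposition \ref{minor} supply everything else, so the main (and only real) obstacle is that short entry-by-entry verification.
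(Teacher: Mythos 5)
Your proof is correct and takes essentially the same approach as the paper: both hinge on the observation that the reversal matrix $J$ is orthogonal and conjugates ${\rm Tri}(\chi)$ to ${\rm Tri}(\chi^*)$, then invoke Proposition~\ref{signtri0} and Proposition~\ref{minor} for the remaining identities. The one small difference is that you pass from orthogonal congruence to equal signatures directly via Sylvester's Law of Inertia (orthogonal congruence being a special case of linear congruence), whereas the paper routes through the Spectral Theorem (equal eigenvalues, hence equal signatures) --- your version is marginally more economical since it avoids invoking the heavier spectral machinery, but the two are interchangeable here.
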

\begin{proof} The orthogonal $n \times n$ matrix
$$J=\begin{pmatrix} 0 & 0 & \dots & 0 & 1 \\
0 & 0 &  \dots & 1 & 0 \\
\vdots & \vdots & \ddots & \vdots & \vdots \\
0 & 1 & \dots & 0 & 0 \\
1 & 0 & \dots & 0 & 0 \end{pmatrix}$$
is such that
$${\rm Tri}(\chi^*)=J^*{\rm Tri}(\chi)J~.$$
Thus ${\rm Tri}(\chi)$ and ${\rm Tri}(\chi^*)$ are orthogonally congruent, the eigenvalues are the same by the Spectral Theorem \ref{spectral}, and hence so are the signatures. Finally, apply the identities given by Proposition \ref{signtri0}
$$\begin{array}{ll}
\tau({\rm Tri}(\chi))&=\sum\limits^n_{k=1}{\rm sign}(\mu_k/\mu_{k-1})=\sum\limits^n_{k=1}{\rm sign}([\chi_k,\chi_{k-1},\dots,\chi_1])\\
&=~n-2\,{\rm var}(\mu)\in   \{-n,-n+1,\dots,n\}~.
\end{array}$$
\end{proof}

Note that the signature $\tau({\rm Tri}(\chi))$ is  {\it not} invariant under the permutation of the indices in $\chi$. For  any permutation $\sigma\in \Sigma_n$ of $\{1,2, \dots, n\}$ let
$$\chi_\sigma=(\chi_{\sigma(1)},\chi_{\sigma(2)},\dots,\chi_{\sigma(n)}) \in \RR^n.$$
In general, ${\rm Tri}(\chi_{\sigma})$ is not linearly congruent to ${\rm Tri}(\chi)$,
but it is the case for $\sigma(k)=n-k+1$.

Here is how the  {\it signaletic equivalence} ${\rm var}(\mu)={\rm var}(\mu^*)$ 
of the Duality Theorem \ref{sylvestertheorem} appeared in Sylvester~\cite{sylvester1853} 
\begin{center}
\includegraphics[width= .9\textwidth]{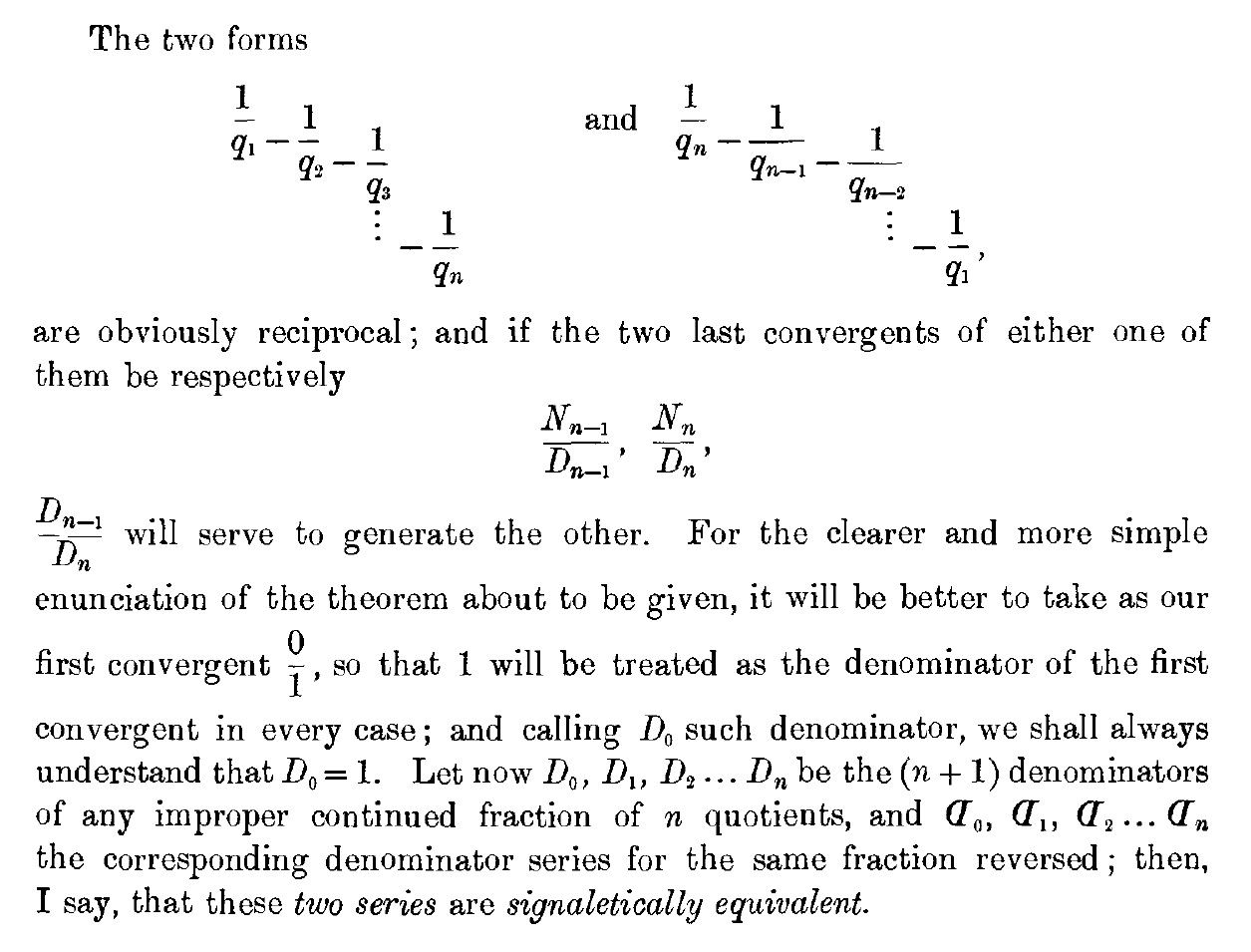}
\end{center}

Since we adopted a different convention for an improper continued fraction, Sylvester's denominators
are our numerators. Specifically, to translate from the language of ~\cite{sylvester1853} into that of the Duality Theorem
\ref{sylvestertheorem} set $(\chi_1,\chi_2,\dots,\chi_n)=(q_1,q_2,\dots,q_n)$ and note that
$$\begin{array}{l}
D_k~=~\mu_k~=~\hbox{\rm numerator of}~[q_1,q_2,\dots,q_k]\\[1ex]
\hskip75pt =\hbox{\rm denominator of}~ [q_1,q_2,\dots,q_n]^{-1}~,\\[1ex]
 \raisebox{\depth}{\scalebox{-1}[-1]{$D$}}_{\hskip-1pt k}~=~\mu^*_k~=~\hbox{\rm numerator of}~[q_n,q_{n-1},\dots,q_{n-k+1}]\\[1ex]
\hskip75pt =\hbox{\rm denominator of}~[q_n,q_{n-1},\dots,q_{n-k+1}]^{-1}~.
\end{array}$$

Sylvester found the invariance ${\rm var}(\mu)={\rm var}(\mu^*)$
remarkable and even worked out a proof ``by hand'' when $n=3$.
Concretely, it means that if $A,B,C$ are three real numbers, the following two sequences have the same number of sign variations 
$$\begin{array}{l}
1,A,BA-1,CBA-C-A\\[1ex]
1,C,BC-1,ABC-A-C.
\end{array}
$$
We leave  this challenging exercise for the reader.

And here is an extract from~\cite{sylvester1853} celebrating the
moment of conception of the identity ${\rm var}(\mu)={\rm var}(\mu^*)$:
\begin{center}
\includegraphics[width=.9\textwidth]{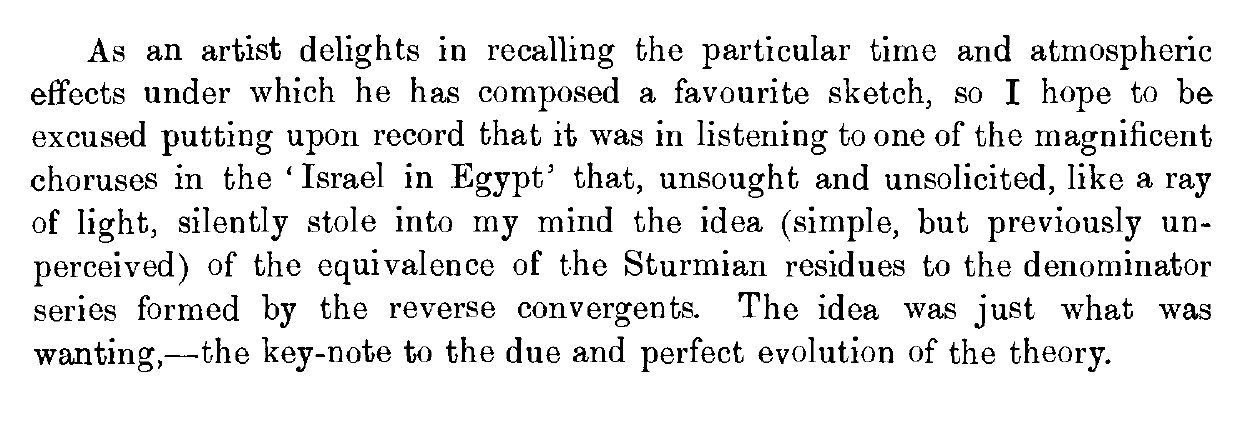}
\end{center}
Sylvester's Duality Theorem~\ref{sylvestertheorem} allowed him to interpret the sign variations provided by the Euclidean algorithm in the original proof of Sturm's theorem by signatures, which we explore further in the next section. The key to this interpretation was the observation of the invariance of $\tau({\rm Tri}(\chi))$ under the permutation $\chi=(\chi_1,\chi_2,\dots,\chi_n) \to \chi^*=(\chi_n,\chi_{n-1},\dots,\chi_1)$.

We now recall the (well-known) connection between proper continued fractions and the Euclidean algorithm:

\begin{example} \label{euclid1}
Given integers $p_0 \geqslant  p_1\geqslant 1$  let $q_1,q_2,\dots,q_n \geqslant 1$ be the successive quotients and $p_2,\dots,p_n \geqslant 1$ the successive remainders in the iterations of the Euclidean algorithm
$$\begin{array}{l}
p_{k-1}=p_kq_k+p_{k+1} \in \ZZ~(1 \leqslant k \leqslant n)~,~0 \leqslant  p_{k+1} <  p_k ,~q_k=\lfloor p_{k-1}/p_k \rfloor,\\
p_n =~\text{greatest common divisor}(p_0,p_1),~p_{n+1}=0.
\end{array}
$$
The vectors $(p_0/p_1,p_1/p_2,\dots,p_{n-1}/p_n)\in \QQ^n$,  $(q_1,q_2,\dots,q_n)\in \ZZ^n$ determine each other by the proper continued fraction
$$p_{k-1}/p_k=q_k+\dfrac{1}{q_{k+1}+\dfrac{1}{q_{k+2}+\ddots\lower10pt\hbox{$+\dfrac{1}{q_n}$} }} \in \QQ$$
and the recurrence
$$q_k=p_{k-1}/p_k - p_{k+1}/p_k \in \ZZ~(1 \leqslant k \leqslant n)~.$$
In terms of matrices, the recurrence is
$$\begin{pmatrix} p_k\\ p_{k+1} \end{pmatrix}=
\begin{pmatrix} 0 & 1 \\ 1  & -q_k \end{pmatrix}
\begin{pmatrix} 0 & 1 \\ 1  & -q_{k-1} \end{pmatrix}
\dots
\begin{pmatrix} 0 & 1 \\ 1  & -q_1 \end{pmatrix}
\begin{pmatrix} p_0 \\ p_1 \end{pmatrix}.
$$
\end{example}

\begin{proposition} \label{recur} {\rm  Let $\chi=(\chi_1,\dots,\chi_n) \in \RR^n$, $\mu=(\mu_0,\mu_1,\mu_2,\dots,\mu_n) \in \RR^{n+1}$
be vectors satisfying the recurrence
$$\mu_k=\mu_{k-1}\chi_k-\mu_{k-2}~(1 \leqslant k \leqslant n)~{\rm with}~\mu_{-1}=0,\,\mu_0 \neq 0 \in \RR.$$
The continued fractions $[\chi_k,\chi_{k-1},\dots,\chi_1]$ are well-defined if and only if $\mu_k \neq 0\in \RR$, in which case there are identities
$$\mu_k/\mu_{k-1}=[\chi_k,\chi_{k-1},\dots,\chi_1] \in \RR~(1 \leqslant k \leqslant n).$$
The vectors $(\mu_1/\mu_0,\mu_2/\mu_1,\dots,\mu_n/\mu_{n-1})\in \RR^n$,  $(\chi_1,\chi_2,\dots,\chi_n)\in \RR^n$ determine each other by
the improper continued fraction
$$\mu_k/\mu_{k-1}=[\chi_k,\chi_{k-1},\dots,\chi_1]$$
and the recurrence
$$\chi_k=\mu_k/\mu_{k-1} +\mu_{k-2}/\mu_{k-1} \in \RR~(1 \leqslant k \leqslant n)~.$$}
\end{proposition}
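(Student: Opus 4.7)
The plan is to induct on $k$, simultaneously tracking both the value of $\mu_k/\mu_{k-1}$ and the conditions under which the improper continued fraction $[\chi_k,\chi_{k-1},\dots,\chi_1]$ is well-defined. Recall that by the recursive definition used in the paper, $[\chi_k,\chi_{k-1},\dots,\chi_1]=\chi_k-1/[\chi_{k-1},\chi_{k-2},\dots,\chi_1]$, so well-definedness at level $k$ requires well-definedness at level $k-1$ together with the nonvanishing of $[\chi_{k-1},\dots,\chi_1]$ — exactly the kind of cascading condition that induction handles cleanly.

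For the base case $k=1$, the recurrence reduces to $\mu_1=\mu_0\chi_1$, and since $\mu_0\neq 0$, the ratio $\mu_1/\mu_0=\chi_1=[\chi_1]$ is immediate. For the inductive step, assume the claim at level $k-1$, so that $\mu_{k-1}\neq 0$ holds iff $[\chi_{k-1},\dots,\chi_1]$ is well-defined, and in that case $\mu_{k-1}/\mu_{k-2}=[\chi_{k-1},\dots,\chi_1]$. Dividing the recurrence $\mu_k=\mu_{k-1}\chi_k-\mu_{k-2}$ through by $\mu_{k-1}$ produces
$$\mu_k/\mu_{k-1}~=~\chi_k-\mu_{k-2}/\mu_{k-1}~=~\chi_k-\frac{1}{\mu_{k-1}/\mu_{k-2}}~=~\chi_k-\frac{1}{[\chi_{k-1},\dots,\chi_1]},$$
which is precisely $[\chi_k,\chi_{k-1},\dots,\chi_1]$ by the defining recursion of the improper continued fraction. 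The equivalence ``$[\chi_k,\dots,\chi_1]$ well-defined iff $\mu_k\neq 0$'' then propagates: the continued fraction at level $k$ demands only that each lower-level continued fraction be nonzero, and by induction these are the quantities $\mu_j/\mu_{j-1}$, so the requirement matches the nonvanishing of $\mu_1,\dots,\mu_{k-1}$, while the value $\mu_k/\mu_{k-1}$ exists (and is nonzero) exactly when $\mu_k\neq 0$.

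The mutual determination claim at the end then follows with no extra work: dividing the recurrence through by $\mu_{k-1}$ as above gives the explicit inverse formula $\chi_k=\mu_k/\mu_{k-1}+\mu_{k-2}/\mu_{k-1}$, so the sequence of ratios $(\mu_1/\mu_0,\dots,\mu_n/\mu_{n-1})$ recovers $\chi$ term-by-term, while the forward direction is the content of the induction just completed. The main bookkeeping obstacle is simply the interlocking of nonvanishing hypotheses — making sure the ``if and only if'' at level $k$ is genuinely equivalent to $\mu_k\neq 0$ and not, accidentally, to a stronger joint condition on all of $\mu_1,\dots,\mu_k$ — but this is transparent once one observes that the inductive hypothesis has already absorbed the conditions $\mu_j\neq 0$ for $j<k$ into the standing assumption that $[\chi_{k-1},\dots,\chi_1]$ makes sense.
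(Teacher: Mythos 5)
Your proof is correct and follows essentially the same inductive route as the paper. The one cosmetic difference is that the paper records the key step as a \emph{product} identity,
$$1+[\chi_k,\dots,\chi_1]\,[\chi_{k-1},\dots,\chi_1]=[\chi_{k-1},\dots,\chi_1]\,\chi_k,$$
which is the same relation as your quotient form $[\chi_k,\dots,\chi_1]=\chi_k-1/[\chi_{k-1},\dots,\chi_1]$ cleared of denominators; the product form is marginally tidier because it is a polynomial identity that one can manipulate without first invoking the nonvanishing hypotheses, whereas your version needs $\mu_{k-1},\mu_{k-2}\neq 0$ in place before the division $\mu_{k-2}/\mu_{k-1}=1/(\mu_{k-1}/\mu_{k-2})$ is legitimate — but you correctly fold that into the standing inductive hypothesis, so nothing is lost.
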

\begin{proof} By induction on $n$, using the identities
$$\begin{array}{l}
\mu_1/\mu_0=\chi_1,~\mu_2/\mu_0=\chi_2\chi_1-1=[\chi_2,\chi_1][\chi_1],\\
1+[\chi_k,\chi_{k-1},\dots,\chi_1][\chi_{k-1},\chi_{k-2},\dots,\chi_1]=[\chi_{k-1},\chi_{k-2},\dots,\chi_1]\chi_k~(2 \leqslant k \leqslant n).
\end{array}$$
\end{proof}

In terms of matrices, the recurrence in Proposition \ref{recur} is
$$\begin{pmatrix} \mu_k \\ \mu_{k-1} \end{pmatrix}=
\begin{pmatrix} \chi_k & -1 \\ 1  & 0 \end{pmatrix}
\begin{pmatrix} \chi_{k-1} & -1 \\ 1  & 0 \end{pmatrix}
\dots
\begin{pmatrix} \chi_1 & -1 \\ 1  & 0 \end{pmatrix}
\begin{pmatrix} \mu_0 \\ 0 \end{pmatrix}.
$$

\begin{example} \label{euclid2}
Given integers $p_0,p_1 \in \ZZ\backslash \{0\}$ let $q_1,q_2,\dots,q_n\in \ZZ \backslash \{0\}$ be the successive quotients and $p_2,\dots,p_n \in \ZZ\backslash \{0\}$ the successive remainders in the iterations of the Euclidean algorithm modified by a change of sign\footnote{In the conventional Euclidean algorithm (Example \ref{euclid1}) it would have been $p_{k+1}=-(p_kq_k-p_{k-1})$, with $q_k$ the quotient when dividing $p_{k-1}$ by $p_k$ and $p_{k+1}$ the remainder, i.e. $p_{k-1}/p_k=q_k+p_{k+1}/p_k$.}
$$\begin{array}{l}
p_{k+1}=p_kq_k-p_{k-1}~(1 \leqslant k \leqslant n),~0 \leqslant \vert p_{k+1} \vert < \vert p_k \vert~\\
\vert p_n\vert~ =~\text{greatest common divisor}(\vert p_0\vert ,\vert p_1\vert ),~p_{n+1}=0.
\end{array}
$$
In terms of matrices, the recurrence is
$$
\begin{array}{ll}
\begin{pmatrix} p_k\\ p_{k+1} \end{pmatrix}&=
\begin{pmatrix} 0 & 1 \\ -1  & q_k \end{pmatrix}
\begin{pmatrix} 0 & 1 \\ -1  & q_{k-1} \end{pmatrix}
\dots
\begin{pmatrix} 0 & 1 \\ -1  & q_1 \end{pmatrix}
\begin{pmatrix} p_0 \\ p_1 \end{pmatrix}\\[2ex]
&=
\begin{pmatrix}
-{\rm det}({\rm Tri}(q_2,q_3,\dots,q_{k-1}))&
{\rm det}({\rm Tri}(q_1,q_2,\dots,q_{k-1}))\\
-{\rm det}({\rm Tri}(q_2,q_3,\dots,q_k))&
{\rm det}({\rm Tri}(q_1,q_2,\dots,q_k))
\end{pmatrix}
\begin{pmatrix} p_0  \\ p_1 \end{pmatrix}
\end{array}
$$
with inverse
$$\begin{array}{ll}
\begin{pmatrix} p_0 \\ p_1 \end{pmatrix}&=
\begin{pmatrix} q_1 & -1 \\ 1  & 0 \end{pmatrix}
\begin{pmatrix}q_2 & -1 \\ 1  & 0 \end{pmatrix}
\dots
\begin{pmatrix} q_k  & -1 \\ 1 & 0  \end{pmatrix}
\begin{pmatrix} p_k  \\ p_{k+1} \end{pmatrix}\\[2ex]
&=
\begin{pmatrix}
{\rm det}({\rm Tri}(q_1,q_2,\dots,q_k))&
-{\rm det}({\rm Tri}(q_1,q_2,\dots,q_{k-1}))\\
{\rm det}({\rm Tri}(q_2,q_3,\dots,q_k))&
-{\rm det}({\rm Tri}(q_2,q_3,\dots,q_{k-1}))
\end{pmatrix}
\begin{pmatrix} p_k  \\ p_{k+1} \end{pmatrix}
\end{array}$$
The vectors
$$\begin{array}{l}
\chi=(\chi_1,\chi_2,\dots,\chi_n)=(q_n,q_{n-1},\dots,q_1) \in \ZZ^n,\\
\mu=(\mu_0,\mu_1,\dots,\mu_n)=(p_n,p_{n-1},\dots,p_0) \in \ZZ^{n+1}
\end{array}
$$
satisfy the hypothesis of Proposition~\ref{recur}, so that
$$p_k/p_{k+1}=[q_{k+1},q_{k+2},\dots,q_n] \in \QQ~(0 \leqslant k \leqslant n-1).$$
\end{example}

\begin{proposition} \label{tri2} \leavevmode
1. For any $\chi=(\chi_1,\chi_2,\dots,\chi_n) \in \RR^n$ the principal minors
$\mu_k=\mu_k({\rm Tri}(\chi)) \in \RR$ satisfy the recurrence of Proposition~\ref{recur}
$$\mu_{k-2}+\mu_k=\chi_k\mu_{k-1}~(1 \leqslant k \leqslant n)$$
with  $\mu_{-1}=0$, $\mu_0 =1$. \\
2. For any $p=(p_0,p_1,\dots,p_n) \in (\RR\backslash \{0\})^{n+1}$, $q=(q_1,q_2,\dots,q_n) \in (\RR\backslash \{0\})^n$ satisfying
$$\begin{array}{l}
p_{k+1}=p_kq_k-p_{k-1}~(1 \leqslant k \leqslant n),~p_{n+1}=0
\end{array}
$$
the vectors
$$\begin{array}{l}
\chi=(\chi_1,\chi_2,\dots,\chi_n)=(q_n,q_{n-1},\dots,q_1) \in (\RR\backslash \{0\})^n,\\[1ex]
\mu=(\mu_0,\mu_1,\dots,\mu_n)=(1,p_{n-1}/p_n,\dots,p_0/p_n) \in (\RR\backslash \{0\})^{n+1}
\end{array}
$$
satisfy the recurrences of 1., with ${\rm Tri}(\chi)$ regular and 
$$\begin{array}{ll}
\mu_k&=\mu_k({\rm Tri}(\chi))={\rm det}({\rm Tri}(\chi_1,\chi_2,\dots,\chi_k))\\[1ex]
&={\rm det}({\rm Tri}(q_n,q_{n-1},\dots,q_{n-k+1}))=p_{n-k}/p_n
\end{array}$$ 
such that
$$\begin{array}{l}
\mu_{n-k}/\mu_{n-k-1}=[\chi_{n-k},\chi_{n-k-1},\dots,\chi_1]\\[1ex]
\hphantom{\mu_{n-k}/\mu_{n-k-1}}=[q_{k+1},q_{k+2},\dots,q_n]\\[1ex]
\hphantom{\mu_{n-k}/\mu_{n-k-1}}=\dfrac{{\rm det}({\rm Tri}(q_{k+1},q_{k+2},\dots,q_n))}{{\rm det}({\rm Tri}(q_{k+2},q_{k+3},\dots,q_n))}\\[2ex]
\hphantom{\mu_{n-k}/\mu_{n-k-1}}=p_k/p_{k+1} \neq 0 \in \RR~(0 \leqslant k \leqslant n-1).
\end{array}$$
In particular,
$$\begin{array}{ll}
\mu_n&=\mu_n({\rm Tri}(\chi))={\rm det}({\rm Tri}(\chi_1,\chi_2,\dots,\chi_n))\\[1ex]
&={\rm det}({\rm Tri}(q_n,q_{n-1},\dots,q_1))={\rm det}({\rm Tri}(q_1,q_2,\dots,q_n))\\[1ex]
&=p_0/p_n.
\end{array}$$
3. For $p,q$ as in 2. define
$$p^*_k~=~\begin{cases} 1&\hbox{\it if}~k=0 \\
{\rm det}({\rm Tri}(q_1,q_2,\dots,q_k))&\hbox{\it if}~ 1 \leqslant k \leqslant n.
\end{cases}$$
Then
$$[q_1,q_2,\dots,q_k]~=~\dfrac{p^*_k}{{\rm det}({\rm Tri}(q_2,q_3,\dots,q_k))}$$
and by  the Sylvester-Jacobi-Gundelfinger-Frobenius Theorem \ref{sjgf} and the
Sylvester Duality Theorem \ref{sylvestertheorem}
$$\tau({\rm Tri}(q))~=~n-2\,{\rm var}(p_0,p_1,\dots,p_n)~=~
n-2\,{\rm var}(p_0^*,p_1^*,\dots,p^*_n)~.$$
\end{proposition}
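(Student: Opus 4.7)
Part 1 should fall out of a one-line cofactor expansion. Expanding $\mu_k={\rm det}({\rm Tri}(\chi_1,\dots,\chi_k))$ along its last column, the only nonzero off-diagonal entry contributes $-\mu_{k-2}$ and the diagonal entry contributes $\chi_k \mu_{k-1}$, yielding $\mu_k = \chi_k\mu_{k-1}-\mu_{k-2}$. The initial data $\mu_{-1}=0$, $\mu_0=1$ are fixed by convention. This is, in fact, exactly the computation already carried out in the proof of Proposition \ref{signtri0}.

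For Part 2, my plan is to observe that the hypothesized recurrence on $p$ \emph{is} the recurrence of Part 1 on $\mu$, read with indices reversed. Setting $j=n-k+1$ turns $p_{k+1}=p_kq_k-p_{k-1}$ into $p_{n-j}=q_{n-j+1}p_{n-j+1}-p_{n-j+2}$, and dividing through by $p_n$ gives $\mu_j=\chi_j\mu_{j-1}-\mu_{j-2}$ with $\mu_j=p_{n-j}/p_n$ and $\chi_j=q_{n-j+1}$. The initial conditions match: $\mu_0=p_n/p_n=1$, and $\mu_{-1}=p_{n+1}/p_n=0$ thanks to the stipulation $p_{n+1}=0$. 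Since the linear recurrence of Part 1 with prescribed initial conditions has a unique solution, and the principal minors ${\rm det}({\rm Tri}(\chi_1,\dots,\chi_j))$ also satisfy it, the two solutions coincide, giving $\mu_j={\rm det}({\rm Tri}(q_n,q_{n-1},\dots,q_{n-j+1}))=p_{n-j}/p_n$. The hypothesis $p_k\neq 0$ then forces each $\mu_j\neq 0$, so ${\rm Tri}(\chi)$ is regular, and Proposition \ref{recur} delivers $\mu_k/\mu_{k-1}=[\chi_k,\chi_{k-1},\dots,\chi_1]$. Rewriting this in the original indices via $\mu_k/\mu_{k-1}=p_{n-k}/p_{n-k+1}$ and $\chi_k=q_{n-k+1}$ produces the stated chain of identities, and the $k=n$ case gives the claimed value of $\mu_n$.

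For Part 3, the formula $[q_1,\dots,q_k]=p^*_k/{\rm det}({\rm Tri}(q_2,\dots,q_k))$ will be an immediate instance of Proposition \ref{minor}(1) applied to the tuple $(q_1,\dots,q_k)$, whose principal minors are by definition the $p^*_k$. For the signature formula, I would apply Theorem \ref{sjgf} directly to ${\rm Tri}(q)={\rm Tri}(q_1,\dots,q_n)$, whose principal minors are the $p^*_k$, obtaining $\tau({\rm Tri}(q))=n-2\,{\rm var}(p^*_0,\dots,p^*_n)$. Now $(q_1,\dots,q_n)$ is precisely the reverse $\chi^*$ of the $\chi=(q_n,\dots,q_1)$ of Part 2, so Sylvester's Duality Theorem \ref{sylvestertheorem} gives ${\rm var}(\mu^*_0,\dots,\mu^*_n)={\rm var}(\mu_0,\dots,\mu_n)$, i.e.\ ${\rm var}(p^*_0,\dots,p^*_n)={\rm var}(\mu_0,\dots,\mu_n)$. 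Since $\mu_k=p_{n-k}/p_n$ and the variation of a real sequence is invariant under both reversal and multiplication by a fixed nonzero scalar, this last variation equals ${\rm var}(p_0,p_1,\dots,p_n)$, which closes the chain.

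The hardest part will be purely organizational: reconciling the two natural orderings (the ascending $p_0,\dots,p_n$ versus the minors of ${\rm Tri}(\chi)$ which correspond to the reversed $q$'s) and checking that the proposed $\mu_j=p_{n-j}/p_n$ satisfies the correct initial conditions so that the uniqueness of the recurrence solution lets us identify it with the determinantal minors. No new ingredients beyond Propositions \ref{signtri0}, \ref{minor}, \ref{recur} and Theorems \ref{sjgf}, \ref{sylvestertheorem} are needed.
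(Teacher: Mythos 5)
Your proof is correct. The paper actually states Proposition~\ref{tri2} without supplying a proof at all — it is left as a consequence of the preceding material — and your argument is precisely the verification the authors intended: Part~1 from the cofactor expansion already carried out in Proposition~\ref{signtri0}; Part~2 by reversing indices in the recurrence, matching initial conditions, and invoking uniqueness so that $\mu_j = p_{n-j}/p_n$ coincides with the principal minors, then reading off the continued-fraction identities from Propositions~\ref{recur} and~\ref{minor}; Part~3 from Theorem~\ref{sjgf} applied to ${\rm Tri}(q)$, Sylvester's Duality Theorem~\ref{sylvestertheorem} (with $\chi^*=(q_1,\dots,q_n)$, so $\mu^*_k=p^*_k$), and the elementary fact that ${\rm var}$ is invariant under reversal and under multiplying the whole sequence by a fixed nonzero scalar. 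The one point worth stating explicitly, which you handle correctly, is that $\mu_{n-k}={\rm det}({\rm Tri}(q_n,\dots,q_{k+1}))$ equals ${\rm det}({\rm Tri}(q_{k+1},\dots,q_n))$ — the tridiagonal determinant is symmetric under reversal — which is needed to pass from your $\mu_k/\mu_{k-1}$ expression to the quotient of determinants appearing in the statement; you justify it via Proposition~\ref{minor}, which suffices. No gaps.
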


\subsection{Sturm's theorem and its reformulation by Sylvester}\label{sturm}

\bigskip
\begin{center}
\includegraphics[width=.6 \textwidth]{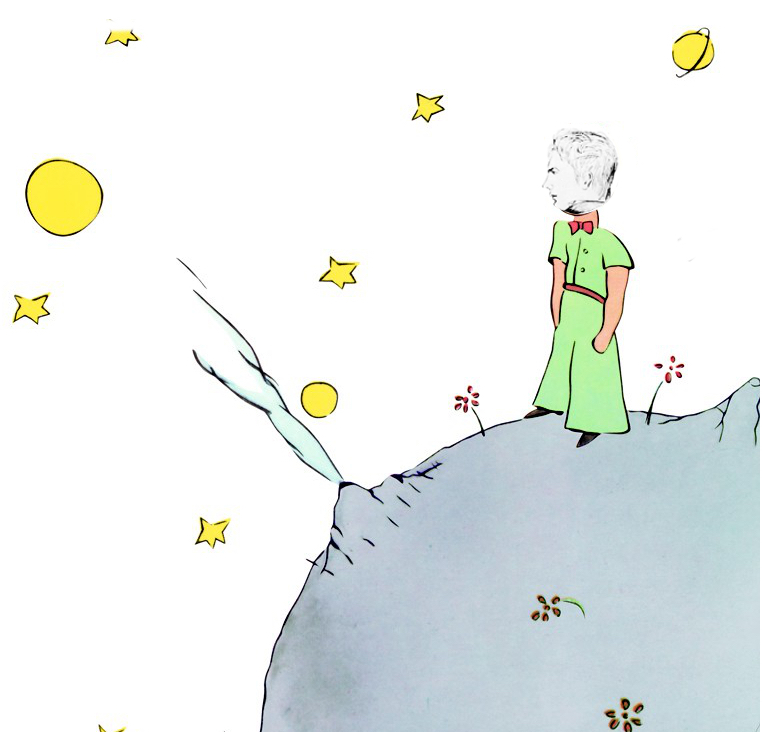}

Jacques Charles Fran\c{c}ois {\bf Sturm}, ForMemRS (1803-1855)

on ``his'' asteroid ``Sturm 31043'', almost 4 million km from the Sun

(Artist's impression)

\end{center}
\bigskip

Many contemporary mathematicians have forgotten the numerical aspect of theoretical algebra.
If a polynomial with real coefficients is given, how can one determine \emph{in practice}  its roots with a given accuracy?
Modern computers give us the feeling that it suffices to type the command ``Solve $P(x)=0$'' to get an immediate answer.
As a matter of fact, treatises on Algebra, at least until the end of the nineteenth century, placed a strong emphasis on this numerical problem.
For instance, the classical ``Cours d'alg\`ebre sup\'erieure'', by J.A.~Serret~\cite{serret}, dated 1877, is one of the first textbooks with a thorough presentation of Galois theory. It contains several chapters on the numerical aspect and splits the topics in two distinct problems.
The first is the \emph{separation of roots}: one has to count the number of roots in a given interval, in order to locate intervals containing a single root.
The second consists of various numerical methods enabling to shrink such an interval to any desirable length.
Concerning the separation problem, there is no doubt that the most impressive theorem is due to Sturm.

\begin{quote}
{\it L'alg\`ebre offrait une lacune regrettable, mais cette lacune se trouva combl\'ee de la mani\`ere la plus heureuse par le fameux th\'eor\`eme de Sturm. Ce grand g\'eom\`etre communiqua \`a l'Acad\'emie des Sciences, en 1829, la d\'emonstration de son th\'eor\`eme qui constitue l'une des plus brillantes d\'ecouvertes dont se soit enrichie l'Analyse math\'ematique~\cite{serret}.}
\end{quote}

\begin{quote}
{\it Le th\'eor\`eme de Sturm a eu le bonheur de devenir imm\'ediatement classique et de prendre dans l'enseignement une place qu'il conservera toujours~\cite[page 291]{hermite1890}.}
\end{quote}

Amazingly, one of the ``most brilliant discoveries in Analysis'' is only familiar today to a minority of mathematicians.

\begin{definition} \label{sturmfunctions}
{\rm Let $P(X) \in \RR[X]$ be a degree $n$ polynomial.
\begin{enumerate}
\item The {\it Sturm functions} of $P(X)$ are the sequences of polynomials
$$P_*(X)=(P_0(X),P_1(X),\dots,P_n(X)),~Q_*(X)=(Q_1(X),Q_2(X),\dots,Q_n(X))$$
occurring as the successive remainders and quotients in the iterations of the Euclidean algorithm modified by a change of sign\footnote{cf. Example \ref{euclid2}. Again, in the conventional Euclidean algorithm (Example \ref{euclid1}) it would have been $P_{k+1}(X)=-P_k(X)Q_k(X)+P_{k-1}(X)$, with $Q_k(X)$ the quotient when dividing $P_{k-1}(X)$ by $P_k(X)$ and $P_{k+1}(X)$ the remainder, i.e. $P_{k-1}(X)/P_k(X)=Q_k(X)+P_{k+1}(X)/P_k(X)$.}
$$\begin{array}{l}
P_0(X)=P(X),~P_1(X)=P'(X),\\
P_{k+1}(X)=P_k(X)Q_k(X)-P_{k-1}(X)~(1 \leqslant k \leqslant n),\\
\text{deg}(P_{k+1}(X)) < \text{deg}(P_k(X)),\\
P_n(X)=\text{constant},~ P_{n+1}(X)=0.
\end{array}$$
\item The polynomial $P(X)$ is {\it regular} if the real roots of the polynomials $P_0(X),P_1(X),\allowbreak
\dots,P_n(X)$ are all distinct and non-zero, with $P_n(X) \in \RR \subset \RR[X]$ constant.
In particular $P(X)$ has no repeated roots,
${\rm deg}(P_k(X))=n-k$, and ${\rm deg}(Q_k(X))=1$.
Call $a \in \RR$ {\it regular} if each of $P_0(a),P_1(a),\dots,\allowbreak P_n(a) \in \RR$ is non-zero, in which case the {\it variation} of $a$ is defined to be the number of sign changes in this sequence
$$\text{var}(a)=\text{var}(P_0(a),P_1(a),\dots,P_n(a)) \in \{0,1,\dots,n\}$$
(cf. Definition~\ref{var}).
\end{enumerate}
}
\end{definition}

\begin{theorem}[Sturm 1829,~\cite{sturm}] \label{sturmtheorem}
For a regular degree $n$ polynomial $P(X) \in \RR[X]$ and regular $a,b \in \RR$ with $a <b $ the number of real roots of $P(X) \in \RR[X]$ contained in $[a,b]$ is equal to
${\rm var}(a)-{\rm var}(b) \in \{0,1,\dots,n\}.$
\end{theorem}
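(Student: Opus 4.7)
The plan is to study how the integer-valued function $\mathrm{var}(x)=\mathrm{var}(P_0(x),P_1(x),\dots,P_n(x))$ varies as $x$ sweeps from $a$ to $b$. Since each $P_k$ is continuous, each $\mathrm{sign}(P_k(x))$ is locally constant away from the zeros of $P_k$, so $\mathrm{var}(x)$ is locally constant on the open intervals between consecutive real zeros of the product $P_0(x)P_1(x)\cdots P_n(x)$. By the regularity hypothesis on $P$, the real zeros of $P_0,P_1,\dots,P_n$ are pairwise distinct, $P_n$ is a nonzero constant (so never vanishes), and $a,b$ are themselves regular, so it suffices to analyze how $\mathrm{var}$ jumps as $x$ crosses a zero $x_0$ of some \emph{single} $P_k$ with $0\leqslant k\leqslant n-1$.

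The ``interior'' case $1\leqslant k\leqslant n-1$ is the one that makes the Sturm sequence work: evaluating the defining recurrence $P_{k+1}(X)=P_k(X)Q_k(X)-P_{k-1}(X)$ at $X=x_0$ yields
\[
P_{k+1}(x_0)=-P_{k-1}(x_0).
\]
Regularity forces both sides to be nonzero, so $P_{k-1}$ and $P_{k+1}$ have \emph{opposite} signs at $x_0$, and by continuity the same is true on a small punctured neighbourhood of $x_0$. For $x$ slightly less than and slightly greater than $x_0$, the triple $(P_{k-1}(x),P_k(x),P_{k+1}(x))$ has the form $(\varepsilon,\pm,-\varepsilon)$ with $\varepsilon=\pm1$ fixed: whatever sign $P_k(x)$ takes, this triple contributes exactly one sign change. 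Hence the total $\mathrm{var}(x)$ is unchanged as $x$ crosses $x_0$.

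The ``boundary'' case $k=0$ is the only one that contributes. Suppose $P(x_0)=0$. Regularity of $P$ forces $P_1(x_0)=P'(x_0)\neq 0$, so near $x_0$ the sign of $P_1$ is constant while $P_0(x)=P(x)$ changes sign as $x$ passes through $x_0$ (a simple zero). A quick check of both subcases $P'(x_0)>0$ and $P'(x_0)<0$ shows that the pair $(P_0(x),P_1(x))$ loses exactly one sign change going from $x_0-\epsilon$ to $x_0+\epsilon$, while all remaining signs $\mathrm{sign}(P_1),\dots,\mathrm{sign}(P_n)$ are unchanged in this neighbourhood. Therefore $\mathrm{var}$ drops by exactly $1$ as $x$ crosses each real root of $P$.

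Summing these local jumps over $(a,b)$, which contains no roots of any $P_k$ on its boundary by regularity of $a,b$, gives
\[
\mathrm{var}(a)-\mathrm{var}(b)~=~\#\{\text{real roots of }P\text{ in }(a,b)\}~=~\#\{\text{real roots of }P\text{ in }[a,b]\},
\]
which is the claim. The only place where care is needed is the interior case: one must observe that the identity $P_{k+1}(x_0)=-P_{k-1}(x_0)$ together with regularity forces $P_{k-1}$ and $P_{k+1}$ to be \emph{simultaneously} nonzero and of \emph{opposite} sign at $x_0$, which is exactly what kills any contribution from intermediate Sturm polynomials.
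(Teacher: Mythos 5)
Your proof is correct and follows essentially the same route as the paper's: both you and the authors observe that $\mathrm{var}(x)$ is locally constant between zeros of the Sturm functions, use the recurrence to show $P_{k+1}(x_0)=-P_{k-1}(x_0)$ at a root $x_0$ of an intermediate $P_k$ (so the local triple always contributes exactly one sign change and $\mathrm{var}$ is unchanged), and then check directly that $\mathrm{var}$ drops by $1$ at each simple root of $P_0=P$. Your version is if anything a touch more explicit about where regularity is invoked (distinctness of the zeros, $P_1(x_0)\neq 0$, $P_n$ a nonzero constant), but the underlying argument is the same.
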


\begin{proof} The proof is clever, but not difficult.
The key point is that
$$P_{k+1}(X)+ P_{k-1}(X)=P_k(X)Q_k(X)$$
so that for every real root $x$ of $P_k(X)$, the signs of
$P_{k+1}(x)\neq 0$ and $P_{k-1}(x)\neq 0$ are different $(1 \leqslant k \leqslant n-1)$.

Consider $\text{var}(x)$ as a function of $x \in [a,b]$.
Strictly speaking, it is only defined on the set $U \subset [a,b]$ of regular $x$ which is the complement of the finite set of real roots of some $P_k(X)$ ($0 \leqslant k \leqslant {n-1}$).
On each connected component of $U$, $\text{var}$ is obviously constant.
We therefore have to describe the jumps of $\text{var}(x)$ when $x$ crosses a real root $x_0$ of some $P_k(X)$, with $0 \leqslant k \leqslant n-1$.
Let us compare $\text{var} (x_0-  \epsilon)$ and $\text{var}(x_0 + \epsilon)$ for $\epsilon>0$ small enough.
When $x$ goes from $x_0-\epsilon$ to $x_0+\epsilon$, all components of
$$(P_0(x),P_1(x),\dots,P_n(x))$$
keep the same sign, except the one corresponding to the polynomial $P_k$ for which $P_k(x_0)=0$.

If $1\leqslant k \leqslant n-1$, the signs of the two adjacent components, $P_{k-1}(x),P_{k+1}(x)$ are different.
Therefore, independently of the sign of $P_k(x\pm \epsilon)$, we have $\text{var} (P_{k-1}(x\pm \epsilon), P_{k}(x\pm \epsilon), P_{k+1}(x\pm \epsilon))= 1$.
In particular $\text{var}(x_0-\epsilon) = \text{var}(x_0+ \epsilon)$ so that $\text{var}(x)$ is constant when $x$ crosses $x_0$.

Therefore there can only be a jump in $\text{var}(x)$ when $x$ crosses a root $x_0$ of $P=P_0$.
There are two possibilities: either $P$ is increasing at the real root $x_0$, in which case $f(x_1)>0$ and $P'(x_1)>0$, or $P$ is decreasing at the root $x_0$, in which case $f(x_1)<0$ and $P'(x_1)<0$.
In both cases $\text{var}(x_0-\epsilon)-\text{var}(x_0+\epsilon)=1$.
\end{proof}

We have stated Sturm's theorem in the generic case of a regular polynomial.
We leave to the reader the formulation and proof of the general case, including possible multiple roots.

Let us work out two simple examples.

\begin{example} 1. For $b \in \RR$ let $P(X)=X+b \in \RR[X]$, so that
$$P_0(X)=X+b, P_1(X)=1, Q_1(X)=X+b.$$
Then $P(X)$ is regular, and any $ x \neq -b \in \RR$ is regular. The variation
$${\rm var}(x)={\rm var}(x+b,1)=\begin{cases} {\rm var}(-,+)=1&\text{if $x<-b$} \\ {\rm var}(+,+)=0&\text{if $x>-b$}\end{cases}$$
decreases by 1 as $x \in \RR$ jumps across the real root $-b$ (from below) of $P_0(X)$.\\
2. For $b,c \in \RR$ with $b^2/4-c \neq 0$ let $P(X)=X^2+bX+c \in \RR[X]$, so that
$$\begin{array}{l}
P_0(X)=X^2+bX+c,~P_1(X)=2X+b,~P_2(X)=b^2/4-c,\\
Q_1(X)=(2X+b)/4,~Q_2(X)=(2X+b)/(b^2/4-c).
\end{array}$$
We have that $P_0(X)$ has either 0 or 2 real roots, is regular, and that any $x \in \RR$ with $x^2+bx+c \neq 0$ and $2x+b \neq 0$ is regular, with
$${\rm var}(x)={\rm var}(x^2+bx+c,2x+b,b^2/4-c)=1.$$
If $b^2/4-c>0$ there are two real roots, $x_1<x_2$ say, the variation
$${\rm var}(x)~=~\begin{cases}
{\rm var}(+,-,+)=2&{\rm if}~x<x_1~,\\[1ex]
{\rm var}(-,-,+)=1&{\rm if}~x_1<x < -b/2~,\\[1ex]
{\rm var}(-,+,+)=1&{\rm if}~-b/2<x < x_2~,\\[1ex]
{\rm var}(+,+,+)=0&{\rm if}~x_2<x
\end{cases}$$
decreases by 1 as $x \in \RR$ jumps across a real root $x$ of $P_0(X)$, and remains constant as it jumps across the root $-b/2$ of $P_1(X)$.\\
If $b^2/4-c<0$ there are no real roots, and the variation
$${\rm var}(x)~=~\begin{cases}
{\rm var}(+,-,-)=1&{\rm if}~x < -b/2~,\\[1ex]
{\rm var}(+,+,-)=1&{\rm if}~x>-b/2
\end{cases}$$
remains constant as it jumps across the root $-b/2$ of $P_1(X)$.
\end{example}

In 1853 Sylvester~\cite{sylvester1853} established a link between continued fractions, signatures of tridiagonal matrices and  Sturm's theorem.
\begin{center}
\includegraphics[width=.8\textwidth]{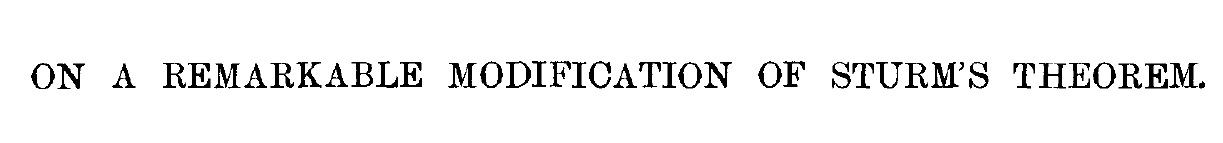}
\end{center}
He noticed that one can use the dictionary between continuous fractions and tridiagonal matrices in the context of polynomials instead of integers, as follows. The Sturm functions  $P_k(X),Q_k(X)$ 
(Definition \ref{sturmfunctions}) are such that
$$\begin{array}{l}
\dfrac{P_k(X)}{P_n(X)}~=~{\rm det}({\rm Tri}(Q_{k+1}(X),Q_{k+2}(X),\dots,Q_n(X)))~,\\[1ex]
\dfrac{P_k(X)}{P_{k+1}(X)}~=~[Q_{k+1}(X),Q_{k+2}(X),\dots,Q_n(X)] \in \RR(X)~(0 \leqslant k \leqslant n-1)
\end{array}
$$
with $P_k(X)$ the numerator in the expression of the $k$th reverse convergent of the improper  continued fraction $[Q_1(X),Q_2(X),\dots,Q_n(X)]\in \RR(X)$ as a quotient of polynomials in $\RR[X]$ in the function field $\RR(X)=(\RR[X]\backslash \{0\})^{-1}\RR[X]$.
By Theorem \ref{sjgf} and Proposition \ref{tri2} the polynomials defined by
$$P^*_k(X)~=~\begin{cases}
1&{\rm if}~k=0\\[1ex]
{\rm det}({\rm Tri}(Q_1(X),Q_2(X),\dots,Q_k(X)))&{\rm if}~1 \leqslant k \leqslant  n
\end{cases}$$
are such that for any regular $a \in \RR$ the signature of the symmetric matrix ${\rm Tri}(Q(a))$ is
$$\tau({\rm Tri}(Q(a)))~=~n-2\,{\rm var}(P^*_0(a),P^*_1(a),\dots,P^*_n(a))~.$$
The polynomials $P^*_1(X),P^*_2(X),\dots,P^*_n(X)$ are the numerators in the expressions of the convergents of $[Q_1(X),Q_2(X),\allowbreak\dots,Q_n(X)]$ as quotients
$$[Q_1(X),Q_2(X),\dots,Q_k(X)]~=~\dfrac{P^*_k(X)}
{{\rm det}({\rm Tri}(Q_2(X),Q_3(X),\dots,Q_k(X)))} \in \RR(X)~.$$

\begin{theorem}[Sylvester 1853] \label{sylvestertheorem2}
For any regular degree $n$ polynomial
$P(X) \in \RR[X]$ with Sturm functions
$(P_0(X),P_1(X),\dots,P_n(X))$, $(Q_1(X),Q_2(X),\dots,\allowbreak Q_n(X))$ define
the symmetric tridiagonal $n \times n$ matrix with entries in $\RR[X]$
$${\rm Tri}(Q)(X)=\begin{pmatrix}
Q_1(X) & 1 & 0 & \dots & 0 & 0 \\
1 & Q_2(X) & 1 & \dots & 0 & 0 \\
0 & 1& Q_3(X) & \dots & 0 & 0 \\
\vdots & \vdots & \vdots & \ddots & \vdots & \vdots \\
0 & 0 & 0 & \dots  & Q_{n-1}(X) & 1\\
0 & 0 & 0 & \dots & 1 & Q_n(X)
\end{pmatrix}.$$
For any regular $a \in \RR$ the signature of the symmetric tridiagonal $n \times n$ matrix ${\rm Tri}(Q(a))$ with entries in $\RR$ is
$$\tau({\rm Tri}(Q)(a))=n-2\,{\rm var}(a) \in \{-n,-n+1,\dots,n\}$$
with ${\rm var}(a)={\rm var}(P_0(a),P_1(a),\dots,P_n(a))$.
\end{theorem}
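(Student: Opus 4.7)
The plan is to apply the Sylvester-Jacobi-Gundelfinger-Frobenius Theorem~\ref{sjgf} to the tridiagonal matrix with its diagonal entries reversed: the Sturm recurrence forces the principal minors of this reversed matrix to be precisely $P_{n-k}(a)/P_n(a)$, and the Duality Theorem~\ref{sylvestertheorem} then transports the resulting signature identity back to ${\rm Tri}(Q)(a)$.

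First I would set $\chi=(Q_n(a),Q_{n-1}(a),\dots,Q_1(a))$ and compute the principal minors $\mu_k({\rm Tri}(\chi))$. By Proposition~\ref{signtri0} they satisfy the tridiagonal recurrence $\mu_k=Q_{n-k+1}(a)\mu_{k-1}-\mu_{k-2}$ with $\mu_0=1$, $\mu_{-1}=0$. A direct induction on $k$ yields $\mu_k=P_{n-k}(a)/P_n$: the equality $\mu_1=Q_n(a)=P_{n-1}(a)/P_n$ is an immediate consequence of the Sturm relation $P_{n+1}=P_nQ_n-P_{n-1}=0$, and the inductive step is nothing but a rewriting of $P_{j+1}(a)=P_j(a)Q_j(a)-P_{j-1}(a)$ with $j=n-k+1$. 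Regularity of $P(X)$ and of the scalar $a$ ensure $P_k(a)\neq 0$ for $0\leqslant k\leqslant n$ and $P_n\neq 0$, so every $\mu_k$ is nonzero and ${\rm Tri}(\chi)$ is regular.

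Applying Theorem~\ref{sjgf} to ${\rm Tri}(\chi)$ then gives
$$\tau({\rm Tri}(\chi))~=~n-2\,{\rm var}\bigl(1,\,P_{n-1}(a)/P_n,\,\dots,\,P_0(a)/P_n\bigr).$$
The variation ${\rm var}$ of a sequence of nonzero reals is unchanged either by multiplying every entry by a common nonzero scalar (all signs are preserved or flipped in tandem) or by reversing the sequence. Multiplying through by $P_n$ and then reversing therefore yields
$${\rm var}\bigl(1,\,P_{n-1}(a)/P_n,\,\dots,\,P_0(a)/P_n\bigr)~=~{\rm var}(P_0(a),P_1(a),\dots,P_n(a))~=~{\rm var}(a).$$

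Finally, the Duality Theorem~\ref{sylvestertheorem} applied to $\chi$ and its reverse $\chi^*=(Q_1(a),Q_2(a),\dots,Q_n(a))$ yields $\tau({\rm Tri}(Q)(a))=\tau({\rm Tri}(\chi^*))=\tau({\rm Tri}(\chi))=n-2\,{\rm var}(a)$, as required. There is no genuine obstacle; the argument is essentially bookkeeping, with the only delicate points being to track the index reversal correctly and to verify that the regularity hypothesis on $P(X)$ and $a$ suffices to make every principal minor of ${\rm Tri}(\chi)$ nonzero, thereby legitimising the use of Theorems~\ref{sjgf} and~\ref{sylvestertheorem}.
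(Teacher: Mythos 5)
Your proof is correct and rests on the same two pillars as the paper's -- the Sylvester-Jacobi-Gundelfinger-Frobenius Theorem~\ref{sjgf} and Sylvester's Duality Theorem~\ref{sylvestertheorem} -- together with the minor computation from Proposition~\ref{tri2}; you simply attack from the other side of the duality. You apply SJGF to ${\rm Tri}(Q_n(a),\dots,Q_1(a))$, whose minors you compute directly as $P_{n-k}(a)/P_n$, normalize the variation by scaling and reversal, and transfer the signature back via the duality; the paper instead uses the ``signaletic equivalence'' ${\rm var}(P_*(a))={\rm var}(P^*_*(a))$ and applies SJGF to ${\rm Tri}(Q(a))$, whose minors are $P^*_k(a)$. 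One small point worth tidying: Theorem~\ref{sylvestertheorem} as stated assumes both $\chi$ and $\chi^*$ regular, and you verified regularity only of $\chi$; since your last step needs only the equality of signatures, it is cleaner (and removes the hypothesis issue) to invoke the orthogonal congruence ${\rm Tri}(\chi^*)=J^*{\rm Tri}(\chi)J$ directly rather than cite the full Duality Theorem.
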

\begin{proof} 
For any regular $a \in \RR$ by the Sylvester-Jacobi-Gundelfinger-Frobenius Theorem \ref{sjgf} and
the `signaletic equivalence' of Sylvester's Duality Theorem \ref{sylvestertheorem} (cf. Proposition \ref{tri2})
$$\begin{array}{ll}
{\rm var}(a)&=~\text{var}(P_0(a),P_1(a),\dots,P_n(a))~(\hbox{as used in Theorem \ref{sturmtheorem}})\\[1ex]
&=~{\rm var}(P^*_0(a),P^*_1(a)\dots,P^*_n(a))\\[1ex]
&=~(n-\tau({\rm Tri}(Q(a)))/2~(\hbox
{\rm since $\mu_k({\rm Tri}(Q(a))=P^*_k(a)$})~.
\end{array}$$
 \end{proof}

 Theorem \ref{sylvestertheorem2} enabled Sylvester to recast Sturm's theorem in terms of signatures of tridiagonal matrices whose entries are polynomials in one real variable.
The main point is to transform Sturm's theorem into a purely algebraic fact, quite independent of the topology of the field of real numbers.
Starting from a polynomial, one constructs a canonical symmetric matrix whose signature contains the relevant information on the number of real roots.

\begin{corollary}[of Sturm's Theorem~\ref{sturmtheorem} and Sylvester's Theorem~\ref{sylvestertheorem2}]\label{sylvester4}
For any regular degree $n$ polynomial $P(X)$ and regular $a,b \in \RR$ with $a<b$ the number of roots in the interval $[a,b]$ is
$${\rm var}(a)-{\rm var}(b)=(\tau({\rm Tri}(Q(b)))-\tau({\rm Tri}(Q(a))))/2.$$
\end{corollary}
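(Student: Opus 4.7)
The plan is to read off the identity by composing the two theorems that have just been proved, essentially as a matter of bookkeeping. Sturm's Theorem \ref{sturmtheorem} already identifies the number of real roots of $P(X)$ in $[a,b]$ with the difference ${\rm var}(a)-{\rm var}(b)$, where ${\rm var}$ is computed on the Sturm sequence $(P_0(x),P_1(x),\dots,P_n(x))$. Sylvester's Theorem \ref{sylvestertheorem2} then converts the quantity ${\rm var}(a)$ into a signature, namely
$${\rm var}(a)~=~\frac{n-\tau({\rm Tri}(Q(a)))}{2},$$
and the same formula with $a$ replaced by $b$.

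Once these two ingredients are in hand, the proof is immediate: I would first invoke Sturm's Theorem to write the root count as ${\rm var}(a)-{\rm var}(b)$, then substitute Sylvester's formula for each of the two terms. The factors of $n$ cancel, the $-1/2$ on the $\tau$-terms reverses the order, and one obtains
$${\rm var}(a)-{\rm var}(b)~=~\frac{\tau({\rm Tri}(Q(b)))-\tau({\rm Tri}(Q(a)))}{2},$$
which is exactly the claim.

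The only thing worth checking carefully is that the hypotheses of both theorems are simultaneously available. Theorem \ref{sturmtheorem} needs $P(X)$ regular and $a,b$ regular in the sense of Definition \ref{sturmfunctions}, that is $P_k(a), P_k(b)\neq 0$ for all $k$. Theorem \ref{sylvestertheorem2} requires the same conditions in order for the minors $\mu_k({\rm Tri}(Q(a)))=P^*_k(a)$ to be non-zero and for the Sylvester-Jacobi-Gundelfinger-Frobenius formula to apply. Since both theorems use the same notion of ``regular'' point, the corollary applies without any additional assumptions beyond those in its statement.

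There is no genuine obstacle here; the content of the corollary is really the translation between Sturm's sign-counting formulation and Sylvester's signature formulation, and that translation is exactly what Theorem \ref{sylvestertheorem2} provides. If anything, the only subtle step in the whole chain is the ``signaletic equivalence'' ${\rm var}(\mu)={\rm var}(\mu^*)$ of the Duality Theorem \ref{sylvestertheorem}, which is what allows one to pass between the $P_k$'s (whose variations Sturm used) and the $P^*_k$'s (the principal minors of ${\rm Tri}(Q(a))$, whose variations compute the signature via Theorem \ref{sjgf}); but that work has already been done upstream.
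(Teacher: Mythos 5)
Your argument is exactly the paper's proof: apply Sturm's Theorem~\ref{sturmtheorem} to express the root count as ${\rm var}(a)-{\rm var}(b)$, then substitute the identity ${\rm var}(x)=(n-\tau({\rm Tri}(Q(x))))/2$ from Theorem~\ref{sylvestertheorem2} at $x=a$ and $x=b$ and cancel the $n$'s. Your extra remarks on the shared ``regular'' hypothesis and on the role of the signaletic equivalence are accurate but not needed beyond what the two cited theorems already guarantee.
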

\begin{proof}
The number of real roots of $P(X) \in \RR[X]$ contained in $[a,b]$ is equal to
$$\begin{array}{ll} \text{var}(a)-\text{var}(b)&=~(n-\tau({\rm Tri}(Q(a))))/2 -(n-\tau({\rm Tri}(Q(b))))/2\\
 &=~(\tau({\rm Tri}(Q(b)))-\tau({\rm Tri}(Q(a))))/2 \in \{0,1,\dots,n\}.\end{array}$$
\end{proof}

This new formulation of Sturm's theorem is indeed more algebraic, in the spirit of Sylvester's approach of algebra.
However, strictly speaking the proof of~\ref{sylvestertheorem2} ultimately relies on Sturm's original proof.
Later, we shall reformulate again Sturm's theorem in a completely algebraic way, valid over any field, and independent of Sturm's proof.
 
The book of Barge and Lannes~\cite{bargelannes} is a far-reaching generalization of these remarks, offering algebraic connections between Sturm sequences, the signatures of tridiagonal matrices and Bott periodicity.

Sylvester wrote magisterially of the relationship between algebra and geometry (which we do not wholly share):

\begin{quote}
{\it Aspiring to these wide generalizations, the analysis of quadratic functions soars to a pitch from whence it may look proudly down on the feeble and vain attempts of geometry proper to rise to its level or to emulate it in its flights.} (1850)
\end{quote}

\bigskip
\begin{center}
\includegraphics[width=.3 \textwidth]{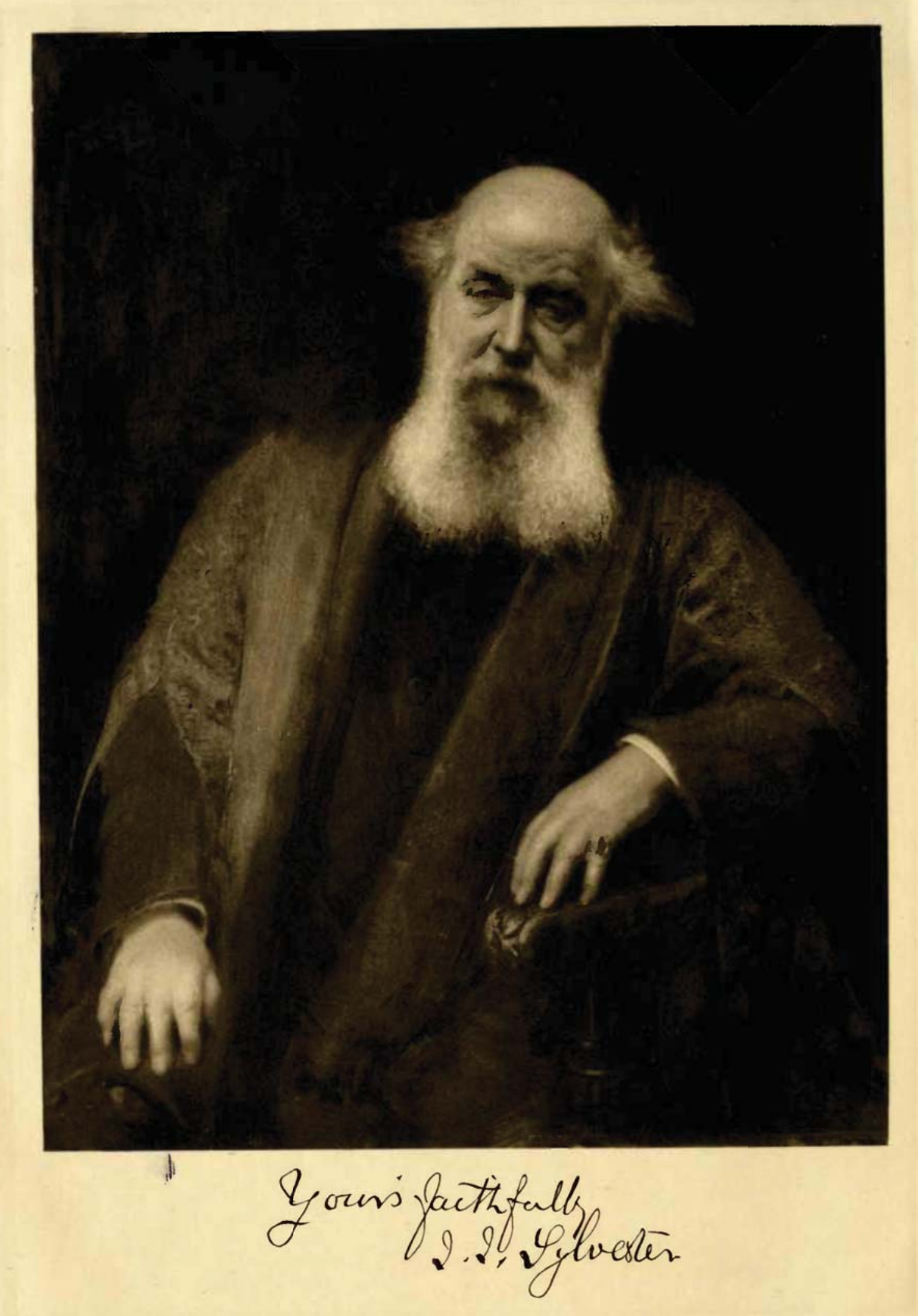}

J.J.Sylvester

Savilian Professor of Geometry, Oxford,

1883-1894
\end{center}
\bigskip

\subsection{Hermite}

\begin{center}
\includegraphics[width=.4 \textwidth]{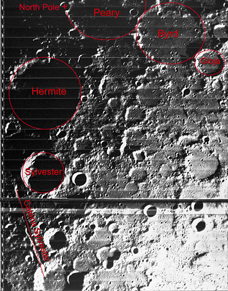}
\hspace*{30pt}
\includegraphics[width=.325 \textwidth]{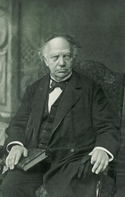}

Charles Hermite (1822 --1901)

\end{center}

\bigskip

Hermite's crater is next to Sylvester's \footnote{et plus grand ! Note d'un des auteurs.}!

There are many other ways of connecting the number of roots of polynomials to the signature of symmetric matrices.
We shall mention only two of them.

A polynomial $P(X) \in \RR[X]$ of degree $n$ with simple roots gives rise to an $n$-dimensional $\RR$-algebra $A_{P(X)}$ of dimension $n$, namely $\RR[X]/P(X)$.
Denote by $\alpha_1,\dots, \alpha_n$ the roots $P(X)$ (real or complex).
Any real $\alpha_i$ gives rise to a linear map 
$$
u\in \RR[X]/P(X) \mapsto u(\alpha_i) \in \RR
$$
and any complex root to a linear map 
$$
u\in \RR[X]/P(X) \mapsto u(\alpha_i) \in \CC.
$$
Since complex roots come in complex conjugate pairs, one gets that the algebra $A_{P(X)}=\RR[X]/P(X)$ is the direct sum of as many copies of $\RR$ as there are real roots and of as many copies of $\CC$ as pairs of complex conjugate pairs of roots.

The multiplication by an element $u$ in $A_{P(X)}$ defines a linear endomorphism $\overline{u}$ of $A_{P(X)}$.
Therefore, $A_{P(X)}$ is canonically equipped with a quadratic form $\phi_{P(X)}$ given by $\phi_{P(X)}(u)= {\rm trace}  (\overline{u}^2)$.
Clearly, any real root gives rise to a $+$ sign in the signature of $\phi_{P(X)}$ and every pair of complex roots gives a $(+,-)$ contribution.
Hence the signature of $\phi_{P(X)}$ is number of real roots.

To compute explicitly the signature of $\phi_{P(X)}$, one can use the natural basis of $A_{P(X)}$ given by $1, X, X^2, ..., X^{n-1}$.
In this basis, the matrix of multiplication by $X$ in $A_{P(X)}$ is the companion matrix $C(P)$ of $P$ and its trace is the sum $\sigma_1$ of the roots of $P$.
In the same way the trace of the multiplication by $X^k$ is the trace of $C(P)^k$, i.e. the sum $\sigma_k$ of the $k$-th powers of the roots of $P$.

Specifically, if $P(X)=X^n-a_{n-1}X^{n-1}-\dots -a_0$, the companion matrix is

$$C(P)=\begin{pmatrix}
0 		& 0    	& 0			& \dots 	& 0 		& a_0 	\\
1		& 0		& 0			& \dots 	& 0		&a_1 	\\
0 		& 1		& 0			& \dots 	& 0		& a_2	\\
\vdots      	& \vdots 	&\vdots 		& \ddots 	& \vdots 	& \vdots 	\\
0		& 0		& 0			& \dots  	& 0		& a_{n-2}	\\
0 		& 0		& 0			& \dots 	&1		& a_{n-1}
\end{pmatrix}
$$
with characteristic polynomial ${\rm det}(XI_n-C(P))=P(X)$.
The matrix of the form on $A_{P(X)}$ in this basis is given by $\phi_{P(X)}(X^i,X^j) =  {\rm trace} (C(P)^{i+j})= \sigma_{i+j}$.

We therefore get the following theorem.

\begin{theorem}[Jacobi-Hermite] \label{hermite}
Let $P(X)$ be a  monic polynomial  of degree $n$ in $\RR[X]$ with distinct roots.
The number of its real roots is equal to the signature of the invertible symmetric $n \times n$
{\rm Jacobi-Hermite matrix}

$$S(P)=\begin{pmatrix}
\sigma_0     	&  \sigma_1    	& \sigma_2 	& \dots 	& \sigma_{n-2} 		& \sigma_{n-1} 	\\
\sigma_1 		& \sigma_2	& \sigma_3 	& \dots 	& \sigma_{n-1} 		& \sigma_n 	\\
\sigma_2 		& \sigma_3 	& \sigma_4 	& \dots 	& \sigma_n 		& \sigma_{n+1} \\
\vdots      		& \vdots 		& \vdots 		& \ddots 	& \vdots 			& \vdots 		\\
\sigma_{n-2} 	& \sigma_{n-1} 	& \sigma_{n}	& \dots  	& \sigma_{2n-4}	& \sigma_{2n-3}\\
\sigma_{n-1} 	& \sigma_n 	& \sigma_{n+1} & \dots 	& \sigma_{2n-3} 	& \sigma_{2n-2}
\end{pmatrix}$$
where $\sigma_k$ denotes the sum of the $k$-th powers of all the roots of $P$. 
\end{theorem}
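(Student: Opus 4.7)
The plan is to carry out rigorously the strategy sketched just before the statement. Set $A = \RR[X]/(P(X))$, an $n$-dimensional $\RR$-algebra since $P$ is monic of degree $n$. For $u \in A$ write $\bar u \colon A \to A$ for multiplication by $u$, and define the symmetric bilinear form
$$B(u,v) = \mathrm{tr}_\RR(\bar u\, \bar v) = \mathrm{tr}_\RR(\overline{uv}),$$
whose associated quadratic form is $\phi_{P(X)}(u) = \mathrm{tr}_\RR(\bar u^2)$. The argument has two parts: (i) compute the signature of $\phi_{P(X)}$ invariantly and show it equals the number $r$ of real roots of $P$; (ii) identify the Gram matrix of $B$ in the monomial basis $1, X, \ldots, X^{n-1}$ with $S(P)$. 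Invoking Sylvester's Law of Inertia (\ref{sylvester}), these together prove the claim.

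For (i), since $P$ has distinct roots, list them as $\alpha_1, \ldots, \alpha_r \in \RR$ and $\beta_1, \bar\beta_1, \ldots, \beta_s, \bar\beta_s \in \CC \setminus \RR$ with $r + 2s = n$. The Chinese Remainder Theorem gives an isomorphism of $\RR$-algebras
$$A \;\cong\; \RR^r \times \CC^s, \qquad u \longmapsto \bigl(u(\alpha_1), \ldots, u(\alpha_r), u(\beta_1), \ldots, u(\beta_s)\bigr).$$
The trace form is additive over products, so $\phi_{P(X)}$ is the orthogonal sum of the trace forms on the factors. On each $\RR$-factor the form is $x \mapsto x^2$, of signature $+1$. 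On each $\CC$-factor, viewed as a $2$-dimensional $\RR$-algebra with basis $\{1, i\}$, multiplication by $z = a + ib$ has $\RR$-matrix $\bigl(\begin{smallmatrix}a & -b \\ b & a\end{smallmatrix}\bigr)$, so $\mathrm{tr}_\RR(\bar z^2) = 2(a^2 - b^2)$: a form of signature $0$. Summing contributions yields $\tau(\phi_{P(X)}) = r$.

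For (ii), one simply computes $B(X^i, X^j) = \mathrm{tr}_\RR(\overline{X^{i+j}})$. In the monomial basis of $A$, multiplication by $X$ is represented by the companion matrix $C(P)$ whose eigenvalues over $\CC$ are exactly the roots of $P$, each occurring with multiplicity one (the roots being distinct). Hence $\mathrm{tr}_\RR(\overline{X^k}) = \mathrm{tr}(C(P)^k) = \sum_i \alpha_i^k + \sum_j(\beta_j^k + \bar\beta_j^k) = \sigma_k$. The Gram matrix of $B$ in this basis therefore has $(i,j)$ entry $\sigma_{i+j}$, i.e.\ it is $S(P)$. Non-degeneracy of each factor form above shows $\phi_{P(X)}$ is non-degenerate, so $S(P)$ is invertible.

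The main obstacle is the $\CC$-factor calculation in step (i): one must see clearly that complex conjugate pairs contribute signature zero (not $\pm 2$, not $+1$) to the trace form, which is the whole reason the signature equals the number of \emph{real} roots rather than the total number of roots. Once this one-line computation is made, the rest is the Chinese Remainder Theorem, the standard identification of the trace of $C(P)^k$ with the Newton power sum $\sigma_k$, and Sylvester's Law of Inertia.
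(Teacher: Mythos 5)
Your proof is correct and follows exactly the route sketched in the paper: decompose $A_{P(X)}=\RR[X]/(P)$ as $\RR^r\times\CC^s$ via CRT, observe the trace form is hyperbolic on each $\CC$-factor and positive on each $\RR$-factor, then identify the Gram matrix in the monomial basis with $S(P)$ via $\mathrm{tr}(C(P)^k)=\sigma_k$. The only difference is one of exposition: where the paper simply asserts that a complex conjugate pair contributes a $(+,-)$ to the signature, you carry out the explicit computation $\mathrm{tr}_\RR(\bar z^2)=2(a^2-b^2)$ — a welcome clarification of the one step the paper labels ``clearly.''
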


[See  section \ref{localize} for the significance of the relationship
$S(P)C(P)=C(P)^*S(P)$ between $S(P)$ and $C(P)$.]

As a very simple example let us compute the Jacobi-Hermite matrix for a degree two polynomial $P(X)=X^2+bX+c$.
The companion matrix is
$$C(P)=\begin{pmatrix}
 0 & -c \\
1 &  -b
\end{pmatrix}
$$
so that  ${\rm trace}(C(P)^0)= 2$, ${\rm trace}(C(P)) = -b$ and ${\rm trace}(C(P)^2) = b^2-2c$.
The corresponding Jacobi-Hermite matrix is therefore
$$
S(P)=\begin{pmatrix}
2 & -b  \\
-b &  b^2-2c
\end{pmatrix}
$$
whose signature is $0$ if $b^2-4c<0$ and $2$ if $b^2-4c>0$, confirming high school algebra!

Here is another way of stating the same theorem, closer to Hermite's formulation. 
We still denote by $\alpha_1,\dots, \alpha_n$ the (real or complex) roots of $P(X)$ and we consider the following quadratic form on $\RR^n$:
$$
q(x_1, \dots, x_n)= \sum_{k=1}^n (x_1+ \alpha_k x_2+ \dots + \alpha_k^{n-1} x_n)^2.
$$
Since complex roots come in complex conjugate pairs, this is indeed a real quadratic form.
For the same reason as before, its signature is equal to the number of real roots.
Expanding the sum, one finds 
$$
q(x_1, \dots, x_n)= \sum_{i,j=1}^n \sigma_{i+j-2} x_ix_j
$$
This is Theorem~\ref{sjgf} expressed in more down to earth terminology.

Note that the $n$ linear forms $l_k$ ($1\leqslant k \leqslant n$)
$$
(x_1, \dots, x_n) \in \CC^n \mapsto x_1+ \alpha_k x_2+ \dots + \alpha_k^{n-1} x_n\in \CC
$$ 
are linearly independent since the determinant of the matrix $\alpha_k^{i-1}$ ($1\leqslant i,k \leqslant n$) is the Vandermonde determinant $\prod_{k_1<k_2} (\alpha_{k_1}-\alpha_{k_2})$ which is not zero since we assume that all roots are distinct. 
This means that the matrix $S$ is invertible.

One can easily modify this idea in order to compute the number of real roots which are less than some real number $t$.
Consider the (real) quadratic form 
$$
q_t(x_1, \dots, x_n)= \sum_{k=1}^n (t-\alpha_k)(x_1+ \alpha_k x_2+ \dots + \alpha_k^{n-1} x_n)^2.
$$
Obviously its signature is the number of real roots $<t$ minus the number of real roots $>t$. 
Expanding the sum, one finds 
$$
q_t(x_1, \dots, x_n)= \sum_{i,j=1}^n (t \sigma_{i+j-2} - \sigma_{i+j-1}) x_ix_j.
$$
In other words, the symmetric matrix
$$S(P)C(P)=\begin{pmatrix}
\sigma_1     	& \sigma_2    	& \sigma_3 	& \dots 	& \sigma_{n-1} 		& \sigma_{n} 	\\
\sigma_2 		& \sigma_3	& \sigma_4 	& \dots 	& \sigma_{n} 		& \sigma_{n+1}	\\
\sigma_3 		& \sigma_4 	& \sigma_5 	& \dots 	& \sigma_{n+1} 	& \sigma_{n+2} \\
\vdots      		& \vdots 		& \vdots 		& \ddots 	& \vdots 			& \vdots 		\\
\sigma_{n-1} 	& \sigma_{n} 	& \sigma_{n+1}	& \dots  	& \sigma_{2n-3}	& \sigma_{2n-2}\\
\sigma_{n} 	& \sigma_{n+1} & \sigma_{n+2} & \dots 	& \sigma_{2n-2} 		& \sigma_{2n-1}
\end{pmatrix}$$
has the following property
\begin{theorem}[Jacobi-Hermite] \label{hermite2}
Let $P(X)$ be a  monic polynomial of degree $n$ in $\RR[X]$ with distinct roots.
Let $p_+(t), p_-(t)$ be the number of real roots which are $>t$ and $<t$.
Then, if $t\in \RR$ is not a real root, the signature of the symmetric $n \times n$ matrix $S(P)(tI_n-C(P))$ is $p_-(t)-p_+(t)$.
\end{theorem}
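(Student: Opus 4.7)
The plan is to exhibit $S(P)(tI_n-C(P))$ as the matrix of an explicit quadratic form which can be diagonalized using the roots of $P$, exactly as in the proof of Theorem~\ref{hermite}. Let $\alpha_1,\dots,\alpha_n$ be the distinct complex roots of $P$, and consider
$$q_t(x) = \sum_{k=1}^n (t-\alpha_k)\,l_k(x)^2,\qquad l_k(x)=x_1+\alpha_k x_2+\cdots+\alpha_k^{n-1}x_n.$$
The first step is to verify that in the standard basis of $\RR^n$ the matrix of $q_t$ is indeed $S(P)(tI_n-C(P))$; expanding and collecting coefficients gives
$$q_t(x)=\sum_{i,j=1}^n \bigl(t\,\sigma_{i+j-2}-\sigma_{i+j-1}\bigr)\,x_ix_j,$$
which matches the matrix displayed just above the theorem. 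Since non-real roots occur in complex conjugate pairs, the coefficients are real and $q_t$ is a genuine real quadratic form.

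Next I would compute the signature of $q_t$ by passing to a basis in which it is (block-)diagonal. By Vandermonde, the forms $l_1,\ldots,l_n$ are linearly independent over $\CC$, so in the dual basis $q_t$ is formally diagonal with complex entries $t-\alpha_k$. To extract the real signature I regroup according to real versus non-real roots. A real root $\alpha_k$ contributes a $1\times 1$ block $(t-\alpha_k)\,l_k^2$ with signature ${\rm sign}(t-\alpha_k)$, equal to $+1$ if $\alpha_k<t$ and $-1$ if $\alpha_k>t$. For a conjugate pair $\alpha_k=a+ib$, $\bar\alpha_k=a-ib$ ($b\neq 0$), writing $l_k=u+iv$ with $u,v$ real linear forms, a direct expansion yields
$$(t-\alpha_k)\,l_k^2+(t-\bar\alpha_k)\,\bar l_k^2 ~=~ 2(t-a)(u^2-v^2)+4b\,uv,$$
whose symmetric $2\times 2$ matrix in the basis $(u,v)$ has determinant $-4\bigl((t-a)^2+b^2\bigr)<0$, hence signature $0$.

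Concatenating the real $l_k$'s with the real and imaginary parts $(u,v)$ coming from each conjugate pair yields a basis of the dual of $\RR^n$ (independence over $\RR$ being inherited from independence of the $l_k$ over $\CC$), and summing the block signatures gives $\tau\bigl(S(P)(tI_n-C(P))\bigr)=p_-(t)-p_+(t)$. The main obstacle is precisely this bookkeeping for the complex roots: one must check that each conjugate pair genuinely contributes a two-dimensional \emph{real} subspace on which the form has signature zero, and that the resulting $n$ real linear forms really form an $\RR$-basis rather than accidentally collapsing. Once that is in place, everything else reduces to the elementary observation that, over $\RR$, the signature is just the sum of the signs of $t-\alpha_k$ over the real roots $\alpha_k$.
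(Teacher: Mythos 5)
Your proposal is correct and follows the paper's own approach essentially verbatim: the paper introduces the same quadratic form $q_t(x)=\sum_k(t-\alpha_k)l_k(x)^2$ in the paragraph immediately preceding the theorem, notes the expansion $q_t=\sum_{i,j}(t\sigma_{i+j-2}-\sigma_{i+j-1})x_ix_j$, and dismisses the signature computation with the single word \emph{``Obviously''}. What you have added is the careful bookkeeping that the paper omits, namely the explicit $2\times2$ real block $2(t-a)(u^2-v^2)+4b\,uv$ for each conjugate pair, its determinant $-4\bigl((t-a)^2+b^2\bigr)<0$ showing it is hyperbolic of signature zero, and the observation (already used for Theorem~\ref{hermite} via Vandermonde) that the resulting $n$ real linear forms are an $\RR$-basis; all of this is correct.
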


Note that this theorem can be genuinely called ``algebraic'' in the sense that it provides an algebraic count of real roots without referring to Sturm.
Indeed, we remind that $\sigma_i$ is the trace of $C(P)^i$ and can therefore be computed easily from the coefficients of $P$.

The story of the previous theorem is rather tortuous.

In a first step, Sylvester tried to understand each polynomial in the Sturm sequence $P_i$ of a given polynomial $P=P_0$, directly as a function of the roots of $P$.
His paper, dated 1839 and published in 1841, contains the following nice formulas but no proofs~\cite{sylvester1841}.
If $\alpha_1, \dots, \alpha_n$ are the (real or complex) roots of $P$ then
$$
\begin{array}{l}
\dfrac{P_1}{P_0} = \sum\limits_{k=1}^n \dfrac{1}{X-\alpha_k}\\[2ex]
\dfrac{P_i}{P_0} = c_i \sum\limits_{k_1< \dots < k_i} \dfrac{\prod_{a<b ; a,b \in \{k_1,\dots, k_i\}}(\alpha_a-\alpha_b)^2}{(X-\alpha_{k_1})\dots (X-\alpha_{k_i})}
\end{array}
$$
where $c_i$ is some {\it positive constant}.
For instance, the last polynomial is equal to 
$$P_n= c_n \prod_{1\leqslant j < k \leqslant n}(\alpha_j-\alpha_k)^2.$$
This is not a surprise since $P_n$ is a degree 0 polynomial, the result of the modified Euclidean algorithm computing the g.c.d of $P$ and $P'$ (Definition \ref{sturmfunctions}), in other words, the {\it discriminant} of $P$ multiplied by $c_n$. En passant, note that $P_n>0$ if and only if the number of pairs of complex conjugate non real roots is even.\label{odd}

Proofs were provided by Sturm himself in 1842~\cite{sturm1842} (and  expanded by Sylvester in~\cite{sylvester1853b} in a long memoir in 1853).
In 1847, Borchardt reformulated this Sylvester-Sturm theorem, showing the link with the $\sigma_k$~\cite{borchardt}.
Later, in 1857 Borchardt explained that soon after publishing his paper, he received a letter from Jacobi showing that the idea of the matrices $\sigma_k$ and their use in the count of real roots was known to him.
There is however no publication by Jacobi on this topic.

As a matter of fact, the goal of the 1847 paper by Borchardt was to provide  a new proof of the spectral theorem that we now describe.

The $n^2$-dimensional $\RR$-algebra $M_n(\RR)$ of $n\times n$ matrices is equipped with a canonical quadratic form: $\Phi(M) = {\rm trace} (M^2)$. 
Notice that the trace of the square of a symmetric matrix is the sum of squares of all coefficients, and is therefore positive definite.
The restriction of $\Phi(M)$ to any subspace of symmetric matrices is therefore also a positive definite quadratic form.

Let $S$ be a symmetric matrix in $\RR$ and let $P(X)={\rm det}(X I_n-S) \in \RR[X]$ be its characteristic polynomial.
Assume that all roots of $P(X)$ are distinct.
The Cayley-Hamilton theorem implies that the algebra $A_{P(X)}=\RR[X]/P(X)$ embeds as an $n$-dimensional subspace in $M_n(\RR)$, sending $X$ to $S$. 
As $S$ is symmetric, the quadratic form $\phi_{P(X)}$ is the restriction of $\Phi$ to this subspace,
so we conclude that $\phi_{P(X)}$ is positive definite, and Theorem~\ref{hermite} gives that all roots of $P(X)$ are real.
This is the ninth proof of the spectral theorem~\cite{borchardt}\label{eighth}.

By Theorem~\ref{sjgf} the symmetric matrix $S(P)$ has signature $n$ if and only if $\mu_k(S(P)) >0$ for $k=1,2,\dots,n$.
In view of the identities $\sigma_k = \text{trace} (C(P)^k)$ this is an explicit set of $n$ inequalities in the coefficients of the polynomial $P(X)$ which are satisfied if and only if all the roots of $P(X)$ are real. 
In particular,  the property for a polynomial $P(X)$ to have all its roots real can be expressed by the positivity of $n$ polynomials in the coefficients of $P(X)$.
Borchardt's proof shows much more: that these polynomials are actually sums of squares of polynomials.
According to Serret, this proof is ``la plus remarquable'' of the spectral theorem~\cite{serret}.

The modern proof that we provided is certainly not original and can be found for example in Gantmacher~\cite{gantmacher}.

Let us look at the simplest example where $n=2$.
The characteristic polynomial of a non-zero symmetric $2 \times 2$ matrix
$$
S=
\begin{pmatrix}
a & b  \\
b &  c
\end{pmatrix}
$$
is $P(X)={\rm det}(XI_2-S)=X^2-(a+c)X+ac-b^2$, with companion and Hermite matrices
$$C(P)=\begin{pmatrix}
0 & b^2-ac  \\
1 &  a+c
\end{pmatrix}~;~S(P)=\begin{pmatrix}
2 & a+c  \\
a+c &  a^2+2b^2+c^2
\end{pmatrix}
$$
such that ${\rm det}(S(P))=(a-c)^2+4b^2>0$.
The paper by Borchardt gives the computation for $n=3$.

Much more on these kinds of results can be found in~\cite{gantmacher,kreinneimark,fuhrmann}.

One finds in~\cite[page 61]{dieudonne} a theorem due to Sylvester and Hermite, also associating in a constructive way a symmetric matrix to a polynomial such that the number of real roots is given by the signature of the matrix. 

The quotient $(P(X)P'(Y)-P(Y)P'(X))/(X-Y)$ is obviously a symmetric polynomial of the form $\sum a_{i,j}X^iY^j$ and one associates the matrix $a_{i,j}$ to the polynomial $P$.
This is called the {\it Bezoutian} symmetric matrix associated to $P$.
See~\cite{helmkefuhrmann,wimmer} for the history of this concept, which applies to any pair of polynomials $P_0(X),P_1(X) \in \RR[X]$, not just $(P_0(X),P_1(X))=(P(X),P'(X))$.
It turns out that the signature of $a_{i,j}$ is also equal to the number of real roots of $P$.
The connection with Jacobi-Hermite theorem~\ref{hermite} is made clear for instance in~\cite{quarez}.
It turns out that the Bezoutian matrix is congruent to the Jacobi-Hermite matrix.
Recall that the Jacobi-Hermite matrix is the matrix of the bilinear form $\phi_{P(X)}$ on $A_{P(X)} =  \RR[X]/P(X)$ in the basis $\{1,X, \dots, X^{n-1}\}$.
It is not difficult to write down the dual basis of $A_{P(X)}$ with respect to this bilinear form. 
One finds the so-called Horner polynomials and the matrix of $\phi_{P(X)}$ is precisely the Bezoutian.
Modern developments can be found in~\cite{helmke,cazanave}.

Finally, in~\cite{hermitecours}, Hermite computes the signs of the principal minors of the Bezoutian matrix associated to a polynomial $P(X)$.
Using Sylvester's formulas for the Sturm sequence $P_i$ of $P$, he finds that these minors coincide with $P_i$. 
In other words, the Jacobi-Hermite-Sylvester algebraic Bezoutian approach gives a new proof of Sturm's theorem, this time fully algebraic.
La boucle est boucl\'ee.

We do not provide more details since this is not related to our general theme: the connection between signature and topology.
See Basu, Pollack and Roy~\cite[4.3]{basupollackroy} for the applications of quadratic forms and matrices to the counting of polynomial roots in real algebraic geometry.

For historical comments on the ``once famous'' Sturm's theorem, we strongly recommend~\cite{sinaceur, sinaceur2, sinaceur3}.

\subsection{Witt groups of fields}

\bigskip
\begin{center}
\includegraphics[width=.6 \textwidth]{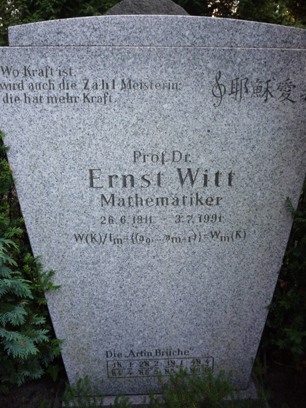}

The grave of Ernst Witt in the Nienstedtener Friedhof, Hamburg

``Wo Kraft ist, wird auch die Zahl Meisterin:
die hat mehr Kraft''

(F. Nietzsche)
\end{center}
\bigskip

In the previous sections, we studied ``classical'' quadratic forms, defined on finite dimensional {\it real} vector spaces.
By considering Sylvester's application of forms to Sturm's theorem we were naturally led to  discuss symmetric matrices whose entries are polynomials in $\RR[X]$ or more generally in $\RR(X)$, the field or rational functions in one variable $X$, with real coefficients.
It is therefore very natural to generalize as much as possible our considerations to symmetric matrices with entries in a general field $K$, or more generally in a commutative, or even non commutative, ring.

In this subsection, we will begin with the most elementary case of a field $K$ of characteristic different from $2$, such as the special case of $K=\RR(X)$.
We will propose first an elementary down to earth approach and only mention more general results, referring to standard textbooks for a deeper description.
The book of Lam~\cite{lam} is a good general reference.

\bigskip

A {\it symmetric form $(V,\phi)$  over $K$} is a finite dimensional $K$-vector space $V$ with a symmetric bilinear pairing $\phi:V \times V \to K$.
A symmetric form is essentially the same as $V$ together with a quadratic function $Q:V \to K$ such that
\begin{itemize}
\item   $Q(av)=a^2Q(v) $ for $a \in K$, $v \in V$,
\item the function $\phi:(v_1,v_2) \in V \times V \mapsto Q(v_1+v_2)-Q(v_1)-Q(v_2) \in K$ is bilinear.
\end{itemize}
Given $(V,\phi)$, define such a $Q$ by
$$Q(v) =\phi(v,v)/2\in K~(v\in V).$$
A basis of $V$ identifies a symmetric form $(V,\phi)$ with a symmetric matrix $S$ with entries in $K$, such as we studied previously for $K=\RR$.
There is a natural notion of {\it isomorphism} between symmetric forms and the goal of the theory is to obtain invariants of the  isomorphism classes of symmetric forms over $K$.
For $K=\RR$ isomorphism is the same as the linear congruence we considered in section~\ref{linear}.

The {\it orthogonal subspace} of a subspace $U \subseteq V$ in a symmetric form $(V,\phi)$ is the subspace
$$U^{\perp}=\{v \in V\,\vert\, \phi(v,w)=0~\text{for all}~w\in U\}.$$
The {\it radical} of $(V,\phi)$ is  $V^{\perp}\subseteq V$.
A symmetric form is {\it nonsingular} if its radical is trivial.
For any $(V,\phi)$  there is defined a nonsingular symmetric form $(V / V^{\perp},[\phi])$.
A subspace $U \subseteq V$ is called {\it isotropic} for $(V,\phi)$ if $\phi(v_1,v_2)=0$ for all $v_1,v_2 \in U$.
A symmetric form is {\it anisotropic} if it contains no non trivial isotropic subspaces.

A nonsingular symmetric form $(V,\phi)$ is called {\it hyperbolic} if it is the direct sum of two isotropic subspaces $U$ and $U'$, in which case $\phi$ restricts to an isomorphism
$$v_1 \in U' \mapsto (v_2 \mapsto \phi(v_1,v_2)) \in U^*={\rm Hom}_K(U,K).$$
The {\it standard hyperbolic form} is defined for any finite-dimensional $K$-vector space $U$ by
$$H(U)=(U \oplus U^*,\phi)$$
with
$$\phi((v_1,f_1),(v_2,f_2))=f_1(v_2)+f_2(v_1) \in K.$$
For any basis on $U$ and the dual basis on $U^*$ the matrix of $\phi$ is
$\begin{pmatrix} 0 & I_n \\ I_n & 0 \end{pmatrix}$ with $n={\rm dim}_KU$.
A nonsingular symmetric form over $K$ is hyperbolic if and only if it is isomorphic to $H(U)$ for some $U$.

The {\it direct sum} of symmetric forms $(V_1,\phi_1)$, $(V_2,\phi_2)$ is the symmetric form $(V_1 \oplus V_2,\phi_1 \oplus \phi_2)$, with
$$(\phi_1\oplus \phi_2)((v_1,v_2),(w_1,w_2))=\phi_1(v_1,w_1)+\phi_2(v_2,w_2) \in K.$$

\begin{theorem}[Witt~\cite{witt}]  \label{witt}
A symmetric form $(V,\phi)$ over any field $K$ (of characteristic $\neq 2$) is isomorphic to a sum  $(V_a,\phi_a)\oplus (V_h,\phi_h) \oplus (V_0,\phi_0)$ where:
\begin{itemize}
\item $(V_a,\phi_a)$ is anisotropic,
\item $(V_h,\phi_h)$ is hyperbolic,
\item $(V_0,\phi_0)$ is trivial, i.e $\phi_0=0$.
\end{itemize}
Moreover, this decomposition is unique, up to isomorphism.
\end{theorem}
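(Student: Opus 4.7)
The plan is to prove existence and uniqueness separately, after a preliminary reduction to the nonsingular case. First I would split off the radical: set $V_0 = V^{\perp}$ and pick any vector space complement $W \subseteq V$ of $V_0$. Because $\phi$ vanishes on $V_0 \times V$, this is actually an orthogonal decomposition $V = V_0 \oplus W$, and the restriction $\phi|_W$ is nonsingular (it corresponds under $W \xrightarrow{\sim} V/V^{\perp}$ to the induced form $[\phi]$, whose nonsingularity is immediate from the definition of $V^{\perp}$). The trivial summand is thereby forced to be $(V^{\perp}, 0)$, and only the nonsingular piece remains to be analyzed.

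For existence in the nonsingular case, I would induct on $\dim W$. If $(W, \phi|_W)$ is anisotropic there is nothing to do. Otherwise choose a nonzero isotropic $v \in W$; nonsingularity produces $u \in W$ with $\phi(v,u) = 1$, and replacing $u$ by $u - \tfrac{1}{2}\phi(u,u)\,v$ — which is where the hypothesis $\mathrm{char}\,K \neq 2$ enters — makes $u$ isotropic too. Then $H := \mathrm{span}(v,u)$ is a hyperbolic plane on which $\phi$ restricts to the standard hyperbolic form $H(K)$; since $\phi|_H$ is nonsingular, $W = H \oplus H^{\perp}$ with $\phi|_{H^{\perp}}$ nonsingular of dimension $\dim W - 2$. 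Applying the inductive hypothesis to $H^{\perp}$ and aggregating all the hyperbolic planes produced along the way gives the decomposition $W = V_a \oplus V_h$.

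The main obstacle is uniqueness, which I expect to reduce to Witt's cancellation theorem: if $(A,\alpha) \oplus (C,\gamma) \cong (B,\beta) \oplus (C,\gamma)$ for nonsingular forms, then $(A,\alpha) \cong (B,\beta)$. Granting cancellation, suppose the nonsingular part of $V$ admits two decompositions $V_a \oplus V_h \cong V_a' \oplus V_h'$ with $V_a, V_a'$ anisotropic and $V_h, V_h'$ hyperbolic. Any two hyperbolic forms of the same dimension are isomorphic to some $H(K^n)$, so after assuming $\dim V_h \leq \dim V_h'$ we may write $V_h' \cong V_h \oplus H_0$; cancellation of the common $V_h$ summand yields $V_a \cong V_a' \oplus H_0$, and since $V_a$ is anisotropic while any nonzero hyperbolic form contains isotropic vectors, $H_0 = 0$. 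Hence $V_h \cong V_h'$ and $V_a \cong V_a'$, and combined with the fact that $\dim V_0$ is the intrinsic invariant $\dim V^{\perp}$, uniqueness follows. The cancellation theorem itself I would prove by induction on $\dim C$, reducing to $\dim C = 1$: writing $C = Kv$ with $v$ anisotropic, an isomorphism $A \oplus Kv \to B \oplus Kv$ sends $v$ to some $w$ with $\phi(w,w) = \phi(v,v) \neq 0$; since $\phi(v-w,v-w) + \phi(v+w,v+w) = 4\phi(v,v) \neq 0$, at least one of $v \pm w$ is anisotropic, and the reflection through the hyperplane perpendicular to it carries $v$ to $\pm w$. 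Composing this reflection with the given isomorphism produces an isomorphism fixing $v$, whose restriction to the orthogonal complement gives $A \cong B$.
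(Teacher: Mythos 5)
The paper cites \cite{witt} for this theorem and provides no proof of its own, so there is no internal argument to compare against; your proposal stands alone. It is correct and is the classical argument: strip off the radical $V^{\perp}$, extract hyperbolic planes inductively to prove existence in the nonsingular case, and reduce uniqueness to Witt cancellation, which you establish by reflections in the Cartan--Dieudonn\'e style. Two small details are left implicit but are easily supplied. The reduction of cancellation from a general nonsingular $C$ to $\dim C = 1$ relies on the fact that in characteristic $\neq 2$ every nonsingular symmetric form is diagonalizable (Proposition~\ref{diag}), so $C$ splits off a one-dimensional anisotropic orthogonal summand at each step. And your reflection carries $v$ to $\pm w$ rather than to $w$ in general; when the sign comes out $-$, composing the reflection with $-\mathrm{id}$ on $B \oplus Kv$ (also an isometry) produces the isometry fixing $v$ that you then restrict to the orthogonal complement. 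Neither point affects the soundness of the argument.
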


Since hyperbolic and trivial forms are classified by dimension, only the description of the anisotropic case is relevant.

 \begin{definition}
 A {\it diagonalization} of a symmetric form $(V,\phi)$ over a field $K$ is an isomorphism $(V,\phi) \cong \bigoplus\limits^n_{k=1}(K,a_k)$  with $a_k \in K$.
 \end{definition}

Recall that we have already noticed the following fundamental fact~\ref{linear}.

\begin{proposition} \label{diag}
Every symmetric form $(V,\phi)$ over a field  $K$  of characteristic $\neq 2$ can be diagonalized.\\
1. For an anisotropic $(V,\phi)$ the symmetric matrix $S$ with respect to any basis is regular, and the diagonalization is of the type
$$(V,\phi) \cong \bigoplus^n_{k=1}(K,\mu_k(S)/\mu_{k-1}(S)).$$
2. For a hyperbolic $(V,\phi)$ the dimension $n={\rm dim}_KV$ is even, and there is a diagonalization of the type
$$(V,\phi)\cong \bigoplus\limits^{n/2}_{k=1} (K\oplus K,1 \oplus -1).$$
\end{proposition}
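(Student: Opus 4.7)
The plan is to deduce the overall diagonalization statement from Theorem \ref{century}, which already establishes that any symmetric matrix over a field of characteristic $\neq 2$ is linearly congruent to a diagonal matrix; the proof given there (completing the square inductively, using $Q(v)=S(v,v)/2$) works verbatim over $K$. So once a basis of $V$ is chosen and $\phi$ is represented by a symmetric matrix $S$, diagonalization is immediate. The substance of the proposition lies in the refined statements in parts 1 and 2.

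For part 1, I would fix any basis $e_1,\dots,e_n$ of $V$ and let $S$ be the corresponding matrix. The key observation is that the principal $k\times k$ minor $\mu_k(S)$ is the determinant of the restricted form $\phi\vert U_k$, where $U_k=\mathrm{span}(e_1,\dots,e_k)$. If $\mu_k(S)=0$ for some $k$, then $\phi\vert U_k$ has a nonzero radical, i.e.\ there exists $v\in U_k\setminus\{0\}$ with $\phi(v,w)=0$ for all $w\in U_k$, and in particular $\phi(v,v)=0$; but then $K\cdot v$ is a nontrivial isotropic subspace, contradicting anisotropy. Hence $S$ is regular in the sense of Definition \ref{regular}. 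At this point the plumbing identity used in the proof of Theorem \ref{sjgf},
\[
\begin{pmatrix} S_{k-1} & v^* \\ v & \chi_k \end{pmatrix}
=\begin{pmatrix} 1 & 0 \\ vS_{k-1}^{-1} & 1\end{pmatrix}
\begin{pmatrix} S_{k-1} & 0 \\ 0 & \mu_k/\mu_{k-1}\end{pmatrix}
\begin{pmatrix} 1 & S_{k-1}^{-1}v^* \\ 0 & 1\end{pmatrix},
\]
applied iteratively (cf.\ also the explicit matrix $A$ constructed in the proof of Proposition \ref{signtri0}) yields the required diagonalization $(V,\phi)\cong\bigoplus_{k=1}^n(K,\mu_k(S)/\mu_{k-1}(S))$.

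For part 2, I would exploit the definition of hyperbolic: $V=U\oplus U'$ with $U,U'$ isotropic and $\phi$ inducing an isomorphism $U'\xrightarrow{\cong}U^*$. Pick any basis $e_1,\dots,e_m$ of $U$ (with $m=n/2$) and let $f_1,\dots,f_m\in U'$ be the dual basis under this isomorphism, so that $\phi(e_i,f_j)=\delta_{ij}$ and $\phi(e_i,e_j)=\phi(f_i,f_j)=0$. This splits $(V,\phi)$ as an orthogonal sum of $m$ two-dimensional hyperbolic planes $\mathrm{span}(e_i,f_i)$. On each such plane the explicit change of basis $g_i=e_i+f_i/2$, $h_i=-e_i+f_i/2$ (which uses only $\frac12\in K$) gives $\phi(g_i,g_i)=1$, $\phi(h_i,h_i)=-1$, $\phi(g_i,h_i)=0$, producing the claimed diagonalization $\bigoplus_{k=1}^{n/2}(K\oplus K,1\oplus -1)$.

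There is no serious obstacle here; the only subtle point is the implication \emph{anisotropic $\Rightarrow$ all principal minors nonzero}, which depends on the tautological identification of $\mu_k(S)$ with the Gram determinant of $\phi\vert U_k$ and on the fact that in characteristic $\neq 2$ a singular symmetric form always has a nonzero isotropic vector (namely any nonzero element of its radical). Once this is in hand, both parts reduce to the explicit matrix manipulations already assembled earlier in the section.
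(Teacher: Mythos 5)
Your proof is correct and follows essentially the same route as the paper's: the anisotropic case is handled by the plumbing identity underlying Theorem~\ref{sjgf}, and the hyperbolic case by an explicit $2\times 2$ change of basis using $\tfrac12\in K$. The one point you handle more carefully than the paper is the implication ``anisotropic $\Rightarrow$ regular'' (via the radical of $\phi\vert_{U_k}$ producing an isotropic vector), which the paper simply asserts; your justification is the right one.
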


\begin{proof} In view of Theorem~\ref{witt} we can consider separately the
anisotropic and hyperbolic cases.\\
1. For an anisotropic $(V,\phi)$ the matrix $S$ is regular, i.e. the principal minors $\mu_k(S)\in K$  ($1 \leqslant k \leqslant n$) are non-zero.
Proceeding as in the proof of Theorem~\ref{sjgf} we have that $(V,\phi)$ is isomorphic to
$\oplus^n_{k=1}(K,\mu_k(S)/\mu_{k-1}(S))$.\\
2. For the hyperbolic case it is enough to consider the 2-dimensional case
$$(V,\phi)=~(K \oplus K,\begin{pmatrix} 0 &  \lambda \\ \lambda & a \end{pmatrix})$$
for which there is defined an isomorphism
$$\begin{pmatrix} \lambda & (a+1)/2 \\ \lambda  & (a-1)/2 \end{pmatrix} : (V,\phi) \to (K\oplus K,1\oplus -1).$$
\end{proof}

The set ${\rm Sym}(K)$ of isomorphism classes of nonsingular symmetric forms over $K$, equipped with  $\oplus$, is an abelian monoid.
Denote by $GW(K)$ the corresponding Grothendieck group, i.e. the group of formal differences of symmetric forms.
More precisely, it is the quotient of  ${\rm Sym}(K) \times {\rm Sym}(K)$ by the equivalence relation $(s_1,s_2) \sim (t_1,t_2)$ if $s_1\oplus t_2$ is isomorphic to $s_2 \oplus t_1$.
Note that the natural map
$$(V,\phi)\in {\rm Sym}(K) \mapsto  ((V,\phi),0) \in GW(K) $$
is injective so that there is no loss of information when we go from isomorphism classes to elements of $GW(K)$.

\begin{definition} The {\it Witt group} of $K$ is the quotient
$$W(K)=~GW(K)/\{\text{subgroup generated by the hyperbolic forms}\}.$$
\end{definition}

 Every nonsingular symmetric form $(V,\phi)$ is diagonalizable by Proposition \ref{diag}, with
$(V,\phi) \cong \sum\limits^n_{k=1}(K,a_k)$ $(a_k \in K^{\bullet})$, so that
$$[V,\phi]=\sum\limits^n_{k=1}(K,a_k) \in W(K)$$
and $W(K)$ is generated by the 1-dimensional symmetric forms $(K,a)$ $(a \in K^{\bullet}$).

It follows from Witt's theorem that the following conditions on two anisotropic nonsingular symmetric forms $s_1=(V_1,\phi_1)$, $s_2=(V_2,\phi_2)$  are equivalent:
\begin{itemize}
\item $s_1$ is isomorphic to $s_2$,
\item $s_1\oplus t$ is isomorphic to $s_2 \oplus t$, for any nonsingular symmetric form $t$,
\item $s_1=s_2 \in W(K)$.
\end{itemize}
Therefore $W(K)$ is indeed the key object in the understanding of nonsingular symmetric forms over $K$.

\begin{definition} A \emph{lagrangian} $L$ of a nonsingular symmetric form $(V,\phi)$ over $F$ is a subspace $L \subseteq V$ such that $L=L^{\perp}$, with $L^{\perp}=\{x \in V\,\vert\,\phi(x,y)=0 \in F~\hbox{\rm for all}~y \in L\}$.
\end{definition}

\begin{remark}\label{remark1}
The following conditions on a nonsingular symmetric form $(V,\phi)$ over $K$ (of characteristic $\neq 2$) are equivalent.
\begin{itemize}
\item[1.] $(V,\phi)$ is hyperbolic,
\item[2.]  $(V,\phi)$ admits a lagrangian,
\item[3.]  $(V,\phi)=0 \in W(K)$.
\end{itemize}
In order to prove 3. $\Longrightarrow$ 2. one can appeal to
Witt's Theorem~\ref{witt}, or else note that if $(V',\phi')$ is a nonsingular symmetric form over $K$ with lagrangian $L'$
and $L$ is a lagrangian of $(V,\phi)\oplus (V',\phi')$ then $V \cap (L+L')$ is a lagrangian of $(V,\phi)$.
\end{remark}

Before we go on, let us describe two trivial examples.

We know the structure of nonsingular symmetric forms over $\RR$ from Theorem~\ref{century}: every such form $(V,\phi)$ is isomorphic to $\oplus^n_{k=1}(\RR,a_k)$ with $a_k= \pm 1$.
Since $(\RR,1) \oplus (\RR,-1)$ is a hyperbolic form, we conclude that $W(\RR)$ is isomorphic to $\ZZ$, with the function
$$\tau= {\rm signature}:~(V,\phi)=\oplus^n_{k=1}(\RR,a_k) \in W(\RR) \mapsto  \tau(V,\phi) \in  \ZZ$$
an isomorphism.

We know that every nonsingular symmetric form over the complex numbers $\CC$ is isomorphic to $\oplus^n_{k=1} (\CC,1)$, with $(\CC,1) \oplus (\CC,1)$ hyperbolic.
The function
$${\rm dim}:~(V,\phi) \in W(\CC) \mapsto {\rm dim}_\CC(V)\bmod\,2  \in \ZZ/ 2 \ZZ $$
is an isomorphism.

In order to describe more interesting examples, we need to develop the basic features of Witt theory.

Let $K^{\bullet}=K\backslash \{0\}$ the multiplicative group of units of $K$.
Note that  every element of $K^{\bullet}/(K^{\bullet})^2$ has order $2$, with $(K^{\bullet})^2=\{a^2 \,\vert\, a\in K^{\bullet}\} \subseteq K^{\bullet}$ the subgroup of squares.
Note also that the group $W(K)$ is denoted additively and  $K^{\bullet}/(K^{\bullet})^2$ multiplicatively.

First, we observe that there are two natural maps defined on $W(K)$.

\begin{definition}\label{wittdisc} \leavevmode

1. The {\it dimension map}
$${\rm dim} : (V,\phi) \in W(K) \mapsto {\rm dim}_KV\,\bmod \,2 \in \ZZ/ 2 \ZZ $$
is well-defined for any $K$, since hyperbolic forms have even dimensions.

2. The {\it discriminant map}
$${\rm disc} : (V,\phi) \in W(K) \mapsto (-1)^{r(r-1)/2} {\rm det}(\phi) \in  K^{\bullet}/(K^{\bullet})^2$$
is well-defined, with $r={\rm dim}_KV$.
\end{definition}

The $0$ element of $W(K)$ is represented by the unique symmetric form on the $0$-dimensional vector space and the associated $0\times 0$ matrix has discriminant $1$, in line with the convention of Sylvester \cite[p.\,616]{sylvester1853} that a $0 \times 0$ matrix has determinant 1.

See Milnor and Husemoller~\cite[p.\,77]{milnorhusemoller} for the basic properties of the discriminant. In particular, note that
$$\begin{array}{l}
{\rm disc}(V\oplus V',\phi \oplus \phi')~=~(-1)^{rr'}{\rm disc}(V,\phi)\,
{\rm disc}(V',\phi') ~,\\[1ex]
{\rm disc}([V,\phi]-[V',\phi'])~=~(-1)^{(r+r')(r+r'-1)/2 + r'}{\rm det}(\phi)/{\rm det}(\phi') \in K^{\bullet}/(K^{\bullet})^2~.
\end{array}$$
The function ${\rm disc}:W(K) \to K^{\bullet}/(K^{\bullet})^2$ is {\it not}
a homomorphism of abelian groups: for example
$${\rm disc}(K,1)~=~1~,~{\rm disc}(K \oplus K,1 \oplus 1)~=~-1 \in
K^{\bullet}/(K^{\bullet})^2~.$$
However, the restriction of the discriminant to the `fundamental ideal'
$I(K)={\rm ker}({\rm dim}:W(K) \to \ZZ/2\ZZ) \subseteq W(K)$
$$[V,\phi]-[V',\phi'] \in I(K) \mapsto {\rm disc}([V,\phi]-[V',\phi'] )=
{\rm det}(\phi)/{\rm det}(\phi')   \in K^\bullet/(K^\bullet)^2$$
is a homomorphism. 

\bigskip

We can now give Witt's description of $W(K)$ by generators and relations, for any field $K$ of characteristic $\neq 2$.

\begin{theorem}[Witt~\cite{witt}]
 \label{generators} The Witt group $W(K)$ is isomorphic to $\ZZ[K^{\bullet}]/N$ with $N \subseteq \ZZ[K^{\bullet}]$ the subgroup generated by elements of the type
 $$[x]-[xy^2]~,~[x]+[-x]~,~[x]+[y]-[x+y]-[xy(x+y)^{-1}]$$
for $x,y \in K^{\bullet}$ with $x+y \in K^{\bullet}$.
The map
$$(V,\phi)=\bigoplus\limits^n_{i=1}(K,a_i)  \in W(K) \mapsto  \sum^n_{i=1}a_i \in \ZZ[K^{\bullet}]/N
$$
is an isomorphism.
\end{theorem}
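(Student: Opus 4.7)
The plan is to analyze the homomorphism $\theta : \ZZ[K^\bullet] \to W(K)$ sending $[a] \mapsto [(K,a)]$, which is surjective by Proposition \ref{diag}; it suffices to show $\ker\theta = N$.

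For the easy inclusion $N \subseteq \ker\theta$: the isomorphism $v \mapsto y^{-1}v$ of $K$ identifies $(K,xy^2)$ with $(K,x)$, handling the first relation; the vector $(1,1)$ spans a lagrangian in $(K,x)\oplus(K,-x)$, so by Remark \ref{remark1} this form is hyperbolic, giving the second relation; and for the third, the basis $\{(1,1),(y,-x)\}$ of $(K,x)\oplus(K,y)$ (a basis because $x+y \neq 0$) has Gram matrix $\mathrm{diag}(x+y,\,xy(x+y))$, while $[xy(x+y)] = [xy(x+y)^{-1}]$ modulo the first relation since they differ by the square factor $(x+y)^2$.

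The reverse inclusion $\ker\theta \subseteq N$ is the substance of the theorem. An element $\xi = \sum[a_i] - \sum[b_j] \in \ker\theta$ yields an isomorphism $\bigoplus(K,a_i)\oplus H_1 \cong \bigoplus(K,b_j)\oplus H_2$ for some hyperbolic forms $H_k$. Each $H_k$ diagonalizes as a direct sum of copies of $(K,1)\oplus(K,-1)$; augmenting both sides by such summands and using the second relation lets me absorb the hyperbolic parts and reduce to an honest isomorphism $\bigoplus(K,a'_i) \cong \bigoplus(K,b'_j)$ of diagonal forms, at the cost of modifying $\xi$ by an element of $N$. The remaining task is to show that any two diagonalizations of a single nonsingular symmetric form represent the same class in $\ZZ[K^\bullet]/N$. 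This is Witt's \emph{chain equivalence theorem}, proved by induction on the common dimension $n$: given two such diagonalizations, pick a value $c \in K^\bullet$ represented by both, apply successive moves of the form $[x]+[y] \equiv [x+y] + [xy(x+y)^{-1}] \pmod N$ to bring $c$ into the leading slot on each side, invoke Witt's cancellation theorem to strip off the common $(K,c)$ summand, and iterate.

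The main obstacle is Witt's cancellation theorem itself, that $V \oplus U \cong V' \oplus U$ implies $V \cong V'$ for nonsingular symmetric forms, which depends on Witt's extension theorem extending isometries between subspaces of a nonsingular form to isometries of the whole form. A secondary subtlety is the degenerate case $x+y = 0$ in the chain equivalence step, where the third relation is inapplicable; here the second relation handles the situation directly, since $(K,x)\oplus(K,-x)$ is hyperbolic and may be traded for $[1]+[-1]$ (modulo the first relation) without changing the class in $\ZZ[K^\bullet]/N$. The rest is bookkeeping: transpositions of diagonal entries are realized by combining two applications of the third relation with square-rescalings from the first.
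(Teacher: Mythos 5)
Your argument is correct and is essentially the proof the paper has in mind (and which Lam gives in full in \cite[Thm.\,II.4.1]{lam}): both reduce the hard inclusion $\ker\theta \subseteq N$ to the statement that any two diagonalizations of the same nonsingular symmetric form give the same class in $\ZZ[K^{\bullet}]/N$, which you correctly identify as Witt's chain equivalence theorem resting on Witt cancellation and the extension theorem. If anything you are more careful than the paper, whose parenthetical reduction to the case $n=2$ ``because ${\rm GL}(n,K)$ is generated by elementary matrices'' is incomplete as stated, since for a factorization $P=E_1\cdots E_k$ the intermediate conjugates $E_1^{*}DE_1$ need not be diagonal; what is really being invoked there is exactly the chain equivalence theorem you name, and the paper defers to Lam for the details.
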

\begin{proof}
We know that any anisotropic nonsingular symmetric form over $K$ can be diagonalized, so that the map under consideration is well-defined and onto.
The kernel will be identified with $N$ by describing the relations in
$W(K)$ between these elements $[x]$.

The first obvious relation is $[x]= [xy^2]$ for every $y \in K^{\bullet}$.
This corresponds to the change of the basis element in a 1-dimensional vector space.

Moreover, one has $[-x]= -[x]$ as follows from the observation that $[x]+[-x]$ is hyperbolic.

For any $x,y\in K^{\bullet}$ with $x+y \in K^{\bullet}$ the invertible $2 \times 2$ matrices defined by
$$
S=\begin{pmatrix} x & 0 \\ 0 & y \end{pmatrix}
\quad; \quad
P = \begin{pmatrix} 1 & -y(x+y)^{-1} \\ 1 & x(x+y)^{-1} \end{pmatrix}
$$
are such that
$$
P^*SP=\begin{pmatrix} x+y & 0 \\ 0&  xy(x+y)^{-1}   \end{pmatrix}~,
$$
where we write $P^*$ for the transpose $P^T$.

It follows that
$$[x]+[y]= [x+y]+ [ xy(x+y)^{-1}] \in W(K).$$

Denote for a moment $\overline{W}(K)=\ZZ[K^{\bullet}]/N$ the abstract abelian group generated by all $[x]$'s subject to these relations. We have just shown that the function sending a formal linear combination in $K^{\bullet}$ (possibly with repetitions) to a diagonal symmetric form
$$\sum\limits^n_{i=1}a_i \in \overline{W}(K) \mapsto \sum\limits^n_{i=1}[K,a_i] \in W(K)$$
is well-defined. The inverse function sends the Witt class $[V,\phi]$ of a symmetric form $(V,\phi)$ to any diagonalization $(V,\phi)\cong \sum\limits^n_{i=1}(K,a_i)$  
$$[V,\phi] \in W(K) \mapsto \sum\limits^n_{i=1}a_i\in \overline{W}(K)~.$$
In order to check that this is well-defined, we have to consider two diagonalizations  $(V,\phi)\cong \sum\limits^n_{i=1}(K,a_i)\cong \sum\limits^n_{i=1}(K,a'_i)$, and prove that
$\sum\limits^n_{i=1}(a_i-a'_i) \in N \subseteq \ZZ[K^{\bullet}]$. The diagonalizations are related by an element $P \in GL(n,K)$
such that 
$$P^*\text{diag}(a_1,a_2,\dots,a_n)P~=~\text{diag}(a'_1,a'_2,\dots,a'_n)~.$$
The group $\text{GL}(n,K)$ is generated by matrices which act non trivially only on two elements of a vector basis (i.e. the elementary operations of Gaussian elimination), so only the case $n=2$ need be considered. For further details see~Lam~\cite[Thm.\,II.4.1]{lam}.
\end{proof}

Although the expression for $W(K)$ given by Theorem~\ref{generators} is not very useful for an arbitrary field $K$, it is adequate for $K$ in which it is known which elements of  $K^{\bullet}$ are squares:

If $K=\RR$ then $x \in \RR^{\bullet}$ is a square if and only if $x>0$, and the signed augmentation
$$[x] \in W(\RR)=\ZZ[\RR^*]/N \mapsto {\rm sign}(x) \in\ZZ$$
is an isomorphism - evidently the signature!

If $K=\CC$ then every $z \in \CC^{\bullet}$ is a square, so that $[z]=[1] \in N$.
It now follows from $([z]+[i^2z])+([z]-[-z])=2[z]=0 \in N$ that the $\bmod\,2$ augmentation
$$[z] \in W(\CC)=\ZZ[\CC^{\bullet}]/N \mapsto 1 \in \ZZ/2\ZZ$$
is an isomorphism - evidently the dimension $\bmod\, 2$!

See section~\ref{order} below for the examples
$K=\RR(X),\QQ,\FF_p$.
The case $K=\RR(X)$ is particularly relevant to the Witt group interpretation of the theorems of Sturm and Sylvester.

\subsection{The Witt groups of the function field $\RR(X)$, ordered fields and $\QQ$.} \label{order}

It is our goal to give an explicit description of the Witt group $W(\RR(X))$ of the field $\RR(X)$ of rational functions $P(X)/Q(X)$ ($P(X),Q(X)\in \RR[X]$, $Q(X)\neq 0)$.
The inclusion $i:\RR \subset \RR(X)$ induces a morphism $i:W(\RR)=\ZZ \to W(\RR(X))$ ( a split injection, in fact) with image the Witt classes of quadratic forms with constant coefficients.
We shall establish an isomorphism 
$$
{\rm coker}(i:\ZZ \to W(\RR(X)))\cong \ZZ[\RR] \oplus \ZZ/2\ZZ[{\cal H}]
$$
with $\ZZ[\RR]$ the free abelian group generated by $\RR$, and similarly for $\ZZ/2\ZZ[{\cal H}]$ where ${\cal H}$ denotes the upper half plane $\{z \in \CC \vert \Im(z) >0 \}$.
This will allow a modern presentation of Sturm and Sylvester's theorems.

Consider an invertible symmetric matrix $S(X)$ over $\RR(X)$.
Its determinant is a non-zero rational function ${\rm det}(S(X))\in \RR(X)^{\bullet}$, with a finite number of zeroes and poles.
Therefore, when $x\in \RR$ is away from some finite set $\{x_1, \dots, x_k\}$ of real numbers, the matrix $S(x)$ is an invertible symmetric matrix
in $\RR$, for which we can compute the signature $\tau(S(x))\in \ZZ$.
When $x$ is a zero or a pole of the determinant of $S(X)$, the value of $\tau(S(x))$ is undefined.

This suggests the following definition.
\begin{definition}
1. A \emph{sign-function} is a map
$\RR \backslash \{x_1,\dots,x_k\} \to \ZZ$  whose domain of definition is the complement of a finite set $\{x_1, \dots, x_k\} \subset \RR$ and which is constant on each connected component of the complement of this finite set.

2. Two sign-functions are equivalent if they agree outside of a finite set.
\end{definition}

We denote the additive group of equivalence classes of sign-functions by ${\mathcal Sign}$.
A symmetric matrix $S(X)$ in $\RR(X)$ determines a sign-function
$${\rm sign}(S(X)):x \in \RR\backslash\{\text{roots and poles of {\rm det}($S(X)$)}\} \mapsto \tau(S(x)) \in  \ZZ.$$
Note that if $S_2(X)= V(X)^*S_1(X)V(X)$, i.e. if $S_1(X)$ and $S_2(X)$ are linearly congruent under some invertible matrix $V(X)$ with coefficients in $\RR(X)$, one can  conclude that the signatures of $S_1(x)$ and $S_2(x)$ coincide outside of the roots and poles of the determinant of $V(X)$ so that two linearly congruent matrices do define equivalent sign-functions.

Clearly the sign-function associated to a direct sum of two matrices is the sum of the sign-functions, and the sign-function of a hyperbolic matrix is obviously $0$.
Therefore, we have a well defined map:
$$
{\rm sign} : W(\RR(X)) \to {\mathcal Sign}.
$$

{\it More precisely, we will show that {\rm sign} is injective when restricted to forms with discriminant $1$}.

Before proving that, we will give a better description of the group ${\mathcal Sign}$.
Regard $\ZZ[\RR]$ as the group of functions $\RR \to \ZZ$ which vanish outside of a finite set.

\begin{lemma}
There is an exact sequence
$$\xymatrix{0 \ar[r] &\ZZ \ar[r] & {\mathcal Sign} \ar[r]^-{\di{\partial}} & \ZZ[\RR] \ar[r] & 0.}
$$
The injection $\ZZ \to {\mathcal Sign}$ splits in two natural ways, by 
$$\sigma \in {\mathcal Sign} \mapsto  \sigma(\pm \infty)
~=~\lim\limits_{x \to \infty} \sigma(\pm x)~.$$ 
\end{lemma}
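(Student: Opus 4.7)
The plan is to define $\partial$ as the \emph{jump map}: given a representative $\sigma \colon \RR \setminus \{x_1 < \cdots < x_k\} \to \ZZ$ of an equivalence class in ${\mathcal Sign}$, set
$$\partial(\sigma) \;=\; \sum_{i=1}^{k} \bigl(\sigma(x_i^+)-\sigma(x_i^-)\bigr)[x_i] \in \ZZ[\RR],$$
where $\sigma(x_i^\pm) = \lim_{\epsilon \to 0^+} \sigma(x_i \pm \epsilon)$ are the one-sided limits, which exist because $\sigma$ is locally constant on its domain. This is manifestly additive with respect to the sum of sign-functions on a common domain. Well-definedness on equivalence classes is the main technical point: if $\sigma \sim \sigma'$ and one takes $S$ large enough to contain the exceptional sets of both and all points of disagreement, then $\sigma$ and $\sigma'$ take the same values on each connected component of $\RR \setminus S$ (a locally constant function on a connected interval is constant, so two locally constant functions differing on only finitely many points must agree on every component). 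Consequently the jumps at every $x_i \in S$ coincide, while at any newly introduced point of $S$ the jump is zero and contributes $0 \cdot [x]$ to the sum.

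Next I would verify exactness term by term. Two distinct constants disagree on all of $\RR$, not a finite set, so $\iota \colon \ZZ \hookrightarrow {\mathcal Sign}$ is injective, and constants have no jumps, so $\partial \circ \iota = 0$. Conversely, if $\partial(\sigma) = 0$ then $\sigma(x_i^+) = \sigma(x_i^-)$ at every exceptional point, so by induction on $k$ the values on adjacent components agree, and $\sigma$ is equivalent to the constant function equal to that common value; this identifies $\ker(\partial)$ with $\mathrm{im}(\iota)$. For surjectivity of $\partial$, given $\sum_{j=1}^m n_j [y_j] \in \ZZ[\RR]$ with $y_1 < \cdots < y_m$, the step function $\sigma(x) = \sum_{y_j < x} n_j$, defined on $\RR \setminus \{y_1,\dots,y_m\}$, is a sign-function whose jump at $y_j$ is exactly $n_j$.

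For the splittings, note that any representative of $\sigma$ is constant on the two unbounded components $(-\infty,x_1)$ and $(x_k,+\infty)$, so the limits $\sigma(\pm\infty) = \lim_{x\to\infty} \sigma(\pm x)$ exist. They are invariant under the equivalence relation (modifying $\sigma$ on a finite set leaves values far from the origin unchanged) and clearly additive, so they define homomorphisms ${\mathcal Sign} \to \ZZ$. Applied to the constant sign-function of value $n$ they return $n$, so both are left inverses of $\iota$, giving two natural splittings.

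The main obstacle is really only the first paragraph, namely the verification that equivalent representatives yield the same element of $\ZZ[\RR]$ under $\partial$; once this is in place, the rest is formal. A small subtlety to handle carefully there is that the "exceptional set" of a representative can be shrunk or enlarged at will (a function locally constant on $\RR \setminus S$ is also locally constant on $\RR \setminus S'$ for any finite $S' \supseteq S$), and the definition of $\partial$ must be insensitive to this choice, which it is precisely because the jump at a point of continuity is zero.
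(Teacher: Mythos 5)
Your proof is correct and follows the same approach as the paper, defining $\partial$ as the jump (``derivative'') map, identifying its kernel with the constants, producing step-function preimages for surjectivity, and splitting via the limits at $\pm\infty$. You spell out the well-definedness check and the surjectivity construction in more detail than the paper, which merely sketches them, but the underlying ideas coincide.
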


\begin{proof}
Given a sign-function $\sigma$, one can define its ``derivative'', i.e. the function
$$
\partial \sigma (a) = \lim_{\epsilon \rightarrow 0^+} \big(\sigma (a+ \epsilon) - \sigma(a- \epsilon)\big)
$$
which is clearly an element of $\ZZ[\RR]$. For $\sigma:\RR \backslash 
\{x_1,x_2,\dots,x_n\} \to \ZZ$ with $x_1<x_2<\dots<x_n$ let
$$y_k~=~(x_k+x_{k+1})/2~(1 \leqslant k \leqslant n-1)~,$$
so that
$$\partial \sigma~=~ (\sigma(y_1)-\sigma(-\infty)).x_1+
(\sigma(y_2)-\sigma(y_1)).x_2+\dots+(\sigma(\infty)-\sigma(y_{n-1})).x_n \in
\ZZ[\RR]~.$$
The kernel of $\partial$ consists of constant functions.
One shows that $\partial$ is onto by ``integration''.
\end{proof}

Recall that the signature of any nonsingular symmetric form over $\RR$ is always congruent modulo two to the dimension of the underlying vector space, so that if $\sigma=\text{sign}(S(X)) \in {\mathcal Sign}$
$$\sigma(-\infty)~\equiv~\sigma(y_1)~\equiv~\sigma(y_2)~\equiv~\dots~
\equiv~ \sigma(y_n)~\equiv~\sigma(\infty)~\bmod \,2$$
and the image of the ``derivative''
$$
\partial \circ {\rm sign} : W(\RR(X)) \to \ZZ[\RR]
$$
is divisible by 2. We shall denote the ``jump'' $( \partial \circ  {\rm sign})/2$ by
$$
\tau_{\RR} : W(\RR(X)) \to \ZZ[\RR]
$$
and call it the {\it total signature}. 

A non-zero polynomial $F(X) \neq 0 \in \RR[X]$ is an invertible symmetric $1 \times 1$ matrix over $\RR(X)$, corresponding an element $[F(X)] \in W(\RR(X))$.
Of course, ${\rm sign} ([F(X)])$ is simply the usual sign of $F(x)$, equal to $\pm 1$ outside of the set of roots of $F$.
As for $\tau_{\RR}([F(X)])$, it is non zero precisely at the values of $x$ at which $F(x)$ changes sign, from $-$ to $+$ or from $+$ to $-$, with value $\pm 1$ accordingly. For any $a \in \RR$ 
$$F(X)~=~(X-a)^NG(X)$$
with $N \geqslant 0$, $G(a) \neq 0$, and the jump of the sign of $F(X)$ at $a$ is
$$(\lim\limits_{x \to a^+}{\rm sign}(F(x))-
\lim\limits_{x \to a^-}{\rm sign}(F(x)))/2~=~
\begin{cases}
{\rm sign}(G(a))&\hbox{\rm if $N$ is odd}\\
0&\hbox{\rm if $N$ is even}~.
\end{cases}$$
In the special case $N=1$ we have that $a \in \RR$ is a simple (i.e. non-multiple) root of $F(X)$ with 
$$F(a)~=~0~,~G(X)~=~\dfrac{F(X)-F(a)}{X-a}~,~G(a)~=~F'(a)~,$$
so that the jump is $1$ (resp. $-1$) if and only if $F(X)$ is increasing (resp. decreasing) at $X=a$, if and only if $F'(a)>0$ (resp. $F'(a)<0$). Thus if $F(X)$ has $n$ distinct real roots $a_1,a_2,\dots,a_n \in \RR$ the total signature is
$$\tau_{\RR}([F(X)])~=~\sum\limits^n_{i=1}{\rm sign}(F'(a_i)).a_i \in \ZZ[\RR]~.$$

We will show that two invertible symmetric $n \times n$ matrices in $\RR(X)$ are linearly congruent if and only if they have the same discriminant and the same sign-function.

We now describe the discriminant (\ref{wittdisc}) of a nonsingular symmetric form over $\RR(X)$.

Irreducible polynomials in $\RR[X]$ have degree $1$ or $2$.
Every $a \in \RR$ determines an irreducible polynomial $X-a \in \RR[X]$ with root $a \in \RR$.
Every $z \in {\cal H}$ defines an irreducible real polynomial of degree 2:
$$
P_z(X) = X^2 - 2 \Re(z) X + \vert z \vert ^2.
$$
with complex roots $z$ and $\bar{z}$.

Any non-zero polynomial in $\RR[X]$ can be written in a unique way as a product of irreducible polynomials, in the form
$$
F(X)=\lambda \prod_{i=1}^n (X-a_i)^{n_i} \prod_{j=1}^m P_{z_j}(X)^{m_j}
$$
where $\lambda \neq 0,a_i\in \RR$, $z_j \in {\cal H}$, and $n_i,m_j>0$.
The total signature is
$$\tau_\RR(F(X))~=~\sum\limits^n_{i=1,n_i~{\rm odd}}{\rm sign}
\bigg(\lambda  \prod_{j=1,\,j \neq i}^n (a_j-a_i)\bigg).a_i \in \ZZ[\RR]$$
and
$$\sigma(F(X))(\infty)~=~\text{sign}(\lambda)~,~
\sigma(F(X))(-\infty)~=~\text{sign}(\lambda) (-1)^N \in \{\pm 1\} \subset \ZZ$$
with $N$ the number of odd $n_i$'s, i.e. the number of terms in $\tau_\RR(F(X))$. 
A rational function $F(X) \in \RR(X)$ can be written in the same way, if one agrees that $n_i$ and $m_j$ can be positive or negative.
Such a function $F$ is a square if and only if $\lambda >0$ and all $n_i$ and $m_j$ are even numbers.
It follows that we have a very simple description
$$
\RR(X)^{\bullet}/(\RR(X)^{\bullet})^2 \cong \ZZ/ 2 \ZZ \oplus \ZZ / 2 \ZZ[\RR] \oplus \ZZ/2\ZZ[{\cal H}]
$$
and the discriminant map has three components
$${\rm disc}=({\rm disc}_1,{\rm disc}_\RR,{\rm disc}_{\cal H}):
W(\RR(X)) \to \RR(X)^{\bullet}/(\RR(X)^{\bullet})^2.$$

We can now give a precise description of the Witt group $W(\RR(X))$.
As before, if $P(X)/Q(X) \in \RR(X)^{\bullet}$ we denote by $[P(X)/Q(X)]$ the element of $W(\RR(X))$ defined by the $1\times 1$ matrix with the single element $P(X)/Q(X)$.

\begin{theorem} \label{wittrational}  The following map is an isomorphism
$$
p: \ZZ [1] \oplus \ZZ[\RR] \oplus \ZZ/2\ZZ[{\cal H}] \to W(\RR(X))
$$
$$
(n,n_a,m_z) \mapsto n [1] + \sum\limits_{a\in \RR}n_a ([X-a]-[1])  + \sum\limits_{z \in {\cal H}} m_z ([P_z(X)]-[1])
$$
with inverse
$$p^{-1}(S(X))~=~(\tau(S(\infty)),\tau_{\RR}(S(X)),{\rm disc}_{\cal H}(S(X)))~.$$
\end{theorem}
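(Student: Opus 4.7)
Define the candidate inverse
\[
\psi\colon W(\RR(X)) \to \ZZ\oplus\ZZ[\RR]\oplus\ZZ/2\ZZ[\mathcal{H}], \qquad \psi(S(X)) = (\tau(S(\infty)),\, \tau_\RR(S(X)),\, \mathrm{disc}_\mathcal{H}(S(X))).
\]
The plan has four steps: (a) well-definedness of $p$; (b) $\psi$ is a group homomorphism; (c) $\psi\circ p = \mathrm{id}$ by direct computation on the three families of generators; (d) $\ker\psi = 0$, which combined with (c) yields that $p$ is an isomorphism with inverse $\psi$.

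\textbf{Steps (a)--(c).} For (a) we must check $2([P_z(X)]-[1]) = 0$ in $W(\RR(X))$. Since $P_z(X) = (X-\Re z)^2 + (\Im z)^2$, the two-dimensional form $[1]\oplus[1]$ represents $P_z(X)$, and by Proposition~\ref{diag} applied in dimension two this forces $[1]+[1] = [P_z(X)]+[P_z(X)]$ in $W(\RR(X))$. Step (b) is straightforward: each component of $\psi$ is additive under $\oplus$, and each vanishes on hyperbolic forms (their sign-function is identically zero and their discriminant is a square). For (c) we evaluate on generators: $\psi([1]) = (1,0,0)$; the polynomial $X-a$ is positive near $+\infty$ and jumps upward by $+1$ at $a$, whence $\psi([X-a]-[1]) = (0, \delta_a, 0)$; and $P_z(X) > 0$ throughout $\RR$, so the sign-function of $[P_z(X)]-[1]$ is identically zero (killing $\tau_\infty$ and $\tau_\RR$), while the discriminant of this class is $-P_z(X)$, contributing exactly $\delta_z$ to $\mathrm{disc}_\mathcal{H}$.

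\textbf{Step (d), the main obstacle.} Suppose $(V,\phi)$ satisfies $\psi(V,\phi)=0$. Vanishing of $\tau_\infty$ and $\tau_\RR$ forces, via the exact sequence for sign-functions, that $\mathrm{sign}(S(X))\equiv 0$, so $S(x)$ is hyperbolic over $\RR$ at every generic $x\in\RR$. A short argument on $\det S(x)$ (its sign is pinned to $(-1)^{r/2}$ everywhere) combined with the hypothesis $\mathrm{disc}_\mathcal{H}=0$ then shows $\mathrm{disc}(V,\phi)$ is a square in $\RR(X)^{\bullet}/(\RR(X)^{\bullet})^2$. It remains to conclude that a form over $\RR(X)$ whose sign-function and discriminant are both trivial is hyperbolic. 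The cleanest route is the Milnor--Witt exact sequence
\[
0 \to W(\RR) \to W(\RR(X)) \xrightarrow{(\partial_q)_q} \bigoplus_{q} W(\RR[X]/(q)) \to 0,
\]
split on the left by evaluation at $\infty$, where $q$ runs over monic irreducibles of $\RR[X]$. Under $W(\RR[X]/(X-a)) \cong W(\RR) \cong \ZZ$ and $W(\RR[X]/(P_z)) \cong W(\CC) \cong \ZZ/2\ZZ$, the residues $\partial_q$ match the $\tau_\RR$ and $\mathrm{disc}_\mathcal{H}$ components of $\psi$, and the splitting matches $\tau_\infty$; this identifies $\psi$ with a known isomorphism. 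The technical heart is the exactness of this sequence, which can either be cited from the literature or proved directly by inducting on the total degree of a diagonalization $\oplus[c_i]$ and using the Witt relation $[u]+[v] = [u+v]+[uv(u+v)^{-1}]$ to peel off one irreducible factor at a time.
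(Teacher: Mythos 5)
Your proposal is correct and, at the technical level, it matches the paper's proof: the paper establishes injectivity of $p$ by exactly the generator computations you carry out in steps (a)--(c) (in particular, the well-definedness argument via the explicit $2\times 2$ isometry realizing $2[P_z(X)]=2[1]$ is the same), and it establishes surjectivity of $p$ by precisely the Witt-relation peeling argument you sketch at the very end --- choosing $\lambda,\mu\in\RR^\bullet$ with $\lambda P+\mu Q$ a square for irreducibles $P,Q$, applying $[\lambda P]+[\mu Q]=[\lambda P+\mu Q]+[\lambda\mu PQ(\lambda P+\mu Q)^{-1}]$, and inducting on the number of irreducible factors. The organizational difference is that you route the ``onto'' direction through $\ker\psi=0$ and the Milnor--Witt exact sequence, whereas the paper argues directly that $[1],[X-a],[P_z(X)]$ generate $W(\RR(X))$; given $\psi\circ p=\mathrm{id}$, these are logically equivalent, and the intermediate observations you make in step (d) about $\det S(x)$ and the sign-function pinning $\mathrm{disc}_\RR$ and $\mathrm{disc}_1$, while correct, are not needed once one goes straight for surjectivity of $p$. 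One caveat worth noting: if you take the Milnor-sequence route, the assertion that ``the residues $\partial_q$ match the $\tau_\RR$ and $\mathrm{disc}_\mathcal{H}$ components of $\psi$'' is true but does require a verification --- the residue at $X-a$ must be identified with half the jump of the signature at $a$, and the residue at $P_z$ with the mod-$2$ multiplicity of $P_z$ in a diagonalization --- which your write-up asserts but does not carry out; the paper sidesteps this by proving the $K=\RR$ case self-contained and only remarking on Milnor's general exact sequence afterwards.
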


In order to check that $p$ is well-defined, note that 
$$p(0,0,2m_z)~=~\sum\limits_{z \in {\cal H}}2m_z([P_z(X)]-[1])~=~0\in W(\RR(X))$$
 by virtue of the isomorphism of nonsingular symmetric forms over $\RR(X)$
$$\begin{array}{l}
\begin{pmatrix} X-\Re(z) & \Im(z) \\
-\Im(z) & X-\Re(z) \end{pmatrix}~:\\[2ex]
(\RR(X) \oplus \RR(X),
\begin{pmatrix} P_z(X) & 0 \\
0 & P_z(X) \end{pmatrix}) \to (\RR(X)\oplus \RR(X),\begin{pmatrix} 1 & 0 \\
0 & 1 \end{pmatrix})~.
\end{array}$$

It is easy to show that $p$ is {\it injective}.
Indeed,

\begin{itemize}
\item ${\rm sign}([X-a])$ is equal to $-1$ for $x<a$ and $1$ for $x>a$ so that $\tau_{\RR} ([X-a])$ is the function equal to $1$ for $x=a$ and $0$ for $x\neq a$.
\item ${\rm sign} ([1])$ and ${\rm sign}([P_z(X)])$ are  constant functions equal to $1$ so that $\tau_{\RR}([1]) = \tau_{\RR}(P_z(X))=0$.
\item $\text{disc} (p(n, n_a, m_z)) = ( n,n_a,m_z) \in \ZZ/ 2 \ZZ \oplus \ZZ/2\ZZ[\RR] \oplus \ZZ/2\ZZ[{\cal H}]$.
\end{itemize}
Any element of the kernel of $p$ has therefore $n_a=0$.
It then follows that $n=0$ and finally $m_z=0$.

\medskip

It remains to show that $p$ is {\it onto}.
In other words, we have to show that $W(\RR (X))$ is generated by $[1], [X-a],[P_z(X)]$.
We already know that elements of the form $[F(X)]$ generate $W(\RR(X))$.

Note that:

\begin{itemize}
\item $[{P(X)}/{Q(X)}]=[P(X)Q(X)]$ so that the $[F(X)]$'s with $F$ polynomial in $\RR[X]$ generate $W(\RR(X))$.
\item Modulo squares, it is enough to consider the $F(X)$ of the form
$$F(X)=\pm \prod_{i=1}^n (X-a_i) \prod_{j=1}^m P_{z_j}(X)$$
with distinct $a_i,z_j$, so that  $n_i=m_j=1$.
\end{itemize}

We can now show that $1$, $[X-a]$ and $[P_z(X)]$  generate $W(K)$.
The proof will be by induction.
Let us first consider products of two factors.
Let $P(X)$ and $Q(X)$ be two irreducible polynomials, each of the form $(X-a)$ or $P_z(X)$.
It is easy to check that one can find two non zero real numbers $\lambda,\mu$ such that $\lambda P(X) + \mu Q(X)$ is a square in $\RR[X]$.
Therefore we have, in $W(K(X))$:

$$
[\lambda P] + [\mu Q] = [\lambda P + \mu Q] + \left[ \frac{ \lambda \mu P  Q}{\lambda P + \mu Q}\right].
$$
so that
$$
\pm [P] \pm [Q] =  [1]\pm [PQ].
$$
So, the product of two irreducible polynomials $[PQ]$ can be expressed as a combination of those polynomials $[P],[Q]$, and $[1]$.
By induction, we can therefore express any $[F(X)]$ as a combination of the elements $1$, $[X-a]$ and $[P_z(X)]$.

This ends the proof of Theorem~\ref{wittrational} giving a full description of $W(\RR(X)$. Let us reformulate it in the following way:

\begin{corollary}\label{split}
The Witt group of $\RR(X)$ fits into a split exact sequence
$$\xymatrix{0 \ar[r] &  \ZZ \ar[r]^-{\di{i}} & }
\xymatrix@C+30pt{W(\RR(X)) \ar[r]^-{\di{\tau_{\RR} \oplus {\rm disc}_{\cal H}}} &}
\xymatrix{\ZZ[\RR] \oplus \ZZ/2 \,\ZZ[{\cal H}] \ar[r]& 0}
$$
where $i$ is induced by the embedding $i:\RR \subset \RR[X]$, split by
$$W(\RR(X)) \to \ZZ~;~S(X) \mapsto \tau(S(\infty))~.$$
\end{corollary}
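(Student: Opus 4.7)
The plan is to derive this corollary directly from Theorem~\ref{wittrational}, which already supplies the essential content. That theorem provides an isomorphism $p:\ZZ \oplus \ZZ[\RR] \oplus \ZZ/2\ZZ[{\cal H}] \to W(\RR(X))$ with inverse $p^{-1}(S(X)) = (\tau(S(\infty)), \tau_\RR(S(X)), {\rm disc}_{\cal H}(S(X)))$. Under this isomorphism, the sequence in the corollary should correspond to the tautologically split exact sequence coming from a direct sum decomposition, so the whole proof reduces to identifying the maps $i$, $\tau_\RR \oplus {\rm disc}_{\cal H}$, and $S(X) \mapsto \tau(S(\infty))$ with the obvious inclusion, projection, and first coordinate projection on that direct sum.

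First I would verify that $p^{-1} \circ i$ equals the inclusion $n \mapsto (n,0,0)$ into the first summand. The map $i$ sends $n \in \ZZ = W(\RR)$ to the class of the constant diagonal form $(\RR(X), n[1])$, i.e.\ the form $\langle 1,\dots,1\rangle$ ($n$ copies) for $n>0$, or $-|n|$ copies of $\langle -1\rangle$ for $n<0$. Evaluated at $\infty$ this is still $\pm I_{|n|}$ with signature $n$; the total signature $\tau_\RR$ vanishes because a constant nonzero rational function has no real sign changes; and ${\rm disc}_{\cal H}$ vanishes because the discriminant is $\pm 1$, which is trivial in $\ZZ/2\ZZ[{\cal H}]$. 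Hence $p^{-1}(i(n)) = (n,0,0)$.

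Next, the map $\tau_\RR \oplus {\rm disc}_{\cal H}: W(\RR(X)) \to \ZZ[\RR] \oplus \ZZ/2\ZZ[{\cal H}]$ is, by the very formula for $p^{-1}$ given in Theorem~\ref{wittrational}, the composition of $p^{-1}$ with projection onto the last two summands. Exactness is then automatic: $i$ is injective because $p^{-1}\circ i$ is (it's the inclusion of a summand); the composite $(\tau_\RR \oplus {\rm disc}_{\cal H})\circ i$ is zero; surjectivity of $\tau_\RR \oplus {\rm disc}_{\cal H}$ follows from surjectivity of $p$; and the kernel of $\tau_\RR \oplus {\rm disc}_{\cal H}$ is $p$ applied to the first summand, which is exactly the image of $i$.

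Finally, for the splitting, the composition $\ZZ \xrightarrow{i} W(\RR(X)) \xrightarrow{\tau(\cdot(\infty))} \ZZ$ sends $n$ to $\tau(n \cdot I_{1}|_{\infty}) = n$, so $S(X) \mapsto \tau(S(\infty))$ is a retraction of $i$. The main obstacle, such as it is, is purely bookkeeping: confirming that the three coordinate functions in $p^{-1}$ from Theorem~\ref{wittrational} literally coincide with $\tau(\cdot(\infty))$, $\tau_\RR$, and ${\rm disc}_{\cal H}$ as used in the statement of the corollary. Since these identifications were built into the definition of $p^{-1}$, no further work is required.
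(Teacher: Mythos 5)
Your proposal is correct and is essentially the paper's own argument: the paper gives no separate proof of Corollary~\ref{split}, introducing it with ``Let us reformulate it in the following way,'' so the corollary is treated exactly as you treat it—a transcription of the isomorphism $p$ of Theorem~\ref{wittrational} (with its explicit inverse $p^{-1}(S(X)) = (\tau(S(\infty)),\tau_\RR(S(X)),{\rm disc}_{\cal H}(S(X)))$) into the language of a split short exact sequence. Your bookkeeping checks (that $p^{-1}\circ i$ is the first-summand inclusion, that $\tau_\RR\oplus{\rm disc}_{\cal H}$ is the projection onto the last two summands, and that $\tau(\cdot(\infty))$ retracts $i$) are exactly the content needed.
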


The Witt class of the monic polynomial 
$$P(X)~=~(X-x_1)(X-x_2)\dots (X-x_n)P_{z_1}(X)P_{z_2}(X)
\dots P_{z_m}(X)$$
with $n$ distinct real roots $x_1,x_2,\dots,x_n \in \RR$ and 
$2m$ distinct complex roots
$\{z_1,z_2,\dots,z_m\} \cup \{\bar{z}_1,\bar{z}_2,\dots,\bar{z}_m\}
\in {\cal H}\cup \overline{\cal H}$ is given by
$$\begin{array}{ll}
[P(X)]&=~[1] + \sum\limits^n_{i=1}{\rm sign}(P'(x_i))([X-x_i]-[1])
+ \sum\limits^m_{j=1}([P_{z_j}(X)]-[1]) \\[1ex]
&\in W(\RR(X))~=~\ZZ \oplus \ZZ[\RR] \oplus \ZZ/2\ZZ[{\cal H}]~.
\end{array}$$
Note that we already showed that the injection $i: \ZZ \rightarrow W(\RR(X))$ splits in two ways, using the limit of ${\rm sign}$ at $\pm\infty$, with
$$\sigma(P(X))(\infty)~=~1~,~\sigma(P(X))(-\infty)~=~1-2\sum\limits^n_{i=1}\text{sign}(P'(x_i))~=~(-1)^n~.$$
\indent Again, the following is just a reformulation of what we proved so far.

\begin{corollary}
Let $S(X)$ be a symmetric $n \times n$ matrix in $\RR(X)$ with non vanishing determinant, so that $(\RR(X)^n,S(X))$ is a nonsingular symmetric form over $\RR(X)$ with Witt class $[\RR(X)^n,S(X)] \in W(\RR(X))$ and total signature
$$\tau_{\RR}([S(X)])=\sum\limits_{a\in \RR}\tau_\RR([S(X)])(a). a  \in \ZZ[\RR].$$
For any $a \in \RR$ which is a zero or a pole of ${\rm det}(S(X)) \in \RR(X)^{\bullet}$
$$\tau_{\RR}([S(X)])(a) =\varinjlim_{\epsilon} (\tau(S(a+\epsilon))-\tau(S(a-\epsilon)))/2$$
is half the jump across $x=a$ of the signature of the nonsingular symmetric form $S(x)$ over $\RR$. If $a \in \RR$ is not a zero or a pole of
${\rm det}(S(X))\in \RR(X)^{\bullet}$ then $\tau_{\RR}([S(X])(a)=0 \in \ZZ$.
\end{corollary}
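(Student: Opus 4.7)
The plan is to essentially unpack the definitions, since this corollary is really a ``dictionary'' translating between the abstract construction of $\tau_\RR$ on $W(\RR(X))$ and the concrete behavior of signatures of specializations.

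First I would recall the chain of maps: given the symmetric matrix $S(X)$, Witt's construction assigns the Witt class $[S(X)] \in W(\RR(X))$, then the map $\mathrm{sign}\colon W(\RR(X)) \to \mathcal{Sign}$ assigns the equivalence class of the sign-function
$$x \in \RR \setminus \{\text{zeros and poles of } \det S(X)\} \;\longmapsto\; \tau(S(x)) \in \ZZ,$$
and finally $\tau_\RR$ is defined as $(\partial \circ \mathrm{sign})/2$ where $\partial\sigma(a) = \lim_{\epsilon \to 0^+}(\sigma(a+\epsilon) - \sigma(a-\epsilon))$. Composing, for each $a \in \RR$,
$$\tau_\RR([S(X)])(a) \;=\; \tfrac{1}{2}\lim_{\epsilon \to 0^+}\bigl(\tau(S(a+\epsilon)) - \tau(S(a-\epsilon))\bigr),$$
which is precisely the jump formula. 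So once I justify that this composed expression really represents $\tau_\RR([S(X)])(a)$, both statements follow at once.

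The only substantive content to check is that $x \mapsto \tau(S(x))$ genuinely is a sign-function on the complement of the finite set of zeros and poles of $\det S(X)$, i.e.\ that it is locally constant on this open dense subset. Away from these points, $S(x)$ is an invertible real symmetric matrix, and by the Spectral Theorem \ref{spectral} combined with Corollary \ref{corsign} its signature equals the sum of signs of its (real) eigenvalues. Since the roots of the characteristic polynomial $\det(XI_n - S(x))$ depend continuously on $x$, and since $\det S(x) \neq 0$ prevents any eigenvalue from crossing zero, each eigenvalue keeps a constant sign on any connected component of the complement. Hence $\tau(S(x))$ is locally constant there, and its class in $\mathcal{Sign}$ is well-defined.

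For $a \in \RR$ not a zero or pole of $\det S(X)$, the local constancy just proved makes the limit $\partial\,\mathrm{sign}([S(X)])(a) = 0$, giving $\tau_\RR([S(X)])(a) = 0$. For $a$ a zero or pole, the limit literally is the jump of $\tau(S(x))$ across $a$, and dividing by $2$ gives the stated formula. One small sanity check to fold in: the formula $\tau_\RR = (\partial \circ \mathrm{sign})/2$ makes sense only because $\mathrm{sign}([S(X)])$ takes values of a fixed parity on $\RR$ (as noted earlier, the signature of a nonsingular real symmetric form is congruent mod $2$ to its dimension $n$, which is fixed), so every jump is even and the division by $2$ is integral. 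The expected minor obstacle is simply being careful that the choice of representative of $[S(X)] \in W(\RR(X))$ does not affect the answer, but this is exactly the content that was established when $\mathrm{sign}$ was shown to factor through Witt equivalence: linearly congruent matrices yield sign-functions agreeing off a larger finite set, and hyperbolic summands contribute zero.
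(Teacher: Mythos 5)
Your proposal is correct and takes essentially the same route as the paper: the corollary is stated there with the comment that it ``is just a reformulation of what we proved so far,'' and your argument is exactly that unwinding of $\tau_\RR = (\partial\circ\mathrm{sign})/2$ together with the observation that the jump is even because $\tau(S(x)) \equiv n \pmod 2$ off the zero/pole set. The one place where you supply a detail the paper glosses over is the verification that $x \mapsto \tau(S(x))$ is genuinely locally constant (via continuity of eigenvalues and the impossibility of an eigenvalue crossing zero where $\det S(x)\neq 0$), which is a correct and worthwhile addition.
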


Now that we have a complete understanding of the Witt group over $\RR(X)$, we can restate Sylvester's reformulation of Sturm's theorem, as an equality between elements in a Witt group, using the following diagonalization.

\begin{proposition} Let $P(X) \in \RR[X]$ be a degree $n$ regular polynomial  with Sturm functions 
$$P(X)=(P_0(X),P_1(X),\dots,P_n(X))~,~Q(X)=(Q_1(X),Q_2(X),\dots,Q_n(X))~.$$
The tridiagonal symmetric matrix ${\rm Tri}(Q(X))$
is linearly congruent in $\RR(X)$ to the diagonal matrix with entries
$$\dfrac{P_{k-1}(X)}{P_k(X)}~=~
\dfrac{{\rm det}({\rm Tri}(Q_1(X),Q_2(X),\dots,Q_{n-k+1}(X))}
{{\rm det}({\rm Tri}(Q_2(X),Q_3(X),\dots,Q_{n-k+1}(X))} \in \RR(X)~
(1 \leqslant k \leqslant n)~.$$
The  Witt class is thus
$$\begin{array}{ll}
[{\rm Tri}(Q(X))]&=\sum\limits_{k=1}^n[P_{k-1}(X)/P_k(X)]\\
&=\sum\limits_{k=1}^n[P_{k-1}(X)P_k(X)] \in W(\RR(X))~.
\end{array}$$
\end{proposition}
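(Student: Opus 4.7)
The plan is to apply the Sylvester-Jacobi-Gundelfinger-Frobenius Theorem \ref{sjgf} over the field $\RR(X)$. The key ingredients already in the paper are the Sylvester Duality Theorem \ref{sylvestertheorem}, which lets me pass between ${\rm Tri}(Q)$ and its reverse, and the previously-established identity $P_m(X)/P_n(X) = \det({\rm Tri}(Q_{m+1}(X),\ldots,Q_n(X)))$, which translates Sturm polynomials into tridiagonal determinants.

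First, introduce the reversed vector $Q^* = (Q_n,Q_{n-1},\ldots,Q_1)$. By the Duality Theorem, $J^*{\rm Tri}(Q)J = {\rm Tri}(Q^*)$ for the reversal matrix $J$, so ${\rm Tri}(Q)$ and ${\rm Tri}(Q^*)$ are orthogonally (hence linearly) congruent over $\RR(X)$. The $k$-th principal minor of ${\rm Tri}(Q^*)$ is
$$\mu^*_k = \det({\rm Tri}(Q_n,Q_{n-1},\ldots,Q_{n-k+1})) = \det({\rm Tri}(Q_{n-k+1},\ldots,Q_n)) = P_{n-k}(X)/P_n(X),$$
the middle step being another reversal and the last the Sylvester-Sturm identity. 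Regularity of $P(X)$ forces each $P_j(X)\neq 0\in \RR[X]$, hence each $\mu^*_k\neq 0\in\RR(X)$, so ${\rm Tri}(Q^*)$ is regular in the algebraic sense of Definition \ref{regular} over $\RR(X)$.

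Next, Theorem \ref{sjgf} applied to ${\rm Tri}(Q^*)$ over $\RR(X)$ yields a linear congruence to the diagonal matrix with entries
$$\mu^*_k/\mu^*_{k-1} = P_{n-k}(X)/P_{n-k+1}(X) \quad (1\leqslant k\leqslant n).$$
Reindexing with $j = n-k+1$ shows the multiset of entries is $\{P_{j-1}/P_j: 1\leqslant j\leqslant n\}$, which matches the claim (the order along the diagonal is irrelevant up to a permutation congruence). Composing this diagonalization with the orthogonal congruence ${\rm Tri}(Q)\cong {\rm Tri}(Q^*)$ gives the desired diagonalization of ${\rm Tri}(Q(X))$. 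The stated expression of $P_{k-1}/P_k$ as a ratio of sub-tridiagonal determinants then follows directly from applying the Sylvester-Sturm identity to both numerator and denominator.

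Finally, the Witt-class identity $[{\rm Tri}(Q(X))] = \sum_{k=1}^n [P_{k-1}(X)/P_k(X)]$ is immediate from the diagonalization, while the further equality $[P_{k-1}/P_k] = [P_{k-1}P_k]$ in $W(\RR(X))$ comes from the basic relation $[a] = [ay^2]$ applied with $y = P_k$. The only real obstacle is the bookkeeping that lines up principal minors of ${\rm Tri}(Q^*)$ with the Sturm-function quotients $P_{k-1}/P_k$; once the Duality Theorem is invoked to pass to the reversed vector $Q^*$, so that the Sylvester-Sturm identity applies directly to the minors, everything else is mechanical.
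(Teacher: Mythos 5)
Your proof is correct and follows essentially the same route as the paper's: pass to the reversed vector $Q^*$ via the orthogonal matrix $J$ (this is exactly how the paper's proof begins, invoking Sylvester's Duality Theorem~\ref{sylvestertheorem}), then diagonalize ${\rm Tri}(Q^*)$ with diagonal entries given by the successive quotients of its principal minors $\mu^*_k=P_{n-k}/P_n$. The only cosmetic difference is that the paper writes out the unipotent diagonalizing matrix $A$ explicitly (as in the proof of Proposition~\ref{signtri0}), whereas you cite the general principal-minor diagonalization abstractly; the paper itself remarks that its explicit computation ``can also be deduced from the proof of Theorem~\ref{sjgf}'', which is precisely your route. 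One caution: your final sentence, that the ``stated expression of $P_{k-1}/P_k$ as a ratio of sub-tridiagonal determinants then follows directly from applying the Sylvester--Sturm identity to both numerator and denominator'', doesn't actually produce the printed formula --- applying $P_m/P_n=\det({\rm Tri}(Q_{m+1},\ldots,Q_n))$ to numerator and denominator yields $\det({\rm Tri}(Q_k,\ldots,Q_n))/\det({\rm Tri}(Q_{k+1},\ldots,Q_n))$, whose index ranges do not coincide with ${\rm Tri}(Q_1,\ldots,Q_{n-k+1})$ and ${\rm Tri}(Q_2,\ldots,Q_{n-k+1})$ as printed; the proposition's displayed ratio appears to carry an indexing slip, and the paper's own proof does not in fact verify it, stopping at the diagonal matrix with entries $P_{n-k}/P_{n-k+1}$.
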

\begin{proof} Let $Q^*(X)=(Q_n(X),Q_{n-1}(X),\dots,Q_1(X))$. 
As in the proof of Theorem \ref{sylvestertheorem} define the $n \times n$
orthogonal matrix
$$J~=~\begin{pmatrix} 0 & 0 & \dots & 0 & 1 \\
0 & 0 & \dots & 1 & 0 \\
\vdots & \vdots & \ddots & \vdots & \vdots \\
0 & 1 & \dots & 0 & 0 \\
1 &  0 & \dots & 0 & 0 
\end{pmatrix}$$
such that
$${\rm Tri}(Q^*(X))~=~J^*{\rm Tri}(Q(X))J~.$$
As in the proof of Proposition \ref{signtri0} define the
invertible $n \times n$ matrix
$$A~=~ \begin{pmatrix} 
1 & -P_n(X)/P_{n-1}(X) & P_n(X)/P_{n-2}(X) & \dots & (-1)^{n-1}P_n(X)/P_1(X)\\ 
0 & 1 & -P_{n-1}(X)/P_{n-2}(X) & \dots & (-1)^{n-2}P_{n-1}(X)/P_1(X)\\
0 & 0 & 1 & \dots & (-1)^{n-3}P_{n-2}(X)/P_1(X)\\
\vdots & \vdots & \vdots & \ddots & \vdots \\
0 & 0 & 0 &\dots & 1 
\end{pmatrix}$$
such that
$$A^*{\rm Tri}(Q^*(X))A~=~
\begin{pmatrix} P_{n-1}(X)/P_n(X) & 0 & \dots & 0 \\
0 & P_{n-2}(X)/P_{n-1}(X)  &  \dots & 0\\
\vdots & \vdots &  \ddots & \vdots \\
0& 0 & \dots & P_0(X)/P_1(X)\end{pmatrix}$$
(which can also be deduced from the proof of Theorem~\ref{sjgf}).
\end{proof}

Let us evaluate
$$
\tau_{\RR}([{\rm Tri}(Q(X))])= \sum_{k=1}^n \tau_{\RR} (P_{k-1}(X)P_k(X)) \in \ZZ[\RR].
$$
We have to add signs $\pm$ for each root of one of the polynomials $P_{k-1}(X)P_k(X)$, depending on whether $P_{k-1}(X)P_k(X)$ is increasing or decreasing in the neighborhood of this root.
Note that
\begin{itemize}
\item Any root $x \in \RR$ of $P_k(X)$  for $1\leqslant k \leqslant n-1$ appears twice in $P_0(X)P_1(X)$, $P_1(X)P_2(X),\dots,P_{n-1}(X)P_n(X)$.
\item $P_{k-1}(x)$ and $P_{k+1}(x)$ have opposite signs at any root $x \in \RR$ of $P_k(X)$ since $P_{k-1}(X)+P_{k+1}(X) = P_k(X)Q_k(X)$. Therefore the two signs associated to the two occurrences of some root of $P_k(X)$ cancel ($1\leqslant k \leqslant n-1$).
\item The product $P_0(X)P_1(X)=P(X)P'(X)$ is increasing in the neighborhood of any root of $P_0(X)=P(X)$ (since the derivative of $P(X)P'(X)$ is $P'(X)^2 + P(X)P''(X)$).
\item $P_n(X)$ is a non zero constant so does not vanish!
\end{itemize}
(This is essentially the proof of Sturm's Theorem~\ref{sturmtheorem}).
It follows that
$$
\tau_{\RR}([{\rm Tri}(Q(X))])(x)=
\begin{cases}
0 & \text{if $P(x)\neq 0$}\\
+1&\text{if $P(x)=0$}
\end{cases}
$$
so that the total signature $\tau_{\RR}([{\rm Tri}(Q(X))]) \in \ZZ[\RR]$ is indeed counting the roots of $P$ (with a $+$ sign).

\begin{corollary}\label{preceding}
For any $a<b \in \RR$ the composite
$$\xymatrix@C+10pt{W(\RR(X)) \ar[r]^-{\di{\tau_{\RR}}}  &}\xymatrix@C+45pt{\ZZ[\RR]
\ar[r]^-{\di{\rm restriction}}  & \ZZ[[a,b]] \ar[r]^-{\di{\rm augmentation}} & \ZZ}$$
 sends $({\rm Tri}(Q(X)))$ to
 $$\begin{array}{ll}
 \{\text{no. of real roots of $P(X)$ in $[a,b]$}\}&=(\tau({\rm Tri}(Q(b)))-\tau({\rm Tri}(Q(a))))/2  \\
 & ={\rm var}(a)-{\rm var}(b) \in \{0,1,\dots,n\}.\qed
 \end{array}
 $$
\end{corollary}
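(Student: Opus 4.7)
The plan is to combine three inputs that are all already on the table: (i) the computation of $\tau_{\RR}([{\rm Tri}(Q(X))]) \in \ZZ[\RR]$ carried out in the paragraph just before the corollary, (ii) the definition of $\tau_{\RR}$ as the ``derivative'' of the sign-function divided by $2$, and (iii) Sturm's Theorem~\ref{sturmtheorem} together with Sylvester's Corollary~\ref{sylvester4}. There is essentially nothing new to prove; the work is to check that the three composites land on the same integer.

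First I would unpack the composite. An element $\xi=\sum_{c\in\RR} n_c\cdot c \in \ZZ[\RR]$ has finite support, so restriction to $[a,b]$ followed by augmentation sends $\xi$ to $\sum_{c\in[a,b]} n_c \in \ZZ$. Applying this to $\xi=\tau_{\RR}([{\rm Tri}(Q(X))])$ and invoking the explicit formula
$$
\tau_{\RR}([{\rm Tri}(Q(X))])(x) \;=\; \begin{cases} +1 & \text{if } P(x)=0, \\ 0 & \text{otherwise,} \end{cases}
$$
which was established just before the corollary (using that the roots of the intermediate $P_k(X)$ contribute paired sign jumps that cancel, that $P_0P_1 = PP'$ is increasing at every simple root of $P$, and that $P_n$ is a nonzero constant), one obtains immediately that the composite sends $[{\rm Tri}(Q(X))]$ to the number of real roots of $P(X)$ lying in $[a,b]$.

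Next I would produce the middle equality $(\tau({\rm Tri}(Q(b)))-\tau({\rm Tri}(Q(a))))/2$ by a telescoping argument based on the definition $\tau_{\RR}=\tfrac12(\partial\circ{\rm sign})$. Since ${\rm sign}({\rm Tri}(Q(X)))$ is a sign-function whose value at $x$ is $\tau({\rm Tri}(Q(x)))$ wherever defined, the sum
$$
\sum_{c\in[a,b]} \tau_{\RR}([{\rm Tri}(Q(X))])(c) \;=\; \tfrac12\sum_{c\in[a,b]} \bigl(\lim_{\epsilon\to 0^+}\tau({\rm Tri}(Q(c+\epsilon)))-\lim_{\epsilon\to 0^+}\tau({\rm Tri}(Q(c-\epsilon)))\bigr)
$$
telescopes to $(\tau({\rm Tri}(Q(b)))-\tau({\rm Tri}(Q(a))))/2$, provided $a,b$ are regular (which we may assume, since both sides are locally constant in $a,b$ off the finite bad set). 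Finally, the identification with ${\rm var}(a)-{\rm var}(b)$ is exactly the content of Corollary~\ref{sylvester4}, itself a combination of Sylvester's Theorem~\ref{sylvestertheorem2} (giving $\tau({\rm Tri}(Q(x)))=n-2\,{\rm var}(x)$) and Sturm's Theorem~\ref{sturmtheorem}.

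There is no real obstacle: the only mild subtlety is making sure the telescoping is set up over regular $a,b$ so that $\tau({\rm Tri}(Q(a)))$ and $\tau({\rm Tri}(Q(b)))$ are unambiguously defined, and noting that the sum on the left-hand side is indeed finite because $\tau_{\RR}$ takes values in the finitely supported group $\ZZ[\RR]$. Once these points are observed, the three described quantities agree termwise and the corollary follows.
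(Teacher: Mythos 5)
Your proof is correct and follows essentially the same route as the paper: the corollary is stated with a terminal \qed precisely because it is meant to be read off from the immediately preceding computation of $\tau_{\RR}([{\rm Tri}(Q(X))])$ (which shows the total signature counts the real roots of $P$ with a $+$ sign) combined with Corollary~\ref{sylvester4}. Your additional telescoping argument, deriving $(\tau({\rm Tri}(Q(b)))-\tau({\rm Tri}(Q(a))))/2$ directly from the definition $\tau_{\RR}=\tfrac12(\partial\circ\mathrm{sign})$, is a clean alternative derivation of the middle equality that sidesteps quoting Corollary~\ref{sylvester4} for that step, and your remark that $a,b$ should be regular (as in Corollary~\ref{sylvester4}) is a correct reading of what "for any $a<b$" implicitly means here.
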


For $z\in {\cal H}$, it is easy to evaluate
$$
disc_{\cal H}([{\rm Tri}(Q(X))])(z)= \sum_{k=1}^n disc_{\cal H}(P_k(X)P_{k-1}(X))(z) \in \ZZ/ 2 \ZZ.
$$
As before, since every $P_i$ occurs twice in $P_0P_1, P_1P_2, \dots, P_{n-1}P_n$, except $P=P_0$ and $P_n$, we have:
$$
disc_{\cal H}([{\rm Tri}(Q(X))])(z)=
\begin{cases}
0 & \text{if $P_z$ does not divide $P$.}\\
+1&\text{if $P_z$ divides $P$}.
\end{cases}
$$

We can now formulate our next to last reformulation of Sturm-Sylvester's Theorem (Corollary~\ref{sylvester4}).

\begin{theorem}\label{nexttolast} Let $P\in \RR[X]$ be a monic degree $n+2m$  polynomial with distinct roots.
Let $x_1,x_2, \dots, x_n$ be its real roots and $z_1, \dots , z_m$ its complex roots in ${\cal H}$. Let $(P_0,P_1,\dots,P_{n+2m})$, $(Q_1,Q_2,\dots,Q_{n+2m})$ be the Sturm functions, and let ${\rm Tri}(Q)$ be the associated tridiagonal matrix.
We have the following equality in $W(\RR(X))$:
$$\begin{array}{ll}
[{\rm Tri}(Q)]&
= [X-x_1]+ [X-x_2]+ \dots +[X-x_n]+ [P_{z_1}] + \dots + [P_{z_m}] -m[1]\\[1ex]
&=n[1]+\sum\limits^n_{i=1}([X-x_i]-[1])+\sum\limits^m_{j=1}
([P_{z_j}(X)]-[1])\\[1ex]
&\in W(\RR(X))~=~\ZZ \oplus \ZZ[\RR] \oplus \ZZ/2\ZZ[{\cal H}].
\end{array}
$$
\end{theorem}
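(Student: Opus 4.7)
By Theorem~\ref{wittrational}, $W(\RR(X))$ is classified by the triple of invariants $(\tau(\cdot)|_\infty, \tau_\RR, \mathrm{disc}_\mathcal{H})$, so my approach is to check agreement of all three coordinates on both sides of the claimed identity. I tacitly assume $P$ is regular in the sense of Definition~\ref{sturmfunctions}; the general case of distinct roots follows by a density argument, since all three invariants are locally constant on the open stratum of polynomials with a fixed set of root types.

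The preceding Proposition supplies the diagonalization
$$[\mathrm{Tri}(Q(X))] \;=\; \sum_{k=1}^{N} [P_{k-1}(X)\,P_k(X)] \in W(\RR(X)), \qquad N = n+2m,$$
and the two telescoping computations in the text just before the statement use the Sturm recurrence $P_{k-1}+P_{k+1}=P_k Q_k$ to collapse the $\tau_\RR$ and $\mathrm{disc}_\mathcal{H}$ coordinates of this sum: contributions from roots of the intermediate $P_k$'s cancel pairwise between consecutive products $P_{k-1}P_k$ and $P_k P_{k+1}$, leaving only the real roots of $P_0=P$ (with sign $+1$ since $(P_0 P_1)'(x_i)=P'(x_i)^2>0$) and, modulo squares, only its irreducible quadratic factors $P_{z_j}$. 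Thus $\tau_\RR([\mathrm{Tri}(Q)])=\sum_i x_i$ and $\mathrm{disc}_\mathcal{H}([\mathrm{Tri}(Q)])=\sum_j z_j$. These match the right-hand side term-by-term: $[X-x_i]$ contributes only $x_i$ to $\tau_\RR$, $[P_{z_j}]$ contributes only $z_j$ to $\mathrm{disc}_\mathcal{H}$, and $[1]$ contributes to neither.

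The remaining and main task is the signature at $+\infty$. On the right-hand side it equals $n + m - m = n$, since each of $[1]$, $[X-x_i]$, and $[P_{z_j}]$ is positive for large $x$. For the left-hand side, Sylvester's Theorem~\ref{sylvestertheorem2} at a regular $a \gg 0$ gives $\tau(\mathrm{Tri}(Q(+\infty))) = N - 2 V_{+\infty}$, where $V_{\pm\infty} := \mathrm{var}(P_0(\pm\infty),\ldots,P_N(\pm\infty))$, and Sturm's Theorem~\ref{sturmtheorem} yields the difference $V_{-\infty} - V_{+\infty} = n$. The new input I would supply is the parity identity $V_{+\infty} + V_{-\infty} = N$: since $\deg P_k = N-k$, the sign of $P_k(-\infty)$ equals $(-1)^{N-k}$ times that of $P_k(+\infty)$, so the relative sign-flip factor at position $(k,k+1)$ equals $(-1)^{N-k}(-1)^{N-k-1}=-1$, forcing exactly one of $\pm\infty$ to record a sign change between those two positions; summing over the $N$ consecutive pairs gives the claim. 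Combining the two identities yields $V_{+\infty} = m$ and hence $\tau(\mathrm{Tri}(Q(+\infty))) = N-2m = n$, matching the right-hand side. With all three invariants now in agreement, Theorem~\ref{wittrational} delivers the Witt-class equality.

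The genuine difficulty is precisely this parity identity $V_{+\infty}+V_{-\infty}=N$: the text's preceding corollary already pins down the difference of signatures at $\pm\infty$ (Sturm-style root counting), but not their absolute values, so a separate argument exploiting the strict degree drop $\deg P_{k+1} = \deg P_k - 1$ along the Sturm sequence is needed to complete the calculation.
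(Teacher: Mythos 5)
Your proof is correct and follows essentially the same strategy as the paper's: reduce to the three Witt invariants of Theorem~\ref{wittrational}, read off $\tau_\RR$ and ${\rm disc}_{\cal H}$ from the Sturm-recurrence cancellation carried out in the preceding text, and pin down the remaining invariant (the signature at $+\infty$) by combining Sturm's theorem with a $\pm\infty$ symmetry. The one place you diverge is in how you prove that symmetry. The paper writes $Q_k(X)=a_kX+b_k$, divides ${\rm Tri}(Q)(x)$ by $x$, and observes that the rescaled matrix tends to ${\rm diag}(a_1,\dots,a_N)$ as $x\to+\infty$ and to ${\rm diag}(-a_1,\dots,-a_N)$ as $x\to-\infty$, giving $\tau(+\infty)=-\tau(-\infty)$ directly. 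You instead use $\deg P_k=N-k$ to get the equivalent parity identity $V_{+\infty}+V_{-\infty}=N$ at the level of sign variations, which under $\tau=N-2V$ is precisely the same statement. Both hinge on the same structural fact about a regular Sturm sequence (the degree drops by exactly $1$ at each step, so $\deg Q_k=1$), and your argument is a clean, correct alternative to the paper's; the paper's is slightly more ``matrix-native'' since it stays with ${\rm Tri}(Q)$ rather than passing to minors, but the two computations carry identical content.
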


\begin{proof}
We already checked that the two sides have the same image by $\tau_{\RR} \oplus {\rm disc}_{\cal H}$.
Their difference is therefore Witt equivalent to a matrix with constant real coefficients.
It is therefore sufficient to check that the signatures of both sides agree for large positive values of $x$.
Let us write $Q_k(X)$ in the form $a_kX+b_k$ ($1 \leqslant k \leqslant n+2m$).
The signature $s$ of ${\rm Tri}(Q)(x)$ for large positive $x$ is the signature of the diagonal matrix with coefficients $a_k$, i.e. is the number of positive $a_k$ minus the number of negative $a_k$.
The signature of ${\rm Tri}(Q)(x)$ for large negative $x$ is the signature of the diagonal matrix with coefficients $-a_k$, so it is the opposite of the signature for $x$ large and positive. 
We know that the differences of signatures of ${\rm Tri}(Q)(x)$ between $x=+\infty$ and $- \infty$ is twice the number of real roots of $P$, i.e. $n$.
Therefore the signature of ${\rm Tri}(Q)(x)$ for $x=+\infty$ (resp. $-\infty$) is equal to $+n$ (resp. $-n$). 
Therefore, the two sides of the equality of the theorem have the same signature for $x$ large.
\end{proof}

\bigskip

\begin{remark}
Our description of the sign-function $sign$ on $W(\RR(X))$ can be expressed in a slightly different language.

An {\it ordering} ${\cal O}$ of a field $K$ is a decomposition as a disjoint union $$K=K_+ \sqcup \{0\} \sqcup K_-$$
such that
\begin{itemize}
\item $x \in K_-$ if and only if $-x\in K_+$.
\item The sum and product of two elements of $K_+$ belongs to $K_+$.
\end{itemize}
\item The {\it ${\cal O}$-sign} of an element $a \in K$ is
$${\rm sign}_{\cal O}(a)=\begin{cases} +1&{\rm if}~x \in K_+ \\
\hspace*{10pt}0&{\rm if}~x=0\\
-1&{\rm if}~x \in K_-
\end{cases}$$
and satisfies the usual product rules for signs.

The  proof of Sylvester's Law of Inertia~\ref{sylvester}, with no modification, enables us to define the ${\cal O}$-signature of  a symmetric $(V,\phi)$ using any of the  diagonalization $(V,\phi) \cong \oplus_{k=1}^n (K,a_k)$ given by Proposition~\ref{diag}.
The number is independent of the diagonalization, since a square $b^2\in K^{\bullet}$ has ${\rm sign}_{\cal O}(b^2)=+1$ for every ordering ${\cal O}$.

The {\it ${\cal O}$-signature} of a nonsingular symmetric form $(V,\phi)$ over $K$ is
$$\tau_{{\cal O}}(V,\phi)=\sum\limits^n_{k=1}{\rm sign}_{\cal O}(a_k) \in \ZZ$$
for any diagonalization $(V,\phi) \cong \oplus^n_{k=1}(K,a_k)$.
In our new language, any ordering ${\cal O}$ on $K$ yields a signature homomorphism (also congruent modulo 2 to the dimension)
$$\tau_{\cal O}: \sum\limits_{k=1}^n(K,a_k) \in W(K) \mapsto \sum\limits^n_{k=1}{\rm sign}_{\cal O}(a_k) \in \ZZ.$$
In the special case of $K=\RR(X)$, we leave to the reader the description of orderings and to relate the corresponding ${\cal O}$-signatures to the map ${\rm sign} : W(\RR(X)) \to {\mathcal Sign}$ that we constructed above.

Pfister~\cite{pfister} proved the great theorem: for any field $K$ of characteristic $\neq 2$ the intersection of all kernels of the homomorphisms $\tau_{\cal O}$ with ${\cal O}$ running over all the orderings of $K$ is torsion.
In other words, for  every $(V,\phi) \in W(K)$ of infinite order, there is some ordering ${\cal O}$ such that  $\tau_{{\cal O}}(V,\phi) \neq 0$.

\end{remark}

\bigskip

The description that we gave of $W(\RR(X))$ ``by hand'' can be greatly generalized and conceptualized.
We refer to~\cite[Chapter 6]{lam} for a detailed exposition and we limit ourselves to a quick sketch since we shall not use this generalization.

Let $K$ be any field (of characteristic different from 2) and let us compute $W(K(X))$.
Any polynomial in $K[X]$ splits as the product of a constant $c$ and irreducible monic polynomials.
Let $\pi\in K[X]$ be such an irreducible monic polynomial.
The quotient $K[X]/\pi$ is the {\it residue field} at $\pi$.

As usual in commutative algebra, one can construct the {\it localization of $K[X]$ at $\pi$}, inverting all elements which are not divisible by $\pi$.
This produces a ring $K_{\pi}[X]=S_{\pi}^{-1}K[X]$ whose field of fractions is $K(X)$,
with $S_{\pi}=\{\pi^k \,\vert\, k \geqslant 0\}$.

For instance, when $K=\RR$ and $\pi = X-a$, the ring $K_{\pi}[X]$ consists of rational functions $P(X)/Q(X)$ which are finite in a neighborhood of $a$, i.e. such that $Q(a)\neq 0$.
So, the passage from $K[X]$ to $K_{\pi}[X]$ is indeed a ``localization around $a$''.

$K(X)$ is equipped with a discrete valuation  $v_{\pi}$ with values in $\ZZ \cup \{ \infty \}$ such that
$v_{\pi} (\pi^n P(X)/Q(X)) = n $ if $P,Q$ are coprime to $\pi$.
The subring consisting of elements with non negative valuation is by definition the local ring $K_{\pi}[X]$.

The norm $\exp( - v_{\pi})$ defines a topology on $K(X)$ which is not complete.
Denote its completion by $K_{\pi}(X)$.

For instance, when $K=\RR$ and $\pi = X-a$, the residue field $K[X]/\pi$ is isomorphic to $\RR$.
As for the field $K_{\pi}(X)$, it  consists of Laurent series at $a$, i.e. formal sums
$$
\sum_{k=v}^{+\infty} \alpha_k(X-a)^k
$$
with $\alpha_k\in \RR$ and $\alpha_v\neq 0$.
The valuation of such an element is $v\in \ZZ$.
Note that series whose valuation is $0$ are precisely the invertible elements of the local ring $K_{\pi}[X]$, i.e. of the ring of elements with non negative valuation.
In general, the field $K_{\pi}(X)$ consists of series $\sum_{k=v}^{+\infty} \alpha_k\pi^k$ where the coefficients $\alpha_k$ are now in the residue field $K[X]/\pi$.

We define a ``residue map''
$$
{\rm res}_{\pi} : W(K(X)) \to W(K[X]/\pi).
$$
The construction is easy as a composition of two natural maps.

The first, $W(K(X)) \to W(K_{\pi}(X))$,  is simply induced by the embedding of $K(X)$  in its completion $K_{\pi}(X)$.

The second goes from $W(K_{\pi}(X))$ to the Witt group of its residue field $W(K[X]/\pi)$.
Denote by $U$ the group of unit elements of $K_{\pi}[X]$ (with valuation $0$).
Modulo squares, a non zero element $q$ of  $K_{\pi}(X)$ can be written in the form $u$ or $\pi u$ for some unit
$$
u= \sum_{k=0}^{+\infty} \alpha_k\pi^k
$$
with $\alpha_0 \in K[X]/\pi$ different from $0$.
One sets ${\rm res}_{\pi} ([u]) =0$ and ${\rm res}_{\pi} ([\pi u]) = [\alpha_0] \in W(K[X]/\pi)$.

In the case $K= \RR$, and $\pi= X-a$ the residue map ${\rm res}_{a} : W(K(\RR)) \to W(\RR)$ is such that
${\rm res}_{a}(\alpha_{-1} (X-a)^{-1})$ is $+1$ or $-1$ according to the sign of $\alpha_{-1}$ and this is coherent with the classical concept of residue in complex analysis.

In effect, Milnor (\cite[Chapter IV]{milnorhusemoller}) established the following exact sequence for any field $K$ of characteristic $\neq 2$, generalizing our computation for $K = \RR$
$$
0 \to W(K) \xrightarrow{i}  W( K(X)) \xrightarrow{p} \bigoplus_{ \pi \in \RR[X]~ {\rm prime} } W(K[X]/\pi) \to 0.
$$

The proof follows the same line as the special case that we examined in detail.

We can at last give our final reformulation of Sturm-Sylvester's theorem, valid in any field (always of characteristic different from 2).
Let $P$ be a monic separable element of $K[X]$ (i.e. with distinct roots in the algebraic closure of $K$). 
Let $P = \pi_1 \dots \pi_k$ be its decomposition as a product of prime polynomials in $K[X]$.
We still denote by ${\rm Tri}(Q)$ the tridiagonal matrix associated to $P$ by Sturm's algorithm.
The following theorem gives explicitly the image $p([{\rm Tri}(Q)])$ in $\bigoplus_{ \pi \in \RR[X]~ {\rm prime} } W(K[X]/\pi)$, component wise.

\begin{theorem} 
If $\pi$ is prime in $K[X]$, we have
$$
res_{\pi} ([{\rm Tri}(Q(X))]) = 
\begin{cases}
0  \in W(K[X]/\pi)  & {\rm if}~\pi \text{ is not a divisor of } P \\
[\pi'] \in W(K[X]/\pi) & {\rm if}~\pi   \text{ is  a divisor of } P,
\end{cases}
$$
where $\pi'$ denotes the derivative of $\pi$.
\end{theorem}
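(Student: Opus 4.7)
The idea is to mimic the real-field computation carried out just before Theorem~\ref{nexttolast}, with signs replaced by $\pi$-adic residues. The proposition preceding Theorem~\ref{nexttolast} is a purely linear-algebraic statement, valid over any field of characteristic $\neq 2$: it gives that ${\rm Tri}(Q(X))$ is linearly congruent over $K(X)$ to the diagonal matrix with entries $P_{k-1}/P_k$, hence
$$
[{\rm Tri}(Q(X))]~=~\sum_{k=1}^n [P_{k-1}(X)\,P_k(X)] \in W(K(X))
$$
(using $[a/b]=[ab]$ modulo squares). The problem reduces to evaluating $\sum_{k=1}^n {\rm res}_\pi([P_{k-1}P_k])$.

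First I would record the divisibility facts that make the cancellations work. Separability of $P$ gives $\gcd(P,P')=1$ in $K[X]$, and the Euclidean recurrence $P_{k+1}=P_kQ_k-P_{k-1}$ propagates this to $\gcd(P_{k-1},P_k)=1$ for every $k$; in particular, $\pi$ cannot divide two consecutive Sturm functions. The terminal $P_n\in K^{\bullet}$ is a nonzero constant, so $\pi\nmid P_n$. Finally, if $\pi\mid P$ then separability of $P$ forces $v_\pi(P)=1$ and forces $\pi$ to be separable, so $\pi'\not\equiv 0\pmod\pi$ and $[\pi']\in W(K[X]/\pi)$ is well defined.

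The heart of the proof is to show that each intermediate $P_j$ $(1\leqslant j\leqslant n-1)$ divisible by $\pi$ contributes zero to the sum. Write $P_j=\pi^m u_j$ with $u_j$ a unit in the local ring $K_\pi[X]$; then $P_j$ appears in exactly two summands, $[P_{j-1}P_j]$ and $[P_jP_{j+1}]$, and by the coprimality above $P_{j\pm 1}$ are units. If $m$ is even both residues are $0$. If $m$ is odd they equal $[\bar P_{j-1}\bar u_j]$ and $[\bar P_{j+1}\bar u_j]$ in $W(K[X]/\pi)$, and reducing $P_{j-1}+P_{j+1}=P_jQ_j$ modulo $\pi$ gives $\bar P_{j+1}=-\bar P_{j-1}$; the identity $[a]+[-a]=0$ in the Witt group then collapses the pair to $0$. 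Thus every interior $P_j$ cancels. The only unpaired boundary term is $[P_0P_1]=[PP']$, since $P_n$ is constant. If $\pi\nmid P$, the $[P_0P_1]$ term is either a unit (residue $0$, if $\pi\nmid P_1$) or pairs with $[P_1P_2]$ under the $j=1$ case of the previous analysis, so the total residue is $0$. If $\pi\mid P$, write $P=\pi R$ with $R$ a unit mod $\pi$; then $P'\equiv \pi' R\pmod\pi$ and
$$
{\rm res}_\pi([P\cdot P'])~=~[\bar R\cdot \pi'\bar R]~=~[\pi'\bar R^{\,2}]~=~[\pi']\in W(K[X]/\pi),
$$
since $\bar R^{\,2}$ is a unit square. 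This is precisely the claimed value.

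The main subtlety is the interior cancellation when $v_\pi(P_j)\geqslant 2$: one must verify that, regardless of whether the valuation is even or odd, the two adjacent summands in $\sum_k {\rm res}_\pi([P_{k-1}P_k])$ sum to zero (by vanishing of both residues in the even case, by $[a]+[-a]=0$ in the odd case), and that no stray term slips through. Once this is checked, the remainder of the argument is routine Witt-group bookkeeping based on $[ab^2]=[a]$ and $[a]+[-a]=0$, together with the mod-$\pi$ reduction of the Sturm recurrence.
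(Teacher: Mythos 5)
Your proof is correct and follows essentially the same route as the paper's: decompose $[{\rm Tri}(Q(X))]=\sum_k[P_{k-1}P_k]$, use $P_{k-1}+P_{k+1}=P_kQ_k\equiv 0\pmod\pi$ to cancel the two residues adjacent to any interior $P_j$ divisible by $\pi$, and compute the surviving boundary term $[PP']$ directly. You are slightly more careful than the paper in treating the case $v_\pi(P_j)\geqslant 2$ (separating the even-valuation case, where both residues already vanish, from the odd case, where $[a]+[-a]=0$ does the work), but this is a refinement of the same cancellation, not a different argument.
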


In the case $K=\RR$, and $\pi = X-a$, we have $\pi' =1 $ and 
$$W(\RR[X]/(X-a))=W(\RR) = \ZZ.$$
When $\pi=P_z$, the derivative $\pi'$ is the non trivial element in $W(\RR[X]/P_z(X))=W(\CC) = \ZZ/2\ZZ$.
Therefore, we do recover the previous version~\ref{nexttolast}. 

\begin{proof} 
We know from the discussion preceding Corollary \ref{preceding} (which applies to an arbitrary $K$) that 
$$[{\rm Tri}(Q(X))]=\sum^n_{k=1}[P_k(X)P_{k-1}(X)] \in W(K(X))$$
where as usual $P_k$ denotes the Sturm's functions.
Choose some prime element $\pi$ in $K[X]$. 
We have to show that $\sum^n_{k=1}[P_kP_{k-1}]$  and $[\pi_1\pi_1']+ \dots + [\pi_k \pi_k']$ have the same residue at $\pi$, i.e. the same image by ${\rm res}_{\pi}$.
This residue can only be non trivial if $\pi$ is a divisor of one of the $P_k$.
Note that by the recurrence formula defining the $P_k$'s, $\pi$ cannot divide two consecutive $P_k$ since that would imply that it divides all the $P_k$ and in particular $P_n$ which is a non vanishing constant.
We first compute 
$$
{\rm res}_{\pi} (\sum^n_{k=1}[P_kP_{k-1}] )
$$
when $\pi$ divides $P_k$ with $0<k<n$ so that $P_k= \pi R$ for some polynomial $R\in K[X]$. 
Then the previous sum reduces to 
$$
{\rm res}_{\pi} ([P_kP_{k-1}] + [P_{k+1}P_{k}]).
$$
By the above definition of the residue:
$$
{\rm res}_{\pi} ([P_kP_{k-1}]) = P_kP_{k-1}/ \pi = RP_{k-1} \in W(K[X]/ \pi)
$$
and similarly 
$$
{\rm res}_{\pi} ([P_{k+1}P_{k}]) = P_{k+1}P_{k}/ \pi = P_{k+1}R \in W(K[X]/ \pi)
$$
Since $P_{k-1}+P_{k+1}= P_k Q_k$, it follows that $P_{k-1}+P_{k+1}$ is divisible by $\pi$ and therefore vanishes in $K[X]/\pi$.
Hence the sum of these two residues vanishes.
Therefore the only non trivial residues are associated to the prime divisors of $P_0$, i.e. to one of the $\pi_i$ ($1\leqslant i \leqslant k$).

We have to compute ${\rm res}_{\pi_i} ([P_0P_1]) \in W(K[X]/\pi_i)$. 
Write $P=P_0$ as $\pi_i A$ where $A\in K[X]$ is the product of the $\pi_j$ with $j \neq i$.
Since $P_1= P_0'$, we have $P_1= \pi_i A' + \pi_i' A$ and 
$$
P_0P_1= \pi_i A (\pi_i A' + \pi_i' A)= \pi_i \pi_i' A^2 + \pi_i^2 AA'
$$
In particular the $\pi_i$-valuation of $P_0P_1$ is 1 and one has
$$
{\rm res}_{\pi_i} ([P_0P_1]) = [\pi_i' A^2+ \pi_i AA'] = [\pi_i'] \in W(K[X]/\pi_i).
$$
In other words
$$
res_{\pi_i}([{\rm Tri}(Q)]) =  [\pi_i'] \in W(K[X]/\pi_i).
$$
\end{proof}

The case of the rationals $\QQ$ is another example where this general technique applies.
Prime ideals of $\ZZ$ correspond to prime numbers $p$.
The residue fields are $\FF_p$.
The completion of the field of fractions $\QQ$ of $\ZZ$ for the $p$-adic valuation is the field $\QQ_p$ of $p$-adic numbers.
The completion of $\QQ$ with the archimedean valuation is of course $\RR$, with Witt group $\ZZ$.
The residue map goes from $W(\QQ)$ to $W(\FF_p)$ and the corresponding exact sequence is now

$$0 \rightarrow \ZZ \rightarrow W(\QQ )\rightarrow \ZZ /2\oplus \bigoplus\limits _{p~{\rm odd}}W(\FF_p)\rightarrow 0.$$

See Milnor and Husemoller~\cite[Chapter III]{milnorhusemoller} for many computations of the symmetric Witt group $W(K)$.
In particular, the Witt group of a finite field $\FF_q$ with $q$ odd (i.e. a power of an odd prime $p$) is
$$W(\FF_q)~=~\begin{cases} 
(\ZZ/2\ZZ)[\FF_q^{\bullet}/(\FF_q^{\bullet})^2]&{\rm if}~q \equiv 1(\bmod\,4)\\[1ex]
\ZZ/4\ZZ&{\rm if}~q\equiv 3(\bmod\,4)
\end{cases}$$
in accord with the Legendre symbol for deciding if $-1$ is a square $\bmod\,p$
$$\bigg(\dfrac{-1}{p}\bigg)=\begin{cases} 1&\text{if $p\equiv 1(\bmod\,4)$}\\
-1&\text{if $p\equiv 3(\bmod\,4)$}~.
\end{cases}$$
In the Appendix (Section \ref{appendix}) we shall recall the definition of the Witt group $W(R)$ of $R$-valued nonsingular symmetric forms on f.g. free $R$-modules, for any commutative ring $R$, and describe the localization exact sequence relating $W(R)$ and $W(S^{-1}R)$  for the localization $S^{-1}R$ inverting a multiplicative subset $S \subset R$ of non-zero divisors
$$\xymatrix{\dots \ar[r] & W(R) \ar[r] & W(S^{-1}R) \ar[r]^-{\di{\partial}} & W(R,S) \ar[r] &\dots}$$
with $W(R,S)$ the Witt group of $S^{-1}R/R$-valued nonsingular symmetric forms on $S$-torsion $R$-modules of homological dimension 1.   
The Witt group $W(\QQ)$ is given by $(R,S)=(\ZZ,\ZZ\backslash \{0\})$ with $S^{-1}R=\QQ$, as detailed in \cite{milnorhusemoller}.
We shall relate the above computation of $W(K(X))$ to the case $(R,S)=(K[X],K[X]\backslash \{0\})$ with $S^{-1}R=K(X)$. 
In particular, for each prime $p \geqslant 2$ there is defined a multiplicative subset
$S_p=\{p^k\,\vert\, k \geqslant 0\} \subset \ZZ$ with $S_p^{-1}\ZZ=\ZZ[1/p]$,  and
$$W(\ZZ,\ZZ\backslash \{0\})~=~\bigoplus\limits_{p\geqslant 2} W(\ZZ,S_p)~=~\bigoplus\limits_{p \geqslant 2}W(\FF_p)$$
with
$$W(\FF_p)~=~\begin{cases}
\ZZ/2\ZZ&{\rm if}~p=2\\[1ex]
\ZZ/2\ZZ\oplus \ZZ/2\ZZ&{\rm if}~p \equiv 1(\bmod\, 4)\\[1ex]
\ZZ/4\ZZ&{\rm if}~p \equiv 3(\bmod\, 4).
\end{cases}$$

\newpage

\section{Topology}\label{topology}

$$
\includegraphics[width=.9\linewidth]{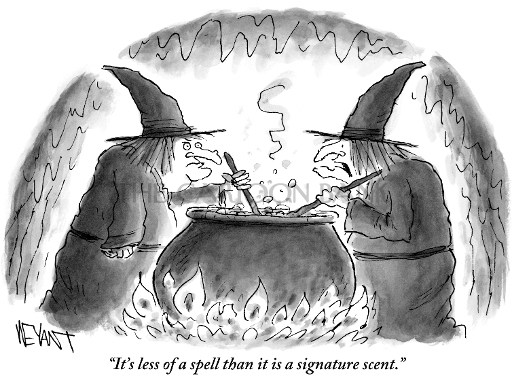}
$$

\subsection{Even-dimensional manifolds}

We shall only consider {\it compact oriented smooth manifolds, possibly with a non empty boundary}, and in the first instance work with homology and cohomology with coefficients in a commutative ring $R$.

Recall that for any commutative ring $R$  the {\it cup product} equips the $R$-coefficient cohomology of any space $X$
$$H^{\star}(X;R) =\bigoplus\limits_{k\geqslant   0} H^k(X;R)$$
with a canonical structure of a graded associative and commutative ring.
Commutativity should be understood in the graded sense.

For a {\it closed} (i.e. without boundary) oriented $m$-dimensional manifold $M$ there is a {\it Poincar\'e duality}, which can be expressed as an isomorphism between $H^k(M;R)$ and $H_{m-k}(M;R)$.
Using this duality, the ring structure on cohomology then transforms to an {\it intersection product} between homology classes:
$$\phi_M:~H_k(M;R)\times H_{\ell}(M;R) \to H_{k+\ell-m}(M;R).$$
The intersection in homology is also well defined for a manifold $M$ with boundary $\partial M$, using the Poincar\'e-Lefschetz duality $H^k(M;R) \cong H_{m-k}(M,\partial M;R)$. For $k+\ell=m$ composition with the augmentation map
$H_0(M;R) \to R$ gives a bilinear pairing
$$\phi_M:~H_k(M;R) \times H_{\ell}(M;R) \to H_0(M;R) \to R$$
such that
$$\phi_M(v,u)=(-1)^{k\ell}\phi_M(u,v) \in R$$
for $u \in H_k(M;R)$ and $v \in H_{\ell}(M;R)$.

\begin{center}
\includegraphics[width=.35 \textwidth]{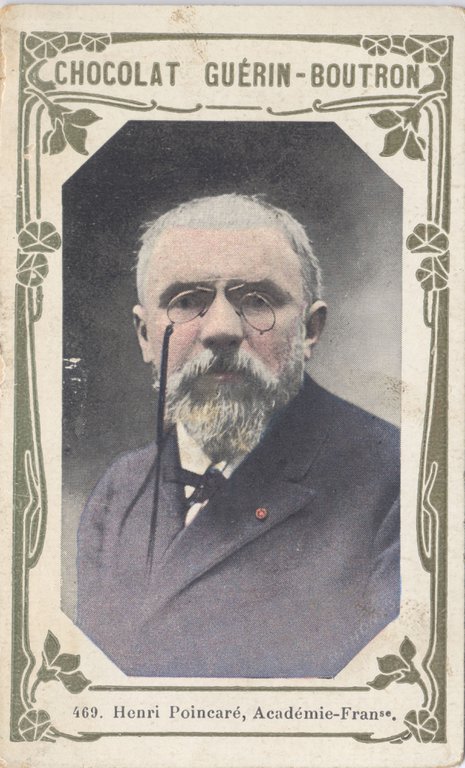}

Henri Poincar\'e (1854--1912)
\end{center}

We shall be particularly concerned with the cases $R=\ZZ,\RR$.
In this case, the homology abelian groups $H_k(M;\ZZ)$ are f.g., and the homology vector spaces $H_k(M;\RR)$ are finite dimensional.

\begin{remark}
A smooth oriented submanifold $L^\ell \subset M$ represents a $\ZZ$-coefficient homology class $[L] \in H_\ell(M, \ZZ)$.
For submanifolds $K^k,L^\ell \subset M^m$ the algebraic intersection $\phi_M([K],[L]) \in H_{k+\ell-m}(M;\ZZ)$ has a geometric description:  by a small deformation, one can assume that the two submanifolds are transverse and one considers the $\ZZ$-coefficient homology class of the oriented intersection. If $K^k,L^{\ell}  \subset M^m$ are transverse then $K \cap L \subseteq M$ is a $(k+\ell-m)$-dimensional submanifold (understood to be $\emptyset$ if $k+\ell<m$) and
$$
\phi_M([K],[L])=[K \cap L] \in H_{k+\ell-m}(M;\ZZ).
$$
For $k+\ell=m$ the intersection $K \cap L$ is a finite set of points, and the augmentation $H_0(M;\ZZ) \to \ZZ$ (an isomorphism for connected $M$) sends $\phi_M([K],[L]) \in H_0(M;\ZZ)$ to the number of points, with a sign given by the orientations.
\end{remark}

Therefore, restricting to the case of the middle dimension, there is a canonical $\pm$-symmetric form associated to an even dimensional manifold.

\begin{definition} For any  $2\ell$-dimensional manifold with boundary $(M,\partial M)$  there is defined a $(-1)^\ell$-symmetric pairing
$$
\phi_M:~H_\ell(M;R) \times H_{\ell}(M;R) \to R.
$$
\end{definition}

If $H_\ell(M;R)$ is a f.g. free $R$-module this is an example of a $(-1)^{\ell}$-symmetric form over $R$, an obvious generalization of the symmetric forms over a field $F$ we were considering in section~\ref{algebra}.

\begin{definition} Let $\epsilon = \pm 1$.

1. A {\it $\epsilon$-symmetric form over R} $(V,\phi)$ is a f.g. free $R$-module $V$ with a bilinear pairing $\phi:V \times V \to R$ such that $\phi(v,w)=\epsilon\phi(w,v)$.

2. A form $(V,\phi)$ is {\it nonsingular} if the $R$-module morphism $\phi:V \to V^*={\rm Hom}_R(V,R)$ given by $\phi(v) (w)=\phi(v,w)$ is an isomorphism.

3. An {\it isomorphism} $f:(V,\phi) \to (V',\phi')$ of $\epsilon$-symmetric forms over $R$ is an $R$-module isomorphism $f:V \to V'$ such that $\phi'(f(v) ,f(w))=\phi(v,w)$.
\end{definition}

(In the Appendix~\ref{appendix} we shall even be considering forms over a ring with involution).

Let $F_{\ell}(M;\ZZ)$ denote the quotient of  $H_{\ell}(M;\ZZ)$ by its torsion.
This is a f.g. free $\ZZ$-module which is also equipped with the $(-1)^{\ell}$-symmetric intersection form $\phi_M$.
The {\it  intersection matrix} of $(M,\partial M)$ is the  integral $(-1)^{\ell}$-symmetric  $n \times n$ matrix
$$S_M=\big(\,\phi_M(b_i,b_j)\in \ZZ\,\big)$$
with respect to some basis $\{b_1,b_2,\dots,b_n\}\subset F_{\ell}(M;\ZZ)$.
The intersection matrix is invertible for closed $M$ or if $\partial M=\SSS^{2\ell-1}$, in which case it has determinant $\pm 1$ and the form $(F_\ell(M;\ZZ),\phi_M)$ over $\ZZ$ is nonsingular.

In 1923, Hermann Weyl~\cite{weylsign} defined the {\it  signature}  of an oriented $4k$-dimensional manifold with boundary $(M,\partial M)$ as the signature of the intersection symmetric $n \times n$ matrix $S_M$
$$
\tau(M)=\tau(S_M) \in \ZZ.
$$
See Eckmann~\cite{eckmann} for the backstory of Weyl's paper.

This is Weyl's signature:
$$
\includegraphics[width=.8\linewidth]{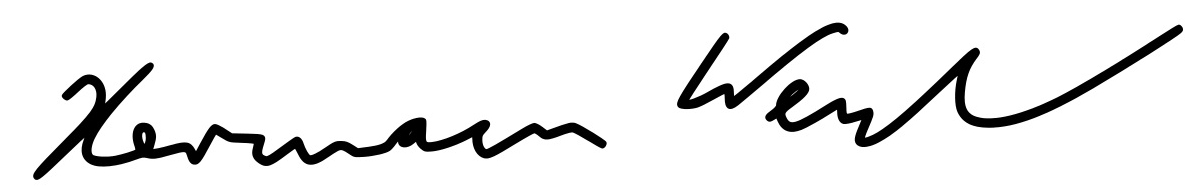}
$$

Thom~\cite{thom} proved that any homology class in $H_{\ell}(M;\ZZ)$ has an integral multiple which is represented by an $\ell$-dimensional submanifold $P^{\ell} \subset M$.
Thus for any basis $\{b_1,b_2,\dots,b_n\} \subset F_{\ell}(M;\ZZ)$ there exist non zero integers $k_1,k_2,\dots,k_n$ such that $k_i b_i=[P_i] \subset F_{\ell}(M;\ZZ)$ for  some $\ell$-dimensional submanifolds $P_i^{\ell} \subset M$ in general position.
The intersection number $\phi_M(b_i,b_j)$ is therefore equal to $\phi_M([P_i],[P_j])/k_ik_j$.
As mentioned earlier, the intersection $\phi_M([P_i],[P_j])=[P_i \cap P_j]$ can be interpreted as the number of signed intersections of $P_i$ and $P_j$ in general position.

The intersection matrix $S_M$ determines the intersection form over $\RR$ since  $\RR\otimes_{\ZZ} F_{\ell}(M;\ZZ) =H_{\ell}(M;\RR).$
Very frequently, we shall forget the integral structure on $H_\ell(M;\ZZ) $ and only consider the $(-1)^{\ell}$-symmetric form $\phi_M$ on the vector space $H_\ell(M;\RR)$.
Its radical is
$$\begin{array}{ll}
H_{\ell}(M;\RR)^{\perp}&=~\text{ker}(\phi_M:H_{\ell}(M;\RR) \to H_{\ell}(M;\RR)^*)\\
&=~\text{im}(H_{\ell}(\partial M;\RR)) \subseteq H_{\ell}(M;\RR).
\end{array}
$$

\begin{example}
The $2k$-dimensional complex projective space $\CC\PP^{2k}$ is a closed $4k$-dimensional manifold with the homology $H_{2k}(\CC\PP^{2k};\ZZ)=\ZZ$  generated by the fundamental class of some $\CC\PP^{k}$ linearly embedded in $\CC\PP^{2k}$ with self-intersection 1.
The intersection $1 \times 1$ matrix is the symmetric $1\times 1$ matrix
$$
S_{\CC\PP^{2k}}=(1),
$$
so that the signature is
$$
\tau(\CC\PP^{2k})=1 \in \ZZ.
$$
\end{example}

Note that if one reverses the orientation of $M$, the signature is changed to its opposite, $\tau(-M)=-\tau(M)$.

Note that when the dimension of the manifold has the form $2 \ell= 4 k+2$, the intersection form is skew symmetric and therefore there is no notion of signature.

In order to understand the intersection form on the boundary of an odd-dimensional manifold we recall some standard algebraic notions:

\begin{definition}\leavevmode

1. Let $(V,\phi)$ be an $\epsilon$-symmetric form over $R$,
with $V$ a f.g. free $R$-module and  $\phi=\epsilon \phi^*:V \to V^*=\text{Hom}_R(V,R)$.
Given a submodule $L \subseteq V$ let $L^{\perp}=\{x \in V\,\vert\,\phi(x,y)=0 \in R~\hbox{\rm for all}~y \in L\} \subseteq V$.
The submodule is \emph{isotropic} if $L \subseteq L^{\perp}$.
An isotropic submodule is a \emph{sublagrangian} if $L$ and $L^{\perp}$ are f.g. free direct summands of $V$, and $x \in V \mapsto (y \mapsto \phi(x,y)) \in L^*$ is surjective.
A \emph{lagrangian} $L$ is a sublagrangian such that $L=L^{\perp}$.

2. The \emph{$\epsilon$-symmetric Witt group} $W_{\epsilon}(R)$ is the group of equivalence classes of nonsingular $\epsilon$-symmetric forms over $R$, with $(V,\phi) \sim (V',\phi')$ if and only if $(V,\phi) \oplus (H,\theta)$ is isomorphic
to $(V',\phi')\oplus (H',\theta')$ for some forms $(H,\theta)$, $(H',\theta')$ with lagrangians.
\end{definition}

Of course, if $R=F$ is a field and $\epsilon=1$ this is just the Witt group $W(F)$ already considered in section~\ref{algebra}.

\begin{proposition} \label{lagr}

1. For a $(2\ell+1)$-dimensional manifold with boundary $(N,\partial N)$
the submodule
$$L={\rm ker}(H_{\ell}(\partial N;R) \to H_{\ell}(N;R)) \subset H_{\ell}(\partial N;R)$$
is isotropic with respect to the  $(-1)^{\ell}$-symmetric pairing
$\phi_{\partial N}:H_{\ell}(\partial N;R) \times H_{\ell}(\partial N;R) \to R$.

2. If $R$ is a field then $L$ is a lagrangian of the nonsingular
$(-1)^{\ell}$-symmetric form  $(H_{\ell}(\partial N;R),\phi_{\partial N})$ over $R$.
In particular, for $R=\RR$, $\ell=2k$ this gives $\tau(\partial N)=0 \in \ZZ$.
\end{proposition}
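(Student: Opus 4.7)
The plan is to handle the two parts independently.

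For part 1, I would use the long exact sequence of $(N, \partial N)$ to rewrite $L = \ker(i_*) = \mathrm{im}(\partial: H_{\ell+1}(N, \partial N; R) \to H_\ell(\partial N; R))$. Given $a, b \in L$, I'd lift to $\tilde a, \tilde b \in H_{\ell+1}(N, \partial N; R)$ and, via Poincar\'e--Lefschetz duality, represent these as $\hat a \cap [N, \partial N]$ and $\hat b \cap [N, \partial N]$ with $\hat a, \hat b \in H^\ell(N; R)$. The key cap-product identity $\partial(\hat a \cap [N, \partial N]) = i^*\hat a \cap [\partial N]$ then shows that the Poincar\'e duals of $a$ and $b$ on $\partial N$ are restrictions of classes from $N$, and the computation
\[
\phi_{\partial N}(a, b) = \langle i^*\hat a \cup i^*\hat b, [\partial N]\rangle = \langle \hat a \cup \hat b, i_*[\partial N]\rangle = \langle \hat a \cup \hat b, i_*\partial[N, \partial N]\rangle = 0
\]
collapses using the exactness $i_* \circ \partial = 0$. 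Geometrically this is the statement that if $\alpha, \beta \subset \partial N$ bound $(\ell+1)$-submanifolds $A, B \subset N$ placed in general position, then $A \cap B$ is a compact $1$-manifold whose boundary $\alpha \cap \beta$ consequently has zero signed count.

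For part 2, with $R = K$ a field, the form $\phi_{\partial N}$ is nonsingular on the closed manifold $\partial N$; combined with the isotropy $L \subseteq L^\perp$ from part 1, it suffices to establish $\dim_K L = \tfrac12 \dim_K H_\ell(\partial N; K)$. Writing the long exact sequence in the form
\[
H_{\ell+1}(N) \xrightarrow{\alpha} H_{\ell+1}(N, \partial N) \xrightarrow{\partial} H_\ell(\partial N) \xrightarrow{i_*} H_\ell(N) \xrightarrow{\beta} H_\ell(N, \partial N),
\]
standard rank counting gives $\dim H_\ell(\partial N) = (\dim H_{\ell+1}(N, \partial N) - \mathrm{rk}\,\alpha) + (\dim H_\ell(N) - \mathrm{rk}\,\beta)$ and $\dim L = \dim H_{\ell+1}(N, \partial N) - \mathrm{rk}\,\alpha$. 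I would then invoke Poincar\'e--Lefschetz duality together with universal coefficients over $K$ to identify $H_{\ell+1}(N, \partial N) \cong H^\ell(N)$ and $H_{\ell+1}(N) \cong H^\ell(N, \partial N)$; this yields both the dimension equality $\dim H_{\ell+1}(N, \partial N) = \dim H_\ell(N)$ and the fact that $\alpha$ corresponds under PD to the dual $\beta^*$, so $\mathrm{rk}\,\alpha = \mathrm{rk}\,\beta$. Substituting back gives $\dim L = \tfrac12 \dim H_\ell(\partial N)$, so $L = L^\perp$. When $\ell = 2k$ and $K = \RR$, the nonsingular symmetric form on $H_{2k}(\partial N; \RR)$ admits the lagrangian $L$, hence by Remark \ref{remark1} has trivial Witt class, and therefore signature zero.

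The main obstacle I anticipate is the sign bookkeeping --- in the cap-product identity for $\partial(\hat a \cap [N, \partial N])$, and especially in the PD identification of $\alpha$ with $\beta^*$, where one must carefully trace connecting homomorphisms through the duality squares. These are all standard facts but merit care. Conceptually both parts are clean: part 1 reflects the vanishing of the boundary of a boundary, and part 2 expresses the self-duality of the Poincar\'e--Lefschetz long exact sequence of $(N,\partial N)$ in middle dimension over a field.
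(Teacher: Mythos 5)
Your argument is correct, and for part~1 it is essentially the paper's proof spelled out: the paper's entire proof is the commutative ladder whose top two rows encode precisely your cap-product square $\partial(\hat a \cap [N,\partial N]) = i^*\hat a \cap [\partial N]$, and the vanishing $\langle \hat a \cup \hat b, i_*\partial[N,\partial N]\rangle = 0$ is the diagram chase through that ladder. For part~2, though, you take a genuinely different route. The paper's diagram (with the bottom row $H_\ell(N)^* \to H_\ell(\partial N)^* \to H_{\ell+1}(N,\partial N)^*$) gives a \emph{direct} identification $\phi_{\partial N}(L) = \mathrm{Ann}(L)$: commutativity of the left two squares shows that the image of $H_{\ell+1}(N,\partial N) \to H_\ell(\partial N) \xrightarrow{\phi_{\partial N}} H_\ell(\partial N)^*$ coincides with the image of $H_\ell(N)^* \to H_\ell(\partial N)^*$, which by exactness of the dualized row is exactly the annihilator of $L = \ker i_*$, whence $L = L^\perp$ with no dimension bookkeeping. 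You instead prove $\dim_K L = \frac12\dim_K H_\ell(\partial N)$ by rank counting in the long exact sequence, using PD/UCT to get $\dim H_{\ell+1}(N,\partial N) = \dim H_\ell(N)$ and $\mathrm{rk}\,\alpha = \mathrm{rk}\,\beta$, then conclude from $L \subseteq L^\perp$ and nonsingularity. Your count is verified correctly and is arguably more elementary and self-contained; the paper's diagram chase is terser and has the advantage of producing the lagrangian as an identity of subspaces rather than as a consequence of equal dimensions, which makes it the version that generalizes to the formation-theoretic and non-field settings appearing later in the paper (cf. Definition~\ref{formation} and the Appendix). Your concern about sign bookkeeping is legitimate but immaterial here, since in both your argument and the paper's only the commutativity of the duality squares up to sign is used, and signs do not affect kernels, images, or ranks.
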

\begin{proof} Consider the commutative diagram with exact rows
$$\xymatrix{H_{\ell+1}(N,\partial N;R) \ar[r] \ar[d] &
H_{\ell}(\partial N;R) \ar[r] \ar[d] & H_{\ell}(N;R) \ar[d] \\
H^{\ell}(N;R) \ar[r] \ar[d] & H^{\ell}(\partial N;R) \ar[d] \ar[r]
& H^{\ell+1}(N,\partial N;R) \ar[d] \\
H_{\ell}(N;R)^* \ar[r] & H_{\ell}(\partial N;R)^* \ar[r] & H_{\ell+1}(N,\partial N;R)^*}$$
\end{proof}

\begin{example}
Consider $M^{2\ell}=\SSS^{\ell} \times \SSS^{\ell}=\partial N$ with $N^{2\ell+1}=\DDD^{\ell+1} \times \SSS^{\ell}$.
The intersection $(-1)^{\ell}$-symmetric form of $M$ is hyperbolic
$$
(H_{\ell}(M;R),\phi_M)=H_{(-1)^{\ell}}(R)=
(R \oplus R,\begin{pmatrix} 0 & 1 \\ (-1)^{\ell} & 0 \end{pmatrix})
$$
with $H_\ell(M;R)=R \oplus R$, generated by the two factors intersecting in one point and each having self intersection 0.
The lagrangian determined by $N$ is
$$
L={\rm ker}(H_{\ell}(M;R) \to H_{\ell}(N;R))=R \oplus 0 \subset H_{\ell}(M;R)=R \oplus R.
$$
\end{example}

By definition, a space $X$ is {\it $k$-connected} for some $k \geqslant   1$ if it is connected and satisfies one of the equivalent conditions:
\begin{itemize}
\item[1.] $\pi_j(X)=0$ for $1 \leqslant j \leqslant k$,
\item[2.] $\pi_1(X)=\{1\}$ and $H_j(X;\ZZ)=0$ for $1 \leqslant  j \leqslant k$.
\end{itemize}
We shall say that an $m$-dimensional manifold with boundary $(M,\partial M)$ is {\it $k$-connected} if $M$ is $k$-connected and $\partial M$ is connected.

For an $(\ell-1)$-connected $2\ell$-dimensional manifold with boundary $(M,\partial M)$ we have that the $\ZZ$-module $H_{\ell}(M;\ZZ)$ is f.g. free, with the natural $\ZZ$-module morphism defined by evaluation
$$
f \in H^{\ell}(M;\ZZ) \mapsto (u \mapsto f(u)) \in H_{\ell}(M;\ZZ)^*
$$
an isomorphism.
The intersection pairing thus defines a $(-1)^{\ell}$-symmetric form $(H_{\ell}(M;\ZZ),\phi_M)$ over $\ZZ$.
Furthermore, $H_j(\partial M;\ZZ)=0$ for $1 \leqslant j \leqslant \ell-2$, and in view of the exact sequence
$$
\xymatrix{0 \ar[r] & H_{\ell}(\partial M;\ZZ) \ar[r]
&H_{\ell}(M;\ZZ) \ar[r]^-{\phi_M} & H_{\ell}(M;\ZZ)^* \ar[r] & H_{\ell-1}(\partial M;\ZZ) \ar[r] &0}
$$
we have that  :
\begin{itemize}
\item[1.] $(H_{\ell}(M;\ZZ),\phi_M)$ is nondegenerate (i.e. ${\rm det}(\phi_M) \neq 0 \in \ZZ$) if and only if $H_{\ell}(\partial M;\ZZ)=0$, if and only if $\partial M$ is a $\QQ$-coefficient homology $(2\ell-1)$-sphere,
\item[2.]  $(H_{\ell}(M;\ZZ),\phi_M)$ is nonsingular if and only if $H_{\ell}(\partial M;\ZZ)=H_{\ell-1}(\partial M;\ZZ)=0$, if and only if $\partial M$ is a $\ZZ$-coefficient homology $(2\ell-1)$-sphere.
\end{itemize}

\subsection{Cobordism}

\begin{definition}
An {\it  $(m+1)$-dimensional cobordism} $(N;M,M')$ is an $(m+1)$-dimensional manifold $N$ with the boundary decomposed as $\partial N=M \sqcup -M'$ for
closed $m$-dimensional manifolds $M$, $M'$, where $-M'$ = $M'$ with the opposite orientation.
\end{definition}

Signature is cobordism invariant:

\begin{theorem}[Thom~\cite{thom}] \label{cobinv}
If $(N;M,M')$ is a $(4k+1)$-dimensional cobordism  then
$$
\tau(M)-\tau(M')=\tau(\partial N)=0 \in \ZZ.
$$
\end{theorem}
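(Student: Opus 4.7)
The plan is to reduce the theorem directly to Proposition~\ref{lagr}(2), which already contains the essential algebraic content: the bounding manifold supplies a lagrangian for the intersection form on the boundary.

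First I would unpack the identity $\tau(M) - \tau(M') = \tau(\partial N)$. Since $\partial N = M \sqcup -M'$ as oriented manifolds, $H_{2k}(\partial N;\RR) = H_{2k}(M;\RR) \oplus H_{2k}(M';\RR)$ as an orthogonal direct sum with respect to the intersection pairing. Signatures add over orthogonal direct sums, and reversing orientation of a $4k$-manifold multiplies the intersection form by $-1$, so $\tau(-M') = -\tau(M')$. This gives the first equality and reduces the theorem to the claim $\tau(\partial N) = 0$.

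Next, apply Proposition~\ref{lagr} with $R = \RR$ and $\ell = 2k$, so that $\dim N = 2\ell+1 = 4k+1$. The proposition produces the subspace
\[
L \;=\; \ker\bigl(H_{2k}(\partial N;\RR) \to H_{2k}(N;\RR)\bigr) \;\subseteq\; H_{2k}(\partial N;\RR),
\]
and asserts that $L$ is a lagrangian of the nonsingular symmetric intersection form $(H_{2k}(\partial N;\RR),\phi_{\partial N})$ (the exact sequence argument of Proposition~\ref{lagr} relies on $\RR$ being a field so that Poincaré duality identifies the middle (co)homology with its dual).

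Finally, I would invoke Remark~\ref{remark1}: a nonsingular symmetric form over a field of characteristic $\neq 2$ admitting a lagrangian is hyperbolic. Under the isomorphism $\tau : W(\RR) \xrightarrow{\cong} \ZZ$ established in Section~\ref{algebra}, hyperbolic forms map to $0$. Therefore $\tau(\partial N) = 0$, completing the proof. There is no real obstacle here, since all of the work is done by the lagrangian statement in Proposition~\ref{lagr}; the only point requiring mild attention is the bookkeeping of orientations in the first step, where one must be careful that the sign change $\tau(-M') = -\tau(M')$ is exactly what makes $\tau(\partial N)$ equal to $\tau(M) - \tau(M')$ rather than $\tau(M) + \tau(M')$.
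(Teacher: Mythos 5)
Your proof is correct and follows exactly the same route as the paper: apply Proposition~\ref{lagr}(2) to obtain a lagrangian of the boundary intersection form and conclude its signature vanishes, then decompose $\tau(\partial N)=\tau(M)-\tau(M')$ via the orientation-reversed disjoint union. The only cosmetic difference is that you invoke Remark~\ref{remark1} and the Witt group isomorphism to deduce $\tau(\partial N)=0$, whereas Proposition~\ref{lagr}(2) already states this conclusion directly; both are valid.
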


\begin{proof} As in Proposition~\ref{lagr} 2., for any field $R$ the subspace
$$
L=\text{ker}(H_{2k}(\partial N;R) \to H_{2k}(N;R)) \subset H_{2k}(\partial N;R)
$$
is a lagrangian of the intersection symmetric form
$$
(H_{2k}(\partial N;R),\phi_{\partial N})=(H_{2k}(M;R),\phi_{M}) \oplus (H_{2k}(M';R),-\phi_{M'}).
$$
For $R=\RR$ we thus have that $\partial N=M \sqcup -M'$ has signature $\tau(\partial N)=0$.
\end{proof}

The set of cobordism classes of {\it closed} $n$-dimensional manifolds is an abelian group $\Omega_n$, with addition by disjoint union, and the cobordism class of the empty manifold $\emptyset$ as the zero element.
Thom~\cite{thom} initiated the computation of $\Omega_n$, starting with the signature
$$
\tau:~\Omega_{4k} \to W(\RR)=\ZZ;~M \mapsto \tau(M)=(H_{2k}(M;\RR),\phi_M)
$$
It turns out that  $\tau$ is an isomorphism for $k=1$ and a surjection for $k\geqslant   2$.
Each $\Omega_n$ is finitely generated.

Recall that every vector bundle over any space $M$ canonically defines characteristic classes, which are cohomology classes in $M$ (Milnor and Stasheff~\cite{milnorstasheff}).
With integral coefficients, this leads to the {\it Pontrjagin classes} $p_k$ which belong to $H^{4k}(M;\ZZ)$.
This applies in particular to the tangent bundle of a smooth closed manifold $M$ and produces the {\it Pontrjagin classes} of the manifold.
If $4k=4k_1+4k_2+\dots+ 4k_i$, the class $p_{k_1}p_{k_2}\dots p_{k_i}$ is in $H^{4k}(M;\ZZ)=\ZZ$ (for connected $M$) and is therefore an integer: one speaks of the {\it Pontrjagin numbers} of the manifold $M$.
They are invariant under cobordism.
For more on cobordism theory, see Milnor and Stasheff~\cite{milnorstasheff}.

The signature is clearly a homotopy invariant.
The Pontrjagin classes are diffeomorphism invariants but not homeomorphism invariants, let alone homotopy invariants.  It was therefore a surprise when in 1953, Hirzebruch~\cite{hirzebruch1953},
\cite{hirzebruch1971} showed that the signature of a closed $4k$-dimensional manifold $M$ is expressed in terms of characteristic numbers. More precisely, there is a formula:
$$
\tau(M)=\int_M L(M)=
\langle L_k(p_1,p_2,\dots,p_k),[M] \rangle \in \ZZ \subset \RR
$$
where $L_k$ is a certain polynomial in the Pontrjagin classes. For instance,

\begin{itemize}
\item $L_1=\frac{1}{3}p_1$
\item $L_2=\frac{1}{45}(7p_2 - p_1^2)$
\item $L_3=\frac{1}{945}(62p_3 -13p_1p_2+2p_1^3)$
\item $L_4=\frac{1}{14175}(381p_4 - 71p_1p_3 - 19p_2^2+22p_1^2p_2 -3p_1^4)$
\end{itemize}

For a {\it Mathematica} code generating the $L_k$'s, see McTague~\cite{mctague}

\medskip

In 1956 Milnor~\cite{milnorexotic} used the failure of the Hirzebruch signature theorem for manifolds {\it with boundary} to detect that certain 7-dimensional manifolds homeomorphic to $\SSS^7$ had exotic (i.e. non-standard) differentiable structures;
 the subsequent work of Kervaire and Milnor~\cite{kervairemilnor} used the signatures of manifolds with boundary as essential tools in the classification of all high-dimensional exotic 
 spheres.\footnote{See the October 2015 issue of the Bulletin of the American Mathematical Society 
 dedicated to the work of John Milnor,  for a collection of materials relating to the history of manifolds in general and exotic spheres in particular.}$\,$In 1965 Novikov~\cite{novikov1965} used signatures of non-compact manifolds to prove that the rational Pontrjagin classes $p_k(M) \in H^{4k}(M;\QQ)$ are homeomorphism invariants.

A different approach to the Hirzebruch signature theorem is due to Atiyah and Singer~\cite{atiyahsinger}.
In the 1960's they proved the ``Atiyah-Singer Index Theorem'' expressing the analytic index of an elliptic operator on a closed manifold in terms of characteristic classes.
The signature is the index of the signature operator: the index theorem in this case recovers the Hirzebruch signature theorem.
The proof of the index theorem is a piece of cake (designed by John Roe and baked by Ida Thompson for Michael Atiyah's 75th birthday in 2004):

$$
\includegraphics[width=.5\linewidth]{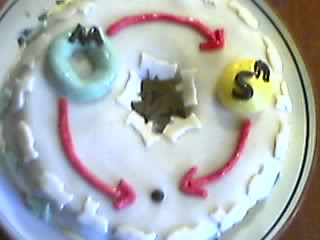}
$$

\subsection{Odd-dimensional manifolds}

Just as forms are the basic algebraic invariants of even-dimensional manifolds, so {\it formations} are the basic algebraic invariants of odd-dimensional manifolds.

\begin{definition} (Ranicki~\cite[p.68]{ranickiexact}) \label{formation}
Let $\epsilon=\pm 1$.

1. An \emph{  $\epsilon$-symmetric formation $(H,\theta;F,G)$ over $R$} is a nonsingular $\epsilon$-symmetric form $(H,\theta)$  over $R$ together with a lagrangian $F$ and a sublagrangian $G$.

2. An \emph{isomorphism} of formations
$$f:~(H,\theta;F,G) \to (H',\theta';F',G')$$
is an isomorphism of forms $f:(H,\theta) \to (H',\theta')$ such that $f(F)=F'$, $f(G)=G'$.
A {\it stable isomorphism} of formations
$$[f]:~(H,\theta;F,G) \to (H',\theta';F',G')$$
is an isomorphism of the type
$$f:~(H,\theta;F,G) \oplus (K,\psi;I,J) \to (H',\theta';F',G') \oplus (K',\psi'; I',J')$$
with $K=I \oplus J$, $K'=I' \oplus J'$.

3. The \emph{boundary} of the formation is the nonsingular $\epsilon$-symmetric form
$$
\partial(H,\theta;F,G)=(G^{\perp}/G,[\theta]).
$$

4.  The formation is \emph{nonsingular} if $\partial(H,\theta;F,G)=0$, i.e. if  $G$ is a lagrangian.

5.  The \emph{boundary} of an $\epsilon$-symmetric form $(V,\phi)$ over $R$ is the nonsingular $(-\epsilon)$-symmetric formation over $R$
$$
\partial (V,\phi)=(H_{-\epsilon}(V);V,\{(v,\phi(v))\,\vert\, v \in V\})
~\text{with}~H_{-\epsilon}(V)=(V \oplus V^*,\begin{pmatrix} 0 & 1 \\ -\epsilon & 0\end{pmatrix}).
$$
\end{definition}

\begin{proposition} \label{stablelagrangian} \leavevmode

1. A nonsingular $\epsilon$-symmetric form $(V,\phi)$ (over any $R$) is such that $(V,\phi)=0 \in W_{\epsilon}(R)$ if and only if $(V,\phi)$ is (isomorphic to) the boundary $\partial(H,\theta;F,G)$ of an $\epsilon$-symmetric formation $(H,\theta;F,G)$ over $R$.

2. If $(H,\theta;F,G)$ is an $\epsilon$-symmetric formation over a field $R$ then
$$
L=((F+G) \cap G^{\perp})/G \subseteq G^{\perp}/G
$$
is a lagrangian of the nonsingular $\epsilon$-symmetric form $\partial(H,\theta;F,G)=(G^{\perp}/G,[\theta])$.

3. An automorphism $A:(H,\theta) \to (H,\theta)$ of a nonsingular $\epsilon$-symmetric form $(H,\theta)$ with a lagrangian $F \subset H$ determines a nonsingular $\epsilon$-symmetric formation $(H,\theta;F,A(F))$.

4. If $1/2 \in R$ for any nonsingular $\epsilon$-symmetric formation $(H,\theta;F,G)$ there exists an automorphism $A:(H,\theta) \to (H,\theta)$ such that $A(F)=G$.
\end{proposition}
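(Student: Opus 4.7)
The plan is to handle each part in sequence, with parts 1 and 4 being the substantive ones.

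For part 1, forward direction: if $(V,\phi)=0 \in W_\epsilon(R)$, the defining equivalence of the Witt group provides forms $(H_0,\theta_0),(H_1,\theta_1)$ with lagrangians $F_0,F_1$ and an isometry $\alpha : (V,\phi) \oplus (H_0,\theta_0) \xrightarrow{\cong} (H_1,\theta_1)$. I set $(H,\theta,F):=(H_1,\theta_1,F_1)$ and $G := \alpha(0 \oplus F_0)$; since $F_0$ is a lagrangian of $(H_0,\theta_0)$, the submodule $0 \oplus F_0$ is a sublagrangian of the direct sum with orthogonal $V \oplus F_0$, and transporting under $\alpha$ makes $G$ a sublagrangian of $(H,\theta)$ with $G^\perp/G \cong (V,\phi)$, i.e.\ $\partial(H,\theta;F,G)\cong(V,\phi)$. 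For the reverse direction, given $(V,\phi) = (G^\perp/G,[\theta])$ and $F$ a lagrangian of $(H,\theta)$, the standard sublagrangian reduction $(H,\theta) \cong (G^\perp/G,[\theta]) \oplus H_\epsilon(G)$ (stably if necessary) combined with the fact that both $(H,\theta)$ and $H_\epsilon(G)$ carry lagrangians (namely $F$ and $G$ respectively) shows $(V,\phi) \oplus \text{(form with lagrangian)} \cong \text{(form with lagrangian)}$, hence $(V,\phi)=0 \in W_\epsilon(R)$.

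Part 2 is a direct calculation over a field exploiting the duality identities $(A\cap B)^\perp = A^\perp + B^\perp$, $(A+B)^\perp = A^\perp \cap B^\perp$, and $A^{\perp\perp}=A$. First, any $x\in (F+G)\cap G^\perp$ written as $x=f+g$ with $f\in F$, $g\in G$ has $f=x-g\in G^\perp$, so $(F+G)\cap G^\perp=(F\cap G^\perp)+G$. The orthogonal of this in $H$ is $F^\perp\cap G^\perp+G^{\perp\perp}=(F\cap G^\perp)+G$, already contained in $G^\perp$; quotienting by $G$ yields $L^\perp=L$. Part 3 is immediate: any isometry $A$ sends $F=F^\perp$ to $A(F)=A(F)^\perp$, so $A(F)$ is lagrangian and $\partial(H,\theta;F,A(F))=A(F)/A(F)=0$.

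Part 4 is the technical heart; my plan has three steps and the first is the main obstacle. Step (i): I prove that whenever $1/2\in R$ any lagrangian $F$ of a nonsingular $(H,\theta)$ admits a complementary lagrangian. Starting from any $R$-module complement $M$ of $F$ in $H$, the restriction of $\theta$ gives an identification $Q: M \xrightarrow{\sim} F^*$. I deform $M$ to the graph $\{(Cm,m):m\in M\}$ of a linear map $C:M\to F$ chosen to annihilate the induced form; in block-matrix form the lagrangian condition reads $C^*Q + \epsilon Q^*C = -S$, where $S$ is the matrix of $\theta|_{M \times M}$ (so $S^*=\epsilon S$), and the choice $C=-\tfrac{1}{2}(Q^*)^{-1}S$ solves it, using $1/2\in R$. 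Step (ii): applying (i) to both $F$ and $G$ yields $(H,\theta) \cong H_\epsilon(F) \cong H_\epsilon(G)$, and since both $F$ and $G$ are free of rank $\tfrac{1}{2}\mathrm{rank}(H)$ I pick any $R$-module isomorphism $\phi:F\to G$. Step (iii): I define $A\in\mathrm{Aut}(H,\theta)$ by $A|_F=\phi$ and $A|_{F'}=(\phi^{-1})^*$ under the identifications $F'\cong F^*$ and $G'\cong G^*$; a routine check on the standard hyperbolic pairing verifies that $A$ is an isometry, with $A(F)=G$ by construction. Once the lagrangian complement of step (i) is in hand, the remainder is the transitive action of $\mathrm{Aut}(H_\epsilon(F))$ on its lagrangians of maximal rank.
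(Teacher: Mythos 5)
Your proof is correct and follows the same route the paper takes, which it compresses into citations to the Appendix: part 1 is the sublagrangian reduction isomorphism $(H,\theta)\cong H_\epsilon(G^*,\theta')\oplus(G^\perp/G,[\theta])$ (Proposition \ref{extension} 1), part 2 is the explicit lagrangian $V\cap(L+L')$ of Remark \ref{remark1} which you verify directly, and part 4 is the existence of a lagrangian complement when $1/2\in R$ (Proposition \ref{extension} 2) followed by the hyperbolic automorphism $A=\phi\oplus(\phi^{-1})^*$. You have essentially filled in the details of the cited propositions; the only blemishes are notational (the display $F^\perp\cap G^\perp+G^{\perp\perp}$ in part 2 needs parentheses to read as $\bigl(F^\perp+G^{\perp\perp}\bigr)\cap G^\perp$, and the ``stably if necessary'' in part 1 is not needed since sublagrangian reduction already holds unstably).
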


\begin{proof}
1. Immediate from the extension of Proposition~\ref{extension} 1. from a field to a ring.

2. Immediate from 1. and Remark~\ref{remark1} 2.

3. Trivial.

4. Immediate from the extension of Proposition~\ref{extension} 2.
\end{proof}

We denote by $\Sigma_n$ the {\it  orientable surface of genus} $n$.
 $$\begin{array}{c}
\includegraphics[width=.25\linewidth]{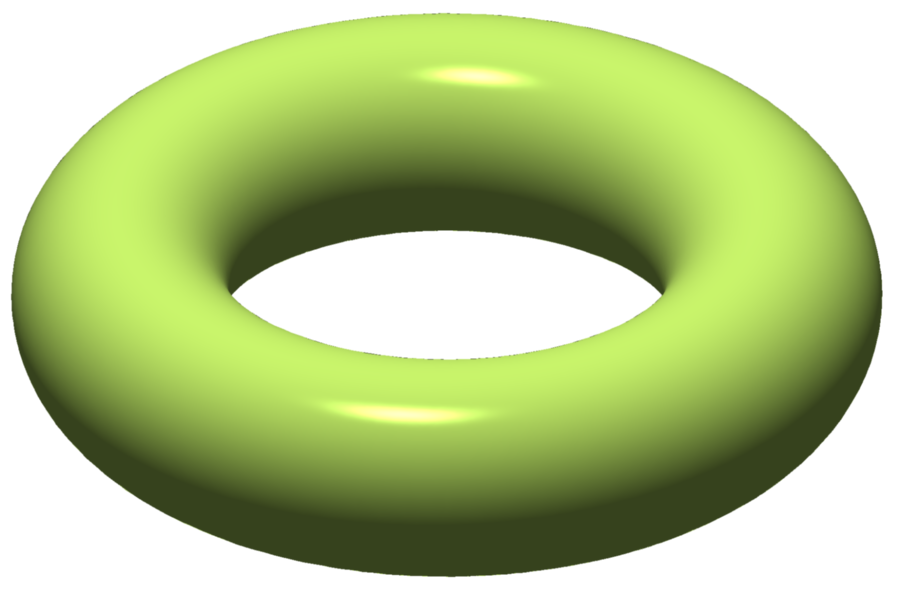}\\
\text{$\Sigma_1$}
\end{array} \hskip7mm
\begin{array}{c}
\includegraphics[width=.25\linewidth]{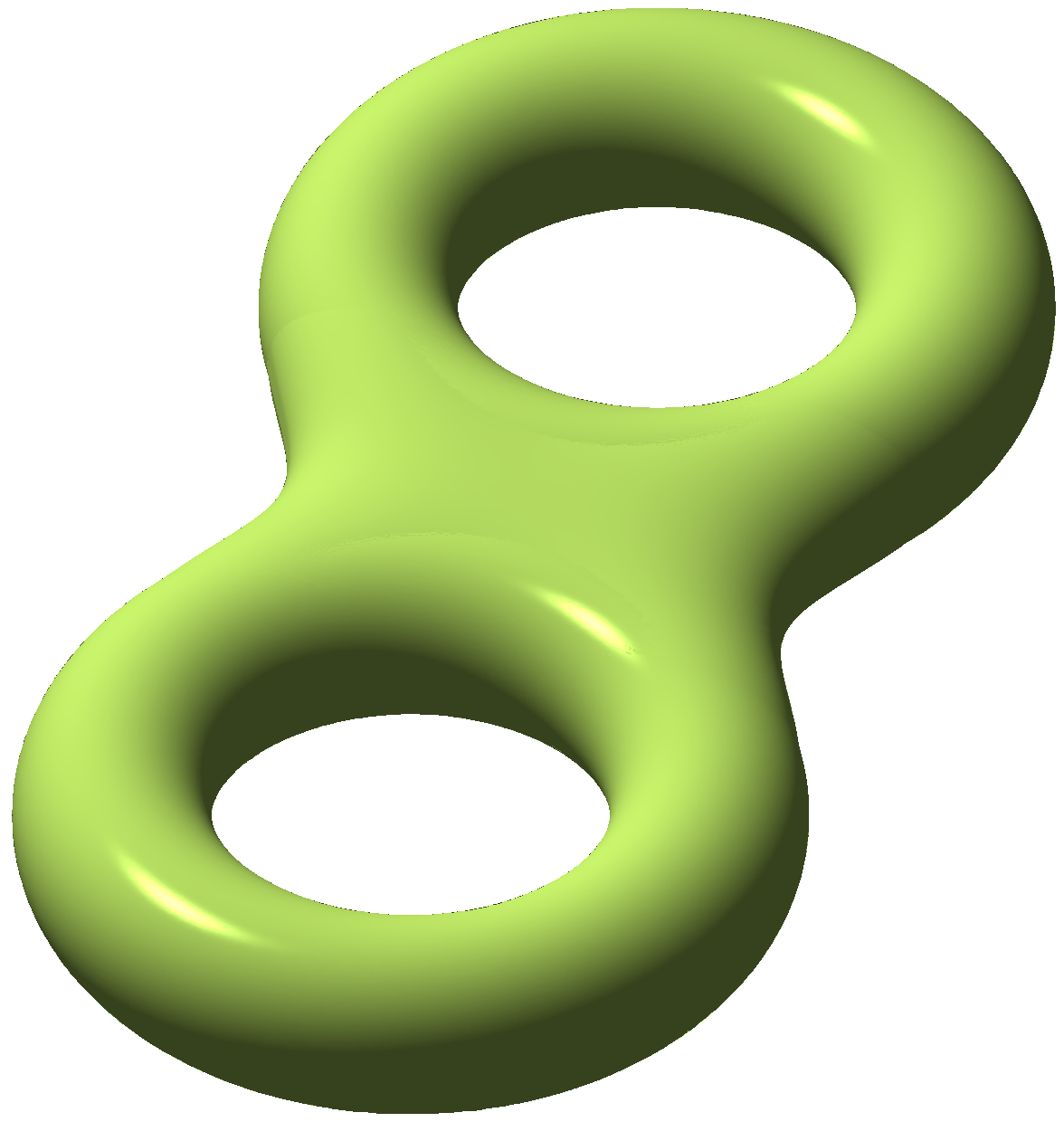}\\
\text{$\Sigma_2$}
\end{array}\hskip7mm
\begin{array}{c}
\includegraphics[width=.25\linewidth]{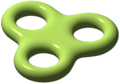}\\
\text{$\Sigma_3$}
\end{array}$$
Note that the intersection  form of $\Sigma_n$ is the standard symplectic form
$$\phi_{\Sigma_n}=
\begin{pmatrix} 0 & I_n \\ -I_n & 0 \end{pmatrix}:~
H_1(\Sigma_n;R)=R^n \oplus R^n \to
H_1(\Sigma_n;R)^*=(R^n)^* \oplus (R^n)^*$$
The {\it  mapping class group} $\Gamma_n=\pi_0(\text{{\rm Aut}}(\Sigma_n))$ is the group of orientation preserving diffeomorphisms $A:\Sigma_n \to \Sigma_n$, modulo isotopy.
The {\it  symplectic group} of a ring $R$
$$
{\rm Sp}(2n,R)=\text{{\rm Aut}}(H_-(R^n))~(n \geqslant   1)
$$
consists of the invertible $2n \times 2n$ matrices $A=(a_{ij})$ ($a_{ij} \in R$) preserving the standard symplectic form on $R^{2n}$:
$$
A^*\begin{pmatrix} 0 & I_n \\ -I_n & 0 \end{pmatrix}A=\begin{pmatrix} 0 & I_n \\ -I_n & 0 \end{pmatrix}.
$$
We shall also use $A$ to denote elements of $\Gamma_n$.
Looking at the action of $A \in \Gamma_n$ on homology, one gets a canonical group homomorphism: $c_n:\Gamma_n \to {\rm Sp}(2n,\ZZ)$ which is an isomorphism for $n=1$ and a surjection for $n \geqslant   2$.
Any $A\in \Gamma_n$ determines a closed 3-dimensional manifold with a Heegaard decomposition
$$
N^3=\#_n \SSS^1 \times \DDD^2 \cup_A \#_n \SSS^1 \times \DDD^2
$$
and hence a nonsingular skew-symmetric formation over $R$ which only depends on $c(A) \in {\rm Sp}(2n,\ZZ)$:
$$
(H,\theta;L_1,L_2)=(H_1(\Sigma_n;R),\phi_{\Sigma_n}; R^n \oplus 0,c_n(A)(R^n \oplus 0)).
$$

Every closed connected 3-dimensional manifold $N$ has  a Heegaard decomposition; two decompositions determine a stable isomorphism of formations.

\begin{remark}\label{lens1}
The first application of the symplectic group in topology is due to Poincar\'e~\cite{poincare1892}: the construction for any $A = \begin{pmatrix} a & b \\ c & d \end{pmatrix} \in {\rm Sp}(2,\ZZ)={\rm SL}(2,\ZZ)$ of the 3-manifold
$$N^3=\SSS^1 \times \DDD^2 \cup _A \SSS^1 \times \DDD^2,$$
which is in fact  the lens space $L(c,a)$.
See section~\ref{lens} below for a somewhat more detailed account of the lens spaces, and the connection with number theory.
\end{remark}

\begin{definition} \label{Heegaard}
A \emph{generalized Heegaard decomposition} of a closed $(\ell-1)$-connected $(2\ell+1)$-dimensional manifold $N^{2\ell+1}$ is an expression as
$$
N=N_1\cup_M N_2
$$
for a closed $(\ell-1)$-connected codimension $1$ submanifold $M^{2\ell} \subset N$ and $(\ell-1)$-connected codimension 0 submanifolds $N_1,N_2 \subset M$ with
$$
M=N_1 \cap N_2=\partial N_1=\partial N_2 \subset N$$
$$
\includegraphics[width=.5\linewidth]{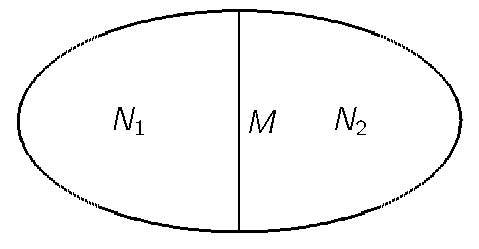}\eqno{}
$$
\end{definition}

\begin{proposition} \label{Hformation}
A generalized Heegaard decomposition $N^{2\ell+1}=N_1 \cup_MN_2$ determines a nonsingular $(-1)^{\ell}$-symmetric formation $(H_{\ell}(M;R),\phi_M;L_1,L_2)$ with the $R$-submodules (assumed to be f.g. free direct summands)
$$
\begin{array}{l}
L_1={\rm ker}(H_{\ell}(M;R) \to H_{\ell}(N_1;R)),\\
L_2={\rm ker}(H_{\ell}(M;R) \to H_{\ell}(N_2;R)) \subset H_{\ell}(M;R)
\end{array}
$$
lagrangians such that
$$
L_1 \cap L_2=H_{\ell+1}(N;R),~H_{\ell}(M;R)/(L_1+L_2)=H_{\ell}(N;R).
$$
\end{proposition}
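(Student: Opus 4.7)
The plan has three main ingredients: Poincar\'e--Lefschetz duality, the exact sequences of each pair $(N_i,M)$, and the Mayer--Vietoris sequence of the decomposition $N=N_1\cup_M N_2$. First, since $M$ is a closed $(\ell-1)$-connected $2\ell$-manifold, $H_{\ell-1}(M;R)=0$ and Poincar\'e duality already makes $(H_\ell(M;R),\phi_M)$ a nonsingular $(-1)^\ell$-symmetric form over $R$. It then remains to verify that $L_1$ and $L_2$ are lagrangians, and to identify $L_1\cap L_2$ and $H_\ell(M;R)/(L_1+L_2)$ with $H_{\ell+1}(N;R)$ and $H_\ell(N;R)$ respectively.

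For the lagrangian property, I would apply Proposition~\ref{lagr}(1) to each $N_i$ regarded as a $(2\ell+1)$-manifold with boundary $M$: that immediately gives the isotropy $L_i\subseteq L_i^\perp$. The reverse inclusion would come from the commutative ladder
$$\xymatrix{H_{\ell+1}(N_i,M;R) \ar[r] \ar[d]^{\cong} & H_\ell(M;R) \ar[r] \ar[d]_{\phi_M}^{\cong} & H_\ell(N_i;R) \ar[d]^{\cong} \\ H^\ell(N_i;R) \ar[r] & H^\ell(M;R) \ar[r] & H^{\ell+1}(N_i,M;R)}$$
whose vertical isomorphisms are Poincar\'e--Lefschetz duality for $(N_i,M)$ and for $M$, and whose rows are the long exact sequences of the pair $(N_i,M)$ in homology and cohomology. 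The submodule $L_i$ is the image of the top-left map; chasing the outer square one finds that $\phi_M(L_i)$ equals the image of $H^\ell(N_i;R)\to H^\ell(M;R)$, which under the right-hand identification of $H^\ell(M;R)$ with $H_\ell(M;R)^*$ is precisely the annihilator $L_i^\circ$ of $L_i$. Since $\phi_M$ is an isomorphism this yields $L_i^\perp=\phi_M^{-1}(L_i^\circ)=L_i$.

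For the identifications of $L_1\cap L_2$ and $H_\ell(M;R)/(L_1+L_2)$, I would feed the decomposition into Mayer--Vietoris. Using $H_{\ell-1}(M;R)=0$ one obtains the exact sequence
$$H_{\ell+1}(N_1;R)\oplus H_{\ell+1}(N_2;R)\to H_{\ell+1}(N;R)\stackrel{\partial}{\to} H_\ell(M;R)\stackrel{(i_{1\ast},-i_{2\ast})}{\longrightarrow} H_\ell(N_1;R)\oplus H_\ell(N_2;R)\to H_\ell(N;R)\to 0.$$
By definition $\ker(i_{1\ast},-i_{2\ast})=L_1\cap L_2$, and this equals the image of $\partial$ by exactness, so once $\partial$ is injective we get $L_1\cap L_2\cong H_{\ell+1}(N;R)$. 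A short diagram chase of the same sequence then identifies $\ker\bigl(i_\ast\colon H_\ell(M;R)\to H_\ell(N;R)\bigr)$ with $L_1+L_2$, with $i_\ast$ surjective as soon as $H_\ell(M;R)\to H_\ell(N_i;R)$ is surjective for each $i$, delivering the second isomorphism.

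The main obstacle I expect is that both $L_1\cap L_2=H_{\ell+1}(N;R)$ and the surjectivity of $i_\ast$ rest on the vanishing $H_{\ell+1}(N_i;R)=0$ (equivalently, by Lefschetz duality, $H_\ell(N_i,M;R)=0$), which is not guaranteed by $(\ell-1)$-connectedness of $N_i$ alone. In the classical $\ell=1$ situation the $N_i$ are genuine handlebodies and this vanishing is automatic; for the general statement I would build into the definition of generalized Heegaard decomposition that each $N_i$ is obtained from a disc by attaching $\ell$-handles, so that $N_i$ is homotopy equivalent to a wedge of $\ell$-spheres. This same hypothesis simultaneously secures the f.g.\ free direct-summand conditions needed so that the duality and dualization arguments of the second paragraph remain valid over an arbitrary commutative ring $R$.
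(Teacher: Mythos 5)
Your argument takes the same route as the paper's. The paper's entire proof is the single assertion that everything is ``immediate'' from the Mayer--Vietoris sequence
$$0 \to H_{\ell+1}(N;R) \to H_{\ell}(M;R) \to H_{\ell}(N_1;R) \oplus H_{\ell}(N_2;R)\to H_{\ell}(N;R) \to 0,$$
which is the sequence you write with the two outer zeroes filled in; your second paragraph reproduces the duality ladder that the paper itself uses in the proof of Proposition~\ref{lagr} (there as a $3\times 3$ diagram), and the chase showing $\ker(i_*\colon H_\ell(M;R)\to H_\ell(N;R))=L_1+L_2$ is correct and supplies the identification $H_\ell(M;R)/(L_1+L_2)\cong H_\ell(N;R)$ that the paper leaves implicit.

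Your worry about the outer zeroes is not cosmetic: the conclusion really does fail for an arbitrary decomposition into $(\ell-1)$-connected pieces, so either Definition~\ref{Heegaard} or the hypotheses of the proposition need strengthening. For instance, with $\ell=1$ and $R=\QQ$, let $N_1$ be $\Sigma_g\times[0,1]$ with a $1$-handle joining the two boundary copies of $\Sigma_g$, so that $\partial N_1\cong\Sigma_{2g}$ and $H_2(N_1;\QQ)\cong\QQ\neq 0$, and let $N$ be the double $N_1\cup_{\Sigma_{2g}}N_1$. Then $L_1=L_2$ is $2g$-dimensional, so $\dim_\QQ(L_1\cap L_2)=2g$, while Mayer--Vietoris gives $\dim_\QQ H_2(N;\QQ)=2g+2$. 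The fix you propose is the correct one: require each $N_i$ to be an $\ell$-handlebody (a boundary connected sum of copies of $\DDD^{\ell+1}\times\SSS^\ell$), equivalently $H_\ell(N_i,M;R)=0$, equivalently over a field $H_{\ell+1}(N_i;R)=0$. This forces $H_{\ell+1}(N_i;R)\to H_{\ell+1}(N;R)$ to vanish, so $\partial$ is injective, and forces $H_\ell(M;R)\to H_\ell(N_i;R)$ to be surjective, so $i_*$ is onto; it also guarantees the f.g.\ free direct-summand conditions over a general ring. This is the condition in force in the existence statement following the proposition and the one the paper's terse proof is implicitly using.
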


\begin{proof}
Immediate from the Mayer-Vietoris exact sequence
$$
0 \to H_{\ell+1}(N;R) \to H_{\ell}(M;R)  \to H_{\ell}(N_1;R) \oplus H_{\ell}(N_2;R)\to H_{\ell}(N;R) \to 0.
$$
\end{proof}

Every closed $(\ell-1)$-connected $(2\ell+1)$-dimensional manifold $N$ has a generalized Heegaard decomposition; the formations associated to different decompositions are stably isomorphic.

\begin{remark}\leavevmode

1. For even-dimensional manifolds the intersection $(-1)^{\ell}$-symmetric intersection form defines a function
$$
\begin{array}{l}
\{\hbox{diffeomorphism classes of  $(\ell-1)$-connected $2\ell$-dimensional}\\
\hskip100pt \hbox{manifolds with boundary}\}\\
\to  \{\hbox{isomorphism classes of $(-1)^{\ell}$-symmetric forms over $\ZZ$}\};\\
\hskip100pt (M,\partial M) \mapsto (H_{\ell}(M;\ZZ),\phi_M)
\end{array}
$$
with closed manifolds sent to nonsingular forms.
See Wall~\cite{walleven} for the classification of $(\ell-1)$-connected $2\ell$-dimensional manifolds for $\ell \geqslant   3$.

2. For odd-dimensional manifolds generalized Heegaard decompositions give a corresponding function
$$
\begin{array}{l}
\{\hbox{diffeomorphism classes of  $(\ell-1)$-connected $(2\ell+1)$-dimensional}\\
\hskip100pt \hbox{manifolds with boundary}\}\\
\to  \{\hbox{stable isomorphism classes of $(-1)^{\ell}$-symmetric formations over $\ZZ$}\};\\
\hskip100pt (N,\partial N) \mapsto (H,\theta;L_1,L_2)
\end{array}
$$
with
$$
\begin{array}{l}
L_1 \cap L_2=H_{\ell+1}(N;\ZZ),~K/(L_1+L_2)=H_{\ell}(N;\ZZ),\\
(L_2^{\perp}/L_2,[\theta])=(H_{\ell}(\partial N),\phi_{\partial N}),
\end{array}
$$
and closed manifolds sent to nonsingular formations.
See Wall~\cite{wallodd} for the classification of $(\ell-1)$-connected $(2\ell+1)$-dimensional manifolds for $\ell \geqslant   3$.
\end{remark}

\subsection{The union of manifolds with boundary; Novikov additivity and Wall nonadditivity of the signature}

In Theorem~\ref{nonadd1} below we state the Novikov additivity theorem for the signature of a union of $4k$-dimensional manifolds  glued along components.
We then go on to state in Theorem~\ref{nonadd2} the Wall nonadditivity for the signature of a union of $4k$-dimensional relative cobordisms.

\begin{definition} \label{geometricunion} \leavevmode

1. An $m$-dimensional manifold with boundary $(M,\partial M)$ is a {\it union} of $m$-dimensional manifolds with boundary $(M_1,\partial M_1)$, $(M_2,\partial M_2)$ along a codimension 1 submanifold with boundary $(N,\partial N) \subset (M,\partial M)$ if
$$(M,\partial M)=(M_1,\partial M_1) \cup_{(N,\partial N)} (M_2,\partial M_2),$$
that is if
$$(M,\partial M)=(M_1\cup_NM_2,N_1 \cup_{\partial N}N_2)$$
with
$$\begin{array}{l}
N_1={\rm cl.}(\partial M_1\backslash N),~
N_2={\rm cl.}(\partial M_2\backslash N),\\
\partial M_1~= N \cup_{\partial N} N_1,~\partial M_2=N\cup_{\partial N}N_2,~\partial N=\partial N_1 =\partial N_2
\end{array}$$
$$
\includegraphics[width=.5\linewidth]{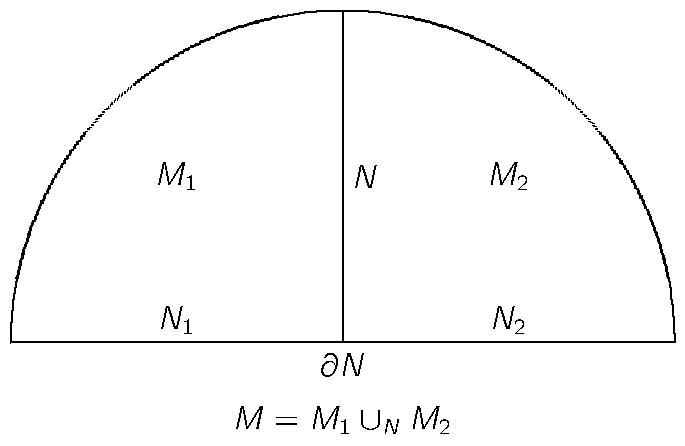}
$$

2. A union is {\it special} if
$$
\partial N=\emptyset,~\partial M_1=N,~
\partial M_2=N \sqcup N_2,~N_1=\emptyset,~N_2=\partial M.
$$

3. The {\it trinity} of the union in 1. is the $m$-dimensional manifold with boundary
$$\begin{array}{ll}
T&=~T(\partial N;N,N_1,N_2)\\
&=~\{\hbox{\rm  neighbourhood of the stratified set $N \cup N_1 \cup N_2 \subset M$}\}\\
&=~\partial N \times \DDD^2 \cup_{\partial N \times (I_0 \sqcup I_1 \sqcup I_2)}(N \times I_0 \sqcup N_1 \times I_1 \sqcup N_2 \times I_2) \subset M
\end{array}$$
with $I_0,I_1,I_2 \subset \partial \DDD^2=\SSS^1$ disjoint closed arcs.
$$
\includegraphics[width=.5\linewidth]{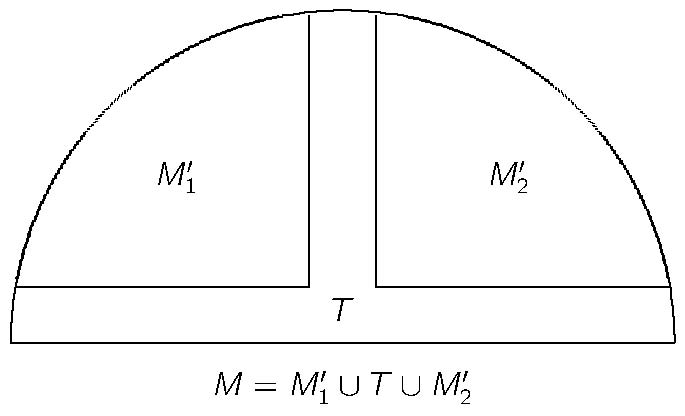}
$$
The closure of the complement
$$
({\rm cl.}(M \backslash T),\partial({\rm cl.}(M \backslash T)))=
(M'_1,\partial M'_1) \sqcup (M'_2,\partial M'_2)
$$
is the disjoint union of two copies $(M'_1,\partial M'_1)$, $(M'_2,\partial M'_2)$ of
$(M_1,\partial M_1)$, $(M_2,\partial M_2)$, with $\partial T=\partial M \sqcup (\partial M'_1 \sqcup \partial M'_2)$.
This gives $(M,\partial M)$ an expression as a special union
$$
(M,\partial M)=(M'_1 \sqcup M'_2,\partial M'_1 \sqcup \partial M'_2)\cup_{(\partial M'_1 \sqcup \partial M'_2,\emptyset)}(T,\partial T).
$$
\end{definition}

\begin{remark}
The trinity terminology is due to Borodzik, Nemethi and Ranicki~\cite{bnr}, which studies relative cobordisms of manifolds with boundary using the method of algebraic surgery theory.
\end{remark}

We have the Novikov additivity theorem for the signature:

\begin{theorem}[Novikov~\cite{novikov1971} 1967 for $\partial N=\emptyset$, Wall~\cite{wall1969} 1969 in general]
\label{nonadd1}
The signature of a $4k$-dimensional manifold with boundary which is a union
$$
(M,\partial M)=(M_1,\partial M_1)\cup_{(N,\partial N)}(M_2,\partial M_2)
$$
is the sum of the signatures of $M_1,M_2$ and the trinity $T$
$$
\tau(M)=\tau(M_1)+\tau(M_2)+\tau(T) \in \ZZ.
$$
In particular, if $\partial N=\emptyset$ (e.g. if the union is special) then $\tau(T)=0$.
\end{theorem}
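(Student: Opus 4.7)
The plan is to split the proof into three steps: first, reduce the general statement to the special union case via the trinity decomposition built into the statement; second, show $\tau(T) = 0$ when $\partial N = \emptyset$; third, establish Novikov additivity for special unions via Mayer--Vietoris. For the reduction, the trinity decomposition in Definition~\ref{geometricunion}.3 writes $(M, \partial M)$ as a special union
$$(M, \partial M) = (M_1' \sqcup M_2', \partial M_1' \sqcup \partial M_2') \cup_{(\partial M_1' \sqcup \partial M_2', \emptyset)} (T, \partial T),$$
the gluing being along the closed $(4k-1)$-manifold $\partial M_1' \sqcup \partial M_2'$. Granting Novikov additivity for special unions and using $M_i' \cong M_i$, this yields
$$\tau(M) = \tau(M_1' \sqcup M_2') + \tau(T) = \tau(M_1) + \tau(M_2) + \tau(T),$$
so the theorem reduces to the two statements (a) $\tau(T) = 0$ when $\partial N = \emptyset$, and (b) Novikov additivity for special unions.

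Claim (a) is straightforward. When $\partial N = \emptyset$ the piece $\partial N \times D^2$ of $T$ collapses, leaving $T$ as a disjoint union of products $X \times I$ with $X \in \{N, N_1, N_2\}$ closed of dimension $4k-1$. For each such product the intersection form on $H_{2k}(X \times I;\RR) \cong H_{2k}(X;\RR)$ vanishes identically, since any two $2k$-cycles may be isotoped into disjoint time slices $X \times \{t_1\}$, $X \times \{t_2\}$. Hence $\tau(T) = 0$.

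For claim (b), set $V = H_{2k}(M_1;\RR) \oplus H_{2k}(M_2;\RR)$ with $\phi = \phi_{M_1} \oplus \phi_{M_2}$, so $\tau(V, \phi) = \tau(M_1) + \tau(M_2)$. The Mayer--Vietoris inclusion map $\beta: V \to H_{2k}(M;\RR)$ preserves intersection forms: same-side terms $\phi_M(j_{i*}x_i, j_{i*}y_i)$ equal $\phi_{M_i}(x_i, y_i)$ by collar push-off inside $M_i$, and cross-terms $\phi_M(j_{1*}x_1, j_{2*}x_2)$ vanish since the cycles can be placed in disjoint collar neighborhoods of $N$. Form-preservation forces $\ker\beta \subseteq \mathrm{rad}(\phi)$, so $\beta$ descends to a form-preserving injection of $V/\ker\beta$ onto $A = \mathrm{im}(\beta) \subseteq H_{2k}(M;\RR)$, giving $\tau(A, \phi_M|_A) = \tau(M_1) + \tau(M_2)$.

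The final and most delicate step --- and the main obstacle --- is to prove $\tau(M) = \tau(A, \phi_M|_A)$, which amounts to showing $A + \mathrm{rad}(\phi_M) = H_{2k}(M;\RR)$ (since signature is invariant under quotienting out a subspace of the radical). By Mayer--Vietoris, $H_{2k}(M;\RR)/A$ embeds into $H_{2k-1}(N;\RR)$ as the image of the connecting homomorphism, so the task reduces to showing every such class lifts to a cycle supported on $\partial M = N_2$, thereby landing in $\mathrm{rad}(\phi_M) = \mathrm{im}(H_{2k}(\partial M) \to H_{2k}(M))$. This requires a diagram chase combining the Mayer--Vietoris sequence with Poincar\'e--Lefschetz duality for the pairs $(M, \partial M)$, $(M_1, N)$, and $(M_2, N \sqcup N_2)$; it crucially uses the closedness of $N$, and is exactly the step where the general (Wall) case would pick up a Maslov-type correction coming from the non-closed boundary $\partial N$.
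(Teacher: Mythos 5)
Your overall architecture matches the paper's: both reduce the general case to the special case via the trinity decomposition, prove $\tau(T)=0$ when $\partial N=\emptyset$ by the same product-with-interval argument, and then establish Novikov additivity for special unions. Steps (a) and the trinity reduction are fine. The gap is in claim (b), and it is not merely a missing calculation --- your proposed reduction is to a statement that is false.

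You reduce claim (b) to showing that $A + \mathrm{rad}(\phi_M) = H_{2k}(M;\RR)$, where $A = \mathrm{im}\bigl(H_{2k}(M_1;\RR)\oplus H_{2k}(M_2;\RR) \to H_{2k}(M;\RR)\bigr)$. This fails already for
$$
M = \SSS^{2k}\times\SSS^{2k} = (\SSS^{2k}\times\DDD^{2k})\cup_{\SSS^{2k}\times\SSS^{2k-1}}(\SSS^{2k}\times\DDD^{2k}),
$$
a special union with $\partial N = \emptyset$. Here $M$ is closed, so $\mathrm{rad}(\phi_M)=0$ and $H_{2k}(M;\RR)=\RR^2$, but $H_{2k}(M_i;\RR)=\RR\cdot[\SSS^{2k}\times \mathrm{pt}]$ for $i=1,2$, so $A$ is the one-dimensional subspace spanned by $[\SSS^{2k}\times \mathrm{pt}]$ and the class $[\mathrm{pt}\times\SSS^{2k}]$ lies in neither $A$ nor the radical. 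The conclusion $\tau(M)=\tau(A,\phi_M|_A)$ happens to hold here ($0=0$), but not for the reason you propose; a Mayer--Vietoris/Poincar\'e--Lefschetz diagram chase cannot establish $A+\mathrm{rad}=H_{2k}(M;\RR)$ because that equality is simply wrong. What Poincar\'e--Lefschetz duality actually supplies --- and what the paper uses --- is a different, weaker statement: passing to the nonsingular quotient $V=H_{2k}(M;\RR)/\mathrm{rad}(\phi_M)$ (equivalently, $\mathrm{im}(\phi_M)\subset H_{2k}(M;\RR)^*$) and to the images $V_1,V_2\subset V$ of $H_{2k}(M_i;\RR)$, one has $V_1^{\perp} = V_2$ in $V$. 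Note this allows $V_1+V_2\subsetneq V$, as in the example above where $V_1=V_2=V_1^\perp$. One then deduces $\tau(V)=\tau(V_1)+\tau(V_2)$ by a purely algebraic sublagrangian argument (the diagonal $V_1 \hookrightarrow (V_1,-\phi_1)\oplus(V,\phi)$ is a sublagrangian with $\Delta^\perp/\Delta\cong V_2$, and the extension lemma, Proposition~\ref{extension}, gives the Witt-class identity). Your proof would need to replace the $A+\mathrm{rad}=H_{2k}$ step with this duality-plus-Witt-reduction, or an equivalent; as written, it stalls at the point you yourself flag as the ``main obstacle.''
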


\begin{proof}
Consider first the case of an expression of a $4k$-dimensional manifold with boundary as a special union
$$
(M,\partial M)=(M_1,\partial M_1) \cup_{(N,\partial N)}(M_2,\partial M_2).
$$
The subforms $(V_1,\phi_1)$, $(V_2,\phi_2)$ of the nonsingular symmetric form over $\RR$
$$
\begin{array}{c}
(V,\phi)=({\rm im}(\phi_M:H_{2k}(M;\RR)\to H_{2k}(M;\RR)^*),\Phi_M)\\
(\Phi_M(\phi_M(u),\phi_M(y))=\phi_M(u,v))
\end{array}
$$
defined by
$$
\begin{array}{l}
(V_1,\phi_1)=({\rm im}(H_{2k}(M_1;\RR) \to H_{2k}(M;\RR)^*),\phi\vert),\\
(V_2,\phi_2)=({\rm im}(H_{2k}(M_2;\RR) \to H_{2k}(M;\RR)^*),\phi\vert) \subseteq (V,\phi)
\end{array}
$$
are such that $V_1^{\perp}=V_2$.
The diagonal
$$
\Delta: u \in (V_1,0) \mapsto (u,u) \in (V_1,-\phi_1) \oplus (V,\phi)
$$
is the inclusion of a sublagrangian with
$$
(\Delta^{\perp}/\Delta,[-\phi_1 \oplus \phi])=(V_2,\phi_2).
$$
By Proposition~\ref{extension} (in the Appendix~\ref{appendix})
$$
-\tau(V_1,\phi_1)+\tau(V,\phi)=\tau(V_2,\phi_2) \in \ZZ
$$
so that
$$
\tau(M)=\tau(V,\phi)=\tau(V_1,\phi_1)+\tau(V_2,\phi_2)=\tau(M_1)+\tau(M_2) \in \ZZ
$$

For the general case note that the union
$$
(M,\partial M)= (M'_1 \sqcup M'_2,\partial M'_1 \sqcup \partial M'_2)
\cup_{(\partial M'_1 \sqcup \partial M'_2,\emptyset)} (T,\partial T)
$$
is special, so that
$$
\tau(M)=\tau(V,\phi)=\tau(M'_1\sqcup M'_2)+\tau(T)~
=~\tau(M_1)+\tau(M_2) +\tau(T) \in \ZZ.
$$

Finally, if $\partial N=\emptyset$ then $T=(N \sqcup N_1 \sqcup N_2) \times [0,1]$ and $\tau(T)=0 \in \ZZ$.
\end{proof}

In 1969 Wall~\cite{wall1969}  computed the signature of the trinity $T=T(\partial N;N,N_1,N_2)$ of  a $4k$-dimensional union  in terms of the `triformation' consisting of the symplectic form over $\RR$
$$
(H,\theta)=(H_{4k-2}(\partial N;\RR),\phi_N)
$$
and the  three lagrangians
$$
\begin{array}{l}
L={\rm ker}(H_{4k-2}(\partial N;\RR) \to H_{4k-2}(N;\RR)),\\
L_1={\rm ker}(H_{4k-2}(\partial N;\RR) \to H_{4k-2}(N_1;\RR)),\\
L_2={\rm ker}(H_{4k-2}(\partial N;\RR) \to H_{4k-2}(N_2;\RR)).
\end{array}
$$
by constructing a symmetric form $W(H,\theta;L,L_1,L_2)$ over $\RR$ such that
$$
\tau(T)=\tau(W(H,\theta;L,L_1,L_2)) \in \ZZ.
$$
This nonadditivity of the signature invariant has subsequently turned out to be a manifestation of the Maslov index, as we shall recount further below.

\begin{definition}[Ranicki,~\cite{ranickiexact}] \label{union} \leavevmode

1.An $\epsilon$-symmetric form $(V,\phi)$ over $R$ is a {\it union} of subforms $(V_1,\phi_1),(V_2,\phi_2) \subseteq (V,\phi)$
$$
(V,\phi)=(V_1,\phi_1) \cup (V_2,\phi_2)
$$
if $\phi(V_1,V_2)=\{0\}$, or equivalently $V_1 \subseteq V_2^{\perp}$,
$V_2 \subseteq  V_1^{\perp}$.

2. The union is {\it special} if $V_1^{\perp}=V_2$. 
\end{definition}

\begin{example}
Let $(M,\partial M)$ be a $2\ell$-dimensional manifold with boundary with $(-1)^{\ell}$-symmetric intersection form $(V,\phi)=(H_{\ell}(M;R),\phi_M)$ over $R$. An expression as a union
$$
(M,\partial M)=(M_1,\partial M_1) \cup_{(N,\partial N)} (M_2,\partial M_2)
$$
determines submodules
$$
V_1={\rm im}(H_{\ell}(M_1;R) \to H_{\ell}(M;R)),~
V_2={\rm im}(H_{\ell}(M_2;R) \to H_{\ell}(M;R)) \subseteq V
$$
such that $\phi(V_1,V_2)=\{0\}$. If $V_1$ and $V_2$ are f.g. free direct summands of $V$ (e.g. if $R$ is a field) then $(V,\phi)$ is the union of the subforms $(V_1,\phi_1)$, $(V_2,\phi_2)$ in the sense of Definition~\ref{union} 1., with $\phi_1=\phi\vert_{V_1}$, $\phi_2=\phi\vert_{V_2}$.
Furthermore if $(M,\partial M)$ is a special geometric union in the sense of  Definition~\ref{geometricunion} 2. and $(V,\phi)$ is a special algebraic union in the sense of  Definition~\ref{union} 2.
\end{example}

\begin{proposition}
For a special union $\epsilon$-symmetric form $(V,\phi)=(V_1,\phi_1) \cup (V_2,\phi_2)$ there is defined an isomorphism of $\epsilon$-symmetric forms
$$
(V_3,\phi_3) \oplus (V_1,\phi_1)~\cong~(V_2,-\phi_2) \oplus (V,\phi)
$$
for some nonsingular $\epsilon$-symmetric form $(V_3,\phi_3)$ with a lagrangian.
\end{proposition}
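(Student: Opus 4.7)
The proof will mirror the diagonal-sublagrangian argument used in the proof of Novikov additivity (Theorem~\ref{nonadd1}), but symmetrised: instead of embedding $V_1$ diagonally into $(V_1,-\phi_1)\oplus(V,\phi)$, I would embed $V_2$ diagonally into $(V_2,-\phi_2)\oplus(V,\phi)$. Concretely, define
$$\Delta'\colon V_2 \longrightarrow V_2\oplus V,\qquad v\longmapsto (v,v).$$
For $v,v'\in V_2$ one has $(-\phi_2\oplus\phi)(\Delta'(v),\Delta'(v'))=-\phi(v,v')+\phi(v,v')=0$, so $\Delta'(V_2)$ is isotropic. The direct-summand and projection hypotheses that make it a \emph{sub}lagrangian follow because $V_2\subseteq V$ is a direct summand (this was assumed when forming the union in Definition~\ref{union}) and the projection $V_2\oplus V\to V_2^{*}$ dual to $\Delta'$ is split by the inclusion $v\mapsto(0,v)$ composed with $\phi:V\to V^{*}$ restricted to $V_2$, which lands surjectively onto $V_2^{*}$ because $(V,\phi)$ is nonsingular.

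Next I would compute $\Delta'{}^{\perp}/\Delta'$. A pair $(u,x)\in V_2\oplus V$ lies in $\Delta'{}^{\perp}$ iff $\phi(x-u,v)=0$ for every $v\in V_2$, i.e.\ iff $x-u\in V_2^{\perp}=V_1$ (this is precisely where the ``special'' hypothesis $V_1^{\perp}=V_2$ is used). Writing $x=u+y$ with $y\in V_1$, the quotient map
$$\Delta'{}^{\perp}/\Delta'\;\xrightarrow{\;\cong\;}\;V_1,\qquad [(u,u+y)]\longmapsto y$$
is an isomorphism of $R$-modules, and the induced form pulls back to
$$(-\phi_2\oplus\phi)\bigl((u,u+y),(u',u'+y')\bigr)=\phi(y,y')=\phi_1(y,y'),$$
the cross-terms in $u$ and $y$ cancelling because $\phi(V_1,V_2)=0$. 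Hence $(\Delta'{}^{\perp}/\Delta',[-\phi_2\oplus\phi])\cong(V_1,\phi_1)$.

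To finish, I would invoke the standard sublagrangian reduction for $\epsilon$-symmetric forms over a ring (the form-level upgrade of Proposition~\ref{extension}, recorded in the Appendix~\ref{appendix}): if $L$ is a sublagrangian of a nonsingular $\epsilon$-symmetric form $(H,\theta)$, then there is an isomorphism
$$(H,\theta)\;\cong\;(L^{\perp}/L,[\theta])\oplus H_{\epsilon}(L).$$
Applied to $(H,\theta)=(V_2,-\phi_2)\oplus(V,\phi)$ with sublagrangian $L=\Delta'(V_2)\cong V_2$, this yields
$$(V_2,-\phi_2)\oplus(V,\phi)\;\cong\;(V_1,\phi_1)\oplus H_{\epsilon}(V_2),$$
and the statement is proved with $(V_3,\phi_3)=H_{\epsilon}(V_2)$, whose canonical summand $V_2\subset V_2\oplus V_2^{*}$ is the required lagrangian.

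\textbf{Main obstacle.} The only non-formal point is the \emph{strict} isomorphism (as opposed to Witt equivalence) in the sublagrangian reduction step: one needs $L=\Delta'(V_2)$ to be a direct summand with a direct-summand $L^{\perp}$, so that a basis can be chosen splitting off a genuine hyperbolic summand $H_{\epsilon}(V_2)$. This is automatic once $V_1,V_2\subseteq V$ are direct summands and $(V,\phi)$ is nonsingular, which are the working hypotheses of Definitions~\ref{union} and the surrounding setup; the reduction itself is algebraic and does not use the field hypothesis of Proposition~\ref{extension}, which is why the Appendix version is needed.
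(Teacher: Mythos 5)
Your argument is essentially the paper's: both embed $V_2$ diagonally as a sublagrangian $\Delta:V_2\to(V_2,-\phi_2)\oplus(V,\phi)$, identify $(\Delta^{\perp}/\Delta,[-\phi_2\oplus\phi])\cong(V_1,\phi_1)$ via the special hypothesis, and split off a nonsingular summand with a lagrangian by the sublagrangian reduction of Proposition~\ref{extension}. The only small overstatement is naming the split-off summand as the hyperbolic form $H_{\epsilon}(V_2)$ rather than the metabolic form $H_{\epsilon}(V_2^{*},\theta)$ that Proposition~\ref{extension} actually produces (these agree only when $1/2\in R$), but since a metabolic form also has a lagrangian this does not affect the conclusion.
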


\begin{proof}
As in the proof of  Theorem~\ref{nonadd1} apply Theorem~\ref{extension} to the diagonal
$$
\Delta: (u,u) \in (V_2,0) \mapsto (u,u) \in (V_2,-\phi_2) \oplus (V,\phi)
$$
which is the inclusion of a sublagrangian with
$$
(\Delta^{\perp}/\Delta,[-\phi_2 \oplus \phi])=(V_1,\phi_1).
$$
\end{proof}

\begin{example}
For a special union symmetric form $(V,\phi)=(V_1,\phi_1) \cup (V_2,\phi_2)$ over $\RR$
$$
\tau(V,\phi)=\tau(V_1,\phi_1)+\tau(V_2,\phi_2) \in \ZZ.
$$
\end{example}

\begin{definition} \label{tri1} \leavevmode

1. A {\it $(-\epsilon)$-symmetric triformation over $R$} $(H,\theta;L_1,L_2,L_3)$  is a nonsingular $(-\epsilon)$-symmetric form $(H,\theta)$ over $R$ together with three lagrangians $L_1,L_2,L_3 \subset H$.

2. The {\it boundary} of the triformation is the nonsingular $(-\epsilon)$-symmetric formation
$$
\partial (H,\theta;L_1,L_2,L_3)= (H,\theta;L_1,L_2) \oplus
(H,\theta;L_2,L_3) \oplus (H,\theta;L_3,L_1).
$$

3. The {\it Wall form} (\cite{wall1969}) $W(H,\theta;L_1,L_2,L_3)=(W,\psi)$ is the $\epsilon$-symmetric pairing
$$
\psi :~W \times W \to R ;~(u,v) \mapsto \psi(u,v)=\epsilon \psi(v,u)
$$
with
$$\begin{array}{l}
W={\rm ker}((j_1~j_2~j_3):L_1 \oplus L_2 \oplus L_3 \to H),\\
j_i={\rm inclusion}:~(L_i,0) \to (H,\theta),\\
\psi:~W \times W \to R;~(u,v)=((u_1,u_2,u_3),(v_1,v_2,v_3)) \mapsto \theta(j_1(u_1),j_2(v_2))
\end{array}.
$$

4. Let $(V,\phi)$ be an $\epsilon$-symmetric form over $R$ which is a union
$$
(V,\phi)=(V_1,\phi_1) \cup (V_2,\phi_2),
$$
such that  $V_1 \subseteq V_2^{\perp}$, $V_2 \subseteq V_1^{\perp}$ are direct summands.
Let
$$
i_1:~(V_1,\phi_1) \to (V,\phi),~i_2:~(V_2,\phi_2) \to (V,\phi)
$$
be the inclusions.
The {\it canonical $(-\epsilon)$-symmetric triformation} $(H,\theta;L_1,L_2,L_3)$ over $R$ is given by
$$\begin{array}{l}
(H,\theta)=
\bigg(\dfrac
{{\rm ker}(\begin{pmatrix}
i_1^*\phi & i_1^* \\ 0 & i_2^* \end{pmatrix}:V \oplus V^* \to V_1^* \oplus V_2^*)}
{{\rm im}(\begin{pmatrix} i_1 & i_2 \\ -\phi i_1 & 0 \end{pmatrix}:
V_1 \oplus V_2 \to V \oplus V^*)},\begin{bmatrix}
0 & 1 \\ -\epsilon & 0 \end{bmatrix} \bigg),\\
\begin{bmatrix} 1 \\ 0 \end{bmatrix}:~
L_1= \dfrac{V_1^{\perp}}{V_2}=\dfrac{{\rm ker}(i_1^*\phi:V \to V_1^*)}{{\rm im}(i_2:V_2 \to V)} \to H,\\
\begin{bmatrix} 0 \\ 1 \end{bmatrix}:~
L_2= \dfrac{V_2^{\perp}}{V_1}=\dfrac{{\rm ker}(i_2^*\phi:V \to V_2^*)}{{\rm im}(i_1:V_1 \to V)} \to H,\\
\begin{bmatrix} 0 \\ 1 \end{bmatrix}:~L_3={\rm ker}(\begin{pmatrix} i_1^* \\ i_2^* \end{pmatrix}:
V^* \to V_1^* \oplus V_2^*) \to H
\end{array}
$$
\end{definition}

\medskip

\begin{example}
1.   Definition~\ref{tri1} 3. associates to any skew-symmetric triformation $(H,\theta;L_1,L_2,L_3)$ over $\RR$ a symmetric form $W(H,\theta;L_1,L_2,L_3)$ over $\RR$ and hence a signature
$$
\tau(H,\theta;L_1,L_2,L_3)= \tau(W(H,\theta;L_1,L_2,L_3))\in \ZZ.
$$

2. For any $\alpha_1,\alpha_2 \in {\rm Sp}(2n,R)$ we have  a skew-symmetric triformation $(H_-(L);\allowbreak L,\alpha_1(L),\allowbreak \alpha_2(L))$ and hence a symmetric form $W(H_-(L);L,\alpha_1(L),\alpha(L_2))$ over $R$. For
$R \subseteq \RR$ the signature
$$
\tau(W(H_-(L);L,\alpha_1(L),\alpha_2(L))) \in W(\RR)=\ZZ
$$
is a cocycle on ${\rm Sp}(2n,R)$ : these are the `Meyer cocycle' for $R=\ZZ$ and the `Maslov cocycle'  for $R=\RR$ -- see section~\ref{maslov} below.
\end{example}

\begin{proposition} \label{formunion} \leavevmode

1. Let $(H,\theta;L_1,L_2,L_3)$ be an $(-\epsilon)$-symmetric triformation over $R$. If $W={\rm ker}(L_1 \oplus L_2 \oplus L_3 \to H)$ is a f.g. free $R$-module (e.g. if $R$ is a field, or if $H=L_1+L_2+L_3$) then the Wall form $(W,\psi)$ is an $\epsilon$-symmetric form over $R$, with the boundary stably isomorphic to the boundary of the triformation
$$
\partial (W,\psi)~\cong~\partial(H,\theta;L_1,L_2,L_3).
$$

2. A union $\epsilon$-symmetric form
$(V,\phi)=(V_1,\phi_1) \cup (V_2,\phi_2)$ over $R$ is such that
there is defined an isomorphism of $\epsilon$-symmetric forms over $R$
$$
(V,\phi)~\cong~((V_1,\phi_1)\oplus (V_2,\phi_2)) \cup W(H,\theta,L_1,L_2,L_3)
$$
with $(H,\theta;L_1,L_2,L_3)$ the canonical $(-\epsilon)$-symmetric
triformation over $R$ given by Definition~\ref{tri1}.

3. For $\epsilon=1$, $R=\RR$
$$
\tau(V,\phi)=\tau(V_1,\phi_1)+\tau(V_2,\phi_2)+\tau(W(H,\theta,L_1,L_2,L_3)) \in \ZZ.
$$
\end{proposition}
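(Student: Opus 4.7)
The plan is to prove the three parts of Proposition~\ref{formunion} essentially in the given order, with parts~1 and~3 being formal consequences of suitable algebraic manipulations, while part~2 requires the bulk of the bookkeeping.

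For part~1, I would verify directly that $\psi$ is $\epsilon$-symmetric. Given $u=(u_1,u_2,u_3)$, $v=(v_1,v_2,v_3) \in W$ with $j_1(u_1)+j_2(u_2)+j_3(u_3)=0$ and similarly for $v$, I would compute
$$\psi(u,v)=\theta(j_1(u_1),j_2(v_2))=\theta(-j_2(u_2)-j_3(u_3),j_2(v_2))=-\theta(j_3(u_3),j_2(v_2))$$
using that $L_2$ is a lagrangian. Then substituting $j_2(v_2)=-j_1(v_1)-j_3(v_3)$ and using that $L_3$ is a lagrangian gives $\theta(j_3(u_3),j_1(v_1))$, and applying the $(-\epsilon)$-symmetry of $\theta$ together with $j_3(u_3)=-j_1(u_1)-j_2(u_2)$ and the lagrangian condition on $L_1$ yields $\epsilon\theta(j_1(v_1),j_2(u_2))=\epsilon\psi(v,u)$. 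For the boundary claim, I would use Proposition~\ref{stablelagrangian}\,1: both $\partial(W,\psi)$ and $\partial(H,\theta;L_1,L_2,L_3)$ must vanish in the Witt group since each side is constructed from pieces that admit natural lagrangians built from the $L_i$'s, and comparing these lagrangians gives the stable isomorphism.

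Part~2 is the principal obstacle. Here I would proceed by constructing an explicit isomorphism of $\epsilon$-symmetric forms and verifying it is compatible with the union decompositions on both sides. The strategy is to note that on the right-hand side, the union structure on $(V_1,\phi_1)\oplus(V_2,\phi_2)$ with $W(H,\theta;L_1,L_2,L_3)$ is canonically determined by pairing the $W$-factor against the orthogonality data of $V_1,V_2$ inside $V$. Concretely, I would define a map $(V_1,\phi_1)\oplus(V_2,\phi_2)\oplus W \to V$ by sending $(v_1,v_2,(u_1,u_2,u_3))$ to $i_1(v_1)+i_2(v_2)+(\text{lift of }u_1)$, show it descends to an isomorphism from the union form to $(V,\phi)$, and then check that the induced pairings match term-by-term using the definitions of $H$, $\theta$, and the $L_i$'s in Definition~\ref{tri1}\,4. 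The main technical difficulty will be verifying the pairing identities modulo the image and kernel relations that define $H$; these involve untangling the quotient
$$H=\frac{\ker\bigl(\begin{smallmatrix} i_1^*\phi & i_1^* \\ 0 & i_2^*\end{smallmatrix}\bigr)}{\mathrm{im}\bigl(\begin{smallmatrix} i_1 & i_2 \\ -\phi i_1 & 0\end{smallmatrix}\bigr)}$$
and matching pairings that arise from $\theta=\begin{bsmallmatrix} 0 & 1 \\ -\epsilon & 0\end{bsmallmatrix}$ on $V\oplus V^*$.

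Part~3 then follows formally. Applying the signature to the isomorphism of part~2 and using that $\tau$ is additive on direct sums gives
$$\tau(V,\phi)=\tau(V_1,\phi_1)+\tau(V_2,\phi_2)+\tau(W(H,\theta;L_1,L_2,L_3))$$
provided we know that the union operation on the right is \emph{special} in the sense of Definition~\ref{union}\,2, so that the additivity of the signature for special unions (as established in the course of Theorem~\ref{nonadd1}) applies. Checking that the union on the right is special amounts to verifying that the orthogonal complement of $(V_1,\phi_1)\oplus(V_2,\phi_2)$ inside $(V,\phi)$ is exactly the image of $W$, which follows from the explicit description in part~2. The hardest step overall is verifying that the canonical triformation of Definition~\ref{tri1}\,4 really does compute the obstruction to splitting $(V,\phi)$ as the orthogonal sum $(V_1,\phi_1)\oplus(V_2,\phi_2)$, i.e.\ that the Wall correction term is precisely what is needed; this is essentially a diagram chase through the double quotient defining $H$.
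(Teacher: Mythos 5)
The paper's own ``proof'' of Proposition~\ref{formunion} is a single sentence, ``An abstract version of Wall~\cite{wall1969}.'' -- so you are not comparing against anything explicit, and your attempt is genuinely trying to reconstruct what Wall did. Measured on its own terms, the verification that $\psi$ is $\epsilon$-symmetric in part~1 is correct: the chain
$$\psi(u,v)=\theta(j_1u_1,j_2v_2)=-\theta(j_3u_3,j_2v_2)=\theta(j_3u_3,j_1v_1)=\epsilon\,\theta(j_1v_1,j_2u_2)=\epsilon\psi(v,u)$$
uses exactly the lagrangian conditions on $L_2$, $L_3$, $L_1$ in that order and the $(-\epsilon)$-symmetry of $\theta$, and it checks out. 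The deduction of part~3 from part~2 via additivity of the signature on special unions is also sound, provided part~2 is established in the precise form needed (i.e.\ that the union on the right-hand side is special).

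However, there are two genuine gaps. First, for the boundary claim in part~1, your argument (``both sides vanish in the Witt group, and comparing lagrangians gives the stable isomorphism'') proves at most Witt equivalence of the two formations, not the asserted \emph{stable isomorphism} of formations in the sense of Definition~\ref{formation}\,2. Stable isomorphism means an actual isomorphism after adding summands $(K,\psi;I,J)$ with $K=I\oplus J$; these are formations, not forms, and Proposition~\ref{stablelagrangian}\,1 (which concerns when a \emph{form} is a boundary of a formation) does not apply here. A dimension count already shows the two formations differ in rank, so an explicit stabilization and an explicit isomorphism must be exhibited -- this is the real content of part~1 and it is missing.

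Second, your candidate isomorphism in part~2, $(v_1,v_2,(u_1,u_2,u_3))\mapsto i_1(v_1)+i_2(v_2)+(\text{lift of }u_1)$, is not well defined: $u_1$ lives in $L_1=V_1^{\perp}/V_2$, so a lift to $V_1^{\perp}\subset V$ is determined only modulo $V_2=i_2(V_2)$, and the ambiguity is \emph{not} absorbed because $v_2$ is an independent variable. You would have to choose a splitting $V_1^{\perp}\cong V_2\oplus C$ and argue the resulting isomorphism class of forms is independent of the choice, or else build the map the other way -- from $(V,\phi)$ onto the union form -- using the canonical surjection $V\to L_1=V_1^\perp/V_2$ on the orthogonal complement of $V_1\oplus V_2$. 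Either route is workable, but as written the construction has a hole, and it is precisely the ``diagram chase through the double quotient defining $H$'' that you flag at the end as the hardest step. That flag is accurate; it is where the proposal stops short of a proof.
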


\begin{proof} An abstract version of Wall~\cite{wall1969}.
\end{proof}

\begin{example} \label{triex} \leavevmode

1.  Let
$$
(M^{2\ell},\partial M)=(M_1,\partial M_1) \cup_{(N,\partial N)} (M_2,\partial M_2)
$$
be an $(\ell-1)$-connected $2\ell$-dimensional manifold with boundary which is a union of $(\ell-1)$-connected manifolds with boundary $(M_1,\partial M_1)$, $(M_2,\partial M_2)$ along an $(\ell-2)$-connected manifold with boundary $(N,\partial N)$. The $(-1)^{\ell}$-symmetric intersection form $(V,\phi)=(H_{\ell}(M;R),\phi_M)$ is an algebraic union
$$
(V,\phi)=(V_1,\phi_1) \cup (V_2,\phi_2)=(H_{\ell}(M_1;R),\phi_{M_1})\cup (H_{\ell}(M_2;R),\phi_{M_2})
$$
such that the canonical $(-1)^{\ell-1}$-symmetric triformation is determined by $\partial N=\partial N_1=\partial N_2$
$$\begin{array}{l}
(H,\theta;L,L_1,L_2)=(H_{\ell-1}(\partial N;R),\phi_{\partial N};
{\rm ker}(H_{\ell-1}(\partial N;R) \to H_{\ell-1}(N;R)),\\
\hskip25pt
{\rm ker}(H_{\ell-1}(\partial N;R) \to H_{\ell-1}(N_1;R)),
{\rm ker}(H_{\ell-1}(\partial N;R) \to H_{\ell-1}(N_2;R)))
\end{array}$$
The Wall form of the triformation is the $(-1)^{\ell}$-symmetric intersection form of the trinity $T=T(\partial N;N,N_1,N_2)$
$$
W(H,\theta;L,L_1,L_2)=(H_{\ell}(T;R),\phi_T).
$$

2. For $\ell=2k$, $R=\RR$
$$
\begin{array}{ll}
\tau(M)&=~\tau(M_1)+\tau(M_2)+\tau(T)\\
&=~\tau(M_1)+\tau(M_2)+\tau(W(H,\theta;L,L_1,L_2)) \in \ZZ
\end{array}
$$
with $\tau(T)=\tau(W(H,\theta;L,L_1,L_2)) \in \ZZ$ the nonadditivity invariant of Wall~\cite{wall1969}.
\end{example}

Finally, here is the Wall non-additivity theorem in full generality:

\begin{theorem}[Wall~\cite{wall1969}] \label{nonadd2}
The signature of a $4k$-dimensional manifold with boundary which is a union
$$
(M,\partial M)=(M_1,\partial M_1) \cup_{(N,\partial N)} (M_2,\partial M_2)
$$
is given by
$$
\tau(M)=\tau(M_1)+\tau(M_2) +\tau(T) \in \ZZ
$$
with $T=T(\partial N;N,N_1,N_2)$ the trinity of the union.
\end{theorem}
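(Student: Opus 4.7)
The plan is to reduce the topological non-additivity to the purely algebraic non-additivity for a union of symmetric forms (Proposition \ref{formunion}(3)), and then identify the resulting Wall triformation invariant with the signature of the trinity $T$ via Example \ref{triex}. The $4k$-dimensional intersection form of $M$ is symmetric, so Wall's algebraic machinery (developed for $\epsilon=+1$ over $R=\RR$) applies directly.

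First I would set $(V,\phi) = (H_{2k}(M;\RR),\phi_M)$ and define
$$V_i = \mathrm{im}\bigl(H_{2k}(M_i;\RR) \to H_{2k}(M;\RR)\bigr),\quad \phi_i = \phi|_{V_i}.$$
Cycles supported in $M_1$ can be pushed off cycles supported in $M_2$ through the interior, yielding $\phi(V_1,V_2)=0$; since $\RR$ is a field, $V_1,V_2$ are direct summands, so $(V,\phi)=(V_1,\phi_1)\cup(V_2,\phi_2)$ is a union in the sense of Definition \ref{union}(1). Standard facts about degenerate forms give $\tau(V_i,\phi_i)=\tau(M_i)$, because the radical of the intersection form of $(M_i,\partial M_i)$ is the image of $H_{2k}(\partial M_i;\RR)$ and contributes nothing to the signature. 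Proposition \ref{formunion}(3) then yields
$$\tau(M)=\tau(M_1)+\tau(M_2)+\tau\bigl(W(H,\theta;L_1,L_2,L_3)\bigr),$$
where $(H,\theta;L_1,L_2,L_3)$ is the canonical skew-symmetric triformation over $\RR$ attached to the union, as in Definition \ref{tri1}(4).

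Second, and this is the real work, I would carry out the geometric identification sketched in Example \ref{triex}(1), namely that the canonical triformation above is isomorphic to
$$\bigl(H_{2k-1}(\partial N;\RR),\,\phi_{\partial N};\,L,\,L_1,\,L_2\bigr),$$
with $L,L_1,L_2$ the kernels into $H_{2k-1}$ of $N,N_1,N_2$ respectively, and that the Wall form of this triformation is isomorphic to the intersection form $(H_{2k}(T;\RR),\phi_T)$ of the trinity. The identification of $(H,\theta)$ with $H_{2k-1}(\partial N;\RR)$ comes from Poincar\'e--Lefschetz duality applied to the codimension-1 submanifold $N$ and its boundary $\partial N$; the three lagrangians emerge from writing the connecting maps in the long exact sequences of the pairs $(M_i,N)$ and $(N,\partial N)$ in terms of the three pieces surrounding the corner locus $\partial N$.

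The main obstacle is the Mayer--Vietoris bookkeeping for the trinity: $T$ is built from the four pieces $\partial N \times \DDD^2$, $N\times I_0$, $N_1\times I_1$, $N_2\times I_2$ glued along $\partial N\times I_j$, and I would need to show that the spectral-sequence-style computation of $H_{2k}(T;\RR)$ produces exactly the subspace
$$W=\ker\!\bigl(L_1\oplus L_2\oplus L_3\to H\bigr)$$
and that the intersection pairing on $H_{2k}(T;\RR)$ transports to Wall's pairing $\psi(u,v)=\theta(j_1(u_1),j_2(v_2))$. Once this matching of form and pairing is verified, substituting $\tau(T)=\tau(W(H,\theta;L_1,L_2,L_3))$ into the algebraic identity above completes the proof.
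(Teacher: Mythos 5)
Your proposal matches the paper's (implicit) argument: the paper also derives Theorem \ref{nonadd2} from Proposition \ref{formunion}(3) together with the geometric identification of the Wall form $W(H,\theta;L,L_1,L_2)$ with $(H_{2k}(T;\RR),\phi_T)$ recorded in Example \ref{triex}, and, like you, defers the Mayer--Vietoris verification of that identification to Wall's original paper. You have reconstructed the intended argument, including the observation that $\tau(V_i,\phi_i)=\tau(M_i)$ because the radical is unchanged on passing to the image in $H_{2k}(M;\RR)$.
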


\subsection{Plumbing: from quadratic forms to manifolds}\label{plumbingmanifolds}

The algebraic plumbing of symmetric matrices over $\RR$ (Definition~\ref{plumbing}) extends readily to $\epsilon$-symmetric forms over $\ZZ$:

\begin{definition} \label{plumbing2}
The {\it plumbing} of an $\epsilon$-symmetric form $(V,\phi)$ over $\ZZ$ with respect to $v \in V^*$ and a 1-dimensional $\epsilon$-symmetric form $(\ZZ,w)$ is the  $\epsilon$-symmetric form over $\ZZ$
$$
(V',\phi')=(V\oplus \ZZ,\begin{pmatrix} \phi & \epsilon v^*\\  v & w \end{pmatrix}).
$$
\end{definition}

\begin{proposition}\label{plumbingeffect}
If $(V',\phi')$ is an $\epsilon$-symmetric form over $\ZZ$ which is obtained from an $\epsilon$-symmetric form $(V,\phi)$ over $\ZZ$
with ${\rm det}(\phi) \neq 0 \in \ZZ$ by plumbing with respect to
$v \in V^*$, $(\ZZ,w)$ then up to isomorphism of $\epsilon$-symmetric forms over $\QQ$
$$\QQ \otimes_\ZZ (V',\phi')=\QQ \otimes_\ZZ(V,\phi) \oplus (\QQ,w-\epsilon v\phi^{-1}v^*).$$
\end{proposition}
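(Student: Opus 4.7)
The plan is to reduce this to the block-matrix completion of the square already used in the proof of the Sylvester--Jacobi--Gundelfinger--Frobenius Theorem~\ref{sjgf}, now carried out over $\QQ$ (where $\phi$ becomes invertible) and with attention to the $\epsilon$-symmetric sign.

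More concretely, I would tensor everything with $\QQ$ at the outset so that $\phi:V\otimes \QQ \to V^*\otimes\QQ$ admits a genuine inverse $\phi^{-1}$. Then I would exhibit the invertible $(n+1)\times(n+1)$ matrix over $\QQ$
$$A~=~\begin{pmatrix} I & \phi^{-1}\epsilon v^* \\ 0 & 1 \end{pmatrix}$$
and verify by direct block multiplication that
$$A^*\begin{pmatrix} \phi & 0 \\ 0 & w-\epsilon v\phi^{-1}v^* \end{pmatrix}A~=~\begin{pmatrix} \phi & \epsilon v^* \\ v & w \end{pmatrix}~=~\phi'~.$$
This exhibits a linear congruence over $\QQ$ between $\phi'$ and the block-diagonal matrix $\phi \oplus (w-\epsilon v\phi^{-1}v^*)$, which is exactly the claimed isomorphism of $\epsilon$-symmetric forms.

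The only step requiring genuine care, and likely the main technical obstacle, is the computation of $A^*$ in the $\epsilon$-symmetric setting: since $\phi$ satisfies $\phi^*=\epsilon\phi$, its inverse satisfies $(\phi^{-1})^*=\epsilon\phi^{-1}$, so that $(\phi^{-1}\epsilon v^*)^*=\epsilon\cdot v\cdot (\phi^{-1})^*=\epsilon^2 v\phi^{-1}=v\phi^{-1}$. This is what ensures that the lower-left block of $A^*$ is the $\epsilon$-free expression $v\phi^{-1}$, so that the congruence recovers $v$ (not $\epsilon^2 v$ or $\epsilon v$) in the off-diagonal of $\phi'$. Once this sign bookkeeping is correct, the block multiplication is routine and the Proposition follows immediately.
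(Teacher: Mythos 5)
Your proof is correct and follows the same route as the paper: the paper's one-line proof cites exactly this upper-triangular block congruence (with the matrix written inverted, $\begin{pmatrix} 1 & -\epsilon\phi^{-1}v^* \\ 0 & 1 \end{pmatrix}$, which is the same change of basis read in the opposite direction). Your explicit check of the $\epsilon$-bookkeeping in computing $A^*$ is the key point and you have it right: $(\epsilon\phi^{-1}v^*)^* = \epsilon\,v\,(\phi^{-1})^* = \epsilon^2 v\phi^{-1} = v\phi^{-1}$, so the block multiplication reproduces $\phi'$ exactly as you claim.
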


\begin{proof} Immediate from the isomorphism
$$\begin{pmatrix} 1 & -\epsilon \phi^{-1}v^* \\ 0 & 1 \end{pmatrix}:~
\QQ\otimes_\ZZ(V',\phi') \xymatrix{\ar[r]^-{\cong}&}
\QQ\otimes_\ZZ(V,\phi) \oplus (\QQ,w-\epsilon v\phi^{-1}v^*)$$
\end{proof}

We shall now show that geometric plumbing determines algebraic plumbing of the intersection form over $\ZZ$, and that in certain cases
algebraic plumbing is realized by the geometric plumbing procedure of Milnor.

Denote by ${\rm SO}(n)$ the subgroup of the orthogonal group consisting of matrices of determinant 1.
Recall that any continuous map  $w:\SSS^{\ell-1}\to {\rm SO}(n)$ can be used to construct a $\RR^{n}$-vector bundle over the sphere $\SSS^{\ell}$.
The sphere $\SSS^{\ell}$ can be decomposed as two hemispheres, homeomorphic to the ball $\DDD^{\ell}$ intersecting along the equator, homeomorphic to $\SSS^{\ell-1}$.
One then starts with two trivial bundles $\DDD^{\ell} \times \RR^n$ and one glues them on their boundaries, identifying the point $(u,v) \in \SSS^{\ell-1} \times \RR^n$ in the boundary
of the first copy to $(u,w(u)(v) ) \in \SSS^{\ell-1} \times \RR^n$ in the boundary of the second copy.
The map $w$ is called a {\it clutching map}.
Any oriented vector bundle over a sphere is isomorphic to one obtained by this construction so that
the set of isomorphism classes of vector bundles over $\SSS^{\ell}$ is identified with the homotopy group $\pi_{\ell}({\rm BSO}(n))=\pi_{\ell-1}({\rm SO}(n))$.
The Bott periodicity theorem gave the computation of the stable groups
$$\pi_{\ell-1}({\rm SO})=\varinjlim\limits_{n} \pi_{\ell-1}({\rm SO}(n)),
$$
leading to the computation of the unstable groups (Kervaire~\cite{kervaireunstable}).
The Euler number (defined as the self-intersection of the zero section of the corresponding $\ell$-plane bundle over $\SSS^{\ell}$) defines a homomorphism
$$\chi:  w \in \pi_{\ell-1}({\rm SO}(\ell)) \mapsto \chi(w) \in \ZZ.$$
Note that $\chi(w)=0$ if $\ell$ is odd.
One can also think of $\chi(w)$  as the degree of the map $\SSS^{\ell-1} \to \SSS^{\ell-1}$ sending $u \in \SSS^{\ell-1}$ to the first column of $w(u)$,
and also the Hopf invariant of a map $J(w):\SSS^{2\ell-1} \to \SSS^{\ell}$.

For $\ell=2k$ and $k \neq 1,2,4$ the Euler number is even since the Hopf invariant of any map $\SSS^{4k-1} \to \SSS^{2k}$ is even in these dimensions (Adams~\cite{adams1}).

We now describe a general plumbing construction of $2\ell$-dimensional manifolds with boundary, using surgery.

Start with the following input:
\begin{enumerate}
\item a $2\ell$-dimensional manifold with boundary $(M,\partial M)$,
\item  an embedding $v:(\DDD^{\ell} \times \DDD^{\ell},\SSS^{\ell-1} \times \DDD^{\ell}) \subseteq (M,\partial M)$,
\item  a clutching map $w:\SSS^{\ell-1}\to {\rm SO}(\ell)$ producing a vector bundle $E(w)$ over $\SSS^{\ell}$, using the adjoint map
$$f(w):~\SSS^{\ell-1} \times \DDD^{\ell} \to \SSS^{\ell-1} \times \DDD^{\ell};~(x,y) \mapsto (x,w(x)(y) )~.$$
\end{enumerate}

The {\it  plumbed} $2\ell$-dimensional manifold with boundary is the following output:
$$(M',\partial M')=(M \cup_{f(w)} \DDD^{\ell} \times \DDD^{\ell},\text{cl.}(\partial M \backslash \SSS^{\ell-1} \times \DDD^{\ell}) \cup \DDD^{\ell} \times \SSS^{\ell-1})~.$$

\begin{center}
\includegraphics[width=\linewidth]{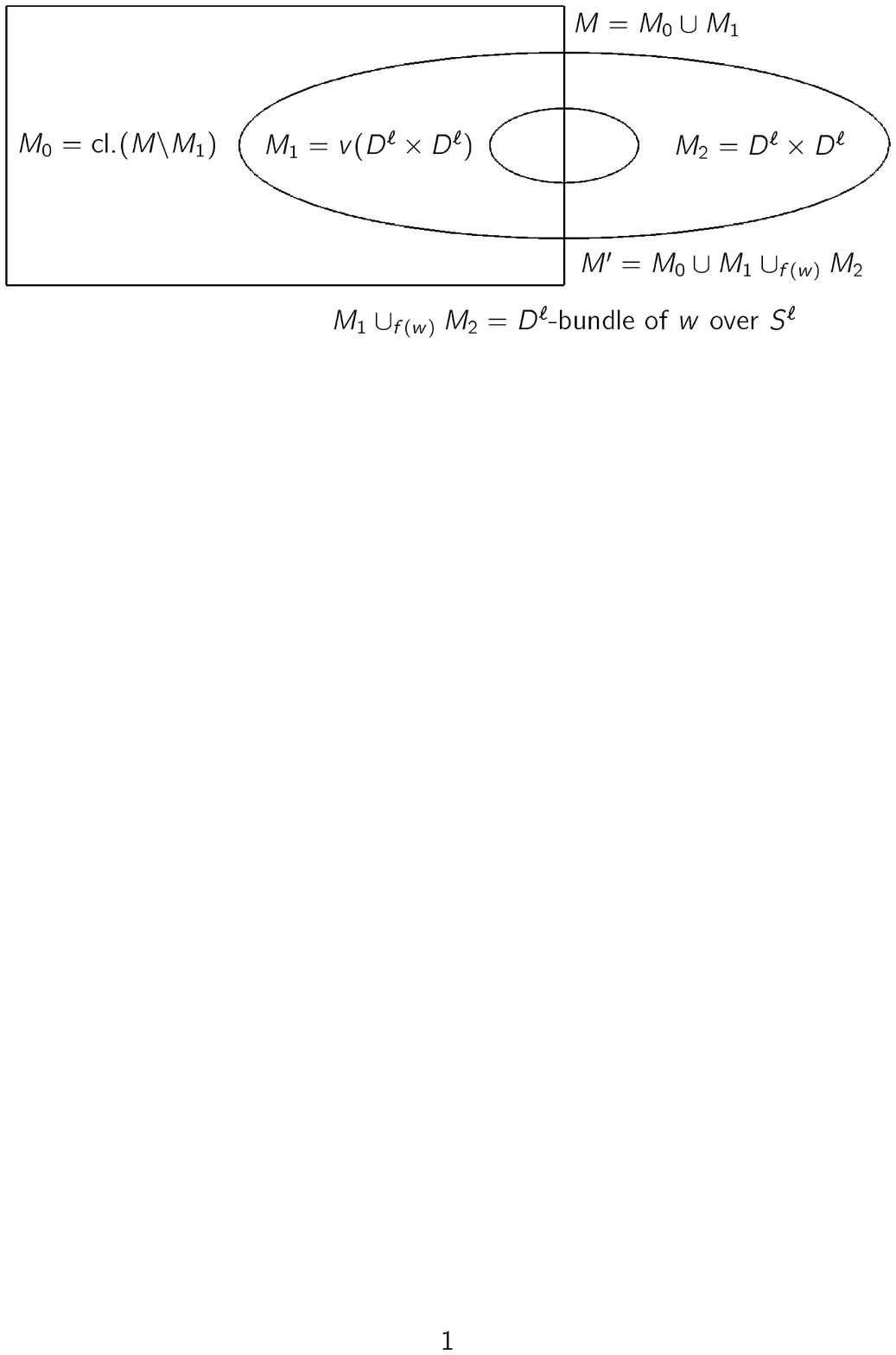}

\includegraphics[width=.5\linewidth]{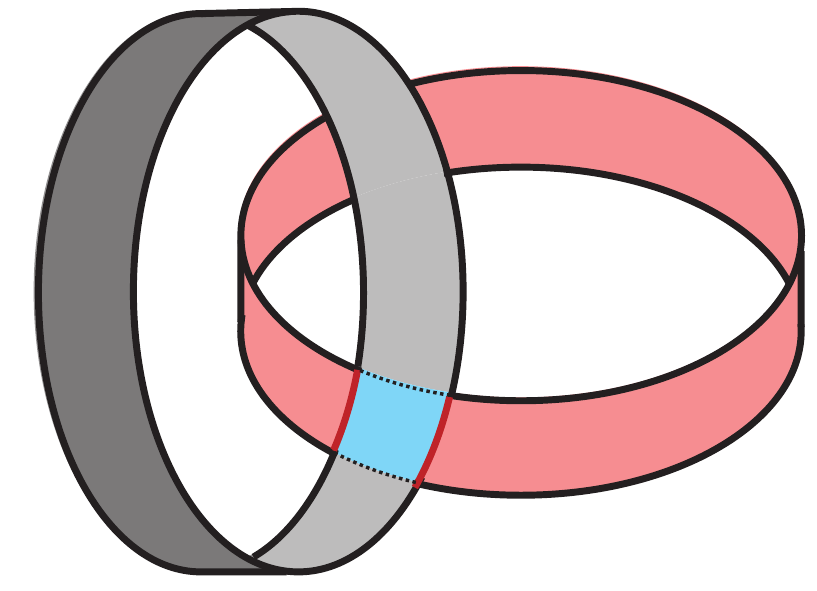}
\end{center}

Let us observe the algebraic effect of geometric plumbing in the highly-connected case of interest:

\begin{proposition} \label{situation}
Let $(M,\partial M)$ be an $(\ell-1)$-connected $2\ell$-dimensional manifold with boundary,  and let $(M',\partial M')$ be the plumbed
manifold with boundary obtained using
$$v:~(\DDD^{\ell} \times \DDD^{\ell},\SSS^{\ell-1} \times \DDD^{\ell}) \subset (M,\partial M),~w:~\SSS^{\ell-1}\to {\rm SO}(\ell).$$

1. $(M',\partial M')$ is $(\ell-1)$-connected with $(-1)^{\ell}$-symmetric intersection form over $\ZZ$ given by algebraic plumbing (Definition~\ref{plumbing2})
$$(H_{\ell}(M';\ZZ),\phi_{M'})=(H_{\ell}(M;\ZZ) \oplus \ZZ,
\begin{pmatrix} \phi_{M} & (-1)^{\ell} v^{\star} \\ v & \chi(w)\end{pmatrix})$$
where
$$v=v[\DDD^{\ell} \times \DDD^{\ell}] \in H_{\ell}(M,\partial M;\ZZ)=H_{\ell}(M;\ZZ)^{\star}$$
and $\chi(w)=(-1)^{\ell}\chi(w)\in \ZZ$ is the Euler number of $w$.

2. If $(H_{\ell}(M;\ZZ),\phi_M)$ is nondegenerate then
$(H_{\ell}(M';\ZZ),\phi_{M'})$ is nondegenerate if and only if $w-(-1)^{\ell}v\phi_M^{-1}v^* \neq 0 \in \QQ$, in which case
$$\QQ\otimes_{\ZZ}(H_{\ell}(M';\ZZ),\phi_{M'})=
\QQ\otimes_{\ZZ}(H_{\ell}(M;\ZZ),\phi_M)\oplus
(\QQ,\chi(w)-(-1)^{\ell}v\phi_M^{-1}v^*).$$
For even $\ell$ it follows that the signatures of $M$, $M'$ are related by $$\tau(M')=\tau(M)+{\rm sign}(\chi(w)-v\phi_M^{-1}v^{\star}) \in \ZZ.$$
\end{proposition}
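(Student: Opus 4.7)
The plan is to prove Part 1 by a Mayer--Vietoris analysis of the decomposition $M' = M \cup (\DDD^{\ell} \times \DDD^{\ell})$ glued along $\SSS^{\ell-1} \times \DDD^{\ell}$ via $f(w)\circ v$, and to read off the intersection form geometrically as exactly the algebraic plumbing of Definition~\ref{plumbing2}. Part 2 will then follow instantly from Proposition~\ref{plumbingeffect} by tensoring with $\QQ$ and applying $\tau$.

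First I would use the fact that the intersection $M \cap (\DDD^{\ell} \times \DDD^{\ell})$ deformation retracts to $\SSS^{\ell-1}$, while $M$ is $(\ell-1)$-connected by hypothesis and $\DDD^{\ell}\times\DDD^{\ell}$ is contractible. The Mayer--Vietoris sequence then shows that $M'$ is $(\ell-1)$-connected and that $H_{\ell}(M';\ZZ) = H_{\ell}(M;\ZZ) \oplus \ZZ$, with the extra $\ZZ$ generated by a class $\sigma$ represented by the closed sphere obtained from the new handle's core $\DDD^{\ell}\times\{0\}$ by capping off its attaching $(\ell-1)$-sphere $v(\SSS^{\ell-1}\times\{0\})\subseteq \partial M$ with a disk in $M$ (which exists since $\pi_{\ell-1}(M)=0$).

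Next I would compute the three types of entries in the matrix of $\phi_{M'}$. For any $x\in H_\ell(M;\ZZ)\subseteq H_\ell(M';\ZZ)$, the pairing $\phi_{M'}(x,x')$ is unchanged from $\phi_M$, since representative cycles can be pushed off the attached handle. For the self-intersection of $\sigma$, standard handle theory identifies it with the Euler number of the normal $\ell$-plane bundle of the representing sphere; this bundle is exactly the rank-$\ell$ bundle $E(w)\to \SSS^{\ell}$ built by clutching two trivial bundles via $w$, so $\phi_{M'}(\sigma,\sigma)=\chi(w)$. For the cross term, moving a cycle representing $x\in H_\ell(M;\ZZ)$ into general position with the core disk $\DDD^\ell\times\{0\}$ and counting signed intersections gives precisely the evaluation of the Poincar\'e--Lefschetz dual $v\in H_\ell(M,\partial M;\ZZ)=H_\ell(M;\ZZ)^\star$ of $v_*[\DDD^\ell\times\DDD^\ell]$ on $x$. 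The $(-1)^\ell$-symmetry of the intersection form then forces the matrix to be
$$
\begin{pmatrix} \phi_{M} & (-1)^{\ell} v^{\star} \\ v & \chi(w)\end{pmatrix},
$$
which is the algebraic plumbing of Definition~\ref{plumbing2}.

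For Part 2, I would apply Proposition~\ref{plumbingeffect} directly: it gives nondegeneracy of $\phi_{M'}$ over $\QQ$ iff $\chi(w)-(-1)^{\ell} v\phi_M^{-1}v^{\star}\neq 0$, together with the orthogonal splitting
$$
\QQ\otimes_{\ZZ}(H_{\ell}(M';\ZZ),\phi_{M'}) \;\cong\; \QQ\otimes_{\ZZ}(H_{\ell}(M;\ZZ),\phi_M)\oplus \bigl(\QQ,\chi(w)-(-1)^{\ell}v\phi_M^{-1}v^{\star}\bigr).
$$
For $\ell$ even, additivity of signature under orthogonal sums and $\tau(\QQ,a)={\rm sign}(a)$ yield the displayed formula $\tau(M')=\tau(M)+{\rm sign}(\chi(w)-v\phi_M^{-1}v^{\star})$.

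The main obstacle, and the only place real care is needed, is the identification of the off-diagonal entry with $v$ in a way compatible with the $(-1)^\ell$-symmetry: this requires consistent orientation conventions for the core disk, the capping disk, and the Poincar\'e--Lefschetz duality isomorphism $H_\ell(M,\partial M;\ZZ)\cong H_\ell(M;\ZZ)^\star$. Once these conventions are fixed, everything else is a routine application of Mayer--Vietoris and the clutching description of the Euler number.
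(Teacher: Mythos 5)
The paper states Proposition \ref{situation} without giving a proof, so there is nothing to compare against; your route---Mayer--Vietoris for the homology, geometric intersection counts for the form, then Proposition~\ref{plumbingeffect} for Part 2---is the natural one, and Part~2 together with the signature formula are handled correctly.

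The one place that needs tightening is the choice of capping disk in Part 1. You cap off $v(\SSS^{\ell-1}\times\{0\})$ with ``a disk in $M$, which exists since $\pi_{\ell-1}(M)=0$'', and then assert that the normal bundle of the resulting sphere $\sigma$ is $E(w)$. For an arbitrary capping disk this is false: the normal bundle of $\sigma$ is clutched not by $w$ alone but by $w$ composed with the element $u\in\pi_{\ell-1}({\rm SO}(\ell))$ measuring the mismatch along $\SSS^{\ell-1}$ between the framing coming from the new handle and the framing induced by the chosen disk, so $\phi_{M'}(\sigma,\sigma)=\chi(w)+\chi(u)$; and changing the capping disk replaces $\sigma$ by $\sigma+y$ for some $y\in H_\ell(M;\ZZ)$, which alters the off-diagonal entries from $v$ to $v+\phi_M(y,\,\cdot\,)$. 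One then obtains a matrix linearly congruent to the stated plumbing matrix but not equal to it, and the clutching claim as you state it becomes wrong. The fix costs nothing: the plumbing datum $v:(\DDD^\ell\times\DDD^\ell,\SSS^{\ell-1}\times\DDD^\ell)\hookrightarrow(M,\partial M)$ already hands you the canonical capping disk $v(\DDD^\ell\times\{0\})$ together with a framing of its normal bundle (the image of the $\DDD^\ell$ factor), so no appeal to $\pi_{\ell-1}(M)=0$ is needed. With this choice the two framings along $\SSS^{\ell-1}$ differ exactly by $w$---this is precisely what gluing via $f(w)$ means---so the normal bundle of $\sigma$ is $E(w)$ and the cross term is $v[\DDD^\ell\times\DDD^\ell]$ on the nose, which is what your cross-term computation actually uses. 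Make that substitution and the rest of your argument (the $(-1)^{\ell}$-symmetry forcing the top-right block, and the reduction of Part~2 to Proposition~\ref{plumbingeffect}) is sound.
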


\begin{remark} In the situation of Proposition~\ref{situation} 2.
with $\ell$ even we have a union as in Definition~\ref{geometricunion}
$$(M',\partial M')=(M,\partial M) \cup_
{( \SSS^{\ell-1}\times \DDD^{\ell},\SSS^{\ell-1} \times \SSS^{\ell-1})}  (\DDD^{\ell} \times \DDD^{\ell},\partial(\DDD^{\ell} \times \DDD^{\ell})).$$
The skew-symmetric triformation over $\QQ$ given by Example~\ref{triex} is
$$(H_{-}(\QQ);
\begin{pmatrix} 1\\ 0 \end{pmatrix} \QQ,
\begin{pmatrix} v\phi_M^{-1}v^* \\ 1\end{pmatrix} \QQ,
\begin{pmatrix} \chi(w) \\ 1 \end{pmatrix} \QQ)$$
with the Wall form
$(W,\psi)=(\QQ,\chi(w)-v\phi_M^{-1}v^{\star})$ of signature
$$\tau(W,\psi)={\rm sign}(\chi(w)-v\phi_M^{-1}v^{\star}) \in \ZZ.$$
\end{remark}

\begin{definition}\label{graphmanifold}
A \emph{graph manifold} is an $(\ell-1)$-connected $2\ell$-dimensional manifold with boundary constructed from $\DDD^{2\ell}$ by the geometric  plumbing of  $n$ $\ell$-plane bundles over $\SSS^{\ell}$, using  a graph with vertices $j=1,2,\dots,n$ and weights $\chi_j \in \pi_{\ell-1}({\rm SO}(\ell))$.
The weights are $\ell$-plane bundles $\chi_j$ over $\SSS^{\ell}$.
\end{definition}

\begin{theorem}[Milnor 1959~\cite{milnorplumb}, Hirzebruch 1961~\cite{hirzebruchneumannkoh}]
Let $\ell \geqslant   2$ and let $S=(s_{ij} \in \ZZ)$  be a $(-1)^{\ell}$-symmetric $n \times n$ matrix, such that for $\ell=2k$ and $k \neq 1,2,4$ the diagonal entries $s_{jj}\in \ZZ$ are even.
Then $S$ is realized by a  graph $2\ell$-dimensional manifold with boundary $(M,\partial M)$ such that
$$
(H_{\ell}(M;\ZZ),\phi_M)=(\ZZ^n,S).
$$
\end{theorem}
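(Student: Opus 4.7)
The plan is to proceed by induction on $n$, starting from the $(\ell-1)$-connected $2\ell$-dimensional manifold $(M_0, \partial M_0) = (\DDD^{2\ell}, \SSS^{2\ell-1})$ (which realizes the empty form) and successively plumbing on $\ell$-plane bundles over $\SSS^\ell$, using Proposition \ref{situation} at each step to track the effect on the intersection form. Assume inductively that an $(\ell-1)$-connected graph manifold $(M_{i-1}, \partial M_{i-1})$ has been built realizing $(\ZZ^{i-1}, S_{i-1})$, where $S_{i-1}$ denotes the top-left $(i-1)\times(i-1)$ principal submatrix of $S$, with generators $b_1, \ldots, b_{i-1}$ given by the zero sections of the previously attached bundles. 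Choose a framed embedding
\[
v_i : (\DDD^\ell \times \DDD^\ell, \SSS^{\ell-1} \times \DDD^\ell) \hookrightarrow (M_{i-1}, \partial M_{i-1})
\]
whose core represents the class in $H_\ell(M_{i-1}, \partial M_{i-1}; \ZZ) \cong H_\ell(M_{i-1}; \ZZ)^*$ determined by $b_j \mapsto s_{ij}$ for $j < i$; existence of such an embedding follows from general position in the middle dimension of an $(\ell-1)$-connected manifold, with the core meeting each zero section $b_j$ in $|s_{ij}|$ signed transverse points.

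Next I would realize the diagonal entry $s_{ii}$ as the Euler number of an $\ell$-plane bundle $\xi_i$ over $\SSS^\ell$, equivalently a clutching map $w_i : \SSS^{\ell-1} \to \mathrm{SO}(\ell)$ with $\chi(w_i) = s_{ii}$. For odd $\ell$ the intersection form is automatically skew-symmetric, so $s_{ii} = 0$ and the trivial bundle works. For $\ell = 2k$ with $k \in \{1, 2, 4\}$ the Euler number homomorphism $\chi : \pi_{2k-1}(\mathrm{SO}(2k)) \to \ZZ$ is surjective (the Hopf fibrations supply bundles of Euler number $\pm 1$), so any integer $s_{ii}$ arises. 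For $k \notin \{1, 2, 4\}$, Adams' theorem on the Hopf invariant restricts the image of $\chi$ to $2\ZZ$, which is exactly why the theorem must impose the even-diagonal hypothesis, and we invoke it. Plumbing $\xi_i$ onto $(M_{i-1}, \partial M_{i-1})$ along $v_i$ yields $(M_i, \partial M_i)$, and Proposition \ref{situation} identifies its intersection form as
\[
\bigl(\ZZ^{i-1} \oplus \ZZ,\; \begin{pmatrix} S_{i-1} & (-1)^\ell v_i^* \\ v_i & s_{ii} \end{pmatrix}\bigr) = (\ZZ^i, S_i),
\]
where $(-1)^\ell$-symmetry of $S$ is used to match $(-1)^\ell v_i^*$ with the top portion of the $i$-th column of $S_i$. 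After $n$ iterations we obtain $(M, \partial M) = (M_n, \partial M_n)$, a graph manifold in the sense of Definition \ref{graphmanifold} whose underlying graph carries $n$ vertices (weighted by the bundles $\xi_i$) and $|s_{ij}|$ edges of sign $\mathrm{sign}(s_{ij})$ between distinct vertices $i$ and $j$, with intersection form $(\ZZ^n, S)$ by construction.

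The main obstacle, and the deepest ingredient, is the assertion that $\chi : \pi_{2k-1}(\mathrm{SO}(2k)) \to \ZZ$ surjects exactly when $k \in \{1, 2, 4\}$: this is Adams' Hopf invariant one theorem, and it is precisely what forces the parity condition in the statement. A secondary technical point is the realization of each prescribed class in $H_\ell(M_{i-1}; \ZZ)^*$ by an embedded framed relative $\ell$-disk, which in the $(\ell-1)$-connected setting with $\ell \geqslant 2$ reduces to transversality in the middle dimension; one does not need the Whitney trick here, since we are allowed (indeed required) to keep all $|s_{ij}|$ intersection points with each previous zero section. Together these ingredients produce the desired graph manifold and realize $S$ as its intersection form.
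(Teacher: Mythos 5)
Your inductive strategy --- plumb one $\ell$-plane bundle at a time, track the intersection form via Proposition~\ref{situation}, and invoke Adams' theorem for the parity constraint --- is sound, but the step you call ``a secondary technical point'' is exactly where the argument breaks. General position makes the candidate core of $v_i$ an \emph{immersed} relative $\ell$-disk with isolated transverse double points, not an embedded one. The obstruction is the self-intersections of the core, not its intersections with the previous zero sections $b_j$ (which, as you rightly observe, are meant to survive). You explicitly disclaim the Whitney trick, yet that is precisely what removes the double points when $\ell \geqslant 3$; and when $\ell = 2$ --- the case of the $E_8$ manifold and the lens spaces, squarely within the scope of the theorem --- one cannot in general upgrade an immersed disk in a simply connected $4$-manifold to a smooth embedding at all. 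So the general-position appeal does not carry the weight you rest on it, and for $\ell = 2$ it fails outright. (Even granting Whitney for $\ell\geqslant 3$, you have also silently used that $\pi_\ell(M_{i-1},\partial M_{i-1})$ surjects onto $H_\ell(M_{i-1},\partial M_{i-1};\ZZ)$, which needs relative Hurewicz and hence $\pi_1(\partial M_{i-1})=1$, i.e.\ again $\ell\geqslant 3$.)

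The Milnor--Hirzebruch construction avoids the problem by never posing an embedding question inside the partial manifold. One takes $n$ disk bundles $E_j \to \SSS^\ell$ with Euler numbers $s_{jj}$ (here Adams is used, as you say) and plumbs $E_i$ to $E_j$ exactly $|s_{ij}|$ times with the appropriate signs: each plumbing identifies a trivialised $\DDD^\ell \times \DDD^\ell \subset E_i$ with a trivialised $\DDD^\ell \times \DDD^\ell \subset E_j$, swapping base and fibre, so the zero sections meet transversely in one point per plumbing. Equivalently, attach $n$ $\ell$-handles to $\DDD^{2\ell}$ along a framed link of pairwise disjoint unknotted $(\ell-1)$-spheres in $\SSS^{2\ell-1}$ with prescribed linking matrix $(s_{ij})_{i\neq j}$ and framings realising $s_{jj}$; the embeddings $v_i$ you wanted to ``choose'' are then tubular neighbourhoods of the obvious $\ell$-disks $D_i \subset \DDD^{2\ell}$ bounded by the link components, and the intersection form is $S$ by construction. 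In either formulation the $\DDD^\ell\times\DDD^\ell$'s used to plumb are supplied explicitly, and no general position, Whitney trick, or Hurewicz argument is needed.
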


If the graph is a tree then for $\ell \geqslant   2$ $M$ is $(\ell-1)$-connected, and for $\ell \geqslant   3$ $M$ and $\partial M$ are both $(\ell-1)$-connected.

\begin{example} Let $q \in \ZZ$. The graph  4-dimensional
manifold realizing the symmetric $1 \times 1$ matrix $q$ 
$$(M,\partial M) = (\DDD^4 \cup \DDD^2 \times \DDD^2,
\SSS^1 \times \DDD^2 \cup_{\begin{pmatrix} 1 & 0 \\ q & 1 \end{pmatrix}}\SSS^1 \times\DDD^2)$$
has boundary a lens space $\partial M=L(q,1)$ -- see Remark~\ref{lens1} and Section~\ref{lens}.
\end{example}

The plumbing construction and graph manifolds were  motivated  by Hirzebruch's 1950's and 1960's work on the resolution of singularities (see section~\ref{lens}
below for one particular class of examples realizing tridiagonal symmetric matrices) and by the
{\it exotic spheres}  $\Sigma^n$ of Milnor~\cite{milnorexotic},~\cite{milnorplumb}.

The celebrated $8 \times 8$ symmetric matrix over $\ZZ$
$$E_8=\begin{pmatrix} 2 & 1 &0 & 0 & 0 & 0 & 0 & 0\\
1 & 2 &1 & 0 & 0 & 0 & 0 & 0 \\
0 & 1 & 2 &1 & 0 & 0 & 0 & 0 \\
0 & 0 & 1 &2 & 1 & 0 & 0 & 0 \\
0 & 0 & 0 &1 & 2 & 1 & 0 & 1 \\
0 & 0 & 0 &0 & 1 & 2 & 1 & 0 \\
0 & 0 & 0 &0 & 0 & 1 & 2 & 0 \\
0 & 0 & 0 &0 & 1 & 0 & 0 & 2 \end{pmatrix}$$
is invertible, positive definite, has even diagonal entries, and signature
$$\tau(E_8)=8 \in \ZZ,$$
corresponding to the exceptional Lie algebra $E_8$ with Dynkin diagram
$$\includegraphics[width=7.5cm]{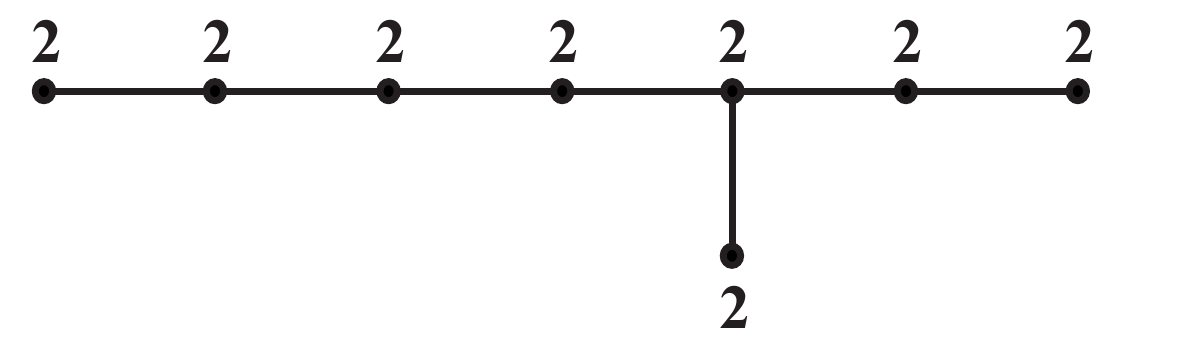}$$
See the website Ranicki~\cite{ranicki8}  for source material on the role of the number 8 in general and the form $E_8$ in particular, in topology, algebra and mathematical physics.
$$
\includegraphics[width=.35\linewidth]{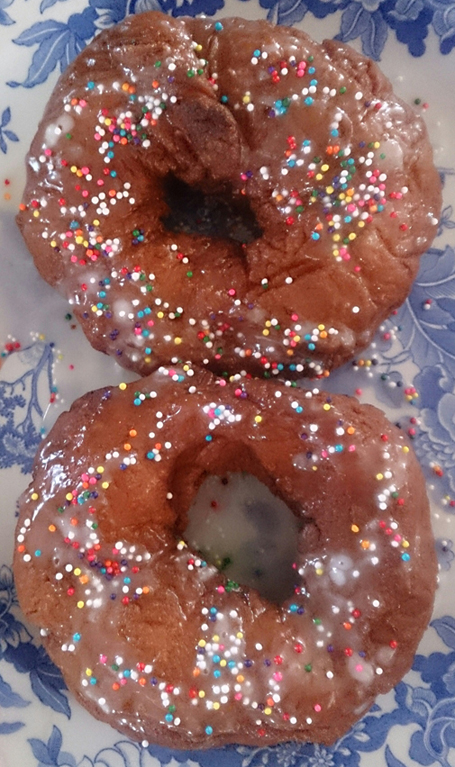}
$$
The 4-dimensional graph manifold $(M,\partial M)$ with intersection matrix $E_8$ has boundary the Poincar\'e homology 3-sphere $\partial M=\Sigma^3$ with $H_*(\Sigma^3)=H_*(\SSS^3)$, $\pi_1(\Sigma^3) \neq \{1\}$.

If  $\Sigma^{2\ell-1}=\partial M$  for a parallelizable $(\ell-1)$-connected $(M^{2\ell},\partial M)$ then for $\ell \geqslant   3$ the differentiable structure on $\partial M$ is detected by the signature defect of $(M,\partial M)$ if $\ell$ is even, and by the Kervaire invariant if $\ell$ is odd.
In particular, for $k \geqslant   2$ the $(2k-1)$-connected $4k$-dimensional graph manifold $(M,\partial M)$ with intersection matrix $E_8$ has boundary the Milnor exotic sphere $\partial M=\Sigma^{4k-1}$ generating the finite cyclic group $bP_{4k}$ of the exotic spheres which bound framed manifolds (Milnor~\cite{milnorexotic}, Kervaire and Milnor~\cite{kervairemilnor}).

Brieskorn~\cite{brieskorn}  proved that all such pairs  $(M^{2\ell},\Sigma^{2\ell-1})$  for $\ell \geqslant   3$ are  fibres of the Milnor fibration~\cite{milnorsingular}
$$
u\in \SSS^{2\ell+1}\backslash V \mapsto  f(u)/\vert f(u)\vert \in \SSS^1
$$
determined by an  isolated singular point of  a complex hypersurface  $V=f^{-1}(0) \subset \CC^{\ell+1}$, with $f$ a polynomial function
$$
f: (u_0,u_1,\dots,u_\ell) \in \CC^{\ell+1}  \mapsto
\sum\limits^{\ell}_{j=0}u_j^{a_j} \in \CC.
$$

\newpage

\section{Knots, links,  braids and signatures}\label{knotslinks}

\subsection{Knots, links, Seifert surfaces and complements}

If $c\geqslant   1$ is an integer, we shall denote by $\SSS^1_c$ the disjoint union of $c$ copies of the circle.
A (classical) $c$-component {\it link} is a smooth embedding $L:\SSS^1_c \hookrightarrow \SSS^3$ of $\SSS^1_c$ in the $3$-sphere.
A {\it knot} is a link with $c=1$.

Two $c$-component links $L_0,L_1$ are considered equivalent if they are {\it isotopic}, i.e. if there is a smooth family $(L_t)_{t\in [0,1]}$ connecting them.
Equivalently, they are isotopic if and only if there is an orientation preserving diffeomorphism of the $3$-sphere sending $L_0$ on $L_1$ (as follows from Cerf's theorem that the group of orientation preserving diffeomorphisms of the 3-sphere is connected).
A link is {\it trivial} if it is isotopic to a link whose image lies in $\RR^2\subset \RR^3 \subset \SSS^3$.

There are many excellent books presenting the present state of knot and link theory, including from an historical perspective.
We recommend in particular the books of Crowell and Fox~\cite{crowellfox}, Sossinsky~\cite{sossinsky}, Kauffman~\cite{kauffman} and Lickorish~\cite{lickorish}.
In this section, we would like to concentrate on a very specific part of this theory relating knots and links to quadratic forms and their signatures.

The first step is actually related to many other aspects of knot theory.
In 1930 Frankl and Pontrjagin~\cite{franklpontrjagin} published the result that every knot $k:\SSS^1 \hookrightarrow \SSS^3$ is the boundary $k(\SSS^1)=\partial F$ of an embedded (oriented) surface $F \hookrightarrow \SSS^3$.
This result was reproved  in 1935 by Seifert~\cite{seifert},  acknowledging the priority of~\cite{franklpontrjagin}.
The advantage of~\cite{seifert} is that it provided a practical algorithm for constructing the Frankl-Pontrjagin surfaces of  a knot $k$  from plane projections $P:\RR^3 \to \RR^2$ (cf. Guillou and Marin~\cite[p.67]{guilloumarin}).  These are called  the {\it Seifert surfaces} of $k$.
A knot is isotopic to the trivial knot if and only if there exists a Seifert surface which is a disc.
The algorithm applies to any link $L: \SSS^1_c \hookrightarrow \RR^3$, and produces a Seifert surface $F \hookrightarrow \RR^3$ such that $\partial F=L(\SSS^1_c)$.
There exists a linear map $P:\RR^3 \to \RR^2$ (many such in fact) such that the image of the composite $PL:\SSS^1_c \to \RR^2$ is a collection of oriented curves with a finite number $\ell$ of transverse double points labelled as over/underpasses.
This is a {\it plane projection} of $L$.
Modify this collection to delete all double points to obtain $n$ {\it Seifert circles} as shown on the following figure, extracted from~\cite[page 21]{sossinsky}.

\begin{center}
\includegraphics[width=.8\linewidth]{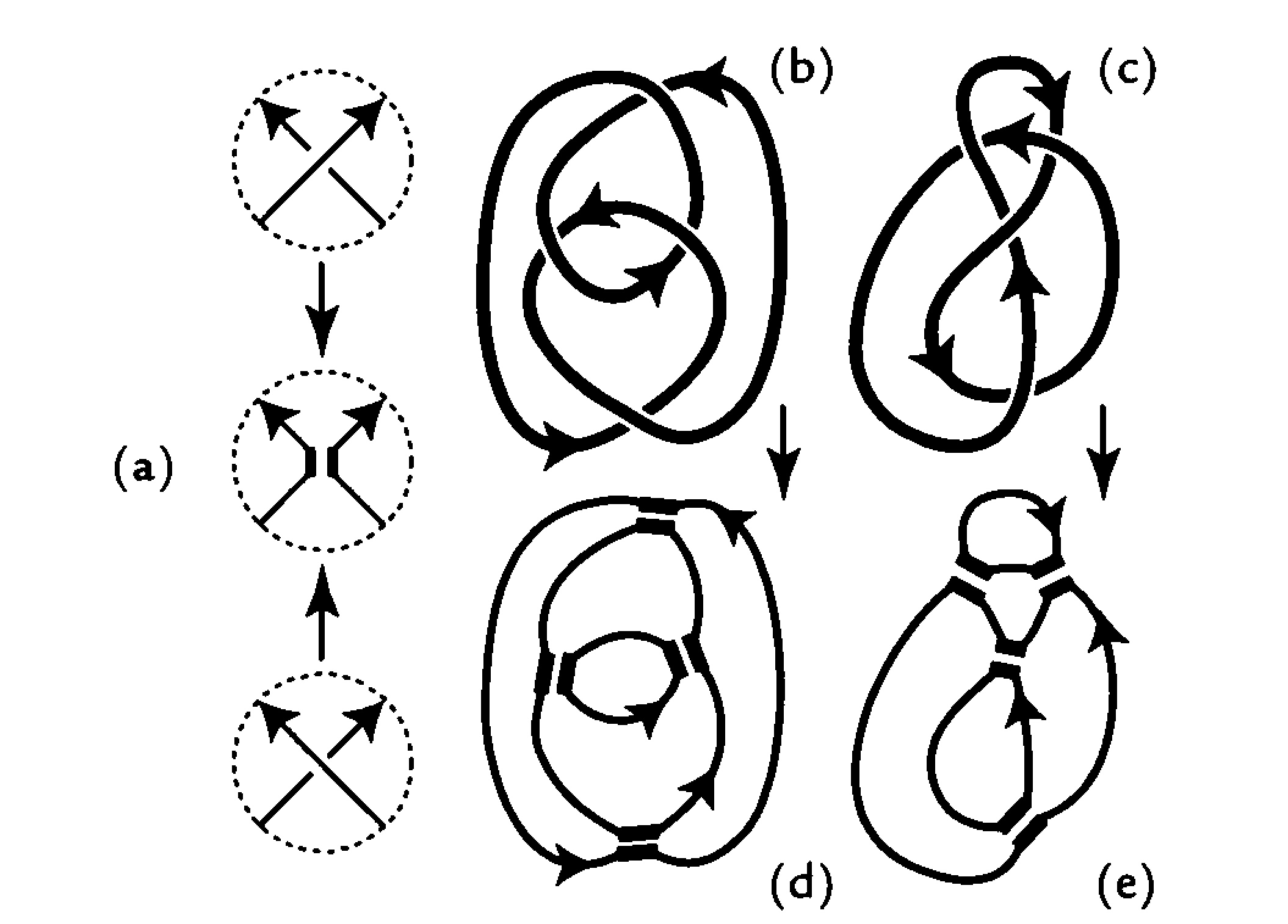}
\end{center}

Each of these curves bounds a disc in the plane.
One can push these discs away from the plane, using the third dimension, in such a way that they are disjoint.
Corresponding to each double point of the projected knot, one glues a twisted band between the corresponding pushed discs.
The result is a Seifert surface with $n$ 0-handles and $\ell$ 1-handles
$$
F=\coprod\limits_n \DDD^2 \cup \coprod\limits_\ell \DDD^1 \times \DDD^1  \subset \RR^3
$$
such that
$$
\partial F=L(\SSS^1_c) \subset \RR^3.
$$
Seifert used this construction to associate a bilinear form to a knot (which also applies to a link).
Consider a Seifert surface $F$ for $k:\SSS^1 \subset \SSS^3$.
If $u$ and $v$ are two homology classes of $H_1(F;\ZZ)$, one can represent them by two closed curves in $F$.
Pushing $u$ in the positive normal direction of $F$, one gets two disjoint curves $u^+$ and $v$ in the 3-sphere and the linking number is therefore well defined:
$${\rm lk}(u^+,v)~=~u^+(\SSS^1) \cap \delta v(\DDD^2) \in \ZZ$$ 
for any extension $\delta v:\DDD^2 \to \SSS^3$ of $v$ which intersects $u^+(\SSS^1) \subset \SSS^3$ transversely.
This is a non symmetric bilinear form called the {\it Seifert form}
$$\Sigma:~H_1(F;\ZZ) \times H_1(F;\ZZ) \to \ZZ.$$
Note that ${\rm lk}(u^+,v)-{\rm lk}(v^+,u)$ is the (skew symmetric) intersection of the two curves $u,v$ on $F$, so that $(H_1(F;\ZZ),\Sigma-\Sigma^*)$ is a symplectic form over $\ZZ$.
See the paper of Collins~\cite{collins} for the relationship between the genus of $F$, $\Sigma$, $c$, $\ell$ and $n$.

As an example, the following picture, extracted from~\cite[page 200]{kauffman} shows a connected Seifert surface for the trefoil knot, a basis $a,b$ for its homology and the Seifert matrix associated to this basis.

\begin{center}
\includegraphics[width=\linewidth]{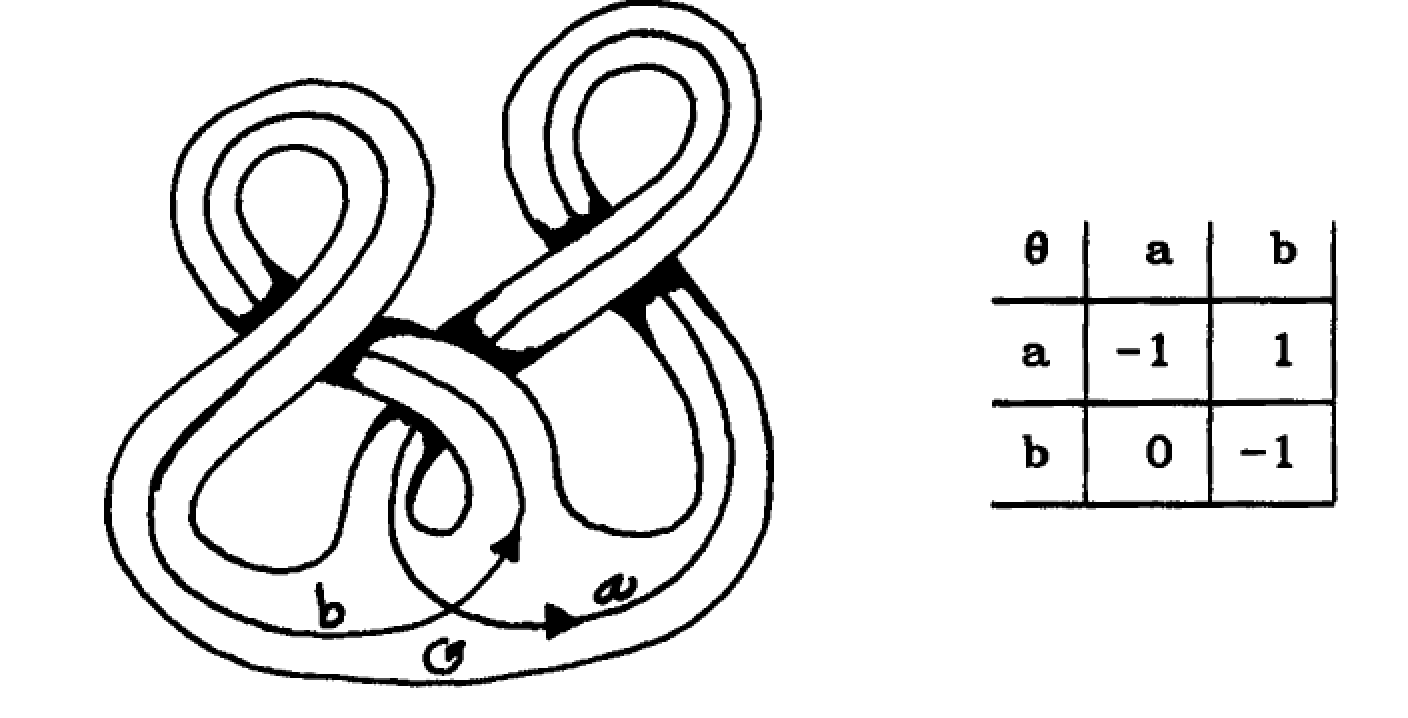}
\end{center}

The main difficulty with this construction is of course that the Seifert surface is not unique, so that this form is not canonically attached to the knot.
Further below we shall describe the effect on the Seifert matrix $\Sigma$ of choosing a different Seifert surface $F$.

It is not hard to prove that one can go from one connected Seifert surface $F_1$ to another one $F_2$ by very simple elementary operations.
The first is simply an isotopy of the ambient sphere.
The second is a $1$-surgery: delete two disjoint discs in the interior of Seifert surface $F$ and connect their boundaries by a tube disjoint from $F$.
The third is the opposite move: choose an embedded tube in $F$, cut it open, and fill the two circles with discs.

One can look at the effect of these operations on the Seifert matrices.
If the Seifert matrix associated to a Seifert surface $F$ is an $n\times n$ matrix $\Sigma$, after a $1$-surgery it is an $(n+2)\times(n+2)$ matrix of the form

$$\Sigma '=\begin{pmatrix}
\Sigma & 0 & 0 \\
0 & 0 & 1\\
\alpha  & 0&  0
\end{pmatrix}.
$$

It was therefore natural to look for invariants of Seifert matrices under this kind of operations which generate the {\it $S$-equivalence} relation, which was introduced by Murasugi~\cite{murasugi}.
Equivalently, one can study invariants of the homology of the canonical infinite cyclic cover of the link exterior.

The {\it exterior} of a link $L:\SSS^1_c \hookrightarrow \SSS^3$ is the 3-dimensional manifold
$X$ with boundary $\partial X=L(\SSS^1_c)\times \SSS^1$ obtained by deleting from the 3-sphere a tubular neighborhood of the link.
The inclusion of the link exterior in the link complement $X \hookrightarrow \SSS^3\backslash L(\SSS^1_c)$ is a homotopy equivalence.
The linking number with $L$ defines a canonical surjection $\pi_1(X) \to \ZZ$ which represents $(1,1,\dots,1) \in H^1(X)$ (which is a direct sum of $c$ copies of $\ZZ$).
The kernel of this epimorphism is the fundamental group
$$\pi_1(\overline{X})={\rm ker}(\pi_1(X) \to \ZZ)$$
of an infinite cyclic covering $\overline{X}$ of $X$.

One way to define this covering is to choose a map $p:X \to \SSS^1$ representing this epimorphism, and to identify $\overline{X}$ with the pullback
$$\overline{X}=\{(x,t) \in X \times \RR \,\vert\, p(x)=e^{2\pi i t} \in \SSS^1\}$$ of the universal cover of the circle.
The homeomorphism
$$z: (x,t) \in \overline{X} \mapsto(x,t+1) \in  \overline{X}$$
is a generating covering translation, and the lift of the projection $p$
$$\overline{p}: (x,t) \in \overline{X} \mapsto t \in \RR$$
is $\ZZ$-equivariant.

Choose $p:X \to \SSS^1$ to be a smooth map whose restriction to the boundary $\partial X=k(\SSS^1) \times \SSS^1$ is the second projection.
The inverse image of a regular value $* \in \SSS^1$ is a Seifert surface $F=p^{-1}(*) \hookrightarrow \SSS^3$ for $k$ with $\partial F=k(\SSS^1) \times \{*\} \hookrightarrow X$.
The embedded surface $F \hookrightarrow X$ has a tubular neighbourhood $F \times [0,1] \hookrightarrow X$.
The covering $\overline{X}$ is of course trivial over $X\backslash F$.
The closure of one component of $\bar{p}^{-1}(X\backslash F) \hookrightarrow \overline{X}$ is a compact manifold $X_F$ whose boundary consists of two copies of $F$, one being the image of the other by $z$.
Equivalently, $X_F={\rm closure}(X \backslash (F \times [0,1])) \hookrightarrow X$.
Note that $X_F$ is a fundamental domain for the  covering $\overline{X} \to X$.

$$
\includegraphics[width=\linewidth]{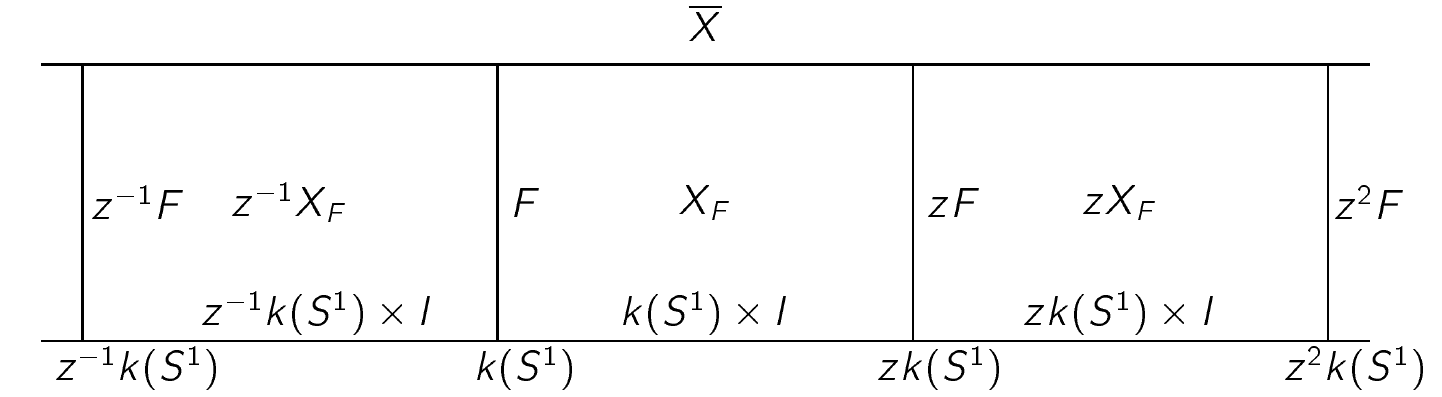}
$$

A curve $v:\SSS^1 \subset X_F\subset X \backslash F$  
and a curve $w:\SSS^1 \subset F$ are disjoint curves in $\SSS^3$, and have a well defined linking number in $\ZZ$.
This defines a $\ZZ$-module isomorphism
$$H_1(X_F;\ZZ) \xymatrix{\ar[r]^-{\cong}&} H_1(F;\ZZ)^*;~v \mapsto (w \mapsto {\rm {\rm lk}}(v,w)).$$
The inclusions $i_+:F \hookrightarrow X_F$ and $i_-:zF \hookrightarrow  X_F$ induce dual $\ZZ$-module morphisms
$$\begin{array}{l}
(i_+)_*=\Sigma:~H_1(F;\ZZ) \to H_1(X_F;\ZZ)~\cong~H_1(F;\ZZ)^*,\\
(i_-)_*=\Sigma^*:~H_1(F;\ZZ) \to H_1(X_F;\ZZ)~\cong~H_1(F;\ZZ)^*
\end{array}$$
and there is defined a short exact sequence of $\ZZ[z,z^{-1}]$-modules
$$\xymatrix@C+10pt{0 \ar[r] & H_1(F;\ZZ)[z,z^{-1}] \ar[r]^-{\Sigma-z\Sigma^*}& H_1(F;\ZZ)^*[z,z^{-1}] \ar[r] & H_1(\overline{X};\ZZ) \ar[r] & 0}.$$
This construction is due to Hirsch and Neuwirth~\cite{hirschneuwirth}.
It follows from $p_*:H_*(X)\cong H_*(\SSS^1_c)$ that in the knot case $c=1$ the $\ZZ[z,z^{-1}]$-module morphism $1-z:H_1(\overline{X}) \to H_1(\overline{X})$ is an isomorphism.
The {\it Alexander polynomial} of $k$
$$\Delta_k(z)={\rm det}(z\Sigma -  \Sigma^{\star}) \in \ZZ[z]$$
is such that $\Delta_k(z)H_1(\overline{X};\ZZ)=\{0\}$.
The Alexander polynomial is an isotopy invariant of the link $L$, which was introduced in~\cite{alexander} in a much more algebraic (and obscure) way.
The expression of $\Delta_k(z)$ in terms of $\Sigma$ is due to Seifert~\cite{seifert}.
The modern version of $\Delta_k(z)$ is the {\it Alexander-Conway polynomial}, the determinant of $u\Sigma - u^{-1}\Sigma^*$ in $\ZZ[u,u^{-1}]$ (with $u=z^2$), which is related to the Jones polynomial.

\subsection{Signatures of knots :  a tale from the sixties}\label{omegasignatures}

For a knot $k:\SSS^1 \subset \SSS^3$ Seifert~\cite{seifert} used his matrix $\Sigma$ to compute the homology groups $H_*(\overline{X}_a)$ (in fact, only the Betti numbers) of the finite cyclic covers $\overline{X}_a=\overline{X}/\{z^a\}$ ($a \geqslant   1$) of the knot exterior $X$, which are isotopy invariants.

In 1962, Trotter~\cite{trotter} published a remarkable paper providing a detailed analysis of the homology $H_*(\overline{X})$ of $\overline{X}$ itself.
In particular, he observed that for any Seifert matrix $\Sigma$ the signature of the symmetric matrix $\Sigma + \Sigma^{\star}$ over $\ZZ$ is also invariant (\cite[Proposition (v)1]{trotter}).
This is the {\it signature} of the knot.
In general, the matrix $\Sigma + \Sigma^*$ is only invertible over $\QQ$.

In 1965, Murasugi~\cite{murasugi} used this signature to get concrete topological consequences.
For instance, he showed that an {\it alternate} knot has a positive signature and therefore cannot be isotopic to its mirror image since one easily checks that mirror images have opposite signatures.
More importantly in our context, he showed that the signature can be used to study cobordism.
This concept had recently been introduced in paper by Fox and Milnor~\cite{foxmilnor} (which actually only appeared in 1966).

A knot is {\it slice} if it bounds an embedded disc in the $4$-ball.
Two knots $k_0,k_1$ are  {\it cobordant} if one can embed a cylinder $\SSS^1 \times [0,1]$ in $\SSS^3 \times [0,1]$ whose boundary is $k_0\times \{0 \}$ and $k_1 \times \{ 1\}$.

If a knot is slice, it is easy to show that the homology of a Seifert surface contains a subspace of half dimension on which the Seifert form identically vanishes.
It follows indeed that the signature vanishes.

Two independent papers by Tristram~\cite{tristram} and Levine~\cite{levine1} appeared in 1969 and extended this idea, both defining the {\it $\omega$-signature} of a Seifert matrix $\Sigma$ of a knot $k:\SSS^1 \hookrightarrow \SSS^3$ for $\omega \in \SSS^1\subset \CC$ to be the signature of the {\it hermitian} form $(1-\omega)\Sigma+ (1-\overline{\omega}) \Sigma^{\star}$
$$\tau_{\omega}(\Sigma)=\tau((1-\omega)\Sigma+ (1-\overline{\omega}) \Sigma^{\star}) \in \ZZ,$$
establishing cobordism invariance for appropriate $\omega$.  The determinant of the hermitian form is such that
$$\det((1-\omega)\Sigma+ (1-\overline{\omega}) \Sigma^{\star})=-(1-\overline{\omega})\Delta_k(\omega) \in \CC$$
so that for $\omega \neq 1 \in \SSS^1$ the hermitian form is nonsingular if and only if  $\Delta_k(\omega) \neq 0 \in \CC$.  In~\cite{tristram} only  the following $\omega$ were considered
$$\omega_p=\exp((p-1)\pi i/p)~\hbox{for an odd prime $p$, and}~\omega_2=\exp(\pi i)=-1$$
and it was proved that $\Delta_k(\omega_p) \neq 0$ and that the $\omega_p$-signatures $\tau_{\omega_p}(k) \in \ZZ$ are cobordism invariants. In~\cite{levine1} it was proved that the  $\omega$-signature function
$$\tau:~\omega \in \SSS^1 \mapsto \tau_{\omega}(\Sigma) \in \ZZ$$
takes constant values between the roots $\omega \in \SSS^1$ of the Alexander polynomial $\Delta_k(z)$, and that these values are cobordism invariants.
See section~\ref{modern} below for a discussion of the jumps in the $\omega$-signature function.

The connected sum of knots defines an operation which provides the space of cobordism of knots with the structure of an abelian group $C_1$.
Tristram~\cite{tristram} uses these $\omega$ signatures to find some epimorphism from $C_1$ to $\ZZ^{\infty}$.
The independent paper by Levine~\cite{levine1} contains similar results, expressed in a more algebraic terminology, but also showing that the $\omega$-signatures determine the cobordism class of a high-dimensional knot $k:\SSS^{2i-1} \hookrightarrow \SSS^{2i+1}$ modulo torsion - see Ranicki~\cite{ranickiknot} for an account of the high-dimensional knot cobordism groups using the $L$-theory of $\ZZ[z,z^{-1}]$ with the involution $\bar{z}=z^{-1}$.

Viro~\cite{viro} and Kauffman and Taylor~\cite{kauffmantaylor} identified $\tau(k)$ with the signature of the 4-dimensional manifold $N$ which is a double cover of $\DDD^4$ branched along $k$.
See more details  in Kauffman~\cite{kauffman} and Lickorish~\cite{lickorish}.

\subsection{Two great papers by Milnor and the modern approach}\label{modern}

The 1968 paper by J.~Milnor~\cite{milnorinfinite} is a pure gem and contains the ``right definition'' of the Murasugi-Tristram-Levine signatures.
There is no more any need for the non canonical choice of a Seifert surface and one gets quadratic forms {\it canonically} attached to a knot.

As a motivation, let us introduce first the concept of {\it fibred} knot.
A knot $k:\SSS^1 \hookrightarrow \SSS^3$  is {\it fibred} its complement has the structure of an {\it open book}.
In other words, there should exist a locally trivial fibration $p:\SSS^3\backslash k(\SSS^1) \to \SSS^1$.
In a tubular neighborhood of $k$, identified with $\SSS^1 \times \DDD^2$, the map $p$ is defined outside $k$ and should be of the form $p(x,z)=\text{Arg} (z) \in \SSS^1$.
For an arbitrary $* \in \SSS^1$ the fibre $p^{-1}(*)$ is such that $\Sigma=k(\SSS^1) \cup p^{-1}(*)$ is a  Seifert surface for $k$.
The fibres of $p$ look indeed like the pages of a book whose binding is $k$.
Many interesting knots are fibred.
For instance, if $P(u,v)$ is a polynomial in two complex variables $(u,v)$ such that $(0,0)$ is an isolated singularity of the curve $P(u,v)=0$, one can consider the knot $k$ which is the intersection of $P(u,v)=0$ with a small sphere $\SSS^3_{\epsilon} = \vert u \vert ^2 + \vert v \vert ^2 = \epsilon^2$ and the map $p=P(u,v)/\vert P(u,v) \vert \in \SSS^1$ provides such a fibration.
See the book by Milnor~\cite{milnorsingular} on singularities of hypersurfaces.
However, many knots are not fibred.

\begin{center}
\includegraphics[width=.7\linewidth]{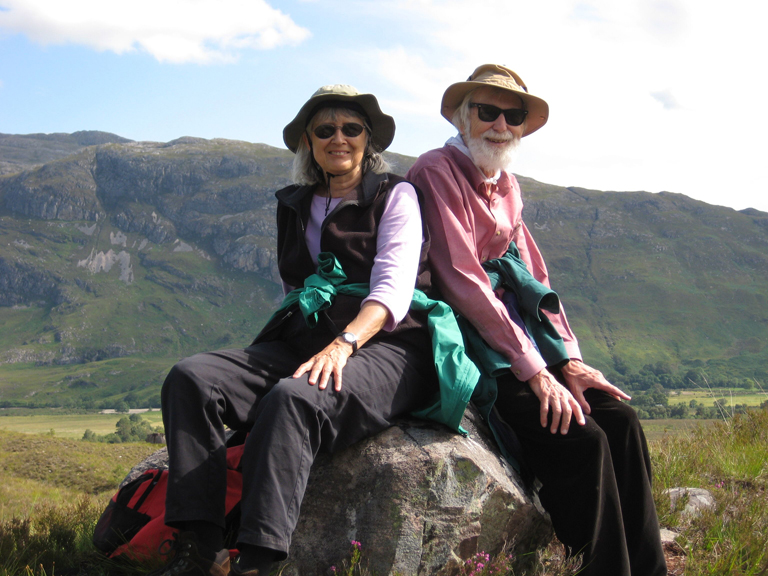}

Dusa McDuff and John Milnor, somewhere in Scotland.
\end{center}

As before, let
$$(X,\partial X)=({\rm closure}(\SSS^3 \backslash k(\SSS^1) \times \DDD^2),k(\SSS^1)\times \SSS^1)$$
and let $(\overline{X},\overline{\partial X})$ be the canonical infinite cyclic cover.
In the case where $k$ is fibred,
$$(\overline{X},\overline{\partial X})=(F,\partial F) \times \RR$$
with the generating covering translation $z:\overline{X} \to \overline{X}$ given by
$$z: (x,t) \in F \times \RR \mapsto (A(x),t+1) \in F \times \RR$$
for some {\it monodromy} diffeomorphism $A:F \to F$, which is the identity in a neighborhood of the boundary.

Such a picture does not hold in the general non fibred case, but it does hold at the homology level, at least if one works with real coefficients.
Milnor shows the following, for any knot $k$:
\begin{itemize}
\item $H^1(\overline{X}, \partial \overline{X},  \RR)$ has a finite even dimension,
\item $H^2(\overline{X}, \partial \overline{X}, \RR) \cong \RR$,
\item The evaluation of the cup product
$$
\theta :~H^1(\overline{X}, \partial \overline{X},  \RR) \times H^1(\overline{X}, \partial \overline{X},  \RR) \to H^2(\overline{X}, \partial \overline{X}, \RR) \cong \RR
$$
defines a symplectic vector space $(H^1(F;\RR),\theta)$ over $\RR$.
\end{itemize}

Let us call a linear automorphism $A$ {\it fibred} if $A-I$ is invertible, i.e. if 1 is not an eigenvalue.
The monodromy acts on the symplectic vector space $(H^1(\overline{X}, \partial \overline{X};  \RR),\theta)$ by a fibred  linear automorphism, still denoted by $A$, such that $A^*\theta A=\theta$.

In this way, {\it we can attach canonically a fibred symplectic automorphism $A$ of a symplectic vector space to any knot $k$}.
This is the fundamental invariant of a knot.

For any fibred  automorphism $A$ of a symplectic vector space $(H,\theta)$ over $\RR$ there is defined an abstract Seifert form (= asymmetric bilinear form) over $\RR$
$$\Sigma=\theta(I-A)^{-1}:~H \to H^*$$
such that $A=\Sigma^{-1}\Sigma^*$, with
$$\Sigma-\Sigma^*=\theta:~H \to H^*.$$
This allows the $\omega$-signature  $\tau_{\omega}(A) \in \ZZ$ to be defined for any $\omega \in \SSS^1$ to be the signature of the hermitian form
$$(1-\omega)\Sigma+(1-\overline{\omega})\Sigma^*:~ \CC\otimes_\RR H \to \CC \otimes_\RR H^*$$
as in section~\ref{omegasignatures}.
For any knot $k$ it is possible to choose a  Seifert surface $F$ such that
the  Seifert form $\Sigma:H_1(F;\RR) \to H_1(F;\RR)^*$ is an isomorphism,
in which case the exact sequence
$$\xymatrix@C+10pt{0 \ar[r] & H_1(F;\RR)[z,z^{-1}] \ar[r]^-{\Sigma-z\Sigma^*}& H_1(F;\RR)^*[z,z^{-1}] \ar[r] & H_1(\overline{X};\RR) \ar[r] & 0}$$
gives an isomorphism of symplectic forms over $\RR$
$$(H_1(F;\RR),\Sigma-\Sigma^*) \cong (H^1(\overline{X};\RR),\theta)$$ and   the  fibred automorphism of the symplectic form $(H^1(\overline{X};\RR),\theta)$  over $\RR$ is  given by
$$A=\Sigma^{-1}\Sigma^*:~(H^1(\overline{X};\RR),\theta) \to (H^1(\overline{X};\RR),\theta)$$
and $\tau_{\omega}(k)=\tau_{\omega}(A)$.

In the second step, one should extract invariants from symplectic automorphisms. As we know, any symplectic vector space is isomorphic with $\RR^{2n}$ equipped with the bilinear form:
$$
\Omega ((x_1,\dots,x_{2n}),(y_1,\dots, y_{2n})) = x_1y_{n+1}+\dots x_ny_{2n}- x_{n+1}y_1 - \dots - x_{2n}y_n.
$$
An $2n \times 2n$ matrix $A$ is said to be {\it symplectic} if it preserves the bilinear form $\Omega$. The {\it symplectic group}
$${\rm Sp}(2n,\RR)={\rm Aut}_\RR H_-(\RR^n)$$
consists of the symplectic $2n \times 2n$ matrices - we shall describe it in more detail later on. Therefore, any knot defines a symplectic matrix, unique up to conjugacy.

{\it Conjugacy invariants of matrices}, real or complex, are very well known and are described by the classical Jordan normal forms.
In the generic case, when eigenvalues are distinct, matrices are diagonalizable (over the complex) and the spectrum is the only invariant.
The situation is very different in the skew-symmetric case and even in the simplest situations, the spectrum does not contain enough information to characterize the conjugacy class.
This is the role of the signature.

To give a very simple example, consider rotations $R_1,R_2$ in the plane $\RR^2$ of angles $\alpha$ and $-\alpha$.
They are both symplectic automorphisms of $\RR^2$ equipped with the standard symplectic form.
They have the same spectrum $\exp{ \pm 2 \pi i \alpha}$.
Of course, they are conjugate in ${\rm GL}(2,\RR)$ but not in ${\rm SL}(2,\RR)$ : they don't rotate in the same direction.

The description of conjugacy classes of symplectic automorphisms is fundamental in hamiltonian dynamics since the flows under consideration are symplectic.
For some reasons, dynamicists and topologists did not collaborate too much on these questions.
The first (almost complete) solution to the problem has been given by Williamson~\cite{williamson} in 1936, with dynamical motivations.
Then, a better understanding, still with dynamical motivations, was given in the wonderful book by Yakubovich and Starzhinskii~\cite{yakubovich}.
See  Ekeland~\cite{ekeland},
 Long and Dong~\cite{longdong}, Gutt~\cite{gutt} for recent and detailed presentations, also with dynamical motivations. From the point of view of algebraic topology, the best reference is another beautiful paper of Milnor~\cite{milnorinner} on isometries of inner product spaces, which deals with the general case over any field.

We shall not give the full description and content ourselves with the part which is relevant for the definition of the signatures.
Our presentation is only slightly different and uses basic ideas from Krein's theory.

From the symplectic form $\Omega$ on $\RR^{2n}$, one produces canonically a hermitian form $G$ on $\CC^{2n}$.
The hermitian product of $u,v$ in $\CC^{2n}$ is given by:
$$
G(u,v) = i \Omega(u,\overline{v}).
$$
Note that all real vectors in $\RR^{2n}$ are isotropic for $G$.
Also, note that any symplectic automorphism of $(\RR^{2n},\Omega)$ is also a unitary automorphism of $(\CC^{2n}, G)$.

Given a complex subspace $E$ of $\CC^{2n}$ one can consider the restriction of $G$ to $E$.
As any hermitian form has a signature, it follows that any subspace $E$ has a signature $\tau(E)$.
Of course complex conjugate subspaces have opposite signatures.

Finally, let $A$ be a symplectic automorphism (or more generally a unitary automorphism of $\CC^{2n}$) and $\lambda$ a complex number.
Let $E_{\lambda}(A)$ be the corresponding characteristic subspace:
$$E_{\lambda}(A) = \bigcup\limits_{j\geqslant   1} {\rm ker} (A-\lambda Id)^j$$
(which is non trivial only if $\lambda$ is an eigenvalue of $A$).
The signature of the restriction of $G$ to this subspace is called the {\it $\lambda$-signature} of $A$ $$\tau^{\lambda}(A)=\tau(E_{\lambda},G\vert) \in \ZZ.$$
In other words, we have a map
$$A \in {\rm Sp}(2n,\RR) \mapsto \bigoplus\limits_{\lambda}\tau^{\lambda}(A) \in \bigoplus\limits_{\lambda \in \CC}\ZZ$$
with each $\tau^{\lambda}(A) \in \ZZ$ a conjugacy invariant.

The spectrum of symplectic $A$ can be decomposed in disjoint groups (possibly empty) of four different types.
\begin{itemize}
\item 4-tuples $\{ \lambda, \lambda^{-1}, \overline{\lambda}, \overline{\lambda}^{-1} \}$,
\item pairs of the form $\{ \lambda, \lambda^{-1}\}$ for some real $\lambda$ different from $\pm 1$,
\item $\pm 1$,
\item pairs of the form $\{ \lambda, \lambda^{-1}\}$ for some $\lambda$ of modulus 1.
\end{itemize}

It is very easy to see that the corresponding $\tau^{\lambda}(A)$ can be non trivial only in the case of a non real eigenvalue of modulus one.

Summing up this discussion, we conclude that one can attach to each symplectic matrix $A$ a function on the circle:
$$
 \lambda \in \SSS^1 \mapsto \tau^{\lambda}(A) \in \{-2n, \dots, 2n\}.
$$
Note that $\tau^{\lambda}(A) =- \tau_{\overline{\lambda}}(A)$ so that one could assume that the imaginary part of $\lambda$ is positive, without losing information.
Note also that $\tau^{\lambda}(A)=0$ if $\lambda$ is not in the spectrum of $A$.

When applied to the symplectic automorphism $A$ attached to a knot $k:\SSS^1 \hookrightarrow \SSS^3$, one gets (a slight modification of)  Milnor's signatures. For any $\omega_1=e^{2\pi i \theta_1}$, $\omega_2\in e^{2\pi i \theta_2} \in \SSS^1$
with $0 \leqslant \theta_1 < \theta_2 < 2\pi$ and a single eigenvalue
$\lambda=e^{2\pi i \mu}$ with $\theta_1 < \mu < \theta_2$ the jump in
the $\omega$-signatures is
$$\begin{array}{ll}
\sigma_{\omega_2}(k) -\sigma_{\omega_1}(k)&
=~\sigma_{\omega_2}(A) -\sigma_{\omega_1}(A)\\
&=~2\tau^{\lambda}(A)\in \ZZ
\end{array}$$
(Matumoto\cite{matumoto}, Ranicki~\cite[Prop. 40.10]{ranickiknot}).

All the previously discussed signatures can be obtained from these $\tau^{\lambda}$.
For instance, the Murasugi signature is the sum of all the $\tau^{\lambda}$ for all $\lambda$ with  $\Im (\lambda) >0$.

Let us work out a simple example.
Let
$$
A=\begin{pmatrix} \cos \alpha & - \sin \alpha \\ \sin \alpha & \cos \alpha \end{pmatrix}.
$$
The eigenvalues are $\exp(\pm i \alpha)$ and the corresponding eigenvectors are $(1, \mp i)$.
Computing the $G$ norm of these vectors, we get
$$G((1,\mp i),(1, \mp i))= i \Omega ((1,\mp i),(1, \pm i))= \pm 2$$
so that $\tau^{\exp (\pm i \alpha)}= \pm 1$.

\subsection{Braids}

See Epple~\cite{epple1} for the history of braids in the 19th century, starting with Gauss: here is a braid drawn by him in 1833:
$$
\includegraphics[width=.3\linewidth]{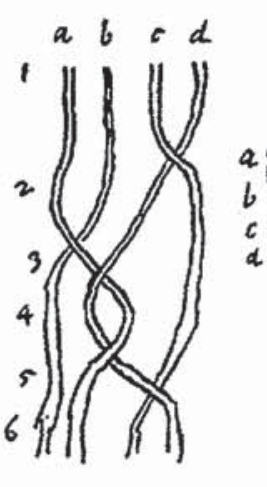}
$$

Fix $n \geqslant   2$ and $n$ distinct points $z_1,z_2,\dots,z_n \in \DDD^2$.
An {\it $n$-strand braid} $b$ is an embedding
$$\beta:~\coprod\limits_n I=\{1,2,\dots,n\} \times I \hookrightarrow \DDD^2 \times I$$
such that each of the composites
$$\xymatrix@C+15pt{I \ar[r]^-{\beta(k,-)} & \DDD^2 \times I \ar[r]^-{\text{projection}} &I}~(1 \leqslant k \leqslant n)$$
is a homeomorphism, and
$$\beta(k,0)=(z_k,0)~\in \DDD^2 \times \{0\},~\beta(k,1)=(z_{\sigma(k)},1) \in \DDD^2 \times \{1\}$$
for some permutation $\sigma $ of $\{1,2,\dots,n\}$.
Such a $\beta$ defines $n$ disjoint forward paths $t \mapsto \beta(k,t)$ in $\DDD^2 \times I \hookrightarrow \RR^3$ from $(z_k,0)$ to $(z_{\sigma(k)},1)$, such that each section
$$\beta(\{1,2,\dots,n\} \times I) \cap (\DDD^2 \times \{t\})~(t \in I)$$
consists of $n$ points.

\begin{center}
\includegraphics[width=.6\linewidth]{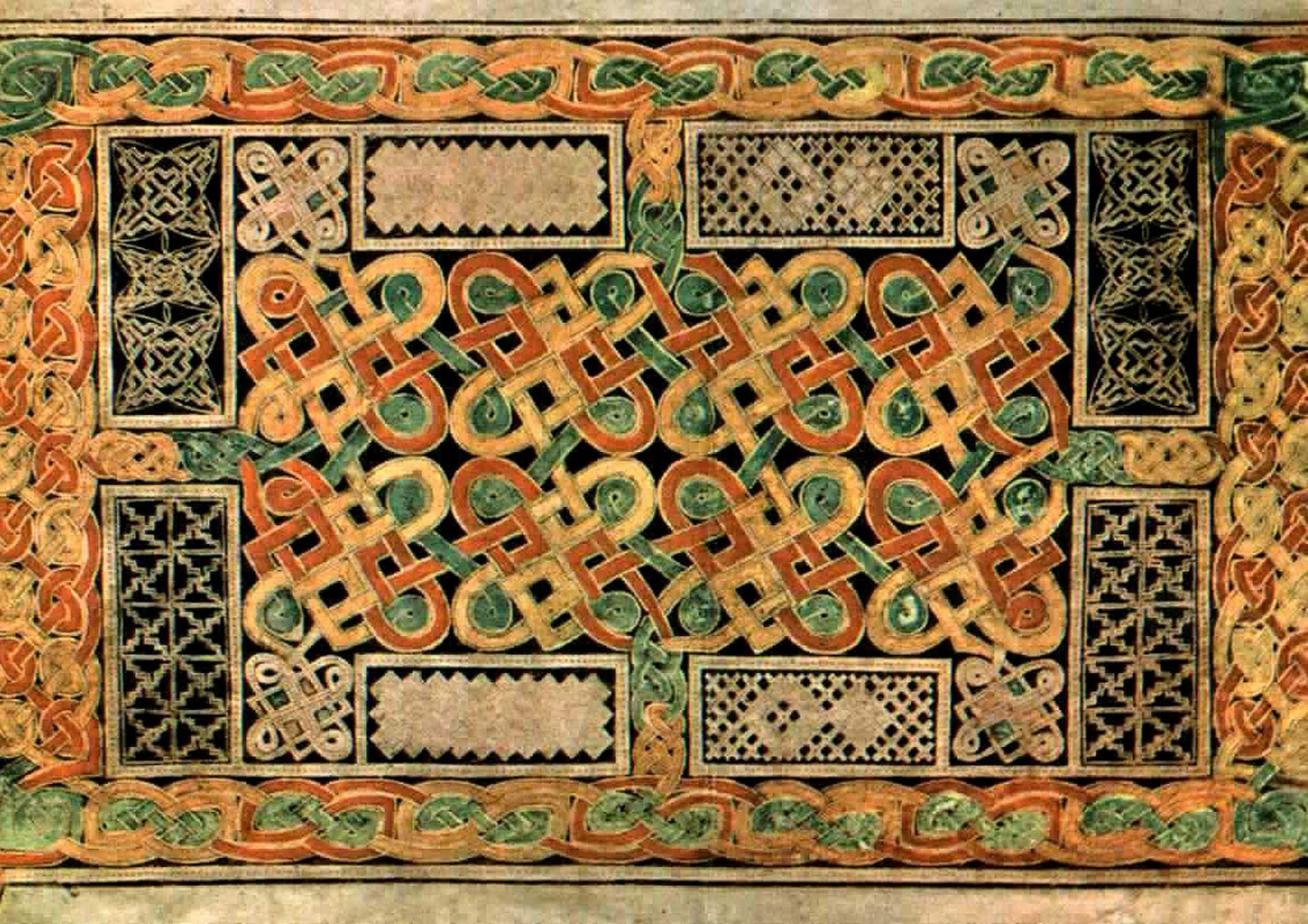}

\text{Braids in the Book of Durrow (Ireland)}
\end{center}

Artin~\cite{artin} founded the modern theory of braids.

The {\it trivial} $n$-strand braid is
$$\sigma_0: t_i \in \coprod\limits_n I   \mapsto  (i,t_i,0) \in \RR^3 $$
$$
\includegraphics[width=.5\linewidth]{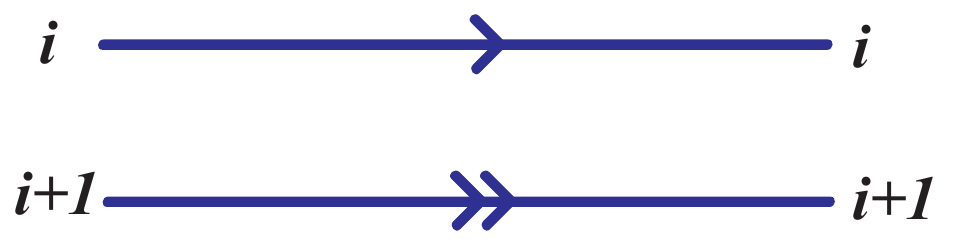}
$$
For $i=1,2,\dots,n-1$ the {\it elementary $n$-strand braid}
$\sigma_i$ is obtained from $\sigma_0$ by introducing an overcrossing of the
$i$th strand and the $(i+1)$th strand,
with the transposition $(i,i+1) $.
$$
\includegraphics[width=.5\linewidth]{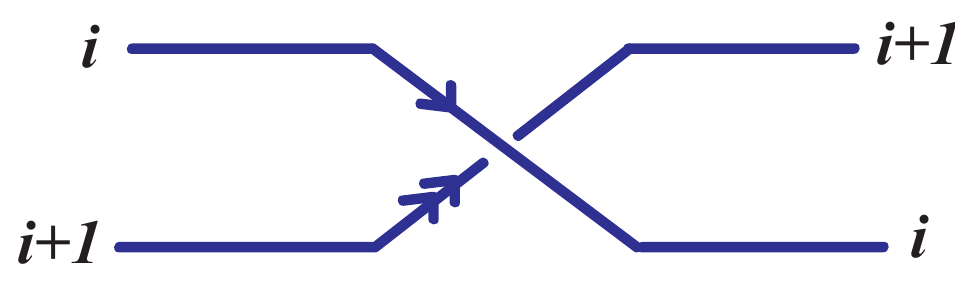}
$$
The inverse {\it elementary $n$-strand braid}
$\sigma_i^{-1}$ is defined in the same way but with an under crossing.
$$
\includegraphics[width=.5\linewidth]{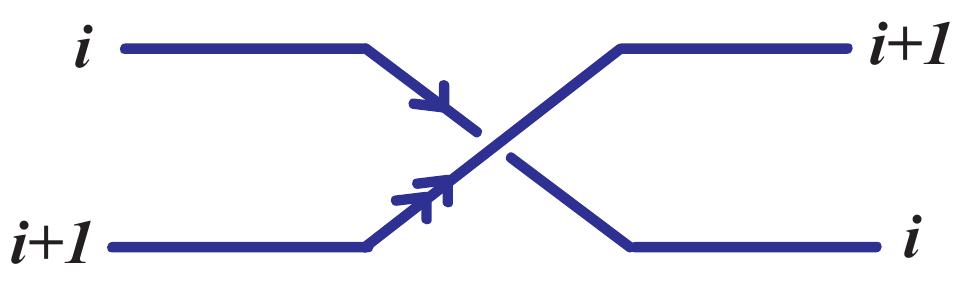}
$$
The Artin group  of isotopy classes of $n$-strand braids under concatenation is denoted by $B_n$ - it has generators $\sigma_1,\sigma_2,\dots,\sigma_{n-1}$ and relations
$$\begin{cases}
\sigma_i\sigma_j=\sigma_j\sigma_i&\text{if}~\vert i-j \vert \geqslant   2\\
\sigma_i\sigma_j\sigma_i=\sigma_j\sigma_i\sigma_j&\text{if}~
\vert i-j \vert=1.
\end{cases}$$
Every $n$-strand braid $\beta$ is represented by a word in $B_n$ in $\ell$ generators, corresponding to a sequence of $\ell$  crossings in a plane projection.

The {\it closure} of an $n$-strand braid $\beta$ is the $c$-component link
$$\widehat{\beta}=\beta \cup \sigma_0:~\coprod\limits_n I\cup_{\sigma}\coprod\limits_n I=\SSS^1_c \hookrightarrow \RR^3$$
with $c= \vert \{1,2,\dots,n\}/\sigma \vert$ the number of cycles in $\sigma$.
Alexander~\cite{alexlem} proved that every link is the closure $\widehat{\beta}$ of some braid $\beta$.
For example, the Hopf link is the closure of the concatenation $\sigma_1\sigma_1$
$$
\includegraphics[width=.8\linewidth]{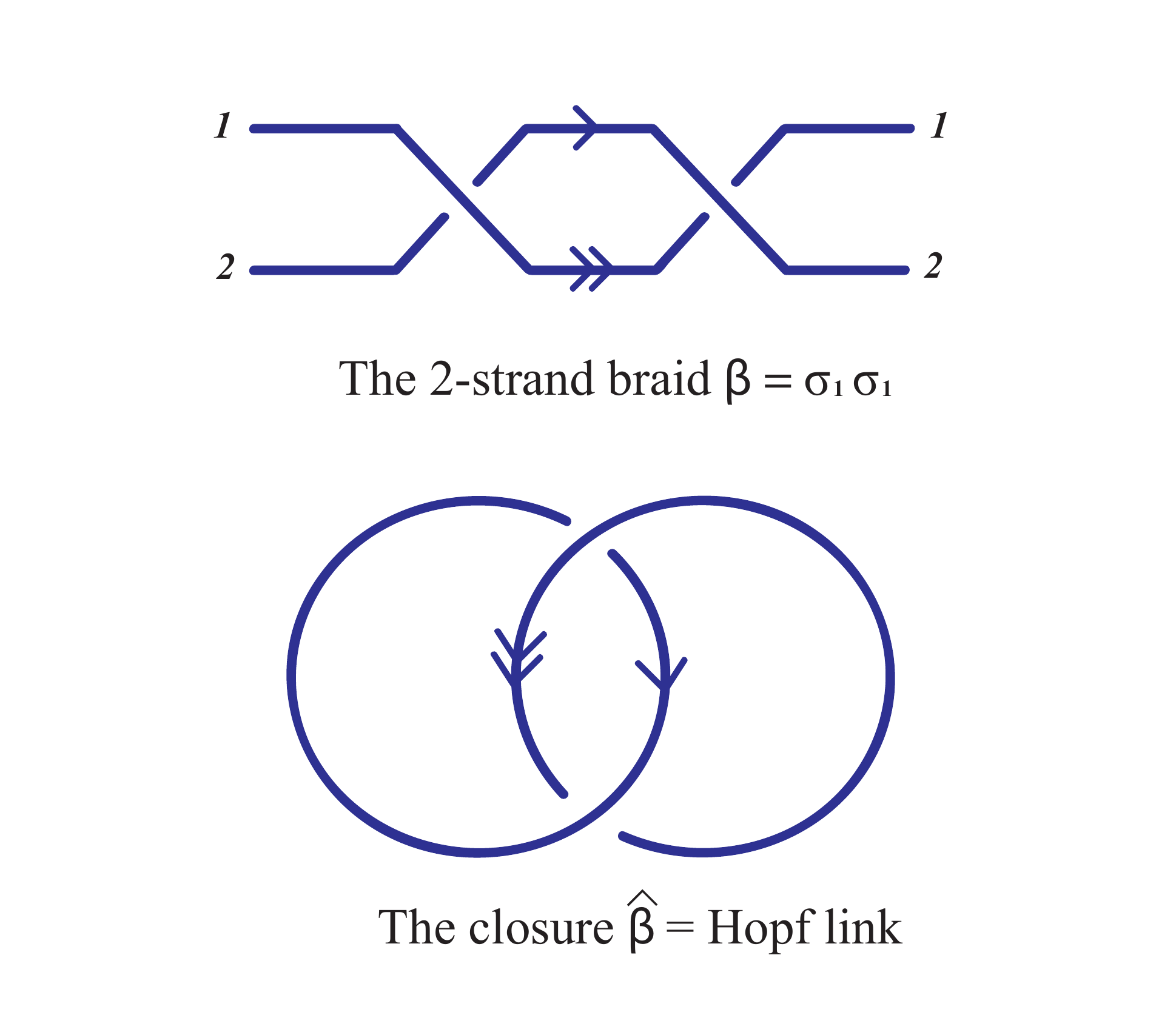}
$$

An $n$-strand braid $\beta$ with $\ell$ crossings is represented by a word
in $B_n$ of length $\ell$ in the generators $\sigma_1,\sigma_2,\dots,\sigma_{n-1}$, so that   $\beta=\beta_1\beta_2 \dots \beta_{\ell}$ is the concatenation of $\ell$ elementary braids.

Stallings~\cite{stallings} observed that the closure $\widehat{\beta}$ has a canonical projection with $n$ Seifert circles and $\ell$ intersections, and hence a canonical Seifert surface with $n$ 0-handles and $\ell$ 1-handles
$$F_{\beta}=\coprod\limits_n \DDD^2 \cup \coprod\limits_\ell \DDD^1 \times \DDD^1 \hookrightarrow \RR^3$$
and hence a canonical Seifert matrix $\Psi_\beta$.
This surface $F_\beta$ is homotopy equivalent to the $CW$ complex
$$X_\beta=\coprod\limits^n_{i=1}e^0_i \cup \coprod\limits^{\ell}_{j=1} e^1_j$$\\
 with $\partial e^1_j=e^0_i \cup e^0_{i+1}$ if $j$th crossing is between strands $i,i+1$
$$\begin{array}{ll}
H_1(F_\beta)=H_1(X_\beta)&=~\text{ker}(d:C_1(X_\beta)\to C_0(X_\beta))\\
&=~\text{ker}(d:\ZZ^\ell\to \ZZ^n)=\ZZ^m.
\end{array}$$
\indent We refer to  Stallings~\cite{stallings}, Gambaudo and Ghys~\cite{gambaudoghys3}, Cohen and van Wijk~\cite{cohenvanwijk}, Collins~\cite{collins},
Bourrigan~\cite{bourrigan} and Palmer~\cite{palmer} for more detailed accounts of the Seifert surfaces and Seifert matrices of braids.

\begin{center}
\includegraphics[width=.4\linewidth]{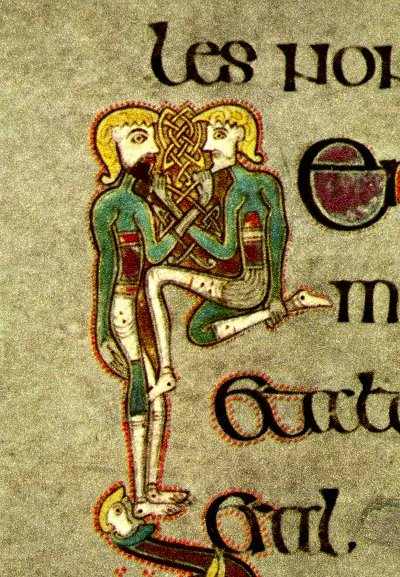}

Beard pullers and braids in the Book of Kells (Ireland)
\end{center}

\subsection{Knots and signatures in higher dimensions}

There is also a high-dimensional knot theory, for knots (or links)
$$k: \SSS^n \hookrightarrow \SSS^{n+2}$$
in all dimensions $n \geqslant   1$.
As noted by Adams~\cite{adams2} :

\begin{quote}
{\it Of course, from the point of view of the rest of mathematics, knots in higher-dimensional space deserve just as much attention as knots in 3-space}.
\end{quote}

The high-dimensional theory was initiated by Kervaire~\cite{kervairenoeuds} using surgery methods, including the use of the plumbing construction to realize any bilinear pairing $\Sigma:H \times H \to \ZZ$ on a f.g. free $\ZZ$-module $H$ with $(H,\Sigma+(-1)^{\ell}\Sigma^*)$ a nonsingular $(-1)^{\ell}$-symmetric form over $\ZZ$ as the Seifert form of a $(2\ell-1)$-knot $k:\SSS^{2\ell-1} \hookrightarrow \SSS^{2\ell+1}$ with an $(\ell-1)$-connected Seifert surface $F^{2\ell} \hookrightarrow \SSS^{2\ell+1}$ such that $H_{\ell}(F;\ZZ)=H$, for any $\ell \geqslant   1$.
The infinite cyclic cover $\overline{X}$ is $(\ell-1)$-connected;
Levine~\cite{levine2} proved that for $\ell \geqslant   2$ the isotopy classes of such {\it simple} $(2\ell-1)$-knots $k:\SSS^{2\ell-1} \hookrightarrow \SSS^{2\ell+1}$ are in one-one correspondence with the
$S$-equivalence classes of Seifert matrices $\Sigma$ with $\Sigma+(-1)^{\ell}\Sigma^*$ invertible.

A {\it cobordism} of $n$-knots $k_0,k_1:\SSS^n \hookrightarrow \SSS^{n+2}$ is an embedding
$$\ell:~\SSS^n \times I \hookrightarrow \SSS^{n+2} \times I$$
such that
$$\ell(u,i)=(k_i(u),i)~(u \in \SSS^n,i\in \{0,1\}).$$

A knot $k$ is {\it slice} if it extends to an embedding of the ball $\DDD^{n+1}$ in the sphere $\DDD^{n+3}$.

The set of cobordism classes of $n$-knots is an abelian group $C_n$, with addition by connected sum.

\begin{theorem}[Kervaire~\cite{kervairenoeuds}] \leavevmode

1. For $n \geqslant   2$ every $n$-knot is cobordant to one with $p:X\to \SSS^1$
a homotopy equivalence (resp. $\ell$-connected) if $n=2\ell$ (resp. $n=2\ell-1$).

2. $C_{2\ell}=0$ for $\ell \geqslant   1$.

3. There are canonical maps $C_{2\ell-1} \to C_{2\ell+3}$ which are isomorphisms for $\ell \geqslant   2$ and a surjection for $\ell=1$.

\end{theorem}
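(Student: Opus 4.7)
The plan is to use high-dimensional surgery theory on the knot complement, exploiting the existence of Seifert surfaces in all dimensions and the algebraic machinery of Seifert forms. I would handle the three parts in order, because each builds on the previous.

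For part 1, I would start with an $n$-knot $k:\SSS^n \hookrightarrow \SSS^{n+2}$ equipped with a Seifert surface $F^{n+1} \hookrightarrow \SSS^{n+2}$ with $\partial F = k(\SSS^n)$. The strategy is to perform ambient surgery on embedded spheres $\SSS^j \hookrightarrow \mathrm{int}(F)$ for $j < \lceil n/2 \rceil$, chosen to represent elements of $\pi_j(F)$. Each such surgery is realized by a cobordism of knots in $\SSS^{n+2} \times I$, since the normal bundle of $\SSS^j$ in $\SSS^{n+2}$ splits (by general position and framing, using $n \geqslant 2$). By the Hurewicz theorem I can kill homotopy below the middle dimension of $F$, and then translate this into connectivity of the projection $p:X \to \SSS^1$ via the Mayer--Vietoris / Wang exact sequences relating $H_*(F)$, $H_*(X)$ and $H_*(\overline{X})$. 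A standard computation then shows that for $n = 2\ell-1$ one obtains $p$ being $\ell$-connected, while for $n = 2\ell$ the extra degree of freedom (including one surgery in the top dimension of $F$) produces a homotopy equivalence $p:X \simeq \SSS^1$.

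For part 2, I would exploit the conclusion of part 1 in the even-dimensional case. If $p:X \to \SSS^1$ is a homotopy equivalence, then the exterior $X$ looks like $\SSS^1 \times \DDD^{n+1}$ up to homotopy, and in high enough dimensions one can upgrade this homotopical statement to a geometric one via the $s$-cobordism or $h$-cobordism theorem applied to an auxiliary cobordism inside $\DDD^{n+3}$. Concretely, I would build a null-cobordism of $k$ by filling in $\SSS^{n+2} \times [0,1]$ so that the knot bounds an embedded disk $\DDD^{n+1} \hookrightarrow \DDD^{n+3}$, showing that every even-dimensional knot is slice, hence $C_{2\ell}=0$.

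For part 3, I would pass to the algebraic picture. By the discussion in the paper, a simple $(2\ell-1)$-knot is determined by its Seifert form $\Sigma:H \times H \to \ZZ$, a bilinear pairing on a f.g.\ free $\ZZ$-module with $\Sigma + (-1)^{\ell}\Sigma^*$ a nonsingular $(-1)^{\ell}$-symmetric form. The double-suspension map $C_{2\ell-1} \to C_{2\ell+3}$ corresponds at the level of Seifert forms to multiplying by $-1$ on the symmetry sign, i.e.\ to a shift that is manifestly 4-periodic. I would then argue, following Kervaire--Levine, that for $\ell \geqslant 2$ a Seifert form represents $0 \in C_{2\ell-1}$ if and only if it is algebraically null-cobordant (admits a metabolizer), and that the algebraic cobordism groups of $(-1)^{\ell}$-symmetric Seifert forms depend only on the sign $(-1)^{\ell}$. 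For $\ell=1$ the same map is only surjective because the geometric realization of forms via plumbing (section~\ref{plumbingmanifolds}) requires an evenness hypothesis that fails in dimension $4$, preventing a full inverse.

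The main obstacle I anticipate is the surgery step in part 1 when $j$ approaches the middle dimension: ensuring that each proposed surgery on $F$ can be realized \emph{ambiently} in $\SSS^{n+2}$ (or, equivalently, through a cobordism in $\SSS^{n+2} \times I$) requires checking that the relevant normal bundles are trivial and that the framing obstructions vanish, which is exactly where the codimension-$2$ restriction bites. Once that is handled, parts 2 and 3 follow from the algebraic theory essentially by bookkeeping with the Seifert form and its $L$-theoretic invariants, including the $\omega$-signatures already introduced in Section~\ref{omegasignatures}.
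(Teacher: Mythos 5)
The paper does not prove this theorem; it is stated as a citation to Kervaire, so the comparison is against the known structure of Kervaire's argument rather than against a proof in the text. Your sketch has the right high-level ingredients (surgery below the middle dimension, Seifert forms, algebraic cobordism), but several steps as written would fail.

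The central gap in your treatment of part 1 is that interior ambient surgery on the Seifert surface $F$ does not move $\partial F = k(\SSS^n)$ and therefore changes neither the knot $k$ nor its exterior $X$ --- it merely replaces one Seifert surface by another. In particular it cannot affect $\pi_1(X)$. But for $\ell\geqslant 2$ (i.e.\ all $n\geqslant 3$) the target statement ``$p:X\to\SSS^1$ is $\ell$-connected'' forces $\pi_1(X)\to\pi_1(\SSS^1)$ to be an isomorphism, i.e.\ $\pi_1(X)\cong\ZZ$, and this is false for generic high-dimensional knot groups and must be achieved by a genuinely non-trivial cobordism. Kervaire's surgeries are performed ambiently on the knot (equivalently on the exterior $X$) inside $\SSS^{n+2}\times I$, and the trace of those surgeries is the knot concordance; improving $F$ is a related but distinct operation (it normalizes the Seifert form within an $S$-equivalence class, but leaves $k$ and $X$ alone). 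Your ``the normal bundle of $\SSS^j$ in $\SSS^{n+2}$ splits'' is also not the right place to look: the surgery is on a $j$-sphere in $X$, and one has to check that after attaching the corresponding handle to $\SSS^{n+2}\times I$ the resulting upper boundary is again $\SSS^{n+2}$ with a new knot in it, which is exactly where the codimension-$2$ bookkeeping lives.

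Part 3 contains two separate errors. First, the map $C_{2\ell-1}\to C_{2\ell+3}$ replaces $\ell$ by $\ell+2$, and $(-1)^{\ell+2}=(-1)^{\ell}$: the Seifert form symmetry is \emph{preserved}, not multiplied by $-1$. The $4$-periodicity is precisely the statement that the algebraic data attached to a simple $(2\ell-1)$-knot is the same for all $\ell$ of a fixed parity, and that is why the map is well defined; ``multiplying by $-1$'' would describe a hypothetical $2$-step map $C_{2\ell-1}\to C_{2\ell+1}$, not Kervaire's $4$-step map. Second, your explanation of why $C_1\to C_5$ is only onto is wrong on both counts. The evenness condition in the Milnor--Hirzebruch plumbing realization (Section~\ref{plumbingmanifolds}) is a \emph{realizability} hypothesis (it controls surjectivity of an algebraic-to-geometric correspondence, not injectivity), it is imposed only for $\ell\geqslant 2$, and it is explicitly waived for $\ell=2k$ with $k\in\{1,2,4\}$ --- dimension $4$ is exempt. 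Indeed the paper states, citing Kervaire, that plumbing realizes any admissible Seifert form for all $\ell\geqslant 1$. The true obstruction to injectivity at $\ell=1$ is of an entirely different nature: there exist classical knots whose Seifert form is algebraically null-cobordant (so they map to $0\in C_5$) but which are not slice, as first shown by Casson--Gordon using signatures of forms over cyclotomic fields; this is exactly the phenomenon noted at the end of Section~\ref{knotslinks}.
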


The knot cobordism groups $C_{2\ell-1}$ for $\ell \geqslant   2$ were computed  by Levine~\cite{levine1,levine3}
$$C_{2\ell-1}=\bigoplus\limits_{\infty}\ZZ \oplus \bigoplus\limits_{\infty}\ZZ/2\ZZ \oplus\bigoplus\limits_{\infty}\ZZ/4\ZZ~ (\text{countable}~\infty'\text{s}).$$
The high dimensional cobordism class $k \in C_{2\ell-1}$ of a $(2\ell-1)$-knot $k:\SSS^{2\ell-1}\hookrightarrow \SSS^{2\ell+1}$ is determined modulo torsion by primary invariants of the signature type, with one $\ZZ$-valued signature  for each root $e^{i\theta}\in \SSS^1$ (an algebraic integer) of the Alexander polynomial $\Delta_k(z)\in \ZZ[z,z^{-1}]$ with $0<\theta<\pi$, and for $\ell \equiv 0(\bmod\,2)$ also the signature. These computations were extended by
Sheiham~\cite{sheiham} to the cobordism groups of high-dimensional boundary links $L:\SSS^{2\ell-1}_c \hookrightarrow \SSS^{2\ell+1}$ (i.e. links with
$c$-component Seifert surfaces).

See Ranicki~\cite{ranickiknot} for an account of high-dimensional knot theory, including the many applications of the signature in the computation of $C_{2\ell-1}$ for $\ell \geqslant   2$.
For the relationship between knot invariants involving the infinite cyclic cover of the knot complement and Seifert surfaces see Blanchfield~\cite{blanchfield}, Kearton~\cite{kearton} and Ranicki~\cite{ranickibs}.

The computation of the knot cobordism groups is much more complicated for classical knots, i.e. for $C_1$. Casson-Gordon~\cite{cassongordon} showed in  1975 that  the surjection $C_1 \to C_{4{\star}+1}$ has non-trivial kernel, detected by the signatures of quadratic forms over cyclotomic fields.
The kernel has been studied by Cochran, Orr and Teichner~\cite{cochranorrteichner} using $L^2$-signatures and noncommutative Blanchfield forms.

\newpage

\section{The symplectic group and the Maslov class}\label{maslov}

\subsection{A very brief history of the symplectic group}

It is difficult, if not impossible, to describe the history of the symplectic group.
Brouzet~\cite{brouzet} mentions a double origin.
The first is {\it projective geometry} during the nineteenth century, in particular connected with the concept of duality.
Duality with respect to a quadric, i.e. a quadratic form, was generalized to duality with respect to a {\it symplectic} form, and this turned out to be relevant in the study of the ``space of lines" and in particular in the study of ``complexes'' (families of lines in $3$-space depending on two parameters).

The second is related to {\it algebraic geometry}, in particular with the foundational work of Riemann on abelian integrals.
Indeed, in modern terminology, the first cohomology of a Riemann surface is canonically equipped with a  symplectic structure,  defined by the cup-product,  in duality with the intersection of cycles.

Surprisingly, this paper does not mention a third origin, which may even be more important, at least from our point of view: the {\it geometrization of mechanics} all along the nineteenth century.
One should mention many names.
Among them Lagrange, Laplace, Legendre, Poisson, Hamilton, Liouville should be emphasized.
Notice that they all have their lunar craters.

\begin{figure}[ht]
\centering
\begin{minipage}[b]{0.3\linewidth}
\includegraphics[height=.9\textwidth]{lagrangecrater.jpg}
\center{Lagrange}
\end{minipage}
\hfill
\begin{minipage}[b]{0.3\linewidth}
\includegraphics[height=.9\textwidth]{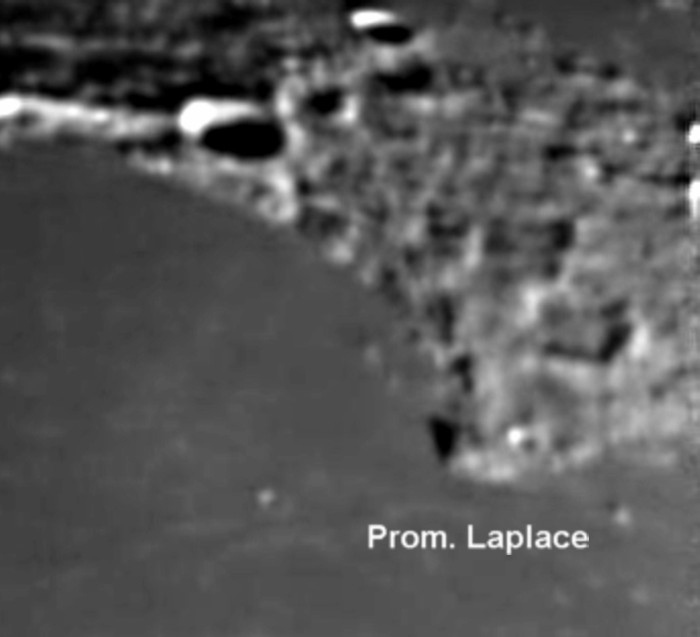}
\center{Laplace }
\end{minipage}
\hfill
\begin{minipage}[b]{0.3\linewidth}
\includegraphics[height=.9\textwidth]{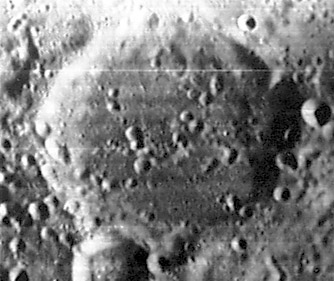}
\center{Legendre }
\end{minipage}
\end{figure}
\begin{figure}[ht]
\centering
\begin{minipage}[b]{0.3\linewidth}
\includegraphics[height=.9\textwidth]{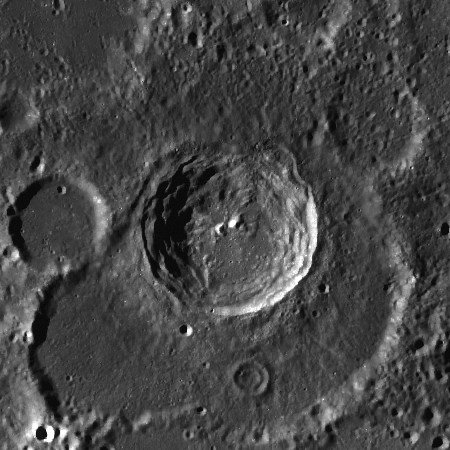}
\center{Hamilton}
\end{minipage}
\hfill
\begin{minipage}[b]{0.3\linewidth}
\includegraphics[height=.9\textwidth]{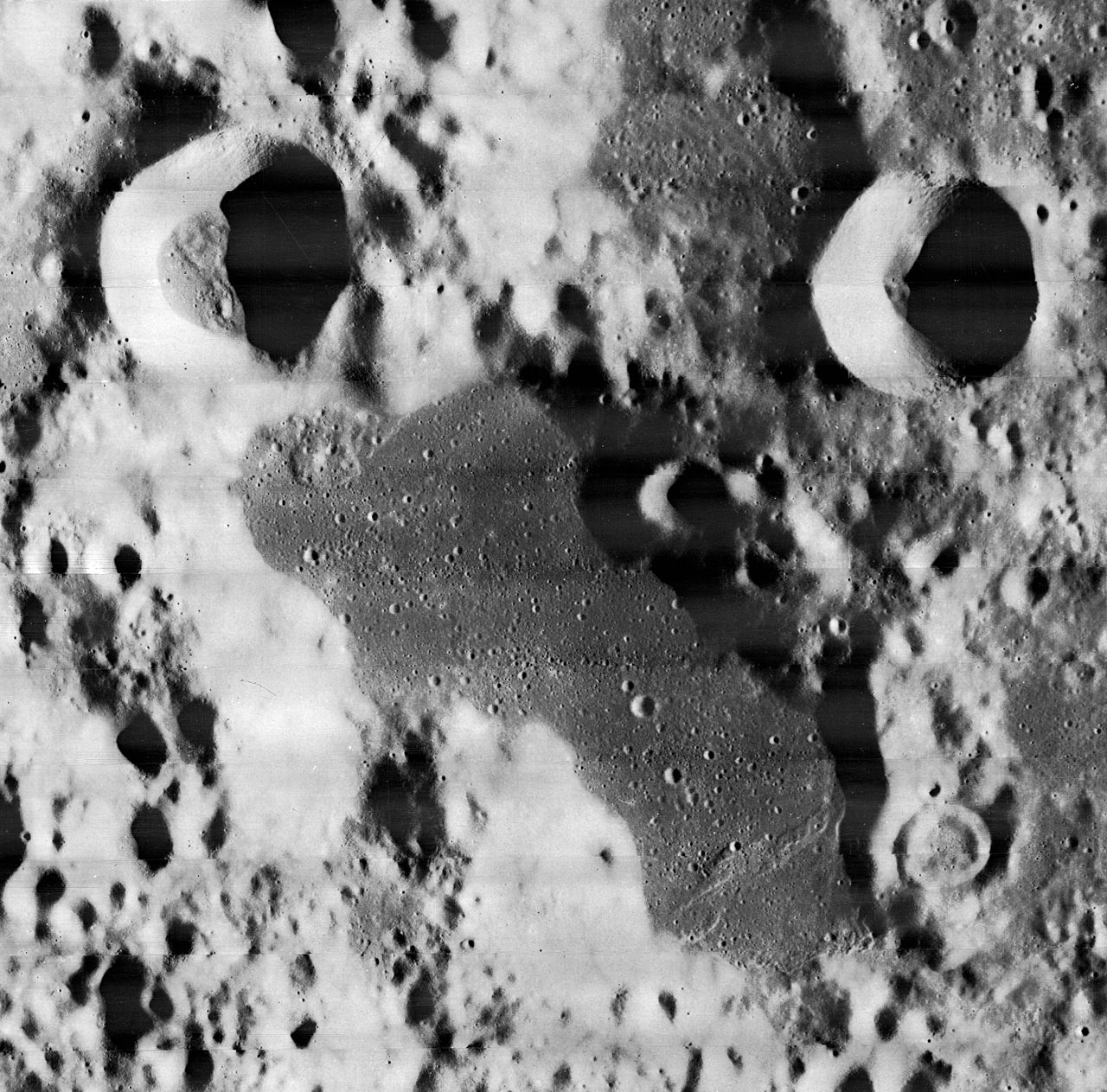}
\center{Liouville }
\end{minipage}
\hfill
\begin{minipage}[b]{0.3\linewidth}
\includegraphics[height=.9\textwidth]{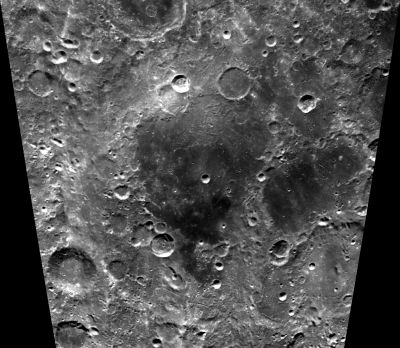}
\center{Poincar\'e }
\end{minipage}
\end{figure}

Towards the end of the nineteenth century, it was understood that the equations of motion of a mechanical system can be written in {\it canonical Hamilton form}:
$$
\frac{dp}{dt} = \frac{\partial H(p,q)}{\partial q} \quad ; \quad \frac{dq}{dt} = -\frac{\partial H(p,q)}{\partial p}
$$
where $q=(q_1,\dots, q_n)$ and $p=(p_1,\dots,p_n)$ denote the positions and momenta and $H$ is the hamiltonian function defined on $\RR^{2n}$.
This ordinary differential equation generates of flow $\phi^t$ of diffeomorphisms of ${\RR}^{2n}$ describing the motion.
Even though one should probably not attribute a full credit to Poincar\'e, it seems to us that his famous memoir on the three body problem contains the first explicit formulation of the basic properties of symplectic geometry :
\begin{itemize}
\item The flow $\phi^t$ preserves the closed form $\Omega=\sum\limits_{i=1}^{n} dp_i\wedge dq_i$.
\item It $f$ is any diffeomorphism of $\RR^{2n}$ preserving $\Omega$ (a canonical transformation) then the flow associated to $H\circ f$ is $f \circ \phi^t \circ f^{-1}$.
\end{itemize}
Using this, Poincar\'e founded the qualitative theory of symplectic (or hamiltonian) dynamics.
For instance, he used the preservation of the volume form $\Omega^n$ and his {\it recurrence theorem} to prove some weak stability (``\`a la Poisson'') of the solar system.
He also began a thorough analysis of periodic orbits.
Taking the differential of $\phi^t$ at a fixed point, one gets what is called today a symplectic matrix and any conjugacy invariant of this matrix (for instance its eigenvalues), gives some information on the dynamics.
In particular, Poincar\'e knew that if $\lambda$ is an eigenvalue of such a matrix, so are $\lambda^{-1}, \overline{\lambda}, \overline{\lambda}^{-1}$.
He also used in many circumstances what we call today ``symplectic manifolds''.

The systematic study of the symplectic group, as a real algebraic group, began later.
The terminology ``symplectic'' is due to Hermann Weyl in his 1939 book~\cite{weylclass} on the classical groups.
\begin{quote}
{\it The name ``complex group'' formerly advocated by me in allusion to line complexes, as they are defined by the vanishing of antisymmetric forms, has become more and more embarrassing through collision with the word ``complex'' in the connotation of complex number. I therefore propose to replace it by the corresponding Greek adjective ``symplectic''. Dickson calls the group ``Abelian linear group'' in homage to Abel who first studied it. }
\end{quote}
The terminology ``symplectic'' has survived and fortunately the word ``Abelian'' is not used any more in this context since this would have created a big confusion!

The {\it entr\'ee en sc\`ene} of the symplectic group in topology came even later.
Of course, the same Poincar\'e had introduced  homology and intersections of cycles in manifolds and ``proved'' his duality theorem but he does not seem to have used the symplectic property in any way.
Recall that for Poincar\'e homology was expressed as Betti numbers rather than as groups. The  homology groups were introduced by Emmy Noether.

During the twentieth century, there was an incredible blooming of symplectic dynamics and topology, in particular thanks to the fundamental works of Arnold and the introduction by Gromov of the techniques of pseudo-holomorphic curves - see McDuff and Salamon~\cite{mcduffsalamon}.

The name of Arnold was given to the asteroid 10031 Vladarnolda, $2.5891829$ astronomical units from the Sun,  and whose orbit has a pretty large eccentricity  $0.1991397$.

\begin{center}
\includegraphics[width=.65\linewidth]{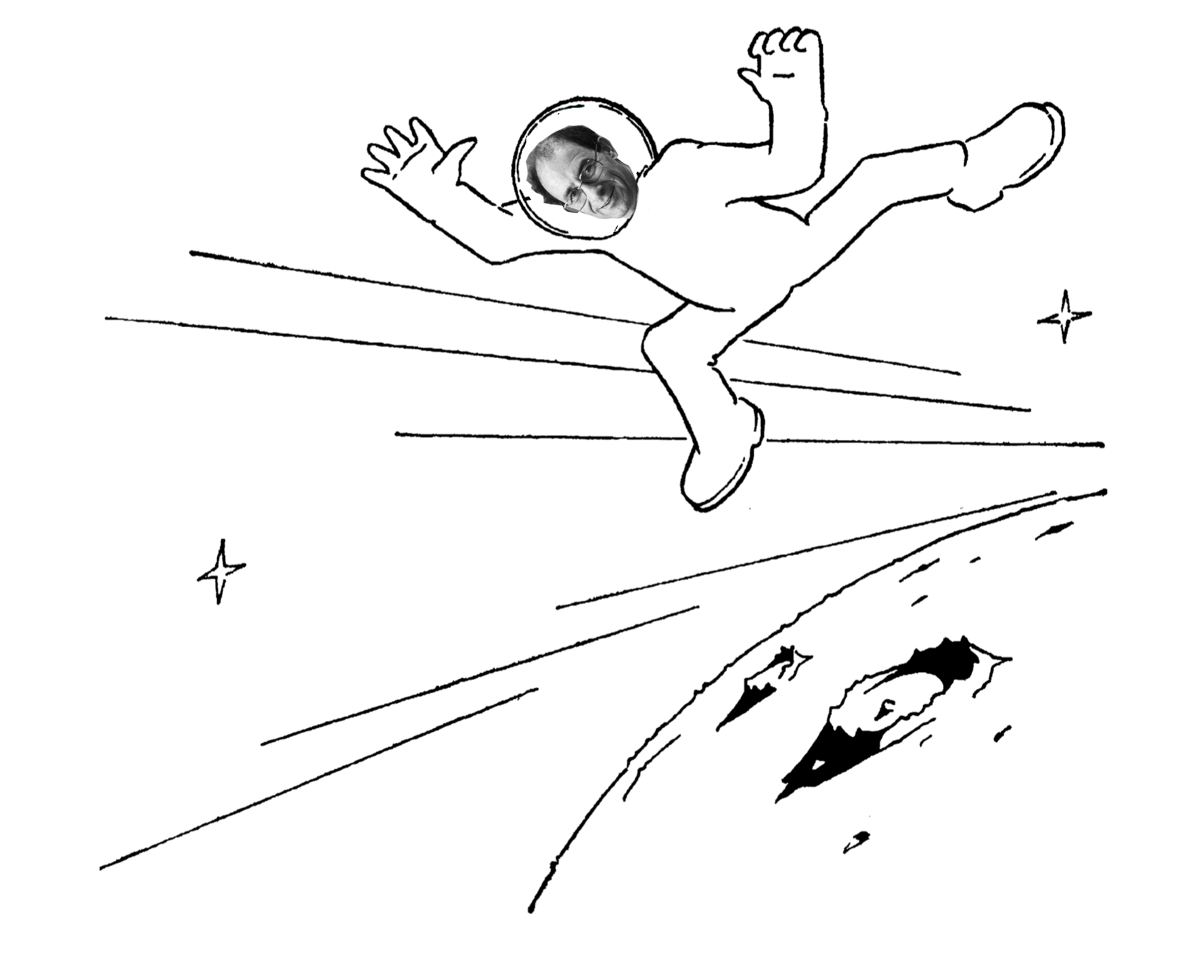}

\text{Vladimir Arnold, orbiting ``his'' asteroid:}

\text{a pastiche of Beletsky~\cite[p.254]{beletsky}}
\end{center}

\subsection{The Maslov class}

The Maslov class appears in different guises in many different parts of mathematics.

\begin{itemize}

\item In mathematical physics, as originally introduced by Maslov in~\cite{maslov}.

\item In hamiltonian dynamics, for example though the introduction of the Conley-Zehnder index for periodic orbits.

\item In topology where it is closely related to the signature of $4k$-dimensional manifolds, the basic invariant of surgery theory.

\item In algebra, specifically in the study of Witt groups, and their
non-simply-connected generalizations the Wall surgery obstruction groups.

\item In number theory, since it is related to some automorphic forms.

\end{itemize}

There are excellent survey papers on the Maslov class, for example~Cappell, Lee and Miller~\cite{cappellleemiller}, de Gosson~\cite{gosson} and Py~\cite{py}, providing a unified presentation of the many facets of this object.
The reader will find a rather complete bibliography at the website~\cite{ranickimaslov}.

Here, we shall limit  ourselves to a small selection, relevant to the other papers in this volume.
We can claim neither for originality nor for novelty and the best we can do is to quote the paper by Arnold on this topic~\cite{arnold1}:

\begin{quote}
\emph{In such a classical area as Sturm's theory it is hard to follow all the predecessors, and I can only say, like Bott and Edwards, that I do not make any claim as to the novelty of the results.
In connection with this I remark that numerous authors writing on the Maslov index, symplectic geometry, geometric quantization, Lagrangian analysis, etc., starting with [2], have not noticed the earlier works of Lidskii, as well as the earlier works of Bott [3] and Edwards [4], in which was constructed a Hermitian version of the theory of the Maslov index and Sturm intersections.}
\end{quote}

Equip $\RR^{2n}$ with the standard symplectic form
$$\Omega((x_i),(y_i))= \sum_{i=1}^n (x_i y_{n+i}-x_{i+n}y_i) \in \RR.$$

A {\it  lagrangian} is an $n$-dimensional subspace of $\RR^n$ on which $\Omega$ vanishes identically.
This terminology was introduced by Arnol’d~\cite{arnold1}  after Maslov had defined in 1965 a ``Lagrange manifold'' as an $n$-dimensional submanifold of $\RR^{2n}$ in which all tangent spaces are lagrangian subspaces.
Weinstein develops in~\cite{weinstein} the credo that
\begin{quote}
{\it Everything is a Lagrange manifold.}
\end{quote}

 The space of lagrangian subspaces of $\RR^{2n}$ is a compact manifold traditionally denoted by $\Lambda_n$.
 Of course, the symplectic group ${\rm Sp}(2n,\RR)$ acts on $\Lambda_n$ and it is not hard to check that this action is transitive, so that $\Lambda_n$ is a homogeneous space.

 The unitary group ${\rm U}(n)$ consists of complex unitary automorphisms of $\CC^n$ equipped with the standard hermitian form $\sum\limits_{j=1}^n z_j \overline{z'}_j$.
 In the identification between $\CC^n$ and $\RR^{2n}$, the imaginary part of the hermitian form is the symplectic form so that the unitary group ${\rm U}(n)$ is a (compact) subgroup of ${\rm Sp}(2n,\RR)$.
 It turns out that ${\rm U}(n)$ is indeed a maximal compact subgroup, and that $\Lambda_n$ is also homogenous under ${\rm U}(n)$ and is isomorphic to the quotient ${\rm U}(n)/{\rm O}(n)$.

 The homogenous space ${\mathcal H}_n={\rm Sp}(2n,\RR)/{\rm U}(n)$ is called the {\it Siegel domain} and carries very interesting geometry.
 This is a symmetric space with non positive curvature.
 This is also a {\it complex holomorphic} domain, that one can also see as the set of complex symmetric $n\times n$ matrices chose imaginary part is positive definite.
 Siegel devoted a long paper to this geometry in 1943~\cite{siegel}.
 His motivation came from number theory and the generalization of modular forms.

 It is always a good idea to look at the case $n=1$ where ${\rm Sp}(2,\RR)$ becomes ${\rm SL}(2,\RR)$, the lagrangian Grassmannian $\Lambda_1$ is the real projective line (diffeomorphic to a circle),
 and the Siegel space is the Poincar\'e upper half space.

 The Maslov class may be seen in many related ways.

 \begin{itemize}
 \item An element of $H^1(\Lambda_N,\ZZ)$,
 \item A cohomology class of degree 2 of ${\rm Sp}(2n,\RR)$ defining its universal cover.
 \item A two cocycle on $\Lambda_n$ invariant under the action of  ${\rm Sp}(2n,\RR)$.
 \item  A two cocycle on ${\mathcal H}_n$ invariant under the action of  ${\rm Sp}(2n,\RR)$.
 \end{itemize}

 We shall now describe these objects.

 \subsection{A short recollection of group cohomology}

Let $\Gamma$ be any group, equipped with the discrete topology.
On can consider the simplicial complex whose vertices are the elements of $\Gamma$ and having a $k$-simplex for every $(k+1)$-tuple of elements of $\Gamma$.
This space is obviously contractible and carries an obvious free action by left translation of $\Gamma$ on itself.
Clearly, the quotient space is a model for the Eilenberg MacLane space $K(\Gamma, 1)$.
The cohomology of this quotient space with coefficients in some abelian group $A$  is called the {\it group cohomology} of $\Gamma$ and denoted by $H^{\star}(\Gamma, A)$.
It is computed from the differential graded algebra consisting of cochains $c: \Gamma^{k+1} \to A$ which are invariant under translations :
$$
c(\gamma \gamma_0,\dots, \gamma \gamma_k)= c(\gamma_0, \dots, \gamma_k).
$$
 These cochains are called {\it homogeneous} for obvious reasons.
 Every homogeneous cochain $c$ defines a non-homogeneous cochain $\overline{c} :\Gamma^k \to A$ by
 $$
 \overline{c}(g_1,g_2,\dots,g_k)= c(1,g_1,g_1g_2, \dots, g_1g_2\dots , g_k)
 $$
 and conversely any non-homogeneous cochain defines a unique homogeneous cochain.
 Therefore, one can compute the cohomology of a group using non-homogeneous cochains.

 For instance, in the non-homogeneous presentation, a 1-cochain is just a map $f : \Gamma \to A$ and its coboundary is the map
 $$
(g_1,g_2) \in \Gamma^2 \mapsto df(g_1,g_2) = f(g_1g_2)-f(g_1)-f(g_2).
 $$
 Cohomology is typically used to describe central extensions
$$
\xymatrix{\{0\}  \ar[r] &A  \ar[r]^-{i} & \widetilde{\Gamma}  \ar[r]^-{\pi}& \Gamma \ar[r] &\{1\}}.$$
One chooses any set theoretic section
$s$ from $\Gamma$ to $ \widetilde{\Gamma}$ (i.e. such that $\pi \circ s = Id$).
Then, given two elements $g_1,g_2$ in $\Gamma$, the element $s(g_1g_2)s(g_2)^{-1}s(g_1)^{-1}$ projects by $\pi$ to the identity so that it is an element of the kernel of $i$
and can be identified with some element of $A$.
In this way, we get a function $c$ depending on $g_1,g_2$ with  values in the abelian group $A$.
This turns out to be a cocycle and changing the section $s$ changes $c$ by a coboundary so that the cohomology class of $c$ is well defined.
This is a class in $H^2(\Gamma,A)$ associated to the central extension.
Conversely, a cohomology class of degree 2 defines a central extension.

For an excellent introduction to group cohomology, we recommend~\cite{brown}.

 \subsection{The topology of $\Lambda_n$}

 Since the determinant of a matrix in ${\rm O}(n)$ is $\pm 1$ and the determinant of a unitary matrix has modulus 1, the homogeneous space $\Lambda_n= {\rm U}(n)/{\rm O}(n)$ maps to the circle:
 $$p:~{\rm U}(n)/{\rm O}(n) \to \SSS^1;~A \mapsto \det (A^2).$$
 This is a locally trivial fibration with simply connected fibres ${\rm SU}(n)/{\rm SO}(n)$.
 As a consequence, $H^1(\Lambda_n;\ZZ)$  is isomorphic to $\ZZ$.

 As a first definition, the {\it Maslov class} is the corresponding generator of $H^1(\Lambda_n,\ZZ)$.
 Any loop in $\Lambda_n$ has a {\it Maslov index}, the index of its projection by $p$.

 Note that the symplectic group ${\rm Sp}(2n,\RR)$ acts canonically on $\Lambda_n$ but this action does not preserve the fibres of $p$.
 However, we shall see later that the action ``almost preserves'' the fibres.

 Since ${\rm U}(n)$ is a maximal compact subgroup of ${\rm Sp}(2n,\RR)$, the quotient  ${\mathcal H}_n={\rm Sp}(2n,\RR)/{\rm U}(n)$ is contractible.
 It follows that the fundamental group of ${\rm Sp}(2n,\RR)$ is also isomorphic to $\ZZ$ so that its universal cover $\widetilde{{\rm Sp}}(2n,\RR)$ is a central extension
 $$\xymatrix{\{0\} \ar[r] & \ZZ \ar[r]^-{i} & \widetilde{{\rm Sp}}(2n,\RR)      \ar[r]^-{\pi}& {\rm Sp}(2n,\RR) \ar[r] &\{1\}}.$$
See Rawnsley~\cite{rawnsley} for an explicit description of $\widetilde{{\rm Sp}}(2n,\RR)$.

The {\it Maslov class} is the cohomology class in $H^2({\rm Sp}(2n,\RR),\ZZ)$ (now using the discrete topology on ${\rm Sp}(2n,\RR)$), corresponding to this central extension.

Another (equivalent) way to understand this class is to use the fact (proved by Hopf) that any homology class of degree 2 is represented by a surface.
Let $\phi: \pi_1(\Sigma_g) \to {\rm Sp}(2n,\RR)$ be some homomorphism from the fundamental group of the compact oriented surface $\Sigma_g$ of genus $g$ to the symplectic group.
The group $\pi_1(\Sigma_g)$ has a well known presentation
$$\langle a_1, \dots, a_g, \dots , b_1, \dots b_g \,  \vert \, [a_1,b_1]\dots [a_g,b_g]=1 \rangle.$$
Choose elements $\widetilde{\phi(a_i)},\widetilde{\phi(b_i)}$ in $\widetilde{{\rm Sp}}(2n,\RR)$ lifting $\phi(a_i),\phi(b_i)$.
The product of commutators $[\widetilde{\phi(a_1)}, \widetilde{\phi(b_1)}]\dots [\widetilde{\phi(a_g}),\widetilde{\phi(b_g)}]$ is an integer independent of all the choices.
This is the evaluation of the Maslov class on the homology class $\phi\in H_2({\rm Sp}(2n,\RR))$.

\subsection{Maslov indices and the universal cover of $\Lambda_n$}

Let us begin with the simple but crucial observation of Leray~\cite{leray}.
Suppose $L_1,L_2$ are two transverse lagrangian subspaces of $\RR^{2n}$.
The pairing $\Omega : L_1 \times L_2 \to \RR$ is nonsingular so that $L_2$ is identified with the dual $L_1^{\star}$ of $L_1$.
In this way, the space $\RR^{2n}$ is identified with $L_1 \oplus L_1^{\star}$ and under this identification the symplectic structure on
$L_1 \oplus L_1^{\star}$ is simply given by:
$$
\Omega((x,x^{\star}),(y,y^{\star}))=y^{\star}(x)-x^{\star}(y) \in \RR.
$$
Suppose now that we are given a third lagrangian $L_3$ which is transverse to $L_2$ so that it is the graph of some linear map $l$ from $L_1$ to $L_1^{\star}$.
The fact that $L_3$ is lagrangian means that $l$ is a symmetric form, i.e. $l(x)(y)-l(y)(x) \equiv 0$.
In other words, {\it if $L_1$ and $L_3$ are transverse to $L_2$, the lagrangian space $L_1$ is canonically equipped with a quadratic form.}
One defines the {\it Wall-Maslov ternary index} $W(L_1,L_2,L_3)$ as the signature of this quadratic form.

If turns out that this definition can be generalized without any transversality conditions.
One of the possibilities is the following.
As in  Definition~\ref{tri1} 2., given three lagrangians $L_1,L_2,L_3$
in $H_-(\RR^n)=(\RR^{2n},\Omega)$ one considers the vector space
$$V=\{(v_1,v_2,v_3) \in L_1 \oplus L_2 \oplus L_3\,\vert\,
v_1+v_2+v_3=0 \in \RR^{2n}\},$$
and the canonical quadratic form on $V$ defined by
$$
Q_{L_1,L_2,L_3}(v_1,v_2,v_3) = \Omega(v_1,v_2)=\Omega(v_2,v_3)=\Omega(v_3,v_1).
$$
One defines the {\it Wall-Maslov triple index} in general as the signature of this quadratic form.

So far, we worked with the symplectic group over the real numbers.
This construction can however be generalized over any field $K$.
The symplectic group ${\rm Sp}(2n,K)$ now acts on the Grassmannian  $\Lambda_n(K)$ of lagrangians in $H_-(K^n)$ and we can associate a quadratic form to three lagrangians, exactly as above.
The Wall-Maslov index of the triformation $(L_1,L_2,L_3)$ is now a class of in the symmetric Witt group $W(K)$.

Let us give the main properties of this index:

\begin{itemize}

\item The Wall-Maslov ternary index $W: \Lambda_n(K)^3 \to W(K)$ is invariant under the action of ${\rm Sp}(2n,K)$.

\item $W$ is an alternating function of its three arguments $(L_1,L_2,L_3)$.

\item $W$ is a {\it cocycle} : for any four lagrangians
$L_1,L_2,L_3,L_4$, one has $$W(L_2,L_3,L_4)-W(L_1,L_3,L_4)+W(L_1,L_2,L_4)-W(L_1,L_2,L_3)=0$$ in $W(K)$.

\end{itemize}

Only the third property is a tricky exercise in linear algebra.
See~Py~\cite{py} for a short proof.

Let us come back to the case of real numbers, which is our main example.
Since $H^1(\Lambda_n; \ZZ)$ is isomorphic to $\ZZ$, this defines an infinite cyclic cover $\pi: \widetilde{\Lambda}_n \to \Lambda_n$.
Let us denote by $T : \widetilde{\Lambda}_n \to \widetilde{\Lambda}_n$ the generator of the deck transformations.
The fibration $p : \Lambda_n \to \SSS^1$ lifts to a fibration  $\widetilde{p} : \widetilde{\Lambda}_n \to \RR$ such that $\widetilde{p} (T (\widetilde{L}))=\widetilde{p}(\widetilde{L}) +1$.
Of course, the group $\widetilde{{\rm Sp}}(2n,\RR)$ acts on $\widetilde{\Lambda}_n$.

Several important properties should be mentioned.

The first concerns the topology of $\Lambda_n$.
If one fixes a lagrangian $L$, one can consider the space $\Lambda_{n,\pitchfork L}$ consisting of lagrangians transverse to $L$.
As noticed above, this space is identified with the space of quadratic forms on any lagrangian supplementary space of $L$.
More precisely, $\Lambda_{n,\pitchfork L}$ is an affine space modeled on the space of symmetric matrices : the ``difference'' between two elements of $\Lambda_{n,\pitchfork L}$ being a  quadratic form.
This provides an atlas for $\Lambda_n$.
The complement of $\Lambda_{n,\pitchfork L}$ in $\Lambda_n$ is called by Arnold the {\it train} of $L$.

The 3-cocycle $W$ is a coboundary when lifted to $\widetilde{\Lambda}_n$.
This means that there is a function $m : \widetilde{\Lambda}_n^2 \to \ZZ$ such that for any three elements of $\widetilde{\Lambda}_n$, we have:
$$
m(\widetilde{L}_1,\widetilde{L}_2)+m(\widetilde{L}_2,\widetilde{L}_3)+m(\widetilde{L}_3,\widetilde{L}_1) = W( \pi(\widetilde{L}_1),\pi(\widetilde{L}_2), \pi(\widetilde{L}_3)).
$$
Moreover, this binary index is unique if one imposes the conditions
$$m(\widetilde{L}, T(\widetilde{L}))=n$$
and the invariance under the action of $\widetilde{{\rm Sp}}(2n,\RR)$.
The existence and uniqueness of this function $m$ is explained in a crystal clear way in~ Arnold~\cite{arnold1}.
Suppose $L_t$ is a continuous path of lagrangians {\it which are all transversal to the same} $L$.
Then the ``difference'' of $L_1$ and $L_2$ define a quadratic form having some signature.
One then lifts the path to a path $\widetilde{L}_t$ in  $\widetilde{\Lambda}_n$ and one defines $m(\widetilde{L}_1,\widetilde{L}_0)$ as being this signature.
One can check that this is independent of the path connecting these two lagrangians.
This only defines $m$ in the very special case of two elements of $\widetilde{\Lambda}_n$ which are connected by a path everywhere transverse to a given lagrangian.
One can then  extend the definition of $m$ to all pairs of elements of $\widetilde{\Lambda}_n$ by using the required property that $M$ is the coboundary of $m$.

The second property is that $\widetilde{\Lambda}_n$ is canonically equipped with a {\it partial ordering} which is important in hamiltonian dynamics.
Any $\Lambda_{n,\pitchfork L}$, being contractible, can be lifted to countably many contractible sets in $\widetilde{\Lambda}_n$.
In each if these contractible sets, on defines $\widetilde{L_1} \leqslant \widetilde{L}_2$ if the difference is a non negative quadratic form.
One should check that these definitions are indeed compatible in the intersections of the $\Lambda_{n,\pitchfork L}$.
Then one defines in general $\widetilde{L_1} \leqslant \widetilde{L}_2$  if there is a chain of elements of elements of $\widetilde{\Lambda}_n$ connecting them,
so that two consecutive elements of the chain belong to one of these contractible parts, and forming an ascending chain for the partially defined ordering.
These verifications are not difficult.

\subsection{The Maslov class as a bounded cohomology class}\label{central}

One of the key properties of the Maslov class is that it is {\it bounded}.
From the na\"{\i}ve point of view, this simply means that the Maslov triple index takes values in a finite set $\{-n,\dots, n \}$.

Given a space $X$, its singular homology is defined using singular simplices, i.e. continuous maps $\tau$ from the standard simplex $\Delta_k$ in $X$.
The space $C_k(X)$ of singular chains is by definition the vector space generated by $k$ simplices.
One can equip this space with the $\ell ^1$-norm, i.e. the sum of the absolute values of the coefficients.
Singular cochains are defined as the duals of singular chains.
A cochain is called bounded if it is a continuous linear form, with respect of the $\ell ^1$-norm.
More concretely, a cochain $c$ is bounded if there is a constant $C$ such that the evaluation $c(\tau)$ on any $k$-simplex has absolute value less that $C$.
The vector space of bounded cochains is obviously a graded differential complex.
Its cohomology is called the {\it bounded cohomology} of $X$ and denoted by $H^{\star}_b(X)$.
This  cohomology was introduced by Gromov~\cite{gromov}.

As a typical example, consider a compact oriented surface $\Sigma$ of genus $g\geqslant  2$ so that it can be equipped with a riemannian metric of curvature $-1$ and that one can identify its universal cover with Poincar\'e disc.
Consider a $2$-simplex $\tau : \Delta_2 \to \Sigma$.
Lift it to the Poincar\'e disc as a simplex $\widetilde{\tau}$ and define $c(\tau)$ as the (oriented) area of the geodesic triangle having the same vertices as $\widetilde{\tau}$.
One checks easily that this is defines a cocycle $c$ which is bounded since the area of geodesic triangles in the Poincar\'e disc is less than $\pi$ by Gauss-Bonnet theorem.
The cohomology class of this $2$-cocycle in the usual cohomology of $\Sigma$ is easy to compute : this is the area of $\Sigma$.
This example is somehow typical and bounded cohomology is vaguely related to negative curvature.

Given a space $X$, there is a classifying map to the Eilenberg MacLane space $K(\pi_1(X),1)$.
Gromov showed that this map induces an isomorphism in bounded cohomology.
In particular, {\it simply connected spaces have a trivial bounded cohomology}.

Therefore, all information concerning bounded cohomology comes from the fundamental group and should be computed from the group theoretical bounded cohomology.
Let $\Gamma$ be any group, equipped with the discrete topology.
One considers the differential graded algebra consisting of {\it bounded} homogeneous cochains $c: \Gamma^{k+1} \to \RR$.
Its homology is the {\it bounded cohomology} of the group $\Gamma$ and denoted by $H^{\star}_b(\Gamma, \RR)$.

Let us state some of the main properties of this cohomology, especially in degree 2.
See~Grigorchuk~\cite{grigorchuk} for a survey of the second bounded cohomology of a group.

If $\Gamma$ is amenable (for example if it is solvable) all bounded cohomology groups vanish.

If a group $\Gamma$ is {\it uniformly perfect}, i.e. if there is an integer $k$ such that any element is a product of at most $k$ commutators, then the canonical map from $H^2_b(\Gamma,\RR)$ to $H^2(\Gamma, \RR)$ is injective.
Indeed, suppose a bounded 2-cocycle is the coboundary of some a priori unbounded 1-cochain $f : \Gamma \to \RR$.
This means that there is a constant $C>0$ such that for every $g_1,g_2$ one has
$$
\vert f(g_1g_2)- f(g_1) - f(g_2) \vert \leqslant C.
$$
Such maps $f$ are called {\it quasi homomorphisms}.
It is easy to see that a quasi-homomorphisms is uniformly bounded on the set of products of a given number of commutators.
Hence, the assumption that $\Gamma$ is uniquely perfect implies that $f$ is bounded, which means precisely that the mapping $H^2_b(\Gamma,\RR)$ to $H^(\Gamma, \RR)$ is injective.

In general, the kernel of $H^2_b(\Gamma,\RR)$ to $H^2(\Gamma, \RR)$ is described by quasi-homomorphisms {\it up to} bounded functions.
There is an elementary way to get rid of these bounded functions using ``homogenization''.
Starting from a quasi homomorphism $f$, one defines its homogenization by
$$
\overline{f} (g) = \lim_{k\to \infty} \frac{f(g^k)}{k}
$$
which is another quasi-homomorphism such that $\vert \overline{f} - f \vert$ is bounded.
In this way, one sees that the kernel under consideration is identified with {\it homogeneous} quasi-homomorphisms, i.e. satisfying the additional condition that $\overline{f}(g^k)=k \overline{f}(g)$.

It turns out that the symplectic group ${\rm Sp}(2n,\RR)$ is uniformly perfect.

The Maslov class is a typical example of a bounded class.

Chose a lagrangian $L$ in $\Lambda_n$.
If $g_0,g_1,g_2$ are three elements of ${\rm Sp}(2n,\RR)$, the ternary index $W(g_0(L),g_1(L),g_2(L))$ defines a bounded 2-cocycle since its values are bounded by $2n$.
This is the {\it bounded Maslov class}.

Clearly, the pull back this cocycle to $\widetilde{{\rm Sp}}(2n,\ZZ)$ is exact and is the coboundary of the $1$-cochain associating to $(\widetilde{g}_0,\widetilde{g}_1)$ the {\it binary} index $m(\widetilde{g}_0(\widetilde{L}),\widetilde{g}_1(\widetilde{L}))$. The Maslov class in $H^2({\rm Sp}(2n,\RR),\ZZ)$ is associated to the central extension
$$\xymatrix{\{0\} \ar[r] & \ZZ \ar[r]^-{i} & \widetilde{{\rm Sp}}(2n,\RR)      \ar[r]^-{\pi}& {\rm Sp}(2n,\RR) \ar[r] &\{1\}}.$$

Of course, the above definition of the Maslov cocycle depends on the choice of a lagrangian $L$, as a base point in $\Lambda_n$,  but the class does not.
 In order to define a canonical cocycle, one can proceed in two ways, leading to two interesting concepts.

In a first approach, one can pull back some bounded Maslov cocycle, associated to some choice of $L$, to $\widetilde{{\rm Sp}}(2n,\RR)$ where, as we noticed already, it is exact.
Therefore, there is a map $r: \widetilde{{\rm Sp}}(2n,\RR) \to \RR $ such that
\begin{itemize}
\item $t(\widetilde{g}_1\widetilde{g_2})^{-1}) t(\widetilde{g}_1)t(\widetilde{g}_2)=  W(L,\pi(\widetilde{g}_1(L)),\pi(\widetilde{g}_2(L)))$
\item $t( T \widetilde{g}) = t(\widetilde{g}) +1$.
\end{itemize}

This function $t$ is of course a quasi-homomorphism which depends on $L$ but its homogenization $\rho$ does not.
The function $trans : \widetilde{{\rm Sp}}(2n,\RR) \to \RR $ is called the {\it symplectic translation number}.
This is the {\it unique homogeneous quasi-homomorphism $trans : \widetilde{{\rm Sp}}(2n,\RR) \to \RR $ taking the value 1 on the generator $T$ of the center}.
Modulo $\ZZ$, one gets the {\it rotation number} $\rho$ on the symplectic group, with values in $\RR / \ZZ$, useful in dynamics.

In a second approach, one can double the dimension of $\RR^{2n}$ and consider $\RR^{2n}\oplus \RR^{2n}$ with the symplectic form $\Omega \oplus - \Omega$.
The graph of a symplectic automorphism $g:H_-(\RR^{2n}) \to H_-(\RR^{2n})$ is a lagrangian $$\Gamma_g=\{(x,g(x))\,\vert\, x \in \RR^{2n}\}.$$
Three elements $g_0,g_1,g_2$  of ${\rm Sp}(2n,\RR)$ define three lagrangians in $\RR^{2n}\oplus \RR^{2n}$,
for which one can compute the Wall-Maslov ternary index.
This defines a  Maslov $2$-cocycle on the symplectic group which is canonical in the sense that it is invariant under conjugacy and does not depend on the choice of a lagrangian.

Finally, we mention another geometrical interpretation of the Maslov class.
The Siegel domain ${\mathcal H}_n={\rm Sp}(2n,\RR)/{\rm U}(n)$ is a K\"ahler manifold whose curvature is non positive.
It can be compactified in an equivariant way by adding a sphere at infinity.
Contrary to the case of the Poincar\'e disc (when $n=1$), the action of ${\rm Sp}(2n,\RR)$ on this sphere is not transitive.
The structure of this heterogeneous sphere is well understood and is described by a Tits building, that we cannot describe here.
It suffices to say that one piece of this building is identified with $\Lambda_n$ which is therefore equivariantly embedded in the sphere at infinity.

Now consider three points $x,y,z$ in ${\mathcal H}_n$. One can connect them by three geodesic arcs $[x,y],[y,z],[z,x]$ to build a triangle.
However, there is no totally geodesic plane containing the three points so that one cannot construct a usual triangle having $x,y,z$ as vertices.
Nevertheless, one can choose any smooth triangle having the union of $[x,y],[y,z],[z,x]$ as its boundary and integrate the K\"ahler form on this triangle.
Since the form is closed, this ``area'' does not depend on the choice of the triangle so that one can indeed define some number $area(x,y,z) \in \RR$.
This is a bounded cocycle.
Choosing some base point $\star$ in ${\mathcal H}_n$ one gets 2-cocycles in ${\rm Sp}(2n,\RR)$ as $area(\star, g_1(\star), g_2(\star))$.
Changing the base point does not change the cohomology class, as one checks easily.
When $\star$ goes to the sphere at infinity and converges to some lagrangian, the corresponding cocycle converges to the previously defined Maslov cocycle.
In other words, the Maslov class can also be seen as some incarnation of the symplectic form of Siegel space.

Suppose now that we restrict our study to {\it integral} symplectic matrices in ${\rm Sp}(2n,\ZZ)$.
Realizing $g_0^{-1}g_1$, $g_0^{-1}g_1 \in {\rm Sp}(2n,\ZZ)$
by automorphisms $\alpha,\beta:\Sigma_n \to \Sigma_n$ there is defined a 4-dimensional manifold with boundary $(T(\alpha,\beta),\partial T(\alpha,\beta))$ such that
\begin{enumerate}
\item  the mapping torus
$$T(\alpha)=\Sigma_n \times [0,1]/\{(x,0) \sim (\alpha(x),1)\,\vert\, x \in \Sigma_n\}$$
is a fibre bundle $\Sigma_n \to T(\alpha) \to \SSS^1$ with monodromy $\alpha$,
\item the double mapping torus $T(\alpha,\beta)$ is a fibre bundle $\Sigma_n \to T(\alpha,\beta) \to P$
over the 2-dimensional pair of pants $P$ with $\partial P=\SSS^1 \sqcup \SSS^1 \sqcup \SSS^1$, such that $\partial T(\alpha,\beta)=T(\alpha) \sqcup T(\beta) \sqcup T(\alpha\circ \beta)$.
\end{enumerate}
The signature of $T(\alpha,\beta)$  is the Wall non-additivity invariant (\ref{tri1})
$$\begin{array}{ll}
{\rm Meyer}(g_0,g_1,g_2)&=~\tau(T(\alpha,\beta))\\
&=~W(H_1(\Sigma_n;\RR),\phi_{\Sigma_n};\RR^n \oplus 0,\alpha(\RR^n\oplus 0),\beta(\RR^n \oplus 0))\\
&=~W(H_-(\RR^{2n});\Gamma_{g_0},\Gamma_{g_1},\Gamma_{g_2}) \in \ZZ
\end{array}$$
which is called the Meyer cocycle on ${\rm Sp}(2n,\RR)$ (\cite{meyer}) and which is clearly independent of all choices - this is a very useful cocycle in topology.

\subsection{Picture in the case $n=2$}

The case $n=1$ is well known.
$\Lambda_1$ is the projective line over the reals, so that it is a circle.
The symplectic group in this case is ${\rm SL}(2,\RR)$ and acts on $\Lambda_1$ through its quotient ${\rm PSL}(2,\RR)$ in the well known way.
The Siegel domain ${\mathcal H}_1$ is the Poincar\'e disc.
Any element of ${\rm PSL}(2,\RR)$ can be seen as an orientation preserving homeomorphism of the circle, having a therefore a well defined rotation number in $\RR / \ZZ$.

It might be useful to draw a picture in the case $n=2$ which  clarify the situation.
We follow the remarkable paper by Arnold~\cite{arnold2}.

Let us  consider  $\Lambda_2= {\rm U}(2)/{\rm O}(2)$.
It fibres over the circle with fibres diffeomorphic to ${\rm SU}(2)/{\rm SO}(2)$, which is diffeomorphic to the 2-sphere.
The universal cover $\widetilde{\Lambda}_2$ is therefore diffeomorphic to ${\mathbb S}^2 \times \RR$.
The second coordinate is the lift of the map $\det ^2$.
It is not difficult to check that the generator $T$ of the deck transformations acts as $T(x,t)= (-x,t+1)$ so that one should be careful that $\Lambda_2$ is not orientable.
If one chooses some lagrangian $L$, its train, that is the set of lagrangians which are not transverse to $L$ is homeomorphic to a 2-sphere in which two antipodal points have been identified.
Lifting this train to $\widetilde{\Lambda}_2 = {\mathbb S}^2 \times \RR$, one gets a string of two spheres, glued along points.

The complement of this ``rosary'' consists of disjoint copies of $\Lambda_{2,\pitchfork L}$, each being identified to the space of quadratic forms in $\RR^2$, i.e. to the 3-space $\RR^3$.

The following picture shows $\widetilde{\Lambda}_2 = {\mathbb S}^2 \times \RR$. The grey part represents $\Lambda_{2,\pitchfork L}$. The three local pictures represent the situation in the neighborhood of the three singular points $(x,t), (-x,t+1),(x,t+2)$.

$$\includegraphics[width=\textwidth]{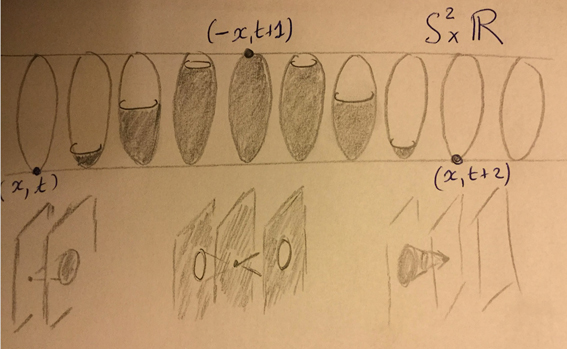}$$

The spheres ${\mathbb S}^2 \times \{t \}$ are definitely not invariant by the symplectic  group $\widetilde{{\rm Sp}}(4,\RR)$.
However, it follows from the boundedness of the Maslov class that if one looks at the image $\widetilde{g} ({\mathbb S}^2 \times \{t \})$ of such a sphere by an element of $\widetilde{{\rm Sp}}(4,\RR)$, its projection on the $\RR$ factor is an interval of bounded length.
In other words, even if the action of $\widetilde{{\rm Sp}}(4,\RR)$ does not descend to an action on $\RR$, it ``quasi-descends'' if one accepts to define a ``quasi-action'' in such a way that the image of a point might be an interval of bounded length.

Concerning the Siegel domain ${\mathcal H}_2$, its dimension is 6, so that its complex dimension is 3.
Its geometrical boundary is a 5-sphere ${\mathbb S}^5$ that we now describe in terms of the Tits building associated to this situation.

As mentioned earlier, ${\mathbb S}^5$ contains a copy of $\Lambda_2$.
It also contains a copy of $\mathbb{RP}^3$, the projective space of lines in $\RR^4$.
The sphere ${\mathbb S}^5$ is a ``restricted link'' of $\mathbb{RP}^3$ and $\Lambda_2$.
This means that every point on the sphere can be considered as a ``barycenter'' $\lambda D + (1-\lambda) L$
of some line $D$ and some lagrangian plane $L$ {\it containing} $D$ for some $\lambda \in [0,1]$.

\newpage

\section{Dynamics}\label{dynamics}

In this section, we briefly describe three topics from dynamical systems where some ``Maslov type'' ideas are relevant.

\subsection{The Calabi and Ruelle invariants}

Let us explain the construction of some invariants of symplectic diffeomorphisms in the simplest case.
Let $B$ be the unit open ball in $\RR^{2n}$ and denote by ${\rm Symp}_c(B)$ the group of symplectic diffeomorphisms of $B$ with compact support.

There are two basic dynamical invariants for elements of  ${\rm Symp}_c(B)$.

Choose some primitive for the symplectic form $\Omega$, for instance
$$\lambda = \sum p_i dq_i.$$
Let $\phi$ be an element of ${\rm Symp}_c(B)$ so that $d( \phi^{\star}(\lambda)-\lambda)=0$.
It follows that there is a function $H: B \to \RR$ such that
$$dH = \phi^{\star}(\lambda)-\lambda$$
and which is uniquely defined if one imposes the condition that $H$ is zero near the boundary of $B$.
One  defines the {\it Calabi invariant} of $\phi$ as the integral of $H$:
$${\mathfrak C}(\phi) = \int_B H\,  \Omega^n.$$
It is not hard to see that $\mathfrak C$ is a homomorphism from ${\rm Symp}_c(B)$ to $\RR$.
It is much harder to prove that the kernel of $\mathfrak C$ is a simple group, so that every non injective homomorphisms from ${\rm Symp}_c(B)$ to a group factors through $\mathfrak C$.
See~Banyaga~\cite{banyaga} for details.

Except in dimension 2, the dynamical meaning of this homomorphism is unclear.
In dimension 2, it is related to some kind of linking between orbits (see for instance~Gambaudo and Ghys~\cite{gambaudoghys1}).

The {\it Ruelle invariant} is related to the Maslov index (see Ruelle~\cite{ruelle}), and has been discussed in~Barge and Ghys~\cite{bargeghys}.
Let $\phi$ be an element of ${\rm Symp}_c(B)$ and consider its differential:
$$
d\phi : x \in B \mapsto d\phi (x) \in {\rm Sp}(2n,\RR).
$$
One can lift this to $\widetilde{{\rm Sp}}(2n,\RR)$ as a map
$$
\widetilde{d\phi} : x \in B \mapsto \widetilde{d\phi} (x) \in \widetilde{{\rm Sp}}(2n,\RR)
$$
which is the identity near the boundary of $B$. Therefore one can compute the symplectic translation number $t(\widetilde{d\phi}(x))$.
Its integral over $B$ defines a quasi-homomorphism on ${\rm Symp}_c(B)$ whose homogenization is called the \emph{Ruelle invariant} $\mathfrak R$:
$$
{\mathfrak R } :\phi \in {\rm Symp}_c(B) \mapsto  \lim_{k\to \infty}   \frac{1}{k} \int_B  t(\widetilde{d\phi^k}(x)) \, \Omega^n \in \RR.
$$
Somehow, one can describe this invariant as an ``elliptic Lyapunov exponent''.
Recall that Lyapunov exponents describe the exponential growth of a vector under iteration of a diffeomorphism.
In our case, the symplectic translation number only takes into account the eigenvalues of modulus one of the differential.

As a very simple example, one can consider the case $n=1$ and the hamiltonian function $H : B \to \RR$ which is of the form $h(\sqrt{p_1^2+q_1^2})$
for some real function $h: [0,1] \to \RR$ which is constant in a neighborhood of 0 and which is 0 in a neighborhood of 1.
Let $\phi$ be the time one of the hamiltonian flow associated to $H$.
One can easily compute the invariants of Calabi and Ruelle and see that they are different:
$$
{\mathfrak C} (\phi) = \int_0^1 2 \pi r h(r) \, dr \quad \text{and} \quad {\mathfrak R}(\phi)= -h(0).
$$
See~\cite{bargeghys,gambaudoghys1} for more information about these invariants.
Let us simply mention that similar invariants can also be defined on the groups of symplectomorphisms of more general symplectic manifolds.
For the state of the art on this topic, we recommend Borman and Zapolsky~\cite{bormanzapolsky}
or~Shelukhin~\cite{shelukhin}. Some of these invariants are related to Floer homology.

\subsection{The Conley-Zehnder index}

Recall that a {\it symplectic manifold} is an even dimensional manifold $M$ equipped with a closed non degenerate 2 form $\Omega$.
A function $H : M \to \RR $ on such a manifold defines a vector field $X_H$ called its symplectic gradient such that $\Omega(X_H, -)= dH (-1)$.
The flow generated by $X_H$ is the hamiltonian flow $\phi^t_H$ associated to $H$.
As a matter of fact, one usually considers hamiltonians $H_t$ depending periodically on a time $t$, i.e. from $M\times \RR / \ZZ$ to $\RR$.
One therefore has a vector field depending periodically on time generating some non autonomous flow $\phi^t_H$.
The symplectic diffeomorphism $\Phi=\phi^1_H$ is by definition a {\it hamiltonian diffeomorphism}.

As it is well known, at least since Poincar\'e, a great deal of information about the dynamics is contained in periodic orbits of $\Phi$ and it has been a constant problem to find tools proving the existence of these orbits. As Poincar\'e wrote in  the ``M\'ethodes Nouvelles de la M\'ecanique C\'eleste''~\cite[page 82]{poincare}:

\begin{quote}
{\it Il semble d'abord que [l'existence de solutions p\'eriodiques] ne puisse \^etre d'aucun int\'er\^et pour la pratique.
En effet, il y a une probabilit\'e nulle pour que les conditions initiales du mouvement soient pr\'ecis\'ement celles qui correspondent \`a une solution p\'eriodique.
Mais il peut arriver qu'elles en diff\`erent tr\`es peu, et cela a lieu justement dans les cas o\`u les m\'ethodes anciennes ne sont plus applicables.
On peut alors avec avantage prendre la solution p\'eriodique comme premi\`ere approximation [$\dots$]
D'ailleurs, ce qui rend ces solutions p\'eriodiques aussi pr\'ecieuses, c'est qu'elles sont, pour ainsi dire, la seule br\`eche par o\`u nous puissions essayer de p\'en\'etrer dans une place jusqu'ici r\'eput\'ee inabordable.}
\end{quote}

Assume for simplicity that the manifold $M$ is $\RR^{2n}$ and consider a periodic point $x$ of period 1, so that $\Phi(x)=x$.
Taking the differential along the orbit, one gets a path
$$
t\in [0,1] \mapsto \gamma(t)= d\phi^t_H (x) \in {\rm Sp}(2n,\RR).
$$
Even though the periodic orbit is a closed loop, the above path is of course not necessarily a loop.
We know how to associate a Maslov index to a loop in the symplectic group ${\rm Sp}(2n,\RR)$.
Conley and Zehnder defined an index, in the same spirit as Maslov for a path $\gamma(t)$ in the symplectic group such that $\gamma(0)=I_{2n}$ and $\gamma(1)$ does not have 1 as an eigenvalue (which is a generic condition). See for instance~\cite{gosson} for a definition.
In this way, every fixed point of a hamiltonian diffeomorphism has a {\it Conley Zehnder index}.
This can be generalized in many symplectic manifolds.

This index has been remarkably useful in the context of Floer homology, which is the most powerful tool to construct periodic orbits.
It is impossible to give any detail of this theory here and we refer to Audin and Damian~\cite{audindamian} for a good exposition.

Let $M$ be a closed symplectic manifold.
Assume for simplicity that $M$ is 2-connected.
Denote by ${\mathcal L} M$ the space of loops, i.e. the space of smooth maps from $\RR / \ZZ$ to $M$.
Let $H_t$ be a time periodic hamiltonian.

We define the {\it action functional} ${\mathcal A} : {\mathcal L} M \to \RR$ in the following way.
If $\gamma : \RR / \ZZ \to M$ is a loop, choose a disc $D$ with boundary $\gamma$ and let
$$
{\mathcal A} (\gamma) = -\int_D \Omega - \int_{\RR / \ZZ}{H_t(\gamma(t))}{dt}.
$$
It is easy to see that the critical points of this functional are precisely the 1-periodic points we are trying to detect.
It is therefore tempting to define some Morse type homology on this loop space, which is called the {\it Floer homology}.
One introduces a generic almost complex structure compatible with the symplectic form.
This enables us to define the gradient lines in the loop space.
Given two critical points, i.e. two periodic orbits $\gamma_+, \gamma_-$, one can look at the space of gradient lines $\gamma_{s\in \RR}$ with $\gamma_+$ such that $\gamma_s$ converges to $\gamma_+$ (resp. $\gamma_-$) when $s$ goes to $+\infty$ (resp. $- \infty$).
In classical Morse theory, the dimension of the intersections of stable and unstable manifolds of two critical points of generic Morse functions is completely determined by the difference of the indices of the critical points.
In this new context of Floer homology, indices are replaced by the Conley-Zehnder indices.
Again, we refer to~\cite{audindamian} for details.

\subsection{Knots and dynamics}

In this subsection, we describe some analogies between the topology of knots and links and the dynamics of volume preserving flows on 3-dimensional manifolds, especially on the $3$-sphere.
More details can be found in~Ghys~\cite{ghys}.

Recall that if $\mathfrak G$ is a Lie algebra, one can define a canonical {\it Killing} symmetric form by
$$\langle x,y \rangle = \text{trace} (ad(x) ad(y)).$$
If $\mathfrak G$ is a simple Lie algebra, this is nonsingular and this is the unique bilinear form which is invariant under the adjoint action (up to a constant factor).

In some cases, infinite dimensional Lie algebras possess such a Killing form.
The main example is the Lie algebra of divergence free vector fields on the 3-sphere, or more generally on a rational homology sphere $\Sigma$ of dimension 3.
Let $vol$ be a volume form on $\Sigma$ (of total mass 1) and consider the Lie algebra $\mathfrak{sdiff}(M)$ of divergence free vector fields $X$ on $M$, i.e. such that the Lie derivative $L_Xvol=0$.
This condition is equivalent to the fact that the 2-form $i_X vol$ is closed and hence exact, because of our assumptions. Let $\alpha_X$ be a choice of a primitive.
Then one can define
$$
\langle X_1, X_2 \rangle =~\int_\Sigma \alpha_{X_1} \wedge d\, \alpha_{X_2} .
$$
This is a symmetric nonsingular bilinear form on $\mathfrak{sdiff}(\Sigma)$ which is of course invariant under conjugacy.
In particular, $\langle X, X\rangle $ is an invariant of a divergence free vector field, that we shall call the {\it Arnold invariant}, following Arnold's Principle:
\begin{quote}
{\it If a notion bears a personal name, then this is not the name of the discoverer.}
\end{quote}
and its complement, the Berry Principle:
\begin{quote}
{\it The Arnold Principle is applicable to itself (\cite{arnold3}).}
\end{quote}
Indeed, the invariant was introduced by many authors, including Arnold (and Moffatt, Morau), under the name of Hopf invariant. It is also called {\it helicity}.

Arnold gave a nice dynamical or topological interpretation of $ \langle X_1, X_2 \rangle$.
Choose some auxiliary riemannian metric on $\Sigma$.
Let $\phi^t_1,\phi^t_2$ be the flows generated by $X_1$ and $X_2$ and pick two points $x_1,x_2$.
One can follow the piece of trajectory of $X_1$ from $x_1$ to $\phi^{t_1}_1(x_1)$ and then follow some minimal geodesic connecting the two endpoints of this arc.
In this way, we produce a closed loop $k_{X_1}(x_1,t_1)$.
In the same way, we can consider the loop $k_{X_2}(x_2,t_2)$.
It turns out that for a generic choice of $x_1,x_2,t_1,t_2$ these two loops are disjoint so that one can compute their linking number.
Then, a form of the ergodic theorem shows that the limit
$${\rm lk}(x_1,x_2)=\lim_{t_1,t_2 \to + \infty} \frac{1}{t_1t_2} {\rm lk} (k_{X_1}(x_1,t_1), k_{X_2}(x_2,t_2))$$
exists for almost every pair of points $(x_1,x_2)$.
The dynamical interpretation is then that the Arnold invariant is an asymptotic linking number:
$$
\langle X_1, X_2 \rangle = \int_\Sigma \int_\Sigma {\rm lk}(x_1,x_2) \, {\rm dvol}(x_1) \, {\rm dvol}(x_2).
$$

The main open conjecture in this topic concerns the topological invariance.
Suppose that the two flows $\phi^t_1,\phi^t_2$  are conjugate by some volume preserving {\it homeomorphism}.
Does that imply that the Arnold invariants of $X_1$ and $X_2$ are equal?

A partial result in this direction is obtained in~Gambaudo and Ghys~\cite{gambaudoghys1}.
If the manifold $\Sigma$ contains an invariant solid torus $D \times {\mathbb S}^1$ in such a way that the fibres $D \times \{ \star \}$ are transversal to the vector field $X$, one can relate the contribution of helicity in the solid torus to the Calabi invariant of the first return map on a disc. Since it is known that the Calabi invariant in dimension 2 is invariant by area preserving homeomorphisms, this provides a wide class of vector fields for which the conjecture is true.

The idea of considering a divergence free vector field as some kind of diffuse knot is certainly not new.
The closed form $i_X vol$ can be considered as a closed 1-current.

It is therefore tempting to try to generalize to volume preserving vector fields some other concepts coming from knot theory.
In particular, in~Gambaudo and Ghys~\cite{gambaudoghys2}, we discussed the signatures of a divergence free vector field.
As before, we consider the loop $k_X(x,t)$.
For almost every choice of $x,t$, this is a knot and one shows that the limit of signatures
$$\tau_X(x)=\lim_{t\to +\infty} \frac{1}{t^2} \tau (k_X(x,t))$$
exists for almost every point $x$.
Moreover, we show that if the vector field is ergodic, this limit $\tau_X(x)$ is almost everywhere constant and coincides with (one half of the) helicity of $X$.
A wide generalization is obtained in Baader and March\'e~\cite{baadermarche} to invariants of finite type in knot theory. They show, assuming ergodicity, that if $v$ is a finite type invariant of order $n$, the limit
$$\lim_{t\to +\infty} \frac{1}{t^n} v( k_X(x,t))$$
exists for almost every point $x$ and is equal to $C_v \langle X , X \rangle ^n$ for some constant $C_v$ depending only on the invariant $v$.

\newpage

\section{Number theory, topology and signatures}\label{sing}

\subsection{The modular group ${\rm SL}(2,\ZZ)$}\label{modular}

 The {\it modular group} ${\rm SL}(2,\ZZ)$ is the group of $2 \times 2$ integer matrices
$$A=\begin{pmatrix} a & b \\ c & d\end{pmatrix}$$
such that
$$\text{det}(A)=ad-bc=1 \in \ZZ.$$
Implicitly, the modular group was introduced by Gauss in his famous {\it Disquisitiones arithmeticae}~\cite{gauss} (1801), discussing in particular quadratic forms on two variables {\it with integral coefficients}.
It has been thoroughly investigated by many mathematicians, starting with Dedekind~\cite{dedekind} in his commentary on Riemann's work on elliptic functions.
This group is related to almost every area of mathematics and at least one of the authors of the present paper\footnote{In fact, both authors.} considers it as ``his favorite group''.
See Serre~\cite{serrearith} or Mumford, Series and Wright~\cite{mumfordserieswright} for a modern account.

In this final section, we will restrict ourselves to a very specific aspect of the modular group: its relation with low dimensional topology and signatures.

The center of  ${\rm SL}(2,\ZZ)$ consists of $\{ \pm I_2 \}$ and the quotient by the center is denoted by ${\rm PSL}(2,\ZZ)$.
The fundamental object is the action of ${\rm PSL}(2,\ZZ)$ on Poincar\'e upper half plane ${\cal H} = \{ z \in \CC \vert \Im (z) >0 \}$.
$$
( \left[ \begin{matrix} a & b \\ c & d\end{matrix}\right], z) \in {\rm PSL}(2,\ZZ) \times {\cal H} \mapsto \frac{az+b}{cz+d} \in {\cal H}
$$
(we use the bracket notation for the quotient by the center).
This action preserves the hyperbolic metric and there is a well-known fundamental domain:
$$
\{ z \in {\cal H} \vert - \frac{1}{2} \leqslant  \Re(z) \leqslant  \frac{1}{2} \text{ and } \vert z \vert \geqslant   1 \}.
$$
The action is not free but the quotient $X= {\cal H } / {\rm PSL}(2,\ZZ)$ defines a Riemann surface which is isomorphic to $\CC$.
This isomorphism is realized by the famous $j$-invariant:
$$
j(z )={1 \over q}+744+196884q+21493760q^{2}+864299970q^{3}+20245856256q^{4}+\cdots
$$
with $q = \exp(2 \pi i z )$.
However, it is frequently useful to think of $X = \ {\cal H } / {\rm PSL}(2,\ZZ)$ as an {\it orbifold}, taking into account the two non-free orbits, with stabilizers $\ZZ/2 \ZZ$ and $\ZZ / 3 \ZZ$.
The universal cover of $X$, {\it as an orbifold}, is ${\cal H}$ and its fundamental group, again as an orbifold, is ${\rm PSL}(2,\ZZ)$.

The following figure is extracted from a famous paper by Klein~\cite{klein}, dated 1878, in which he explains that this tessellation is due to Dedekind, who himself acknowledges the influence of Gauss $\dots$

\begin{center}
\includegraphics[width=.9\textwidth]{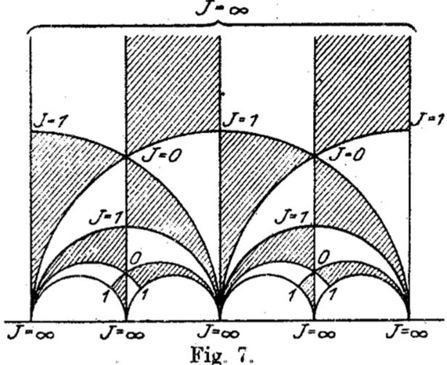}
\end{center}

The action on the boundary of ${\cal H}$ is the natural action of ${\rm PSL}(2,\ZZ)$ on $ \RR{\mathbb P}^1= \RR \cup \{\infty \}$.
The key point is that this action is transitive on rational numbers.
For $A \in {\rm SL}(2,\ZZ)$  with $c \neq 0$  the Euclidean algorithm gives a regular $\chi \in \ZZ^n$  with  $\vert\chi_k\vert \geqslant   2$, such that
$$
A=\begin{pmatrix} a & b \\ c & d \end{pmatrix}=
\begin{pmatrix} 0 & -1 \\ 1 & 0  \end{pmatrix}
\begin{pmatrix} \chi_1 & -1 \\ 1 & 0  \end{pmatrix}
\begin{pmatrix} \chi_2 & -1 \\ 1 & 0  \end{pmatrix}\dots\begin{pmatrix} \chi_n & -1 \\
1 & 0  \end{pmatrix}
$$
$$
a/c=[\chi_1,\chi_2,\dots,\chi_n]=\chi_1- \dfrac{1}{\chi_2-\dfrac{1}{\chi_3- \ddots\lower10pt\hbox{$-\dfrac{1}{\chi_n}$} }}.
$$
This is essentially the same matrix product as in Section~\ref{sturm}.

One can express the same fact in several equivalent ways.
It one sets
$$
S=\left[
\begin{matrix} 0 & -1 \\ 1 & 0\end{matrix}
\right];
T=\left[
\begin{matrix} 1 & 1 \\ 0 & 1\end{matrix}
\right];
\text{ and }
U=TS=
\left[
\begin{matrix} 1 &  1 \\ 1 & 0\end{matrix}
\right],
$$
one has $S^2= U^3=I_2$ and  ${\rm PSL}(2,\ZZ)$ is the {\it free product of the corresponding subgroups of order 2 and 3}.

One could also say that any element of  ${\rm PSL}(2,\ZZ)$ can be written in a unique way as a product of matrices which are alternatively of the form
$$
\left[\begin{matrix} 1 & n \\ 0 & 1\end{matrix}\right] \text{ and } \left[\begin{matrix} 1 & 0 \\ n & 1\end{matrix}\right]
$$
with $n\neq 0$.

{\it Modular forms} of weight $2k$ are expressions of the type $f(z)\, dz^k$, with $f$ holomorphic on ${\cal H}$, which are invariant under ${\rm PSL}(2,\ZZ)$.
For an introduction, we recommend the wonderful~\cite{serrearith}.
We will only recall that there is essentially a unique modular form of weight $12$,
$$
\Delta(z) = q \prod_{n=1}^{\infty}(1-q^n)^{24}
$$
again with  $q = \exp(2 \pi i z )$.
To explain the existence of this $\Delta$, one could use the kernel $\Gamma_2$ of the abelianization of ${\rm PSL}(2,\ZZ)$, of index $6$, acting freely on ${\cal H}$.
The quotient of ${\cal H}$ by $\Gamma_2$ is a {\it bona fide} Riemann surface, covering 6 times $X$,
which is actually a punctured elliptic curve, and which therefore carries a non vanishing holomorphic form.
The sixth power of this holomorphic form goes down to the modular form $\Delta$ on $X$.

\subsection{Torus bundles and their signatures}

We refer to Stillwell~\cite{stillwell} for a survey of the early history of 3-manifolds.

There are basically two ways of constructing a 3-manifold from a matrix in ${\rm SL}(2,\ZZ)$.
In this subsection, we describe torus bundles. In the next subsection we describe the Heegaard twisted double construction
of the lens spaces.

Every element $A \in {\rm SL}(2,\ZZ)$ is induced by an automorphism of the torus $\SSS^1 \times \SSS^1=\RR ^2 / \ZZ^2$ by a linear transformation of $\RR^2$ preserving the lattice of integral points $\ZZ^2$.
It turns out that any orientation preserving diffeomorphism of the torus is isotopic to a linear mapping so that
${\rm SL}(2,\ZZ)$ is isomorphic to the mapping class group of the torus.

Given a matrix $A$, one can construct the mapping torus
$$T^3_A=\RR ^2 / \ZZ ^2\times \RR /\{(x,t) \sim (A(x),t+1)\}.$$
By definition, $T^3_A$ has $\RR ^2 / \ZZ ^2 \times \RR$ as an infinite cyclic cover.
The natural projection of $T^3_A$ on $\SSS^1=\RR  / \ZZ$ is a locally trivial fibration, whose fibres are tori.

These examples were introduced by Poincar\'e~\cite{poincare1892} in his foundational paper on Topology.
His motivation was to study examples of manifolds where the homology (that he defined himself) was not sufficient to distinguish non homeomorphic manifolds.
Indeed, he defined the fundamental group based on these examples.
He showed that the fundamental groups of $T^3_A$ and $T^3_B$ are isomorphic if and only if $A$ and $B^{\pm 1}$ are conjugate in ${\rm GL}(2, \ZZ)$.

In order to connect these manifolds with the geometry of the modular orbifold $X = \ {\cal H } / {\rm PSL}(2,\ZZ)$, one should recall the following construction.

Consider the linear action of ${\rm SL} (2, \ZZ)$ on $\CC^2$.
The open set
$${\cal L}_{marked}=\{ (\omega_1,\omega_2) \in  \CC^{\bullet 2} \, \vert \, \omega_1/ \omega_2 \in {\cal H} \}$$
is the set of lattices in $\CC$ {\it marked with a positive basis}.
The quotient ${\cal L}= {\cal L}_{marked} / {\rm SL}(2, \ZZ)$ is the space of (unmarked) lattices in $\Lambda \subset \CC$.
For such a lattice $\Lambda$ one defines
$$
g_2(\Lambda) = 60 \sum_{\omega \in \Lambda\backslash \{0\}} \omega^{-4} \text{ and }  g_3(\Lambda) = 140 \sum_{\omega \in \Lambda\backslash \{0\}} \omega^{-6}.
$$
It turns out that $(g_2,g_3)$ maps ${\cal L}$  biholomorphically onto the complement of the discriminant locus $\{(g_2,g_3) \vert  g_2^3-27 g_3^2=0\} \subset \CC^2$.
By homogeneity, ${\cal L}$ is homeomorphic to the product by $\RR^{+}$ of the complement of a trefoil knot in the $3$-sphere $\{ (g_2,g_3) \in \CC \vert \, \vert g_2 \vert^2+ \vert g_3\vert^2=1 \}$.
Its fundamental group is the braid group $B_3$, a central extension of ${\rm SL}(2, \ZZ)$.

Note that the maps
$$
 (\omega_1,\omega_2) \in {\cal L}_{marked}  \to \omega_1/\omega_2 \in  {\cal H}
$$
is ${\rm SL}(2,\ZZ)$-equivariant and produces a holomorphic map ${\cal L}\to X$ (with fibres isomorphic to $\CC^{\bullet}$).

Each lattice $\Lambda \subset \CC$ defines an elliptic curve $\CC / \ZZ$, which is topologically a torus $\TT^2$.
Therefore there is a tautological torus bundle $E$ over $\cal L$.
The action of the fundamental group of the basis, i.e. $B_3$, on the homology $\ZZ^2$ of the torus fibres, factors through the projection $B_3 \to {\rm SL}(2, \ZZ)$
and reduces to canonical action of ${\rm SL}(2, \ZZ)$ on $\ZZ^2$.

Given a loop in $\cal L$, one can consider its pre-image in $E$.
One gets a $3$-manifold which is of course diffeomorphic to some $T^3_A$.
This was the main motivation of Poincar\'e in his attempt to generalize Riemann's ideas from complex curves to complex surfaces.

If $A$ is an element  of ${\rm SL}(2, \ZZ)$, one can choose a path in ${\rm SL}(2, \RR)$ connecting $A$ to $I_2$.
This provides a trivialization $t$ of the tangent bundle of $T^3_A$.
We can therefore consider the associated Hirzebruch signature defect : $3 \tau(W) - p_1(W, \partial W)$ for any $4$-manifold $W$ with boundary $T^3_A$ where the class $p_1$ is evaluated relative to its trivialization $t$ on the boundary.
We have seen earlier that this does not depend on the choice of $W$.
It is easy to show that the defect does not depend either on the choice of the path ${\rm SL}(2, \RR)$ since the effect of a change of path factors through $\pi_1(SO) = \ZZ / 2 \ZZ$.
Therefore, there is a well defined signature defect
$$
\delta : {\rm SL}(2, \ZZ) \to \ZZ.
$$
There are several ways to compute this number as a function of $A$.
The first is to find explicitly a manifold $W_A$ having $T^3_A$ as its boundary.
If $A,B$ are two elements of $ {\rm SL}(2, \ZZ)$, one can construct a $4$-manifold $W_{A,B}=T(A,B)$, which is a fibration over a pair of pants (i.e. a disc minus two discs), with fibres diffeomorphic to $\RR^2/ \ZZ^2$ and such that the monodromies of the boundary components are $A,B,AB$.
Therefore, the boundary of $W_{A,B}$ is $T^3_{AB}-T^3_{A}-T^3_{B}$.
If we can construct manifolds with boundaries $T^3_A$ and $T^3_B$, we can glue them to $W_{A,B}$ and this will provide a manifold with boundary $T^3_{AB}$.

When $A =\begin{pmatrix} 1 & n \\ 0 & 1\end{pmatrix}$, it is easy to describe $T^3_A$ : it is a principal circle fibration over the torus $\RR^2/ \ZZ^2$ with Euler number $n$.
Filling each fibre by a disc, one constructs immediately a $4$ manifold $W_A$ with boundary $T^3_A$.
Alternatively, $W_A$ is the unit disc bundle over $\RR^2/ \ZZ^2$ in the 2-dimensional vector bundle of Chern class $n$ : the zero section has self intersection $n$.

Since we know that any $A$ in $ {\rm SL}(2, \ZZ)$ can be written uniquely as a product of upper and lower triangular matrices, this provides an explicit construction of a manifold $W_A$ with boundary $T^3_A$.
It is now easy to compute the intersection form of $W_A$ and to get the numerical value of $\delta(A)$.

In order to state the result, we use the so called {\it Rademacher function}
$$
R: {\rm PSL}(2, \ZZ) \to \ZZ
$$
defined in the following way.
Every element of ${\rm PSL}(2, \ZZ)$ can be written as $U^{\epsilon_0}SU^{\epsilon_1}... U^{\epsilon_{k+1}}$
 with $\epsilon_0, \epsilon_{k+1}\in \{-1,0,1\}$ and $\epsilon_j\in \{-1,1\}$ for $j\neq 0, k+1$.
 We set
 $$
 R(A) = \sum_{i=0}^{k+1} \epsilon_i.
 $$
 For simplicity, suppose that $A$ is hyperbolic, i.e. that ${\rm trace}\,(A) >2$.
 Assume that in the above decomposition $\epsilon_0=1$ and $\epsilon_{k+1}=0$, then one gets
 $$
 \delta(A) = 4 R(A).
 $$
 See~\cite{bargeghys} for this computation.
 Another approach, also from~\cite{bargeghys} (and also Kirby and Melvin~\cite{kirbymelvin}), is to observe that the cocycle
 $$(A,B) \in {\rm SL}(2,\ZZ) \times {\rm SL}(2,\ZZ) \mapsto \tau(W_{A,B}) \in \ZZ$$
 represents $4 \in H^2({\rm SL}(2,\ZZ);\ZZ)=\ZZ/12\ZZ$, with 
 $$(A,B) \in {\rm SL}(2,\ZZ) \times {\rm SL}(2,\ZZ) \mapsto 3\tau(W_{A,B})
 =  \delta(AB)-\delta(A)-\delta(B)  \in \ZZ.$$
The canonical extension 
$$(A,B) \in {\rm SL}(2,\RR) \times {\rm SL}(2,\RR) \mapsto \tau(W_{A,B}) \in \ZZ$$
 is the cocycle of Meyer~\cite{meyer},  representing $4 \in H^2({\rm SL}(2,\RR);\ZZ)=\ZZ$. 
The boundedness of this cocycle and the fact that every element of ${\rm SL}(2, \RR) $ is a product of a bounded number of commutators enables us to identify quickly its primitive and to recover this Rademacher formula.

A nice example where such a formalism can be applied consists of Hilbert modular surfaces (see~\cite{hirzebruch1973}).
For instance, consider the {\it Hilbert modular group}  $\Gamma_D={\rm PSL}(2, \ZZ[\sqrt{D}])$ (for $D$ positive integer with no square factor, say such that $D$ is not congruent to 1 modulo 4).
It embeds in ${\rm PSL}(2, \RR)\times {\rm PSL}(2, \RR)$ as a discrete group through the two embeddings of $\ZZ[\sqrt{D}]$ in $\RR$.
One can therefore construct the quotient $W_D$ of ${ \cal H} \times {\cal H}$ by the action of $\Gamma$ (as an orbifold).
This is a 4-dimensional orbifold having a finite number of ends.
Each of its ends is associated to an element of the ideal class group of $\ZZ[\sqrt{D}]$  and is of the form $T^3_A\times \RR^+$.
Therefore $W_D$ provides a cobordism between all the $T^3_A$ associated to ideals in $\ZZ[\sqrt{D}]$.

\subsection{Lens spaces}\label{lens}

The {\it lens space} $L(c,a)$ is the closed, parallelizable 3-dimensional manifold defined for coprime $c,a \in \ZZ$
$$L(c,a)=\begin{cases}
\SSS^3 / (\ZZ/c\ZZ)&{\rm if}~c \neq 0\\
\SSS^1 \times \SSS^2&{\rm if}~c=0.
\end{cases}$$
where the unit sphere $\SSS^3$ is identified with $\{(u,v) \in \CC^2\,\vert \, \vert u \vert^2+\vert v\vert^2=1\}$ and
$\ZZ /c \ZZ $ acts by $(u,v) \mapsto (\zeta u,\zeta^av)$ with $\zeta=\exp{2\pi i /c}.$
Lens spaces were introduced by Tietze~\cite{tietze} in 1908.
They provided the first examples of homotopy equivalent manifolds which are not homeomorphic, $L(7,1)$ and $L(7,2)$ (detected by Reidemeister torsion).

The lens space $L(c,a)$ has
a genus 1 Heegaard decomposition
$$L(c,a)=\SSS^1 \times \DDD^2\cup_A \SSS^1 \times \DDD^2$$
for any $A=\begin{pmatrix} a & b \\ c & d \end{pmatrix} \in {\rm SL}(2,\ZZ)$, corresponding to the symplectic (= $(-1)$-symmetric)  formation
$(H_-(\ZZ);\ZZ\oplus \{0\},L)$ over $\ZZ$ with
$$L=A(\ZZ \oplus \{0\})=\{(ax,cx)\,\vert\, x \in \ZZ\} \subset \ZZ \oplus \ZZ.$$
If $c \neq 0$
$$\pi_1(L(c,a))=H_1(L(c,a))=\ZZ / c \ZZ~,~H_*(L(c,a);\QQ)=0$$  
and for any $q \in \ZZ$ there is a canonical orientation-preserving diffeomorphism
$L(c,a) \cong L(c,a+qc)$ inducing the isomorphism of symplectic
formations
$$\begin{array}{l}
\begin{pmatrix} 1 & q \\ 0 & 1 \end{pmatrix}~:~
(H_-(\ZZ);\ZZ \oplus \{0\},\{(ax,cx)\,\vert\, x \in \ZZ\})\\[1ex]
\hskip25pt
\xymatrix{\ar[r]^-{\cong}&} (H_-(\ZZ);\ZZ \oplus \{0\},\{((a+qc)x,cx)\,\vert\, x \in \ZZ\})~.
\end{array}$$
\indent The lens spaces are the only 3-dimensional manifolds having a Heegaard decomposition of genus 1.

As in Hirzebruch, Neumann and Koh~\cite[p.51]{hirzebruchneumannkoh} assume that $c$ is odd, $a$ is even, so that there is a unique expression of $c/a$ as an improper continued fraction
$$\begin{array}{ll}
c/a&=\chi_1-\dfrac{1}{\chi_2-\dfrac{1}{\chi_3-\ddots\lower10pt\hbox{$-\dfrac{1}{\chi_n}$} }}\\
&=[\chi_1,\chi_2,\dots,\chi_n] \in \QQ
\end{array}$$
for $\chi_k \neq 0 \in 2\ZZ$ ($1 \leqslant k \leqslant n$), corresponding to the iterations of the Euclidean algorithm for ${\rm gcd}(a,c)=1$, as in section~\ref{modular} but with the roles of $a,c$ reversed.
The expression is realized by the graph $4$-dimensional manifold
$$(M(\chi),\partial M(\chi))=(\DDD^4 \cup \bigcup\limits_n \DDD^2 \times \DDD^2, L(c,a))$$ obtained by $n$ successive geometric plumbings using the tree $A_n$ weighted by $\chi_k \in \pi_1({\rm SO}(2))=\pi_1(\SSS^1)=\ZZ$
$$\xymatrix{\hbox{$A_n:$}
\raise5pt\hbox{$\displaystyle{\chi_1}\atop \bullet$} \ar@{-}[r] &
\raise5pt\hbox{$\displaystyle{\chi_2} \atop \bullet$} \ar@{-}[r] &
\raise5pt\hbox{$\displaystyle{\chi_3} \atop \bullet$} \ar@{-}[r] & \dots \ar@{-}[r] &
\raise5pt\hbox{$\displaystyle{\chi_{n-1}} \atop \bullet$} \ar@{-}[r] &
\raise5pt\hbox{$\displaystyle{\chi_n} \atop \bullet$}
}$$
The manifold $M(\chi)$ resolves the singularity at $(0,0,0)$ of the 2-dimensional complex space
$$\{(w,z_1,z_2) \in \CC^3\,\vert\,w^c=z_1(z_2)^{c-a}\}.$$
See Jung~\cite{jung}, Hirzebruch~\cite{hirzebruchvier} and de la Harpe~\cite{delaharpe}.

The intersection matrix of $M(\chi)$ is the integral symmetric tridiagonal $n \times n$ matrix
$$\phi_{M(\chi)}=\text{Tri}(\chi)=\begin{pmatrix}
\chi_1 & 1 & 0 & \dots & 0 & 0 \\
1 & \chi_2 & 1 & \dots & 0 & 0 \\
0 & 1& \chi_3 & \dots & 0 & 0 \\
\vdots & \vdots & \vdots & \ddots & \vdots & \vdots \\
0 & 0 & 0 & \dots & \chi_{n-1} & 1\\
0 & 0 & 0 & \dots & 1 & \chi_n
\end{pmatrix},$$
which is obtained by the corresponding $n$ successive algebraic plumbings (\ref{plumbing2}). The symmetric form over $\ZZ$
$$(H_2(M(\chi);\ZZ),\phi_{M(\chi)})=(\ZZ^n,{\rm Tri}(\chi))$$
is nondegenerate, meaning that ${\rm det}({\rm Tri}(\chi)) \neq 0$.
As in Proposition~\ref{tri2} the principal minors
$\mu_k=\mu_k(M(\chi)) \neq 0 \in \ZZ$ ($1 \leqslant k \leqslant n$) are such that
$$\mu_k/\mu_{k-1}=[\chi_k,\chi_{k-1},\dots,\chi_1]~,~\mu_{k-2}+\mu_k=\chi_k\mu_{k-1}~(\mu_{-1}=0,\mu_0=1)$$
and by Theorem~\ref{sylvestertheorem} the signature of  $M(\chi)$ is 
$$\tau(M(\chi))=\sum\limits_{k=1}^n{\rm sign}(\mu_k/\mu_{k-1})~.$$
See section \ref{localize} below for a further discussion of the intersection matrix of $M(\chi)$ in the context of the Witt group
 localization exact sequence for $W(\ZZ) \to W(\QQ)$.

The hypothesis $\vert \chi_k \vert \geqslant   2$ gives a simplification:

\begin{proposition} \label{signtri}
{\rm [Hirzebruch, Neumann and Koh~\cite[Lemma 8.12]{hirzebruchneumannkoh} }
A vector $\chi=(\chi_1,\chi_2,\dots,\chi_n) \in \ZZ^n$ with $\vert \chi_k \vert \geqslant   2$ ($k=1,2,\dots,n$)
is regular (so that the continued fractions $[\chi_k,\chi_{k-1},\dots,\chi_1] \in \QQ$ are well-defined) with
$${\rm sign}([\chi_k,\chi_{k-1},\dots,\chi_1])={\rm sign}(\chi_k)$$
and
$$\tau({\rm Tri}(\chi))=\sum^n_{k=1}{\rm sign}([\chi_k,\chi_{k-1},\dots,\chi_1])=\sum\limits^n_{k=1}{\rm sign}(\chi_k) \in \ZZ.$$
\end{proposition}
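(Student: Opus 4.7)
The plan is to establish by induction on $k$ the stronger claim that for every $k=1,2,\dots,n$ the continued fraction $[\chi_k,\chi_{k-1},\dots,\chi_1]$ is well defined, satisfies $|[\chi_k,\chi_{k-1},\dots,\chi_1]|\geqslant 1$, and has the same sign as $\chi_k$. Granted this, the regularity of $\chi$ follows from Proposition \ref{minor}.2 (since each $[\chi_k,\dots,\chi_1]$ being defined and nonzero is equivalent to $\mu_k\neq 0$), and the signature formula $\tau(\mathrm{Tri}(\chi))=\sum_{k=1}^n\mathrm{sign}(\chi_k)$ then follows immediately by combining the sign identity with the Sylvester--Jacobi--Gundelfinger--Frobenius evaluation $\tau(\mathrm{Tri}(\chi))=\sum_{k=1}^n\mathrm{sign}([\chi_k,\chi_{k-1},\dots,\chi_1])$ of Proposition \ref{signtri0}.

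For the induction, the base case $k=1$ is trivial: $[\chi_1]=\chi_1$, and $|\chi_1|\geqslant 2\geqslant 1$ with $\mathrm{sign}([\chi_1])=\mathrm{sign}(\chi_1)$. For the inductive step, assuming the claim at level $k-1$, I would use the defining identity
$$
[\chi_k,\chi_{k-1},\dots,\chi_1]~=~\chi_k-\dfrac{1}{[\chi_{k-1},\chi_{k-2},\dots,\chi_1]}
$$
(as recorded in the proof of Proposition \ref{minor}.1). The inductive hypothesis gives $|[\chi_{k-1},\dots,\chi_1]|\geqslant 1$, hence $\bigl|1/[\chi_{k-1},\dots,\chi_1]\bigr|\leqslant 1$. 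Combined with $|\chi_k|\geqslant 2$, the reverse triangle inequality yields
$$
\bigl|[\chi_k,\chi_{k-1},\dots,\chi_1]\bigr|~\geqslant~|\chi_k|-\bigl|1/[\chi_{k-1},\dots,\chi_1]\bigr|~\geqslant~2-1~=~1,
$$
and the strict inequality $|\chi_k|>1\geqslant|1/[\chi_{k-1},\dots,\chi_1]|$ forces $\chi_k-1/[\chi_{k-1},\dots,\chi_1]$ to have the same sign as $\chi_k$. This closes the induction.

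There is no real obstacle: the only subtle point is recognising that one must carry along the auxiliary bound $|[\chi_k,\dots,\chi_1]|\geqslant 1$ throughout the induction, since a bare sign statement would not feed back into the next step. Once that bound is packaged into the inductive hypothesis, the rest is a one-line estimate, and the signature formula is then just a direct substitution into Proposition \ref{signtri0}.
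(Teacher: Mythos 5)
Your proof is correct and follows the same inductive route as the paper: the paper likewise carries along the bound on $\vert[\chi_k,\dots,\chi_1]\vert$ (stated there as $>1$, which your weaker $\geqslant 1$ equally sustains) together with the sign statement, steps via $[\chi_{k+1},\dots,\chi_1]=\chi_{k+1}-[\chi_k,\dots,\chi_1]^{-1}$ and the triangle inequality, and then invokes the Sylvester signature formula. The only cosmetic difference is that the paper cites Theorem~\ref{sylvestertheorem} rather than Proposition~\ref{signtri0}; both give the same conclusion.
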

\begin{proof} Assume inductively that for some $k \geqslant   1$
\begin{itemize}
\item[(a)] $\vert [\chi_k,\chi_{k-1},\dots,\chi_1]\vert >1$,
\item[(b)] ${\rm sign}([\chi_k,\chi_{k-1},\dots,\chi_1])= {\rm sign}(\chi_k)$.
\end{itemize}
By the triangle inequality
$$\begin{array}{ll}
\vert [\chi_{k+1},\chi_k,\dots,\chi_1]\vert&=~
\vert \chi_{k+1} - [\chi_k,\chi_{k-1},\dots,\chi_1]^{-1} \vert\\
&\geqslant   \vert \chi_{k+1}\vert - \vert [\chi_k,\chi_{k-1},\dots,\chi_1]^{-1} \vert > 1
\end{array}
$$
and
$${\rm sign}(\chi_{k+1})=
{\rm sign}([\chi_{k+1},\chi_k,\dots,\chi_1]^{-1}) = {\rm sign}([\chi_{k+1},\chi_k,\dots,\chi_1])$$
giving the inductive step. It now follows from Sylvester's Theorem~\ref{sylvestertheorem} that
$$\begin{array}{ll}
\tau({\rm Tri}(\chi))&=\sum\limits^n_{k=1}
{\rm sign}([\chi_k,\chi_{k-1},\dots,\chi_1])\\
&=\sum\limits^n_{k=1}{\rm sign}(\chi_k)\in \ZZ.
\end{array}$$
\end{proof}

See Hirzebruch and Mayer~\cite{hirzebruchmayer} and Hirzebruch, Neumann and Koh~\cite{hirzebruchneumannkoh} for
early accounts of plumbing.

\subsection{Dedekind sums}

The Dedekind $\eta$ function defined on ${\cal H}$ by
$$
\eta(z) = {\rm exp}( \pi i z /12) \prod_{n=1}^{\infty}(1-{\rm exp}(2 \pi i z )^n.
$$
It clearly does not vanish.
Its 24th power is the modular $\Delta$ that we mentioned earlier
$$
\eta^{24}(\frac{az+b}{cz+d})= \eta^{24}(z) (cz+d)^{12}
$$
which does not vanish.
Hence, there is a holomorphic determination of its logarithm $\ln \eta$ on ${\cal H}$ and taking the logarithm on both sides of the previous identity, we get:
$$
24 \ln \eta(\frac{az+b}{cz+d})= 24 \ln \eta (z)  + 6 \ln (-(cz+d)^{2})+ 2 \pi i R(A).
$$
for some function $R$ on ${\rm PSL}(2, \ZZ)$ (the second log in the right hand side is chosen with imaginary part in $]-\pi, \pi]$).
The numerical determination of $R(A)$ has been a challenge, and turned out to be related to many different topics, in particular number theory, topology, and combinatorics.
The inspiring paper by M. Atiyah~\cite{atiyahomni} contains an ``omnibus theorem'' proving that seven definitions of $R$ are equivalent!
As the notation suggests, one of the incarnation of $R$ is the previously defined Rademacher function.

We now give a more combinatorial description.

For a real number $x$, one defines $((x))$ by

$$((x)) =\begin{cases}
\{x\}-1/2&\text{if}~x \in \RR\backslash \ZZ\\
0&\text{if}~x \in \ZZ
\end{cases}$$
with $\{x\}\in [0,1)$ the fractional part of $x \in \RR$. Nonadditive:
$$((x))+((y))-((x+y))=\begin{cases}
-1/2&\text{if}~0<\{x\}+\{y\}<1\\
1/2&\text{if}~1<\{x\}+\{y\}<2\\
0&\text{if}~x~\in \ZZ~\text{or}~y \in \ZZ~\text{or}~x+y \in \ZZ.
\end{cases}$$

\begin{center}
\includegraphics[width=.9\textwidth]{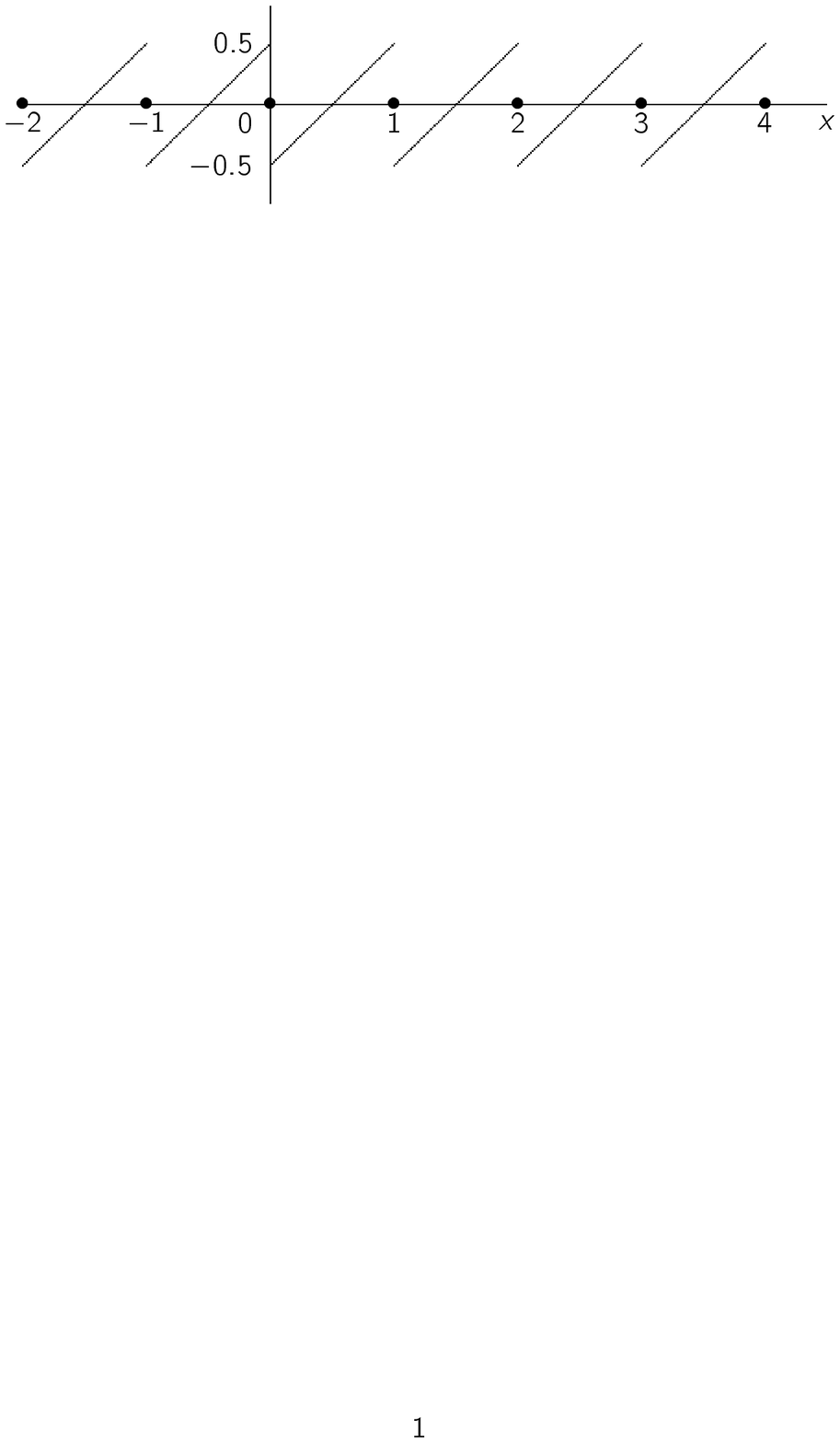}
\end{center}

The sawtooth function $x \mapsto((x))$ was used by Dedekind~\cite{dedekind} to count the $\pm 2\pi$ jumps in the complex logarithm
$$\text{log}(re^{i\theta})=\text{log}(r)+i(\theta+2n \pi) \in \CC(n \in \ZZ).$$
The {\it Dedekind sum} for $a,c \in \ZZ$ with $c \neq 0$ is
$$s(a,c)=\sum\limits^{\vert c \vert -1}_{k=1}
\big(\big( \frac{k}{c} \big)\big) \big(\big(\frac{ka}{c}\big)\big)
=\dfrac{1}{4\vert c\vert}\sum\limits^{\vert c \vert -1}_{k=1}
\text{cot}\big( \frac{k\pi}{c} \big) \text{cot}\big(\frac{ka\pi}{c}\big)\in \QQ$$
(Rademacher and Grosswald~\cite{rademachergrosswald},  Hirzebruch~\cite{hirzebruch1971},~\cite{hirzebruch1973},  Hirzebruch and Zagier~\cite{hirzebruchzagier}, Kirby and Melvin
\cite{kirbymelvin}).

For $\begin{pmatrix} a & b \\ c & d \end{pmatrix} \in {\rm SL}(2,\ZZ)$ and $\chi=(\chi_1,\chi_2,\dots,\chi_n)\in \ZZ^n$, $M(\chi)$ as in Proposition~\ref{signtri}, the signature defect of $M(\chi)$ is given by
$$
\tau(\text{Tri}(\chi))-(\sum\limits^n_{j=1}\chi_j)/3=
\begin{cases}
b/3d&\text{if}~c=0\\
(a+d)/3-4\text{sign}(c)s(a,c)&\text{if}~c \neq 0.
\end{cases}
$$
See Barge and Ghys~\cite{bargeghys} and Kirby and Melvin~\cite{kirbymelvin} for proofs and a hyperbolic geometry interpretation, related to the action of ${\rm SL}(2,\ZZ)$ on the upper half plane ${\cal H}$.

\newpage

\section{Appendix: Algebraic $L$-theory of rings with involution
and the localization exact sequence\\[0.5ex]
{\small  by Andrew Ranicki}}
\label{appendix}

In these notes we have largely worked with forms over commutative rings such as fields and their subrings.
However, the applications to the topology of manifolds  as well as purely algebraic considerations require the study of forms over an arbitrary ring with involution $R$ such as a group ring $\ZZ[\pi]$ with $\pi$ the fundamental group.
This has led to the development of the algebraic cobordism groups $L^n(R)$ (resp. $L_n(R)$) of $n$-dimensional f.g. free $R$-module chain complexes $C$ with a symmetric (resp. quadratic) Poincar\'e duality $\phi:H^{n-*}(C) \cong H_*(C)$ for
$n \geqslant   0$, with $L^0(R)=W(R)$ the symmetric Witt group of $R$. The quadratic $L$-groups $L_*(R)$ are the surgery obstruction groups of Wall~\cite{wall1971},
which are of central significance in the classification of the homotopy types of manifolds of dimension $>4$.
The symmetric $L$-groups $L^*(R)$ were introduced by Mishchenko~\cite{mishchenko} and developed further by Ranicki \cite{ranickiats1,ranickiats2,ranickiexact}.
We shall be mainly concerned with $L^*(R)$ here.

We note some basic properties of the algebraic $L$-groups:
\begin{itemize}
\item a compact oriented $n$-dimensional manifold $M$ with universal cover $\widetilde{M}$ has a {\it symmetric signature} $\sigma(M)=(C(\widetilde{M}),\phi_M)\in L^n(\ZZ[\pi_1(M)])$
which is both a cobordism and a homotopy invariant, and which for $n=4k$ has image $[\sigma(M)]=\tau(M) \in L^{4k}(\ZZ)=\ZZ$ the ordinary signature,
\item the intersection properties of even-dimensional manifolds,  the
linking properties of odd-dimensional manifolds, and the relationship  between the two for an even-dimensional manifold with boundary, are  seen in the symmetric Poincar\'e complex $(C(\widetilde{M}),\phi_M)$ of a closed manifold and its analogue for a manifold with boundary,
\item the quadratic $L$-groups are 4-periodic, $L_*(R)\cong L_{*+4}(R)$,  with $L_{2k}(R)$ (resp. $L_{2k+1}(R)$) the Witt group of nonsingular $(-1)^k$-quadratic forms (resp. formations) over $R$,
\item the symmetric  $L$-groups of a Dedekind ring $R$ are 4-periodic, $L^*(R)=L^{*+4}(R)$, with $L^{2k}(R)$ (resp. $L^{2k+1}(R)$) the Witt group of nonsingular $(-1)^k$-symmetric forms (resp. formations) over $R$,
\item the algebraic $L$-groups of $\ZZ$ are
$$\begin{array}{l}
L_n(\ZZ)~=~\begin{cases} \ZZ&{\rm if}~n \equiv 0(\bmod\,4)~({\rm signature/8)}\\
\ZZ/2\ZZ&{\rm if}~n \equiv 2(\bmod\,4)~(\hbox{\rm Arf-Kervaire invariant})\\
0&{\rm if}~n \equiv 1(\bmod\,2)~,
\end{cases}\\[6ex]
L^n(\ZZ)~=~\begin{cases} \ZZ&{\rm if}~n \equiv 0(\bmod\,4)~({\rm signature)}\\
\ZZ/2\ZZ&{\rm if}~n \equiv 1(\bmod\,4)~(\hbox{\rm de Rham invariant})\\
0&{\rm if}~n \equiv 2,3(\bmod\, 4)~,
\end{cases}
\end{array}$$
\item the function
$$A \in {\rm Sp}(R)=\bigcup\limits^{\infty}_{n=1}{\rm Sp}(2n,R) \mapsto (H_-(R^n);R^n,A(R^n)) \in L^3(R)$$
is a surjective group morphism, for any ring with involution $R$ with 4-periodic symmetric $L$-groups, in which case $L^3(R)$ is the Witt group of symplectic formations over $R$,
\item the symmetrization maps $L_*(R) \to L^*(R)$ are isomorphisms modulo 2-primary torsion, and actually isomorphisms if
 $1/2 \in R$,
\item the symmetric and quadratic $L$-groups are the same modulo 2-primary torsion, $L_*(R)[1/2] \cong L^*(R)[1/2]$, so they have the same signature invariants, i.e. ${\rm Hom}_{\QQ}(\QQ\otimes_{\ZZ}L_*(R),\QQ)={\rm Hom}_ {\QQ}(\QQ\otimes_{\ZZ}L^*(R),\QQ)$,
\item for any morphism $i:R \to R'$ of rings with involution there are relative symmetric $L$-groups $L^n(i)$ of the cobordism classes of $n$-dimensional
symmetric Poincar\'e pairs over $R'$ with the boundary $R'\otimes_R(C,\phi)$
for an $(n-1)$-dimensional symmetric Poincar\'e complex $(C,\phi)$ over $R$, with a long exact sequence
$$ \xymatrix{\dots \ar[r] & L^n(R) \ar[r]^-{i} & L^n(R') \ar[r] & L^n(i) \ar[r] & L^{n-1}(R) \ar[r] &\dots}$$
and similarly for quadratic $L$-groups,
\item for the localization $i:R \to R'=S^{-1}R$ of  a ring with involution $R$ inverting a multiplicative subset $S \subset R$ of central non-zero divisors with $1 \in S$ and $\bar{S}=S$ there is defined a long exact sequence of symmetric $L$-groups
$$\xymatrix{ \dots \ar[r] & L^n(R) \ar[r]^-{i} & L^n(S^{-1}R) \ar[r]^-{\partial}& L^n(R,S) \ar[r]& L^{n-1}(R) \ar[r]&\dots}$$ with $L^n(R,S)$ the cobordism group of $(n-1)$-dimensional
symmetric Poincar\'e complexes $(C,\phi)$ over $R$ such that $H_*(S^{-1}C)=0$, and similarly  for quadratic $L$-groups,
\item $L^0(S^{-1}R)=W(S^{-1}R)$ is the Witt group of nonsingular symmetric forms over $S^{-1}R$, and $L^0(R,S)=W(R,S)$ is the Witt group of nonsingular symmetric $S^{-1}R/R$-valued linking forms $(T,\lambda:T \times T \to S^{-1}R/R)$ on f.g. $R$-modules $T$ of homological dimension 1; for any $p_0 \in S$ there is defined a nonsingular symmetric form
$(S^{-1}R,p_0)$ over $S^{-1}R$ with $\partial(S^{-1}R,p_0)=(R/p_0R,1/p_0) \in W(R,S)$;
 for sequences of ``Sturm functions'' $p=(p_0,p_1,\dots,p_n) \in S^{n+1}$,
$q=(q_1,q_2,\dots,q_n) \in R^n$ such that 
$p_kq_k=p_{k-1}+p_{k+1}$, $p_n=1$, $p_{n+1}=0$ the Witt class 
$$(S^{-1}R^n,{\rm Tri}(q))=\sum\limits^n_{k=1}(S^{-1}R,p_{k-1}/p_k) \in W(S^{-1}R)$$
is such that
$$\begin{array}{l}
\partial(S^{-1}R,p_{k-1}/p_k)=
(R/p_{k-1}R,p_k/p_{k-1})\oplus (R/p_kR,-p_{k+1}/p_k)~,\\[1ex]
\partial  (S^{-1}R^n,{\rm Tri}(q))=(R/p_0,p_1/p_0) \in W(R,S)~,
\end{array}
$$
\item see Wall \cite[p.\ 297]{Wall1963} for an early instance in the
case $(R,S)=(\ZZ,\ZZ\backslash \{0\})$ of  this connection
between the Euclidean algorithm, tridiagonal symmetric forms and symmetric linking forms $(T,\lambda)$, with $T=\ZZ/p_0\ZZ$ a cyclic finite abelian group,  
\item the Witt group localization exact sequence for $(R,S)=(\RR[X],\RR[X]\backslash \{0\})$, $S^{-1}R=\RR(X)$ includes the split exact sequence 
$$\begin{array}{l}
\xymatrix{0 \ar[r] & W(\RR[X])=W(\RR)=\ZZ \ar[r] & W(\RR(X))}\\[1ex] \hskip50pt \xymatrix{\ar[r]^-{\partial} &W(\RR[X],S)=\ZZ[\RR]\oplus
\ZZ/2\ZZ[{\cal H}] \ar[r] &0;}
\end{array}$$
for a  regular degree $n+2m$ polynomial $P(X) \in \RR[X]$ with
Sturm functions $(P_0(X),P_1(X),\dots,P_{n+2m}(X))$, $Q(X)=(Q_1(X),Q_2(X),\dots,\allowbreak Q_{n+2m}(X))$ and roots $x_1,x_2,\dots,x_n \in \RR$,
$z_1,\overline{z}_1,\dots,z_m,\overline{z}_m \in {\cal H} \cup\overline{\cal H}$
 the Witt class 
$$\begin{array}{l}
(\RR(X),{\rm Tri}(Q(X)))~=~\sum\limits^{n+2m}_{i=1}(\RR(X),P_{i-1}(X)/P_i(X))\\[1ex]
=~\sum\limits^n_{k=1}(\RR(X),X-x_k) +\sum\limits^m_{j=1}(\RR(X),
(X-z_j)(X-\overline{z}_j)) -m(\RR(X),1) \\[1ex]
\hskip150pt\in W(\RR(X))
\end{array}$$
(Theorem \ref{nexttolast}) has image under $\partial$ 
$$\begin{array}{l}
\partial (\RR(X),{\rm Tri}(Q(X)))=(\RR[X]/(P_0(X)),P_1(X)/P_0(X))\\[1ex]
=\sum\limits^n_{k=1}1.x_k +\sum\limits^m_{j=1}1.z_j \in W(\RR[X],S)=\ZZ[\RR] \oplus \ZZ/2\ZZ[{\cal H}]~,
\end{array}$$
\item generically, for  a localization $S^{-1}\ZZ[\pi]$ of the group ring $\ZZ[\pi]$ of a group $\pi$, the quadratic $L$-groups $L_*(S^{-1}\ZZ[\pi])$ have a finite number of signature invariants (i.e. the $\QQ$-vector spaces $\QQ\otimes_{\ZZ}L_*(S^{-1}\ZZ[\pi])$ are finite-dimensional) for a finite group $\pi$ and an infinite number of signature invariants for an infinite
group $\pi$.
\end{itemize}
We refer to Ranicki~\cite{ranickiats1,ranickiats2,ranickiexact,ranickiknot,ranickitopman} for a development of algebraic $L$-theory.
We shall not attempt to summarize the full theory here!
Our aim in this appendix is to extend the theory of forms defined in Section~\ref{algebra} over fields to forms over a ring with involution $R$, and to indicate some of the connections between the material in sections 1-6 and the general theory.

\subsection{Forms over a ring with involution}

An {\it involution} on a ring $R$ is a function $r \in R \mapsto \bar{r} \in R$ such that $\overline{r+s}=\bar{r}+\bar{s}$, $\overline{rs}=\bar{s}.\bar{r}$, $\bar{1}=1$ and $\bar{\bar{r}}=r$. The three main examples are:
\begin{itemize}
\item $R$ is a commutative ring and $\bar{r}=r$ is the identity involution,
\item $R=\CC$ with $\overline{x+iy}=x-iy$ complex conjugation,
\item $R =\ZZ[\pi]$ is a group ring, and $\bar{g}=g^{-1}$ for $g \in \pi$.
\end{itemize}

Let then $R$ be a ring with involution.
The \emph{dual} of a (left) $R$-module $V$ is the (left) $R$-module $V^*={\rm Hom}_R(V,R)$ with $R$ acting by
$$(r,f) \in R \times V^* \mapsto (v \in V \mapsto f(v) \overline{r}) \in V^*.$$
The natural $R$-module morphism
$$v \in V  \mapsto (f \mapsto \overline{f(v)}) \in V^{**}$$
is an isomorphism for f.g. free $R$-modules $V$, in which case we use it to identify $V^{**}=V$.

Let $\epsilon=1$ or $-1$.
An {\it $\epsilon$-symmetric form $(V,\phi)$ over $R$}  is a f.g. free $R$-module $V$ with a bilinear pairing
$$\phi: (v,w) \in V \times V  \mapsto \phi(v,w) \in R$$
such that $\phi(w,v)=\epsilon \overline{\phi(v,w)} \in R$ for all $v,w \in V$.
This is equivalent to an $R$-module morphism
$$\phi: v \in V \mapsto (w \mapsto \phi(v,w)) \in V^*$$
such that $\phi^*=\epsilon \phi$. For $\epsilon=1$ the form is called {\it symmetric}, while for $\epsilon=-1$ it is called {\it skew-symmetric}.
A {\it morphism} $f:(V,\phi) \to (V',\phi')$ of $\epsilon$-symmetric forms over $R$ is an $R$-module morphism $f:V \to V'$ such that $f^*\phi'f=\phi$, or equivalently such that $\phi'(f(v) ,f(w))=\phi(v,w) \in R.$ An {\it isomorphism} is a morphism such that $f:V \to V'$ is an $R$-module isomorphism.

The {\it transpose} of an $n \times n$ matrix $A=(a_{ij})$ is the $n \times n$ matrix $A^*=(\bar{a}_{ji})$.
From the matrix point of view, an $\epsilon$-symmetric form $(V,\phi)$ with a choice of basis $(b_1,b_2,\dots,b_n)$ for $V$ corresponds to the  $n\times n$ matrix $S=(S_{ij})$ such that $S^*=\epsilon S$, with
$$\phi(b_i,b_j)=S_{ij}=\epsilon \bar{S}_{ji} \in R.$$
An isomorphism of forms corresponds to  linear congruence of matrices: if $f:(V,\phi) \to (V',\phi')$ is an isomorphism of $\epsilon$-symmetric forms and $f:V \to V'$ has invertible  matrix $A$ with respect to the bases chosen for $V$ and $V'$, then $A^*S'A=S$.

An $\epsilon$-symmetric form $(V,\phi)$ is {\it nonsingular} if the $R$-module morphism $\phi:V \to V^*$ is an isomorphism.

Given an $\epsilon$-symmetric form $(V,\phi)$ over $R$ and a submodule $W \subseteq V$ define the {\it orthogonal} submodule
$$W^{\perp}=\{v \in V\,\vert\, \phi(v,w)=0 \in R~\hbox{for all}~w \in W\} \subseteq V.$$
Submodules $V_1,V_2 \subseteq V$ are {\it orthogonal} if
$$\phi(v_1,v_2)=0 \in R~\hbox{for all}~v_1 \in V_1,v_2 \in V_2,$$
or equivalently $V_1 \subseteq V_2^{\perp}$.

A {\it subform} $(V',\phi') \subseteq (V,\phi)$  of an $\epsilon$-symmetric form $(V,\phi)$ over $R$ is the $\epsilon$-symmetric form defined by the restriction $\phi'=\phi\vert_{V'}$ on a direct summand $V'\subseteq V$ such that $V'^{\perp} \subseteq V$ is a direct summand.
Note that $V'^{\perp} \subseteq V$ is a direct summand if the $R$-module morphism
$$\phi\vert:v \in V  \mapsto (v' \mapsto \phi(v,v')) \in  {V'}^*$$
is onto, as is automatically the case if $(V,\phi)$ is nonsingular or if $R$ is a field.

The {\it radical} of $(V,\phi)$ is $V^{\perp} \subseteq V$.
If ${\rm im}(\phi) \subseteq V^*$ is a direct summand (e.g. if $R$ is a field) then $V^{\perp} \subseteq V$ is a direct summand, $(V/V^{\perp},[\phi])$ is a nonsingular $\epsilon$-symmetric form over $R$, and there are defined isomorphisms
$$(V,\phi) \,\cong\, (V^{\perp},0) \oplus ({\rm im}(\phi),\Phi),~ (V/V^{\perp},[\phi])\,\cong\, ({\rm im}(\phi),\Phi)$$
where
$$\Phi:~{\rm im}(\phi) \times {\rm im}(\phi) \to R;~(\phi(v),\phi(w)) \mapsto \phi(v)(w)=\epsilon \phi(w)(v). .$$
For a symmetric form $(V,\phi)$ over $\RR$
$$\tau(V,\phi)=\tau(V/V^{\perp},[\phi]) \in \ZZ$$

Let $(V,\phi)$ be an $\epsilon$-symmetric form over $R$.
A {\it sublagrangian} of an  $\epsilon$-symmetric form $(V,\phi)$ over $R$ is a subform of the type $(L,0) \subseteq (V,\phi)$. Then $L \subseteq V$ is isotropic (self-orthogonal), with $L,L^{\perp} \subseteq V$ direct summands such that $L \subseteq L^{\perp}$, with an induced $\epsilon$-symmetric form $(L^{\perp}/L,[\phi])$. A {\it lagrangian} is a sublagrangian $L$ such that $L=L^{\perp}$, in which case $(V,\phi)$ is nonsingular and  ${\rm dim}_R(L)={\rm dim}_R(V)/2$.

\begin{example} For any nonsingular $\epsilon$-symmetric form $(V,\phi)$ over $R$ the diagonal
$$\Delta=\{(v,v)\in V \oplus V\,\vert\, v \in V\} \subset V \oplus V$$
is a lagrangian of $(V,\phi)\oplus (V,-\phi)$.
\end{example}

\begin{definition} \leavevmode

1. The {\it metabolic $\epsilon$-symmetric form}  is the nonsingular $\epsilon$-symmetric form over $R$ defined for any $\epsilon$-symmetric form $(L^*,\theta)$ over $R$ by
$$H_{\epsilon}(L^*,\theta)=(L \oplus L^*,\begin{pmatrix} 0 & 1 \\ \epsilon & \theta \end{pmatrix})$$
with
$$\begin{array}{l}
\begin{pmatrix} 0 & 1 \\
\epsilon & \theta\end{pmatrix}:~L \oplus L^* \to (L \oplus L^*)^*;\\
(v,f)\mapsto ((w,g) \mapsto g(v) +\epsilon f(w)+\theta(f)(g))
\end{array}$$
$H_{\epsilon}(L^*,\theta)$ has lagrangian $L$.

2. The {\it hyperbolic $\epsilon$-symmetric form} is defined for any f.g. free $R$-module $L$ by
$$H_{\epsilon}(L)=H_{\epsilon}(L^*,0)=(L \oplus L^*,\begin{pmatrix} 0 & 1 \\ \epsilon & 0 \end{pmatrix}).$$

\end{definition}

\begin{proposition} \label{extension} \leavevmode

1. The inclusion of a sublagrangian $(L,0) \subseteq (V,\phi)$ in an $\epsilon$-symmetric form extends to an isomorphism of forms
$$H_{\epsilon}(L^*,\theta) \oplus (L^{\perp}/L,[\phi])\xymatrix{\ar[r]^-{\cong}&}(V,\phi).$$
In particular, a nonsingular $\epsilon$-symmetric form $(V,\phi)$ admits a lagrangian $L$ if and only if $(V,\phi)$ is isomorphic to $H_{\epsilon}(L^*,\theta)$ for some $(L^*,\theta)$.

2. If $1/2 \in R$ there is defined an isomorphism
$$\begin{pmatrix} 1 & \epsilon\theta/2 \\
0 & 1 \end{pmatrix}:~
H_{\epsilon}(L^*,\theta) \xymatrix{\ar[r]^-{\cong}&}H_{\epsilon}(L)$$
so a  nonsingular $\epsilon$-symmetric form admits a lagrangian $L$ if and only if it is isomorphic to $H_{\epsilon}(L)$ for some $L$.
\end{proposition}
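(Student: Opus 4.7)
The plan is to prove part~1 by producing an explicit orthogonal splitting $V = (L \oplus M) \oplus N$ in which $(L \oplus M, \phi\vert) \cong H_\epsilon(L^*,\theta)$ and $N \cong L^\perp/L$, and then part~2 by a direct matrix calculation.

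For part~1, I would first exploit the sublagrangian hypothesis. By definition, the evaluation morphism $\phi\vert : V \to L^*$, $v \mapsto (l \mapsto \phi(v,l))$, is surjective with kernel $L^\perp$. Since $L^*$ is f.g.\ free, this short exact sequence splits, yielding a f.g.\ free direct summand $M \subseteq V$ with $V = L^\perp \oplus M$ and such that the composite $M \hookrightarrow V \to L^*$ is an isomorphism. Next choose any splitting $L^\perp = L \oplus N_0$ of the sublagrangian inclusion; this is possible because $L \subseteq V$ is assumed to be a direct summand and $L \subseteq L^\perp$.

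The key step---and the only non-formal one---is then to modify $N_0$ to make it orthogonal to $M$. Given $n \in N_0$, the functional $m \mapsto \phi(n,m)$ lies in $M^*$; since the pairing $\phi : L \times M \to R$ realizes $L \cong M^*$ (this is a reformulation of $M \xrightarrow{\cong} L^*$ above, using the $\epsilon$-symmetry of $\phi$), there is a unique $\delta(n) \in L$ with $\phi(\delta(n),m) = -\phi(n,m)$ for all $m \in M$. Setting $N = \{n + \delta(n) : n \in N_0\}$ gives another splitting $L^\perp = L \oplus N$ with the extra property $\phi(N,M) = 0$. Note that elements of $L$ are automatically orthogonal to $N$ because $N \subseteq L^\perp$ and $\phi$ is $\epsilon$-symmetric, so in the decomposition $V = L \oplus M \oplus N$ we have $N$ orthogonal to both $L$ and $M$.

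It remains to identify the pieces. On $L \oplus M$ the form has matrix $\begin{pmatrix} 0 & A \\ \epsilon A^* & D \end{pmatrix}$ where $A: L \to M^*$ is the isomorphism $l \mapsto (m \mapsto \phi(l,m))$ and $D$ is the restriction $\phi\vert_M$. Using $A$ to identify $M$ with $L^*$ transports $D$ to some $\epsilon$-symmetric $\theta : L^* \to L^{**} = L$, yielding an isomorphism $(L \oplus M, \phi\vert) \cong H_\epsilon(L^*,\theta)$. On $N$ the form restricts to represent $[\phi]$ under the identification $N \xrightarrow{\cong} L^\perp/L$, giving $(N,\phi\vert) \cong (L^\perp/L,[\phi])$. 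Assembling the orthogonal direct sum decomposition produces the claimed isomorphism $H_\epsilon(L^*,\theta) \oplus (L^\perp/L,[\phi]) \xrightarrow{\cong} (V,\phi)$ extending the inclusion of $L$. The ``in particular'' statement is the special case where $(V,\phi)$ is nonsingular, so that $L^\perp = L$ and the summand $(L^\perp/L,[\phi])$ is trivial.

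For part~2 I would simply verify the asserted formula by matrix multiplication. Writing $P = \begin{pmatrix} 1 & \epsilon\theta/2 \\ 0 & 1 \end{pmatrix}$ and using $\bar\epsilon = \epsilon$ together with $\theta^* = \epsilon\theta$, one computes $P^* = \begin{pmatrix} 1 & 0 \\ \theta/2 & 1 \end{pmatrix}$, and then
\[
P^* \begin{pmatrix} 0 & 1 \\ \epsilon & 0 \end{pmatrix} P = \begin{pmatrix} 1 & 0 \\ \theta/2 & 1 \end{pmatrix}\begin{pmatrix} 0 & 1 \\ \epsilon & \theta/2 \end{pmatrix} = \begin{pmatrix} 0 & 1 \\ \epsilon & \theta \end{pmatrix},
\]
so $P$ is an isomorphism $H_\epsilon(L^*,\theta) \xrightarrow{\cong} H_\epsilon(L)$. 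Combined with part~1 this shows that, in the presence of $1/2 \in R$, a nonsingular $\epsilon$-symmetric form admits a lagrangian $L$ if and only if it is isomorphic to $H_\epsilon(L)$. The main obstacle in the whole argument is the orthogonalization step in part~1, which requires the right choice of $\delta$; once one recognizes that $L$ pairs nonsingularly with any complement $M$ of $L^\perp$, the construction is forced.
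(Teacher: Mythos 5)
Your argument is correct. The paper itself supplies no proof for Proposition~\ref{extension}---it simply cites Ranicki's \emph{Algebraic theory of surgery~II}, Prop.~2.2---so what you have done is reconstruct the argument that the paper delegates. Your reconstruction is sound and is essentially the standard one: split off $M\cong L^*$ using the surjection $V\to L^*$ (which is part of the definition of sublagrangian, and without which the statement would fail, e.g.\ for $L=\ZZ\oplus 0$ in $(\ZZ^2,\begin{pmatrix}0&2\\2&0\end{pmatrix})$), choose any complement $N_0$ of $L$ in $L^\perp$, and then correct $N_0$ by the unique $\delta(n)\in L$ killing the pairing with $M$---a move forced by the nonsingularity of the pairing $L\times M\to R$. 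The identification $(L\oplus M,\phi\vert)\cong H_\epsilon(L^*,\theta)$ and $(N,\phi\vert)\cong(L^\perp/L,[\phi])$, and the matrix verification in part~2 (using $\bar\epsilon=\epsilon$, $\theta^*=\epsilon\theta$, $\epsilon^2=1$), are all correct. The only thing worth making explicit, which you left implicit, is that $\delta$ is $R$-linear (immediate from uniqueness and the sesquilinearity of $\phi$) and that $N_0\cong L^\perp/L$ is f.g.\ free, which is tacitly required for $(L^\perp/L,[\phi])$ to be a form over $R$ in the sense of the paper; this is built into the hypotheses.
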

\begin{proof} See Ranicki\cite[Prop. 2.2]{ranickiats2}.
\end{proof}

We finally get  to the definition of the Witt groups of a ring with involution.

\begin{definition} The {\it $\epsilon$-symmetric Witt group} $W_{\epsilon}(R)$ of a ring with involution $R$  is the group of equivalence classes of nonsingular $\epsilon$-symmetric forms $(V,\phi)$ over $R$ with
$$\begin{array}{ll}
(V,\phi) \sim (V',\phi')&\hbox{if and only if there exists an isomorphism}\\
&f:(V,\phi) \oplus H_{\epsilon}(L^*,\theta) \xymatrix{\ar[r]^-{\cong}&} (V',\phi') \oplus H_{\epsilon}({L'}^*,\theta')\\
&\hbox{for some $\epsilon$-symmetric forms $(L^*,\theta)$, $((L')^*,\theta')$ over $R$.}
\end{array}$$
This equivalence relation is called {\rm stable isomorphism}.
\end{definition}

The Witt group $W_{\epsilon}(R)$ is the quotient of the Grothendieck group of isomorphism classes $[V,\phi]$ of nonsingular $\epsilon$-symmetric forms over $R$ with the relations
$$\text{$[V,\phi]=0$ if $(V,\phi)$ admits a lagrangian $L$}$$
or equivalently
$$\text{$[V,\phi]=[L^{\perp}/L,[\phi]]$ if $(V,\phi)$ admits
a sublagrangian $L$}.$$
Addition and inverses are by
$$[V,\phi]+[V',\phi']=[V \oplus V',\phi\oplus \phi'],~-[V,\phi]=[V,-\phi] \in W_{\epsilon}(R).$$

We shall write the symmetric and skew-symmetric Witt groups as
$$W_{+1}(R)=W(R),~W_{-1}(R)=W_-(R)$$
and similarly for the hyperbolic and metabolic forms
$$\begin{array}{l}
H_{+1}(L^*,\theta)=H(L^*,\theta),~H_{+1}(L)=H(L),\\
H_{-1}(L^*,\theta)=H_-(L^*,\theta),~H_{-1}(L)=H_-(L).
\end{array}
$$

\begin{remark}
By Witt's Theorem~\ref{witt} for $R$ a field of characteristic $\neq 2$ (with the identity involution) nonsingular symmetric forms are isomorphic if and only if they are stably isomorphic.
This is false for a field of characteristic 2 and also for rings which are not fields: for example if $R=\FF_2$ or $\ZZ$ the nonsingular symmetric forms $H(R)$, $H(R,1)$ are stably isomorphic but not isomorphic.
\end{remark}

\begin{remark} (Barge and Lannes \cite{bargelannes})
Let $R$ be a commutative ring. For any $\ell \geqslant 1$ let $\text{Sym}_{\ell}(R)$ be the pointed set of nonsingular symmetric forms $(R^\ell,\phi)$ over $R$, based at $(R^\ell,I_\ell)$. 
For $k \geqslant 0$ let ${\rm Lag}_k(R)$ be the pointed set of lagrangians $L\subset R^k \oplus R^k$
in the hyperbolic nonsingular symplectic form $H_-(R^k)$ over $R$,
based at $R^k \oplus \{0\}$. The symplectic group ${\rm Sp}(2k,R)={\rm Aut}(H_-(R^k))$ acts transitively on ${\rm Lag}_k(R)$ by
$${\rm Sp}(2k,R) \times {\rm Lag}_k(R) \to {\rm Lag}_k(R)~;~(\alpha,L) \mapsto \alpha(L)~.$$
A symmetric form $(R^k,q)$ over $R$ determines an elementary symplectic matrix
$$E(q)~=~\begin{pmatrix} q & -1 \\ 1 & 0 \end{pmatrix} \in 
{\rm Sp}(2k,R)$$
with
$$E(q)(R^k \oplus \{0\})~=~\{(q(x),x)\,\vert\, x \in R^k\} \in {\rm Lag}_k(R)~. $$
Given an expression of $\alpha\in {\rm Sp}(2k,R)$ as a product of $n$ elementary symplectic matrices
$$\alpha~=~E(q_1)E(q_2) \dots E(q_n)  \in {\rm Sp}(2k,R)$$
there is defined  a symmetric form $(R^{2kn},{\rm Tri}(q))$ over $R$ with 
$${\rm Tri}(q)~=~\begin{pmatrix}
q_1 & 1 & 0 &\dots & 0\\
1 & q_2 & 1 &  \dots & 0 \\
0 &1 & q_3 & \dots & 0\\
\vdots & \vdots &\vdots & \ddots & \vdots \\
0 & 0 & 0 & \dots & q_n \end{pmatrix}$$
a generalized tridiagonal symmetric matrix. An element
$\alpha \in {\rm Sp}(2k,R[X])$ can be regarded as an algebraic path
$[0,1] \to {\rm Lag}_k(R)$ from
$\alpha(0)(R^k \oplus \{0\})$ to $\alpha(1)(R^k \oplus \{0\})$.
Let 
$$\begin{array}{l}
\Omega {\rm Lag}_k(R)\\[1ex]
=~\{\alpha \in {\rm Sp}(2k,R[X])\,\vert\, 
\alpha(1)(R^k \oplus \{0\})=\alpha(0)(R^k \oplus \{0\}) \in
{\rm Lag}_k(R)\}~,
\end{array}$$
the pointed set of algebraic loops in ${\rm Lag}_k(R)$.  For a regular ring $R$
(i.e. noetherian of finite global homological dimension)  with $1/2 \in R$ it is proved in \cite{bargelannes}  that for sufficiently large $k,n$ every $\alpha \in \Omega{\rm Lag}_k(R)$ has  an expression  
$$\alpha~=~E(q_1)E(q_2) \dots E(q_n)  \in {\rm Sp}(2k,R[X])$$
with $(R[X]^k,q_i)$ symmetric forms over $R[X]$ such that 
$(R^{2nk},{\rm Tri}(q)(0))$, \allowbreak
$(R^{2kn},{\rm Tri}(q)(1)) \in
{\rm Sym}_{2nk}(R)$.  It is then proved that the Maslov index map
$$\begin{array}{l}
\Omega\text{Lag}_k(R)\to \text{Sym}_{2nk}(R)~;\\[1ex]
\alpha \mapsto \text{Maslov}(\alpha)=(R^{nk},\text{Tri}(q)(1)) \oplus (R^{nk},-\text{Tri}(q)(0))
\end{array}$$
induces the algebraic Bott periodicity isomorphism
$$\varinjlim_k \pi_1(\text{Lag}_k(R)) ~\cong~ \varinjlim_\ell \pi_0(\text{Sym}_\ell(R))$$
with
$$\text{Sym}_{\ell}(R) \to \text{Sym}_{\ell+1}(R)~;~(R^{\ell},\phi)
\mapsto (R^{\ell} \oplus R,\phi \oplus 1)~.$$
(ii) The case $k=1$, $R=\RR$ of (i) is particularly relevant. Every 1-dimensional subspace $L \subset \RR \oplus \RR$ is a lagrangian in $H_-(\RR)$. The function
$$S^1 \to \text{Lag}_1(\RR)~=~\RR\,\PP^1~;~
e^{2\pi i x} \mapsto \{(\cos \pi x , \sin \pi x )\}$$
is a diffeomorphism, such that the image of $S^1 \backslash \{1\}\cong \RR$ is the contractible subspace
$$\text{Lag}_1(\RR)_0~=~\text{Lag}_1(\RR)\backslash \{ \{0\}\oplus \RR\} \subset \text{Lag}_1(\RR)~.$$
For generic degree $n$ $P(X) \in \RR[X]$ with $0,1 \in \RR$ regular the algebraic path
$\alpha=E(Q_1(X))E(Q_2(X))\dots E(Q_n(X)) \in \text{Sp}(2,\RR[X])$ given by the Sturm sequence corresponds to the actual path
$$\alpha~:~[0,1]\to \text{Lag}_1(\RR)~;~ x \mapsto \{(P(x),P'(x))\}$$
with $\alpha(0),\alpha(1) \in \text{Lag}_1(\RR)_0$ since $P(0),P(1) \neq 0$.
Use the congruence $\tau(\RR^\ell,\phi) \equiv \ell (\bmod \,2)$ and the Sylvester Law of Inertia to define the bijection
$$\varinjlim\limits_\ell\pi_0(\text{Sym}_\ell(\RR)) \to \ZZ^+~;~(\RR^{\ell},\phi)
 \mapsto \tau_-(\RR^{\ell},\phi)= (\ell-\tau(\RR^{\ell},\phi))/2$$
sending the base point  $(\RR^{\ell},1)$ to 0. The degree of the actual loop
$$\begin{array}{l}
[\alpha]~:~S^1~=~[0,1]/\{0,1\} \to \text{Lag}_1(\RR)/\text{Lag}_1(\RR)_0~\simeq~S^1~;\\[1ex]
\hskip100pt e^{2\pi i x} \mapsto \{(P(x),P'(x))\}~ (0 \leqslant x \leqslant 1)
\end{array}$$
is computed by
$$\begin{array}{ll}
\text{degree}([\alpha])&=~\vert [\alpha]^{-1}(\{0\} \oplus \RR)\vert\\[1ex]
&=~\hbox{number of real roots of $P(X)$ in $[0,1]$}\\[1ex]
&=~\text{var}(P_0(0),P_1(0),\dots,P_n(0)) -\text{var}(P_0(1),P_1(1),\dots,P_n(1))\\[1ex]
&=~\big(\tau(\text{Tri}(Q)(1))-\tau(\text{Tri}(Q)(0))\big)/2\\[1ex]
&\hskip50pt \hbox{(by  the algorithm of Sturm's Theorem \ref{sturmtheorem})}\\[1ex]
&=~{\rm Maslov}(\alpha)  \in W(\RR)~=~\ZZ~.
\end{array}$$
\end{remark}

\subsection{The localization exact sequence}\label{localize}

Let $R$ be a ring with involution, and let $S \subset R$ be a multiplicative subset of central non-zero divisors such that $1\in S$ and $\overline{S}=S$.
The {\it localization of $R$ inverting $S$} is the ring with involution
$$S^{-1}R~=~S \times R/\{(s,r) \sim (s',r') \,\vert\, rs'=r's \in R\} $$
with the equivalence class of $(r,s)$ denoted $r/s$.
The natural morphism of rings with involution $i:r \in R \mapsto r/1 \in S^{-1}R$ is injective. An $R$-module morphism $f:M \to N$ is an {\it $S$-isomorphism} if it induces an $S^{-1}R$-module isomorphism $f:S^{-1}M=S^{-1}R\otimes_RM \to S^{-1}N$.

The localization exact sequence of Ranicki~\cite[Chapter III]{ranickiexact} (and many other authors: Milnor and Husemoller, Karoubi, Pardon, Vogel, $\dots$)
$$\xymatrix@C-5pt{\dots \ar[r]& L^1(R,S)  \ar[r] & L^0(R) \ar[r]^-{i} & L^0(S^{-1}R)
\ar[r]^-{\partial} & L^0(R,S) \ar[r] & L^{-1}(R) \ar[r]& \dots}$$
is defined - there is also a version for quadratic $L$-theory.
The 0-dimensional $L$-group $L^0(S^{-1}R)=W(S^{-1}R)$ can be identified with the Witt group of symmetric forms $(V,\phi)$ over $R$ which are {\it $S$-nonsingular}, i.e. 
such that $\phi:V \to V^*$ is an $S$-isomorphism, or equivalently such that $S^{-1}(V,\phi)=S^{-1}R\otimes_R(V,\phi)$ is nonsingular over $S^{-1}R$. The $L$-group $L^0(R,S)=W(R,S)$ is the Witt group of nonsingular symmetric {\it linking forms over $(R,S)$}, pairs $(T,\lambda)$ with $T$ an $S$-torsion  f.g. $R$-module of homological dimension 1, and $\lambda:T \times T \to S^{-1}R/R$ a nonsingular symmetric pairing. Thus $T$ is required to have a f.g. projective $R$-module resolution
$$\xymatrix{0 \ar[r] & F_1 \ar[r]^-{d}& F_0 \ar[r] & T  \ar[r] & 0}$$
with $d$ an  $S$-isomorphism, so that $S^{-1}T=0$.

Every nonsingular symmetric form $(W,\Phi)$ over $S^{-1}R$ is isomorphic to $S^{-1}(V,\phi)$ for some $S$-nonsingular symmetric form $(V,\phi)$ over $R$. The {\it boundary} nonsingular symmetric linking form over $(R,S)$ is
$$(T,\lambda)=({\rm coker}(\phi:V \to V^*), (x,y) \mapsto x(\phi^{-1}(y)))~,$$
with 
$$\partial~:~W(S^{-1}R) \to W(R,S)~;~(W,\Phi)=S^{-1}(V,\phi) \mapsto 
\partial (V,\phi)~.$$
 Note that if $S^{-1}(V,\phi)$ is isomorphic to $S^{-1}(V',\phi')$ then the
boundary linking forms $\partial(V,\phi)$, $\partial (V',\phi')$ are only Witt-equivalent, and not in general isomorphic.

\begin{example} For any $(R,S)$ let $(V,\phi)=(R,s)$ with $\overline{s}=s \in S$. Then
$$\partial S^{-1}(V,\phi)~=~(T,\lambda)~=~(R/sR,1/s) \in W(R,S)$$
with 
$$\lambda~=~1/s~:~T \times T~=~R/sR \times R/sR \to S^{-1}R/R~;~(x,y) \mapsto
\overline{x}y/s.$$
\end{example}

\begin{example} In the special case $i:R=\ZZ \to S^{-1}R=\QQ$ the
boundary map $\partial:(W,\Phi) \mapsto (T,\lambda)$ associates to a nonsingular symmetric form $(W,\Phi)$ over $\QQ$ a nonsingular symmetric linking form $(T,\lambda)$, as follows. First choose a lattice $V \subset W$ (i.e. a f.g. free $\ZZ$-submodule such that $\QQ\otimes_{\ZZ}V=W$ and $\Phi(V \times V) \subseteq \ZZ$), define the finite abelian group $V^{\#}/V$ with the dual lattice
$$V^{\#}= \{w \in W\,\vert\, \Phi(V,w) \subseteq \ZZ \subset \QQ\} \subset W$$
such that there is defined a $\ZZ$-module isomorphism
$$V^{\#} \to V^*={\rm Hom}_\ZZ(V,\ZZ)~;~w_1 \mapsto (w_2 \mapsto \Phi(w_1,w_2))~.$$
The boundary linking form is given up to isomorphism by
$$(T,\lambda)~=~\partial (V,\phi)~=~(V^{\#}/V,\lambda)$$
with
$$\lambda~:~V^{\#}/V \times V^{\#}/V \to \QQ/\ZZ~;~
 (w_1,w_2) \mapsto \Phi(w_1,w_2)~.$$
\end{example}
\begin{example} Let $(R,S)=(\ZZ,\ZZ\backslash \{0\})$, so that $S^{-1}R=\QQ$ as in the previous example.\\
1. If $N$ is a $(2k-2)$-connected $(4k-1)$-dimensional
$\QQ$-homology sphere there is defined a nonsingular symmetric linking form $(H_{2k-1}(N),\lambda_N)$ over $(\ZZ,\ZZ\backslash \{0\})$
with $H_{2k-1}(N)$ a finite abelian group and
$$\lambda_N: (x,y) \in H_{2k-1}(N) \times H_{2k-1}(N) \mapsto z(x)/n \in \QQ/\ZZ$$
with $z \in C^{2k-1}(N)$ , $n \geqslant 1$ such that $ny=d^*z \cap [N]\in C_{2k-1}(N)$. \\
2. If $(M,\partial M)$ is a $(2k-1)$-connected $4k$-dimensional
manifold with boundary such that $\partial M$ is a  $(2k-2)$-connected $(4k-1)$-dimensional $\QQ$-homology sphere
(as in 1.), then
$$(H_{2k}(M;\QQ),\phi_M)=S^{-1}(H_{2k}(M),\phi_M)$$
is a nonsingular symmetric form over $\QQ$ with boundary the
nonsingular symmetric linking form  over $(\ZZ,\ZZ\backslash \{0\})$
$$\partial(H_{2k}(M;\QQ),\phi_M)=(H_{2k-1}(\partial M),\lambda_{\partial M}).$$
\end{example}

\begin{example} \label{lenslinking}
As before, let $(R,S)=(\ZZ,\ZZ\backslash \{0\})$, with $S^{-1}R=\QQ$.\\
1. The symmetric  linking form of the lens space 
$$L(c,a)=\SSS^1 \times \DDD^2 \cup_{\begin{pmatrix} a& b \\ c & d\end{pmatrix}} \SSS^1 \times \DDD^2~(c \neq 0)$$
is
$$(H_1(L(c,a)),\lambda)~=~(\ZZ/c\ZZ,a/c)$$
with
$$\lambda~:~(x,y) \in \ZZ/c\ZZ \times \ZZ/c\ZZ \mapsto axy/c \in \QQ/\ZZ.$$
The effect of the surgery on half a Heegaard decomposition
$\SSS^1 \times \DDD^2\subset L(c,a)$ is
 $$\SSS^1 \times \DDD^2\cup_{\begin{pmatrix} -c& -d \\ a & b\end{pmatrix}}\SSS^1 \times \DDD^2=L(a,-c)=-L(a,c)$$
where $-L(a,c)$ denotes $L(a,c)$ with the opposite orientation. The trace of the surgery is the  oriented cobordism
$(M(a,c);L(c,a),-L(a,c))$ with
$$(M(a,c),\partial M(a,c))~=~(L(a,c) \times I \cup \DDD^2 \times \DDD^2,L(c,a) \cup L(a,c))~.$$
(See Ranicki \cite[Chapter 2]{ranickisurgery} for the language of geometric surgery, such as the "effect" and "trace".)
The symmetric intersection form of $M(a,c)$ is the $S$-nonsingular symmetric form over $\ZZ$ 
$$(H_2(M(a,c)),\phi)~=~(\ZZ,ac)~.$$
In view of the isomorphism of symmetric linking forms
$$(\ZZ/c\ZZ,a/c)\oplus (\ZZ/a\ZZ,c/a) \xymatrix{\ar[r]^-{\cong}&} (\ZZ/ac\ZZ,1/ac)~;~
(x,y) \mapsto  ax+cy$$
(with inverse $z \mapsto (dz,-bz)$) the boundary symmetric linking form over $(\ZZ,S)$ is
$$\begin{array}{ll}
\partial (H_2(M(a,c)),\phi)&=~(H_1(L(a,c)),\lambda) \oplus (H_1(L(c,a)),\lambda)\\[1ex]
&=~(\ZZ/ac,1/ac)~=~(\ZZ/c\ZZ,a/c)\oplus (\ZZ/a\ZZ,c/a)~.
\end{array}$$
Thus for any coprime $p_0,p_1 \in S$ 
and $q_1 \in \ZZ$ there is defined an oriented cobordism $(M(p_0,p_1);L(p_0,p_1),L(p_1,p_2))$ with
$p_2=p_1q_1-p_0$ and 
$$\begin{array}{l}
(H_2(M(p_0,p_1)),\phi)=(\ZZ,p_0p_1)~,~L(p_1,p_2)=L(p_1,-p_0)=-L(p_1,p_0)~,\\[1ex]
\partial(H_2(M(p_0,p_1)),\phi)=\partial(\ZZ,p_0p_1)=(\ZZ/p_0\ZZ,p_1/p_0) \oplus (\ZZ/p_1\ZZ,-p_2/p_1)~.
\end{array}$$
2. For any $p_0,p_1,p_2\in S$ with $p_0,p_1$ and $p_1,p_2$ coprime and
$$p_0+p_2~=~p_1q_1 \in \ZZ$$
for some $q_1\in \ZZ$ there is defined an oriented cobordism 
$$\begin{array}{l}
(M(p_0,p_1,p_2);L(p_0,p_1),
-L(p_2,p_1))\\[1ex]
=~(M(p_0,p_1);L(p_0,p_1),L(p_1,p_2)) \cup
(M(p_1,p_2);L(p_1,p_2),-L(p_2,p_1))
\end{array}$$ 
with 
$$L(p_1,p_2)~=~L(p_1,-p_0)~=~-L(p_1,p_0)~.$$
The symmetric intersection form of $M(p_0,p_1,p_2)$ is the $S$-nonsingular symmetric form over $\ZZ$ 
$$(H_2(M(p_0,p_1,p_2)),\phi)~=~(\ZZ \oplus \ZZ,\begin{pmatrix}
p_0p_1 & p_0 \\ p_0 & q_1 \end{pmatrix})~.$$
In view of the isomorphism of nonsingular symmetric forms over $\QQ$
$$\begin{pmatrix} 1 & -1 \\ 0 & p_1\end{pmatrix}~:~
(\QQ \oplus \QQ,\begin{pmatrix} p_0p_1 & 0 \\ 0 & p_1p_2 \end{pmatrix})
\xymatrix{\ar[r]^-{\cong}&}  \QQ\otimes_{\ZZ}(H_2(M(p_0,p_1,p_2)),\phi)$$
the signature is
$$\tau(H_2(M(p_0,p_1,p_2)),\phi)~=~{\rm sign}(p_0p_1)+{\rm sign}(p_1p_2)~.$$
The boundary map in the localization exact sequence for $\ZZ \subset \QQ$ is such that
$$\begin{array}{l}
\partial~:~W(\QQ) \to W(\ZZ,S);\\[1ex]
(H_2(M(p_0,p_1,p_2);\QQ),\phi) \mapsto (H_1(L(p_0,p_1)),\lambda) \oplus -(H_1(L(p_2,p_1)),\lambda)~.
\end{array}
$$   
3. More generally, for any sequences $p=(p_0,p_1,\dots,p_n)\in S^{n+1}$, $q=(q_1,q_2,\dots,q_n)\in \ZZ^n$ such that
$$p_{k-1},p_k~\hbox{are coprime, and}~p_{k-1}+p_{k+1}=p_kq_k~(1 \leqslant k \leqslant n)$$
there is defined an oriented cobordism
$$(M(p);L(p_0,p_1),L(p_n,p_{n+1}))~=~\bigcup\limits^n_{k=1}
(M(p_{k-1},p_k);L(p_{k-1},p_k),L(p_k,p_{k+1}))$$
such that
$$\begin{array}{l}
\partial:W(\QQ) \to W(\ZZ,S);\\[1ex]
(H_2(M(p);\QQ),\phi)=\bigoplus\limits^n_{k=1}(\QQ,p_{k-1}p_k) 
\mapsto (H_1(L(p_0,p_1)),\lambda)-(H_1(L(p_n,p_{n+1})),\lambda)\\[1ex]
\hphantom{(H_2(M(p);\QQ),\phi)=\bigoplus\limits^n_{k=1}(\QQ,p_{k-1}p_k) 
\mapsto}=(\ZZ/p_0\ZZ,p_1/p_0)-(\ZZ/p_n\ZZ,p_{n+1}/p_n).
\end{array}$$
If  $p_n=1$, $p_{n+1}=0$ then $L(p_n,p_{n+1})=L(1,0)=\SSS^3$ and
$$M~=~M(p)\cup_{\SSS^3}\DDD^4~=~M(q_1,q_2,\dots,q_n)$$ 
is the 4-dimensional
graph manifold (\ref{graphmanifold} and  section \ref{lens}) obtained by the $A_n$-plumbing weighted by $q_n,q_{n-1},\dots,q_1$
starting from $\DDD^4$, with boundary $\partial M = L(p_0,p_1)$, $\ZZ$-coefficient intersection form
$$(H_2(M),\phi)~=~(\ZZ^n,{\rm Tri}(q_1,q_2,\dots,q_n)),$$
$\QQ$-coefficient intersection form
$$(H_2(M;\QQ),\phi)~=~\bigoplus\limits^n_{k=1}(\QQ,p_{k-1}p_k)~,$$
and boundary linking form
$$\partial (H_2(M;\QQ),\phi)~=~(H_1(L(p_0,p_1)),\lambda)~=~(\ZZ/p_0\ZZ,p_1/p_0)~.$$
The signature is
$$\begin{array}{ll}
\tau(M)&=~\sum\limits^n_{k=1}{\rm sign}(p_k/p_{k-1})~(\hbox{from~$\tau(M(p_{k-1},p_k))={\rm sign}(p_{k-1}p_k)$})\\[1ex]
&=~\tau({\rm Tri}(q_1,q_2,\dots,q_n))~=~\sum\limits^n_{k=1}{\rm sign}(p^*_k/p^*_{k-1})\\[1ex]
&\hskip25pt (\hbox{from Theorem \ref{sjgf}, with $p^*_k={\rm det}({\rm Tri}(q_1,q_2,\dots,q_k)))$}
\end{array}$$
giving a topological proof of Sylvester's Duality Theorem~\ref{sylvestertheorem}.
See Proposition \ref{trilinking} below for an abstract version of the construction.
\end{example}

We shall now develop an abstract version for any $(R,S)$ of the relationship between Sturm functions and the Witt group localization exact sequence
of $(\RR[X],\RR[X]\backslash \{0\})$ obtained in 
section \ref{order}.

\begin{proposition} \label{trilinking}
1. For coprime $s,t \in S$ such that $\bar{s}=s$, $\bar{t}=t$ there is defined an isomorphism of nonsingular symmetric linking forms over $(R,S)$
$$(R/sR,t/s) \oplus (R/tR,s/t)~\cong~(R/stR,1/st)~=~\partial(S^{-1}R,st)~,$$
and in the Witt group of $(R,S)$
$$\begin{array}{ll}
(R/sR,t/s) \oplus (R/tR,s/t)&=~(R/stR,1/st)\\[1ex]
&=~(R/stR,s/t)~=~(R/stR,t/s) \in W(R,S)~.
\end{array}$$
2. If $(V',\phi')$ is the $S$-nonsingular symmetric form over $R$ obtained from an $S$-nonsingular symmetric form $(V,\phi)$ 
over $R$ by plumbing
$$(V',\phi')~=~(V \oplus R,\begin{pmatrix} \phi & v^* \\ v & w
\end{pmatrix})$$
with $v \in V^*$, $w=\overline{w} \in R$ such that
$w-v \phi^{-1} v^* \in S$, then
$$\partial S^{-1}(V',\phi')~=~\partial S^{-1}(V,\phi)
\oplus \partial (S^{-1}R,w-v\phi^{-1}v^*) \in W(R,S)~.$$
3. Let $\chi=(\chi_1,\chi_2,\dots,\chi_n) \in R^n$, $\mu=(\mu_0,\mu_1,\dots,\mu_n) \in S^{n+1}$ be such that $\bar{\chi}_k=\chi_k$, $\bar{\mu}_k=\mu_k$ and
$$\mu_k+\mu_{k-2}~=~\chi_k \mu_{k-1}~( 1\leqslant k \leqslant n,~\mu_0=1,~\mu_{-1}=0)$$
so that each $(R^k,{\rm Tri}(\chi_1,\chi_2,\dots,\chi_k))$ is an 
$S$-nonsingular symmetric form over $R$,
with 
$$\mu_k~=~{\rm det}({\rm Tri}(\chi_1,\chi_2,\dots,\chi_k)) \in S~,$$
such that up to isomorphism of symmetric forms over $S^{-1}R$
$$(S^{-1}R^n,{\rm Tri}(\chi))~=~\bigoplus\limits^n_{k=1}(S^{-1}R,\mu_k/\mu_{k-1})~.$$
The pairs  $\mu_{k-1},\mu_k \in S$ 
$(0 \leqslant k \leqslant n)$ are coprime, and for $k=1,2,\dots,n$ up to isomorphism of 
symmetric linking forms over $(R,S)$
$$\begin{array}{ll}
\partial(S^{-1}R,\mu_k/\mu_{k-1})&=~
(R/\mu_{k-1}R,\mu_k/\mu_{k-1})\oplus (R/\mu_kR,\mu_{k-1}/\mu_k)~(by~1.)\\[1ex]
&=~
(R/\mu_{k-1}R,-\mu_{k-2}/\mu_{k-1})\oplus (R/\mu_kR,\mu_{k-1}/\mu_k)~.
\end{array}$$
Thus in the Witt group of symmetric linking forms over $(R,S)$
$$\begin{array}{ll}
\partial (S^{-1}R^n,{\rm Tri}(\chi))&=~
\sum\limits^n_{k=1}\big((R/\mu_{k-1}R,-\mu_{k-2}/\mu_{k-1})\oplus (R/\mu_kR,\mu_{k-1}/\mu_k)\big)\\[1ex]
&=~(R/\mu_nR,\mu_{n-1}/\mu_n)  \in W(R,S)~.
\end{array}$$
\end{proposition}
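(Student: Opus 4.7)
The plan is to prove the three parts in order. Part 1 sets up a Chinese-remainder decomposition of the basic 1-dimensional linking form $\partial(S^{-1}R, st) = (R/stR, 1/st)$; part 2 is the $L$-theoretic version of the algebraic plumbing of Proposition~\ref{plumbingeffect}; and part 3 iterates part 2 and then telescopes using part 1.

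For part 1, I would construct the isomorphism explicitly via the map $f\colon R/sR \oplus R/tR \to R/stR$, $(x,y) \mapsto tx + sy$, which is an $R$-module isomorphism because coprimality gives $\alpha s + \beta t = 1$ and because $tx + sy \equiv 0 \pmod{st}$ forces $x \equiv 0 \pmod s$, $y \equiv 0 \pmod t$. Pulling back the form $(R/stR, 1/st)$ under $f$ gives, modulo $R$,
$$
\frac{(tx_1 + sy_1)(tx_2 + sy_2)}{st} \equiv \frac{t x_1 x_2}{s} + \frac{s y_1 y_2}{t},
$$
the cross term being integral; this is precisely $(R/sR, t/s) \oplus (R/tR, s/t)$. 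The identification $(R/stR, 1/st) = \partial(S^{-1}R, st)$ is the definition of $\partial$ applied to the $1$-dimensional form $(R, st)$. The further Witt-group equalities with $(R/stR, s/t)$ and $(R/stR, t/s)$ are to be read after reducing modulo the radicals (generated respectively by $tR/stR$ and $sR/stR$), which collapse those forms onto the summands of the decomposition.

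For part 2, the $S$-nonsingularity of $(V,\phi)$ makes $\phi$ invertible over $S^{-1}R$, and the involution-compatible elementary congruence
$$
\begin{pmatrix} 1 & 0 \\ -v\phi^{-1} & 1 \end{pmatrix}^{\!*}
\begin{pmatrix} \phi & v^* \\ v & w \end{pmatrix}
\begin{pmatrix} 1 & -\phi^{-1}v^* \\ 0 & 1 \end{pmatrix}
= \begin{pmatrix} \phi & 0 \\ 0 & w - v\phi^{-1}v^* \end{pmatrix}
$$
produces an isomorphism $S^{-1}(V',\phi') \cong S^{-1}(V,\phi) \oplus (S^{-1}R, w - v\phi^{-1}v^*)$ of nonsingular symmetric forms over $S^{-1}R$. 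Additivity of $\partial$ on direct sums then yields the claim.

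For part 3, I would first check by induction that consecutive minors $\mu_{k-1}, \mu_k$ are coprime: any common divisor divides $\mu_{k-2} = \chi_k \mu_{k-1} - \mu_k$, and so inductively divides $\mu_0 = 1$. Iterating part 2, at each step $v = (0,\ldots,0,1)$ and $w = \chi_k$, so $v\phi^{-1}v^*$ is the bottom-right entry of $\text{Tri}(\chi_1,\ldots,\chi_{k-1})^{-1}$, equal to $\mu_{k-2}/\mu_{k-1}$ by Cramer, and the recurrence gives $w - v\phi^{-1}v^* = \mu_k/\mu_{k-1}$; this produces the diagonalization $S^{-1}(R^n,\text{Tri}(\chi)) \cong \bigoplus_{k=1}^n (S^{-1}R, \mu_k/\mu_{k-1})$. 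Scaling by the unit $\mu_{k-1} \in (S^{-1}R)^{\times}$ identifies each summand with $S^{-1}(R, \mu_{k-1}\mu_k)$, whose boundary is $(R/(\mu_{k-1}\mu_k)R, 1/(\mu_{k-1}\mu_k))$; by part 1 this splits as $(R/\mu_{k-1}R, \mu_k/\mu_{k-1}) \oplus (R/\mu_k R, \mu_{k-1}/\mu_k)$, and the congruence $\mu_k \equiv -\mu_{k-2} \pmod{\mu_{k-1}}$ rewrites the first factor. Summing over $k$, the pair $(R/\mu_k R, \mu_{k-1}/\mu_k) \oplus (R/\mu_k R, -\mu_{k-1}/\mu_k)$ appearing between consecutive terms is hyperbolic (the diagonal is a lagrangian) and vanishes in $W(R,S)$, the initial term $(R/R, 0) = 0$ drops out, and only $(R/\mu_n R, \mu_{n-1}/\mu_n)$ survives. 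The main obstacle will be pinning down the Witt-group identifications in part 1 between $(R/stR, 1/st)$ and the \emph{a priori} degenerate forms $(R/stR, s/t)$, $(R/stR, t/s)$: either a systematic reduction-modulo-radical convention or an explicit stable-isomorphism argument within $W(R,S)$ has to be committed to, after which the coprimality of consecutive minors ensures that every intermediate form in the telescoping of part 3 is genuinely nonsingular and the telescope closes cleanly.
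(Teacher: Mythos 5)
Your proof is essentially correct and follows the same route as the paper in all three parts: the same Chinese remainder isomorphism in part~1 (your $f(x,y)=tx+sy$ is the inverse of the paper's $w\mapsto(bw,aw)$ with $as+bt=1$), the same elementary congruence in part~2, and the same iteration-plus-telescope in part~3. Two small points are worth flagging. First, for coprimality of consecutive minors in part~3 the paper writes down explicit $a_k,b_k\in R$ (themselves determinants of shorter tridiagonals) with $a_k\mu_{k-1}+b_k\mu_k=1$; your inductive argument is shorter, and it does work in the ideal-theoretic sense once you rephrase ``any common divisor divides $\mu_0=1$'' as propagating a B\'ezout relation $a\mu_{k-2}+b\mu_{k-1}=1$ through the recurrence $\mu_{k-2}=\chi_k\mu_{k-1}-\mu_k$. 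Second, and more substantively, the ``reduce modulo the radical'' heuristic you propose for the equalities $(R/stR,1/st)=(R/stR,s/t)=(R/stR,t/s)$ in $W(R,S)$ does not actually land where you want it to: the pairing $\lambda(x,y)=(s/t)x\bar y$ on $R/stR$ has radical $tR/stR$, and the induced nonsingular form on the quotient $R/tR$ is $(R/tR,s/t)$ --- a single summand of the decomposition, not the full Witt class. The paper's argument is the clean one here: one observes that multiplication by $s$ is an isomorphism of nonsingular symmetric forms $(S^{-1}R,s/t)\cong(S^{-1}R,1/st)$ over $S^{-1}R$, and then applies $\partial$ to both sides; the equality in $W(R,S)$ follows without ever having to make literal sense of $(R/stR,s/t)$ as an honest linking form. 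You flag this as the ``main obstacle,'' which is the right instinct --- commit to the scaling-isomorphism reading rather than the radical-reduction one, and the proof closes as cleanly as you describe.
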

\begin{proof} 1.   Let $a,b \in R$ be such that $as+bt=1 \in R$. The $R$-module morphism
$$f~:~R/stR \to R/sR \oplus R/tR~;~w \mapsto (bw,aw)$$
has inverse
$$f^{-1}~:~R/sR \oplus R/tR \to R/stR~;~(x,y) \mapsto tx+sy$$
and defines an isomorphism of nonsingular symmetric linking forms over $(R,S)$
$$f~:~(R/stR,1/st) \xymatrix{\ar[r]^-{\cong}&} (R/sR,t/s) \oplus (R/tR,s/t)~.$$
It follows from the isomorphism of nonsingular symmetric forms over $S^{-1}R$ 
$$s~:~(S^{-1}R,s/t) \to (S^{-1}R,1/st)$$
that $\partial(S^{-1}R,s/t)=\partial (S^{-1}R,1/st) \in W(R,S)$, and similarly with the roles of $s,t$ reversed.\\
2. Immediate from the isomorphism of nonsingular symmetric forms over $S^{-1}R$
$$\begin{pmatrix} 1 & \phi^{-1}v^* \\ 0 & 1 \end{pmatrix}~:~
S^{-1}(V',\phi') \to S^{-1}(V,\phi) \oplus (S^{-1}R,w-v\phi^{-1}v^*)~.$$
3.  Consider the plumbing (Definition \ref{plumbing})
$$\begin{array}{ll}
(R^k,{\rm Tri}(\chi_1,\chi_2,\dots,\chi_k))&=~(V',\phi')~=~
(V \oplus R,\begin{pmatrix} \phi & v^* \\ v & w \end{pmatrix})\\[1ex]
&=~(R^{k-1}\oplus R,\begin{pmatrix} {\rm Tri}(\chi_1,\chi_2,\dots,\chi_{k-1}) & \begin{pmatrix} 0 \\ \vdots  \\ 0 \\ 1 \end{pmatrix}\\
\begin{pmatrix} 0 & \dots & 0 & 1 \end{pmatrix} & \chi_k \end{pmatrix})
\end{array}
$$
with the proof of Proposition \ref{signtri0} giving
$$w-v\phi^{-1}v^*~=~\chi_k-\mu_{k-2}/\mu_{k-1}~=~\mu_k/\mu_{k-1}~.$$
By 2., up to isomorphism of symmetric forms over $S^{-1}R$
$$\begin{array}{l}
S^{-1}(R^k,{\rm Tri}(\chi_1,\chi_2,\dots,\chi_k))\\[1ex]
\hskip25pt
=~S^{-1}(R^{k-1},{\rm Tri}(\chi_1,\chi_2,\dots,\chi_{k-1})) \oplus (S^{-1}R,\mu_k/\mu_{k-1})
\end{array}$$
and a fortiori in the Witt group $W(S^{-1}R)$. The ring elements $a_k,b_k \in R$ defined by
$$\begin{array}{l}
a_k~=~\begin{cases} 0&{\rm if}~k=0\\
1&{\rm if}~k=1\\
{\rm det}({\rm Tri}(\chi_2,\chi_3,\dots,\chi_k))&{\rm if}~2 \leqslant k \leqslant n~,
\end{cases}\\[5ex]
b_k~=~\begin{cases} 1&{\rm if}~k=0\\
0&{\rm if}~k=1\\
-1&{\rm if}~k=2\\
{\rm det}({\rm Tri}(\chi_2,\chi_3,\dots,\chi_{k-1}))&{\rm if}~3 \leqslant k \leqslant n
\end{cases}
\end{array}
$$
are such that
$$a_k\mu_{k-1}+b_k\mu_k~=~1~.$$
Thus $\mu_{k-1},\mu_k \in S$ are coprime, and by 1.
$$\begin{array}{ll}
\partial (S^{-1}R,\mu_{k-1}\mu_k)&=~
(R/(\mu_{k-1}\mu_k),1/\mu_{k-1}\mu_k)\\[1ex]
&=~
(R/(\mu_{k-1}),\mu_k/\mu_{k-1}) \oplus (R/(\mu_k),\mu_{k-1}/\mu_k)
\in W(R,S)~.
\end{array}$$
Finally, note that it follows from
$$\mu_k/\mu_{k-1}+\mu_{k-2}/\mu_{k-1}=\chi_k \in R \subset S^{-1}R$$
that there is an identity of linking forms
$$(R/(\mu_{k-1}),\mu_k/\mu_{k-1})~=~(R/(\mu_{k-1}),-\mu_{k-2}/\mu_{k-1})~(2 \leqslant k \leqslant n)~.$$
\end{proof} 

Let $R$ be a Dedekind ring with involution, and $S=R \backslash \{0\}\subset R$, so that $S^{-1}R=K$ is the field of fractions. See Ranicki~\cite[Chapter 4]{ranickiexact},  \cite[Prop. 38.5]{ranickiknot} for a detailed account of the
Witt group localization exact sequence 
$$\xymatrix@C-10pt{L^1(R,S)=0 \ar[r] & W(R) \ar[r]^-{i} & W(K) \ar[r]^-{\partial} & W(R,S)  \ar[r] & L^{-1}(R) \ar[r] & L^{-1}(K)=0}
$$
with $L^{-1}(R)=\widetilde{K}_0(R)/2\widetilde{K}_0(R)$, $\widetilde{K}_0(R)$ the ideal class group (= 0 for a principal ideal domain $R$). An $S$-torsion $R$-module $T$ of homological dimension 1 splits as a
direct sum
$$T=\bigoplus\limits_{\pi=\bar{\pi} \triangleleft R\,\text{maximal}} T_{\pi}$$
with $\pi$ the involution-invariant maximal (= non-zero prime) ideals of $R$, and
$T_{\pi}=\{x \in T\,\vert\, \pi^kx=0~\text{for~some}~k \geqslant 0\}$. Similarly for a linking form $(T,\lambda)$. The morphism
$$\bigoplus\limits_{\pi=\bar{\pi} \triangleleft R\,\text{maximal}}W(R/\pi) \to W(R,S)~;~(R/\pi,q) \mapsto (R/\pi,q/p)$$
is an isomorphism, with $R/\pi$ the residue field and  $p \in \pi$ a uniformizer (e.g. a generator for a principal ideal), and the inverse given by d\'evissage.  The composite
$W(R/\pi) \to W(R,S) \to L^{-1}(R)$ sends $(R/\pi,q)$ to 
$$(H_-(R);R \oplus 0,\text{im}(\begin{pmatrix}
q \\ p\end{pmatrix}:R \to R \oplus R))~=~[\pi] \in
L^{-1}(R)~=~\widetilde{K}_0(R)/2\widetilde{K}_0(R)~.$$
If there exists a Sturm sequence $p_*=(p_0,p_1,\dots,p_n)$, $q_*=(q_1,q_2,\dots,q_n)$ in $R$ with 
$$p_{k-1}+p_{k+1}=p_kq_k~,~
p_0=p,~p_1=q,~p_n \in R^{\bullet},~p_{n+1}=0$$ 
(e.g. if $R$ is a Euclidean domain) then
$$\begin{array}{l}
(H_-(R);R \oplus 0,\text{im}(\begin{pmatrix}
q \\ p\end{pmatrix}:R \to R \oplus R))~=~0 \in L^{-1}(R)~,\\[1ex]
\partial (K^n,\text{Tri}(q_*))~=~(R/\pi,q/p)\\[1ex]
\in \text{ker}(W(R,S)\to L^{-1}(R))~=~\text{im}(\partial:W(K) \to W(R,S))~,\\[1ex]
(K^n,\text{Tri}(q_*))~=~\bigoplus\limits^n_{k=1}(K,p_{k-1}/p_k) \in W(K)~.
\end{array}$$
The localizations of $R$ away from $\pi$ and at $\pi$ are the Dedekind rings with
involution
$$\pi^{-1}R~=~R[p^{-1}]~,~R_{(\pi)}~=~(R\backslash\pi)^{-1}R$$
which fit into a cartesian square
$$\xymatrix{R \ar[r] \ar[d] & R_{(\pi)}\ar[d] \\
\pi^{-1}R \ar[r] & K}$$
The $\pi$-adic completions of $R$ and $K$ are Dedekind rings with involution
$$\widehat{R}_\pi~=~\varprojlim_k R/\pi^k~,~
\widehat{K}_\pi~=~\pi^{-1}\widehat{R}_\pi~=~\widehat{R}_\pi[p^{-1}]$$
which fit into a cartesian square
$$\xymatrix{R_{(\pi)}\ar[d] \ar[r] & \widehat{R}_\pi \ar[d]\\
K \ar[r] &\widehat{K}_\pi}~.$$
Thus there are isomorphisms of abelian groups
$$\begin{array}{l}
R/\pi~\cong~R_{(\pi)}/pR_{(\pi)}~\cong~\widehat{R}_{\pi}/p\widehat{R}_{\pi}~,\\[1ex]
\varinjlim\limits_k R/p^kR~=~\pi^{-1}R/R~\cong~K/R_{(\pi)}~\cong~\widehat{K}_\pi/\widehat{R}_\pi
\end{array}$$
and excision isomorphisms of the torsion $L$-groups
$$L^*(R,\pi^{\infty})~\cong~L^*(R_{(\pi)},\pi^{\infty})~\cong~L^*(\widehat{R}_\pi,\pi^{\infty})~.$$
The residue map is the composite
$$\begin{array}{l}
\text{res}_\pi~:~W(K) \to W(\widehat{K}_\pi) \to W(\widehat{R}_\pi,\pi^{\infty})~=~W(R/\pi)~;\\[1ex]
\hskip20pt
(K,p^iq) \mapsto 
\begin{cases}
0&\text{if $i\equiv 0(\bmod\,2)$}\\
(R/\pi,q)&\text{if $i\equiv 1(\bmod\,2)$}
\end{cases}~([q] \in (R/\pi)^{\bullet})~.
\end{array}
$$

\begin{example} (i) Let $K$ be a field of characteristic $\neq 2$, and let $(R,S)=(K[X],K[X]\backslash \{0\})$, with localization the function field
$S^{-1}K[X]=K(X)$. The maximal ideals of $K[X]$ are the principal ideals $(\pi) \triangleleft K[X]$ with $\pi \in K[X]$ irreducible monic, with an injection
$$K[X]/\pi\to \pi^{-1}K[X]/K[X]=\varinjlim_k K[X]/\pi^k~;~P(X) \mapsto P(X)/\pi~.$$
The local ring $K_\pi[X]=K[X]_{(\pi)}$ has the unique maximal ideal
$(\pi) \triangleleft K_{\pi}[X]$, with $K_\pi[X]/(\pi)=K[X]/(\pi)$ the residue field.
The $\pi$-adic completion of $K[X]$
$$\widehat{K[X]}_\pi~=~\varprojlim_k K_\pi[X]/(\pi^k)~=~\varprojlim_k K[X]/(\pi^k)$$
has field of fractions the $\pi$-adic completion $\pi^{-1}\widehat{K[X]}_\pi=K_\pi(X)$ of $K(X)$ which 
has already appeared in section \ref{order}, along with the isomorphisms
$$W(K[X],S)~=~\bigoplus\limits_\pi W(K_\pi[X],\pi)~=
\bigoplus\limits_\pi W(\widehat{K[X]}_\pi,\pi)~=~\bigoplus \limits_\pi W(K[X]/\pi)$$
and the residue maps ${\rm res}_{\pi}:W(K(X)) \to W(K_\pi(X))\to W(K[X]/\pi)$.\\
(ii) Let $K=\RR$. The maximal ideals $(X-x)$, $(X-z)(X-\overline{z})\triangleleft \RR[X]$ are given by $x \in \RR$ and $z \in {\cal H}$. Now
$$\RR[X]/(X-x)~=~\RR~,~\RR[X]/(X-z)(X-\overline{z})~=~\CC$$
so that
$$W(\RR[X],S)~=~W(\RR)[\RR] \oplus W(\CC)[{\cal H}]~=~ 
\ZZ[\RR] \oplus \ZZ/2\ZZ[{\cal H}]~.$$
\end{example}

Here are some examples of how the Witt group localization exact sequence has appeared in this paper:

\begin{itemize}
\item The computation of $W(\QQ)$ in section~\ref{order} fits into the localization exact sequence
$$\xymatrix{0 \ar[r] & W(\ZZ) \ar[r]^-{i} & W(\QQ) \ar[r]^-{\partial} & 
W(\ZZ,S) \ar[r] & 0}$$
with $S=\ZZ\backslash\{0\} \subset \ZZ$ and $W(\ZZ,S)=\bigoplus\limits_{\infty} \ZZ/2\ZZ\oplus \bigoplus\limits_{\infty} \ZZ/4\ZZ$ the Witt group of nonsingular symmetric linking pairings $(T,\lambda:T \times T \to \QQ/\ZZ)$ on finite abelian groups.
In particular, this is the exact sequence used by Milnor and Husemoller~\cite{milnorhusemoller} to recover the computation that the signature map $\tau:W(\ZZ) \to \ZZ$ is an isomorphism (Serre~\cite{serrearith}).
\item For any field $K$ let $(R,S)=(K[X],K[X]\backslash \{0\})$ with
$S^{-1}R=K(X)$. The computation of $W(K(X))$ in section~\ref{order} for  $K$ of characteristic $\neq 2$ fits into the localization exact sequence
$$\xymatrix@C-5pt{0 \ar[r] & W(K[X]) \ar[r]^-{i} & W(K(X)) \ar[r]^-{\partial}& W(K[X],S)=\bigoplus\limits_\pi W(K[X]/\pi) \ar[r] & 0}$$
with $\pi\in K[X]$ running over all the irreducible monic polynomials - see Ranicki \cite[39A]{ranickiknot}. By a theorem of Karoubi~\cite{karoubi} (Ojanguren~\cite{ojanguren}) $W(K[X])=W(K)$.
By \cite[Prop. 39.2]{ranickiknot}
$W(K[X],S)$ is the Witt group of triples $(V,\phi,C)$ with $(V,\phi)$ a nonsingular symmetric form over $K$ and $C:V \to V$ a $K$-linear map 
with characteristic polynomial $\det(X I_n-C) \in S$ ($n=\dim_K(V)$), and $\phi(Cv,w)=\phi(v,Cw)$; such an object is the same as a nonsingular symmetric linking form $(K[X],S)$, and $\partial$ is split by
$$W(K[X],S) \to W(K(X))~;~(V,\phi,C) \mapsto (V(X),\phi(X-C))~.$$
. For any regular degree $n$ polynomial $P(X) \in K[X]$ the Witt class $(K(X),P(X)) \in W(K(X))$ has image
$$\partial (K(X),P(X))=(K^n,S(P),C(P)) \in W(K[X],S)$$
with $S(P)$ the Jacobi-Hermite matrix of Theorem~\ref{hermite} and $C(P)$ the companion matrix, such that $\det(X I_n-C(P))=P(X) \in S$. 
See \cite[Example 39.9]{ranickiknot} for the computation
$$W(\RR(X))=\ZZ \oplus \ZZ[\RR] \oplus \ZZ/2\ZZ[{\mathcal H}]$$ 
in section~\ref{order} from this point of view.
\item Let $R$ be an integral domain (with the identity involution) and let $K=S^{-1}R$ be the field of fractions, with $S=R\backslash \{0\}$. Given sequences $(p_0,p_1,\dots,p_n) \in S^{n+1}$, $(q_1,q_2,\dots,q_n) \in R^n$ satisfying the modified Euclidean algorithm (cf. Example \ref{euclid2})
with $p_n={\rm gcd}(p_0,p_1)\in S$ and recurrences
$$p_{k-1}+p_{k+1}=p_kq_k~(1 \leqslant k \leqslant n),~p_{n+1}=0$$
define
$$(\chi_1,\chi_2,\dots,\chi_n)=(q_n,q_{n-1},\dots,q_1)~,~
(\mu_0,\mu_1,\dots,\mu_n)=(p_n,p_{n-1},\dots,p_0)$$
so that as in Proposition \ref{trilinking}
$$\mu_k+\mu_{k-2}~=~\chi_k \mu_{k-1}~( 1\leqslant k \leqslant n,~\mu_0=1,~\mu_{-1}=0)$$
Define the tridiagonal $S$-nonsingular symmetric form $(R^n,{\rm Tri}(q))$ over $R$ with
$${\rm Tri}(q)=\begin{pmatrix} q_1 & 1 & 0 & \dots & 0 \\
1 &q_2 & 1 & \dots & 0 \\
0 & 1 & q_3 & \dots  & 0 \\
\vdots & \vdots & \vdots & \ddots & \vdots \\
0 & 0 & 0 & \dots & q_n \end{pmatrix}.$$
such that ${\rm det}({\rm Tri}(q))=p_0/p_n$ (Proposition \ref{tri2}). The invertible $n \times n$ matrix in $K$
 $$A~=~ \begin{pmatrix} 
1 & 0 & 0 & \dots & 0 \\
-p_2/p_1 & 1 & 0 & \dots &0\\ 
p_3/p_1 & -p_3/p_2 & 1 & \dots & 0\\
\vdots & \vdots & \vdots & \ddots & \vdots \\
(-1)^{n-1}p_n/p_1 & (-)^{n-2}p_n/p_2 & (-)^{n-3}p_n/p_3 &\dots & 1 
\end{pmatrix}$$ 
is such that
$$A^*{\rm Tri}(q)A~=~
\begin{pmatrix} p_0/p_1 & 0 & \dots & 0 \\
0 & p_1/p_2  &  \dots & 0\\
\vdots & \vdots &  \ddots & \vdots \\
0& 0 & \dots & p_{n-1}/p_n\end{pmatrix}~,$$
defining an isomorphism of symmetric forms over $K$
$$(K^n,{\rm Tri}(q))~\cong~\bigoplus\limits^n_{k=1}(K,p_{k-1}/p_k)~,$$ 
with ${\rm det}({\rm Tri}(q))=p_0/p_n \in K$. By Proposition \ref{trilinking} 3.
$$\begin{array}{ll}
\partial(K,p_{k-1}/p_k)&=
(R/p_{k-1}R,p_k/p_{k-1})\oplus (R/p_kR,p_{k-1}/p_k)\\[1ex]
&=(R/p_{k-1}R,p_k/p_{k-1})\oplus (R/p_kR,-p_{k+1}/p_k)
\end{array}$$
up to isomorphism of linking forms, and in the Witt group
$$\begin{array}{ll}
\partial (K^n,{\rm Tri}(q))&=\bigoplus\limits^n_{k=1}
((R/p_{k-1}R,p_k/p_{k-1}) \oplus (R/p_kR,-p_{k+1}/p_k))\\[1ex]
&=(R/p_0R,p_1/p_0)  \in W(R,S).
\end{array}$$
\item Let $(R,S)=(\ZZ,\ZZ\backslash \{0\})$. For any
$\begin{pmatrix} a & b \\ c
& d \end{pmatrix} \in {\rm SL}(2,\ZZ)$
we have that $a,c \in \ZZ$ are coprime (assume $c>a>0$ for simplicity), so the modified Euclidean algorithm gives
sequences $(p_0,p_1,\dots,p_n) \in (\ZZ\backslash \{0\})^{n+1}$, $(q_1,q_2,\dots,q_n) \in \ZZ^n$ as above, with 
$$p_0=c~,~p_1=a~,~p_{k-1}+p_{k+1}=p_kq_k~(1 \leqslant k \leqslant n,~p_n=1,~p_{n+1}=0)~.$$ 
${\rm Tri}(q)$ is the symmetric intersection matrix over $\ZZ$ of the 4-dimensional manifold $M$ constructed by $n$ plumbings along the $A_n$-tree weighted by $q$ (Example \ref{lenslinking}), with determinant 
${\rm det}({\rm Tri}(q))=c$ and signature
$$\tau({\rm Tri}(q))~=~n-2\,{\rm var}(p_0,p_1,\dots,p_n)~=~\sum\limits^n_{k=1}{\rm sign}(p_{k-1}/p_k) \in \ZZ~.$$
The boundary of $M$ is the 3-dimensional lens space $\partial M=L(c,a)$ 
(as in section~\ref{lens}), with linking form 
$$(H_1(L(c,a)),\lambda_{L(c,a)} )~=~(\ZZ/p_0\ZZ,p_1/p_0)~=~(\ZZ/c\ZZ,a/c)~.$$  
\item Let $(R,S)=(\RR[X],\RR[X]\backslash \{0\})$. For  a 
regular degree $n+2m$ polynomial $P(X) \in \RR[X]$  with real roots
$x_1,x_2,\dots,x_n \in \RR$ and $2m$ complex roots
$\{z_1,z_2,\dots,z_m\} \cup \{\bar{z}_1,\bar{z}_2,\dots,\bar{z}_m\}
\in {\cal H} \cup \overline{\cal H}$, let $(\RR[X]^{n+2m},{\rm Tri}(Q(X)))$
be the $S$-nonsingular tridiagonal symmetric form over $\RR[X]$ given by the Sturm quotients. By the above and Theorem \ref{nexttolast}
$$\begin{array}{l}
\partial (\RR(X)^{n+2m},{\rm Tri}(Q(X)))\\[1ex]
=~(\RR[X]/P(X)\RR[X],P'(X)/P(X))~
=~(\sum\limits^n_{k=1}1.x_k,\sum\limits^m_{j=1}1.z_j)\\[1ex]
\in {\rm coker}(W(\RR[X]) \to W(\RR(X)))~=~W(\RR[X],S)~=~\ZZ[\RR] \oplus\ZZ/2\ZZ[{\cal H}]
\end{array}$$
counting the roots of $P(X)$.
\item Let the Laurent polynomial extension $\ZZ[z,z^{-1}]$ of $\ZZ$ have the involution $\bar{z}=z^{-1}$, and  let $P \subset \ZZ[z,z^{-1}]$ be the multiplicative subset of the polynomials $p(z) \in \ZZ[z,z^{-1}]$ such that $\overline{p(z)}=\pm z^ap(z^{-1})\in \ZZ[z,z^{-1}]$ for some $a \in \ZZ$. A finite f.g. free $\ZZ[z,z^{-1}]$-module chain complex
$C$ is such that $H_*(P^{-1}C)=0$ if and only if $H_*(\ZZ\otimes_{\ZZ[z,z^{-1}]}C)=0$ (i.e. $C$ is ``$\ZZ$-acyclic'').
 The complement $(X,\partial X)$ of an $n$-knot $k:{\mathbb S}^n \subset {\mathbb S}^{n+2}$ is equipped with a degree 1 map
$$f:(X,\partial  X) \to {\mathbb S}^1 \times (\DDD^{n+3},k(\SSS^n))$$
which induces isomorphisms $f_*:H_*(X) \cong H_*(\SSS^1)$, and
such that 
$$f_*~:~\pi_1(X) \to \pi_1(\SSS^1)=\ZZ~;~(u:\SSS^1 \subset X) \mapsto {\rm lk}(u,k)$$
is the canonical surjection. The chain complex kernel of $f$ is a $\ZZ$-acyclic $(n+2)$-dimensional symmetric Poincar\'e complex $(C,\phi)$ over $\ZZ[z,z^{-1}]$ with
$H_*(C)=H_{*+1}(\overline{f}:\overline{X} \to \RR \times \DDD^{n+3})$ the reduced homology $\ZZ[z,z^{-1}]$-modules of the infinite cyclic cover
$\overline{X}=f^*\RR$ of $X$.
The morphism defined on the cobordism group $C_n$ of $n$-knots
$$k \in C_n \mapsto (C,\phi) \in L^{n+3}(\ZZ[z,z^{-1}],P)$$
is a surjection  for $n=1$ and an isomorphism for $n \geqslant   2$.
For odd $n=2i-1\geqslant 3$ the knot cobordism class
$$(k:\SSS^{2i-1} \subset \SSS^{2i+1})=(C,\phi)\in 
C_{2i-1}=L^{2i+2}(\ZZ[z,z^{-1}],P)$$
 is the Witt class of the $(-1)^{i+1}$-symmetric Blanchfield linking form $(H_i(\overline{X}),\lambda)$ over $(\ZZ[z,z^{-1}],P)$ 
(Kearton \cite{kearton}, Ranicki~\cite[Chapter 7.9]{ranickiexact}). The discussion of the knot signature jumps in section~\ref{modern} can be expressed  in terms of the computation of $W(\RR(z))$ of the rational function field $\RR(z)$ with involution $\overline{z}=z^{-1}$ (cf. Ranicki~\cite[Chapter 39]{ranickiknot}), paralleling the formulation in section~\ref{order} and in the previous point of the theorems of Sturm and Sylvester in terms of the computation of  $W(\RR(X))$ ($\overline{X}=X$).
\end{itemize}

\newpage

\bibliographystyle{acm}

\bibliography{list}

\def\cprime{$'$} \def\cprime{$'$} \def\cprime{$'$} \def\cprime{$'$}
\begin{thebibliography}{100}

\bibitem{adams2}
{\sc Adams, J., and Pears, A.}
\newblock Topology, 1977.
\newblock Use of Mathematical Literature, Butterworths.

\bibitem{adams1}
{\sc Adams, J.~F.}
\newblock On the non-existence of elements of {H}opf invariant one.
\newblock {\em Ann. of Math. (2) 72\/} (1960), 20--104.

\bibitem{alexlem}
{\sc Alexander, J.~W.}
\newblock A lemma on systems of knotted curves.
\newblock {\em Proc. Nat. Acad. Sc. U.S.A 9\/} (1923), 93--95.

\bibitem{alexander}
{\sc Alexander, J.~W.}
\newblock {T}opological {I}nvariants of {K}nots and {L}inks.
\newblock {\em Transactions of A.M.S. 30\/} (1928), 275--306.

\bibitem{appell}
{\sc Appell, P.}
\newblock {\em Trait\'e de m\'ecanique rationnelle. {T}ome 5. {E}l\'ements de
  calcul tensoriel. {A}pplications g\'eom\'etriques et m\'ecaniques. {P}ar
  {R}en\'e {T}hiry}.
\newblock Gauthier-Villars, Paris, 1955.
\newblock 2{\`e}me {\'e}d.

\bibitem{arnold3}
{\sc Arnol{\cprime}d, V.~I.}
\newblock On teaching mathematics.
\newblock \url{http://pauli.uni-muenster.de/~munsteg/arnold.html}.

\bibitem{arnold1}
{\sc Arnol{\cprime}d, V.~I.}
\newblock On a characteristic class entering into conditions of quantization.
\newblock {\em Funkcional. Anal. i Prilo\v zen. 1\/} (1967), 1--14.

\bibitem{arnold2}
{\sc Arnol{\cprime}d, V.~I.}
\newblock Sturm theorems and symplectic geometry.
\newblock {\em Funktsional. Anal. i Prilozhen. 19}, 4 (1985), 1--10, 95.

\bibitem{arnold4}
{\sc Arnol{\cprime}d, V.~I.}
\newblock {\em Mathematical methods of classical mechanics}, vol.~60 of {\em
  Graduate Texts in Mathematics}.
\newblock Springer-Verlag, New York, 199?
\newblock Translated from the 1974 Russian original by K. Vogtmann and A.
  Weinstein, Corrected reprint of the second (1989) edition.

\bibitem{artin}
{\sc Artin, E.}
\newblock Theorie der {Z}\"opfe.
\newblock {\em Abh. Math. Sem. Univ. Hamburg 4}, 1 (1925), 47--72.

\bibitem{atiyahomni}
{\sc Atiyah, M.}
\newblock The logarithm of the {D}edekind {$\eta$}-function.
\newblock {\em Math. Ann. 278}, 1-4 (1987), 335--380.

\bibitem{atiyahdonnellysinger}
{\sc Atiyah, M. F., Donnelly, H. and Singer, I. M.}
\newblock Eta invariants, signature defects of cusps, and values of
              {$L$}-functions.
			  \newblock {\em Ann. of Math. (2) 118} 1 (1983), 131-177.

			  \bibitem{atiyahpatodisinger}
			  {\sc Atiyah, M. F., Patodi, V. K. and Singer, I. M.}
			  \newblock Spectral asymmetry and Riemannian geometry. III
			  \newblock {\em Math. Proc. Cambridge Philos. Soc. 79}, 1 (1976), 71-99.

\bibitem{atiyahsinger}
{\sc Atiyah, M.~F., and Singer, I.~M.}
\newblock The index of elliptic operators on compact manifolds.
\newblock {\em Bull. Amer. Math. Soc. 69\/} (1963), 422--433.

\bibitem{audindamian}
{\sc Audin, M., and Damian, M.}
\newblock {\em Morse theory and {F}loer homology}.
\newblock Universitext. Springer, London; EDP Sciences, Les Ulis, 2014.
\newblock Translated from the 2010 French original by Reinie Ern{\'e}.

\bibitem{baadermarche}
{\sc Baader, S., and March{\'e}, J.}
\newblock Asymptotic {V}assiliev invariants for vector fields.
\newblock {\em Bull. Soc. Math. France 140}, 4 (2012), 569--582 (2013).

\bibitem{banyaga}
{\sc Banyaga, A.}
\newblock {\em The structure of classical diffeomorphism groups}, vol.~400 of
  {\em Mathematics and its Applications}.
\newblock Kluwer Academic Publishers Group, Dordrecht, 1997.

\bibitem{bargeghys}
{\sc Barge, J., and Ghys, {\'E}.}
\newblock Cocycles d'{E}uler et de {M}aslov.
\newblock {\em Math. Ann. 294}, 2 (1992), 235--265.

\bibitem{bargelannes}
{\sc Barge, J., and Lannes, J.}
\newblock {\em Suites de {S}turm, indice de {M}aslov et p\'eriodicit\'e de
  {B}ott}, vol.~267 of {\em Progress in Mathematics}.
\newblock Birkh\"auser Verlag, Basel, 2008.

\bibitem{basupollackroy}
{\sc Basu, S., Pollack, R., and Roy, M.-F.}
\newblock {\em Algorithms in real algebraic geometry}, second~ed., vol.~10 of
  {\em Algorithms and Computation in Mathematics}.
\newblock Springer-Verlag, Berlin, 2006.

\bibitem{beletsky}
{\sc Beletsky, V.~V.}
\newblock {\em Essays on the motion of celestial bodies}.
\newblock Birkh\"auser Verlag, Basel, 2001.
\newblock Translated from the Russian by Andrei Iacob.

\bibitem{sinaceur}
{\sc Benis-Sinaceur, H.}
\newblock Deux moments dans l'histoire du th\'eor\`eme d'alg\`ebre de {C}h.\
  {F}.\ {S}turm.
\newblock {\em Rev. Histoire Sci. 41}, 2 (1988), 99--132.

\bibitem{sinaceur2}
{\sc Benis-Sinaceur, H.}
\newblock {\em Corps et mod\`eles : essai sur l'histoire de l'alg\`ebre
  r\'eelle}.
\newblock Mathesis, Librairie philosophique J. Vrin, 1999.

\bibitem{sinaceur3}
{\sc Benis-Sinaceur, H.}
\newblock {\em L'\oe{}uvre alg\'ebrique de {C}harles {F}ran\c{c}ois {S}turm,
  {C}ollected works of {C}harles Fran\c{c}ois {S}turm.(French) Edited by
  Jean-Claude Pont in collaboration with Flavia Padovani}.
\newblock Birkh\"auser Verlag, Basel, 2009.

\bibitem{blanchfield}
{\sc Blanchfield, R.~C.}
\newblock Intersection theory of manifolds with operators with applications to
  knot theory.
\newblock {\em Ann. of Math. (2) 65\/} (1957), 340--356.

\bibitem{borchardt}
{\sc Borchardt, C.-W.}
\newblock D\'eveloppements sur l'\'equation \`a l'aide de laquelle on
  d\'etermine les in\'egalit\'es s\'eculaires du mouvement des plan\`etes.
\newblock {\em Journal de math\'ematiques pures et appliqu\'ees 12\/} (1847),
  50--67.

\bibitem{borchardt2}
{\sc Borchardt, C.-W.}
\newblock Bemerkung \"uber die beiden vorstehenden {A}ufs\"atze.
\newblock {\em Journal f\"ur die reine und angewandte {M}athematik 53\/}
  (1857), 281--283.

\bibitem{bormanzapolsky}
{\sc Borman, M.~S., and Zapolsky, F.}
\newblock Quasimorphisms on contactomorphism groups and contact rigidity.
\newblock {\em Geom. Topol. 19}, 1 (2015), 365--411.

\bibitem{bnr}
{\sc Borodzik, M., N{\'e}methi, A., and Ranicki, A.}
\newblock Codimension 2 embeddings, algebraic surgery and {S}eifert forms,
  2012.
\newblock Arxiv preprint 1210.0798.

\bibitem{bourbaki}
{\sc Bourbaki, N.}
\newblock {\em \'{E}l\'ements d'histoire des math\'ematiques}.
\newblock Masson, Paris, 1984.
\newblock Reprints of the historical notes from the {\'{E}l{\'e}ments}.

\bibitem{bourrigan}
{\sc Bourrigan, M.}
\newblock {\em Quasimorphismes sur les groupes de tresses et forme de
  Blanchfield}.
\newblock PhD thesis, ENS Lyon, 2013.
\newblock Reprinted in this volume.

\bibitem{brieskorn}
{\sc Brieskorn, E.}
\newblock Beispiele zur {D}ifferentialtopologie von {S}ingularit\"aten.
\newblock {\em Invent. Math. 2\/} (1966), 1--14.

\bibitem{brouzet}
{\sc Brouzet, R.}
\newblock La double origine du groupe symplectique.
\newblock {\em Expo. Math. 22}, 1 (2004), 55--82.

\bibitem{brown}
{\sc Brown, K.~S.}
\newblock {\em Cohomology of groups}, vol.~87 of {\em Graduate Texts in
  Mathematics}.
\newblock Springer-Verlag, New York, 1994.
\newblock Corrected reprint of the 1982 original.

\bibitem{cappellleemiller}
{\sc Cappell, S.~E., Lee, R., and Miller, E.~Y.}
\newblock On the {M}aslov index.
\newblock {\em Comm. Pure Appl. Math. 47}, 2 (1994), 121--186.

\bibitem{cassongordon}
{\sc Casson, A.~J., and Gordon, C.~M.}
\newblock On slice knots in dimension three.
\newblock In {\em Algebraic and geometric topology ({P}roc. {S}ympos. {P}ure
  {M}ath., {S}tanford {U}niv., {S}tanford, {C}alif., 1976), {P}art 2}, Proc.
  Sympos. Pure Math., XXXII. Amer. Math. Soc., Providence, R.I., 1978,
  pp.~39--53.

\bibitem{cauchy2}
{\sc Cauchy, A.~L.}
\newblock Exercices de math\'ematiques, 1828.

\bibitem{cauchy1829}
{\sc Cauchy, A.~L.}
\newblock Sur l'\'equation \`a l'aide de laquelle on d\'etermine les
  in\'egalit\'es s\'eculaires des mouvements des plan\`etes.
\newblock {\em Exercices de Math\'ematiques 4\/} (1829), 140--160.

\bibitem{cazanave}
{\sc Cazanave, C.}
\newblock Algebraic homotopy classes of rational functions.
\newblock {\em Ann. Sci. \'Ec. Norm. Sup\'er. (4) 45}, 4 (2012), 511--534
  (2013).

\bibitem{cochranorrteichner}
{\sc Cochran, T.~D., Orr, K.~E., and Teichner, P.}
\newblock Knot concordance, {W}hitney towers and {$L^2$}-signatures.
\newblock {\em Ann. of Math. (2) 157}, 2 (2003), 433--519.

\bibitem{cohenvanwijk}
{\sc Cohen, A., and van Wijk, J.}
\newblock Visualization of {S}eifert {S}urfaces.
\newblock {\em IEEE Transactions on Visualization and Computer Graphics 1\/}
  (2006), 1--13.
\newblock reprinted in this volume.

\bibitem{collins}
{\sc Collins, J.}
\newblock An algorithm for computing the {S}eifert matrix of a link from a
  braid representation.
\newblock reprinted in this volume.

\bibitem{crowellfox}
{\sc Crowell, R.~H., and Fox, R.~H.}
\newblock {\em Introduction to knot theory}.
\newblock Based upon lectures given at Haverford College under the Philips
  Lecture Program. Ginn and Co., Boston, Mass., 1963.

\bibitem{gosson}
{\sc de~Gosson, M.}
\newblock On the usefulness of an index due to {L}eray for studying the
  intersections of {L}agrangian and symplectic paths.
\newblock {\em J. Math. Pures Appl. (9) 91}, 6 (2009), 598--613.

\bibitem{delaharpe}
{\sc de~la Harpe, P., and Siegfried, P.}
\newblock Singularit\'es de {K}lein.
\newblock {\em Enseign. Math. (2) 25}, 3-4 (1979), 207--256 (1980).

\bibitem{dedekind}
{\sc Dedekind, R.}
\newblock Erl\"auterungen zu den vorstehenden {F}ragmenten, 1876.
\newblock xxviii, Commentary on Riemann.

\bibitem{dieudonne}
{\sc Dieudonn{\'e}, J.}
\newblock {\em Calcul infinit\'esimal}.
\newblock Hermann, Paris, 1968.

\bibitem{eckmann}
{\sc Eckmann, B.}
\newblock Is algebraic topology a respectable field?, 2006.
\newblock Mathematical Survey Lectures 1943-2004, Springer.

\bibitem{ekeland}
{\sc Ekeland, I.}
\newblock {\em Convexity methods in {H}amiltonian mechanics}, vol.~19 of {\em
  Ergebnisse der Mathematik und ihrer Grenzgebiete (3) [Results in Mathematics
  and Related Areas (3)]}.
\newblock Springer-Verlag, Berlin, 1990.

\bibitem{epple1}
{\sc Epple, M.}
\newblock Orbits of asteroids, a braid, and the first link invariant.
\newblock {\em Math. Intelligencer 20}, 1 (1998), 45--52.

\bibitem{foxmilnor}
{\sc Fox, R.~H., and Milnor, J.~W.}
\newblock Singularities of {$2$}-spheres in {$4$}-space and cobordism of knots.
\newblock {\em Osaka J. Math. 3\/} (1966), 257--267.

\bibitem{franklpontrjagin}
{\sc Frankl, F., and Pontrjagin, L.}
\newblock Ein {K}notensatz mit {A}nwendung auf die {D}imensionstheorie.
\newblock {\em Math. Ann. 102}, 1 (1930), 785--789.

\bibitem{frobenius}
{\sc Frobenius, G.}
\newblock \"{U}ber das {T}r\"agheitsgesetz der quadratischen {F}ormen.
\newblock {\em Journal f\"ur die reine und angewandte Mathematik 114\/} (1895),
  187 -- 230.

\bibitem{fuhrmann}
{\sc Fuhrmann, P.~A.}
\newblock {\em A polynomial approach to linear algebra}.
\newblock Universitext. Springer-Verlag, New York, 1996.

\bibitem{gambaudoghys1}
{\sc Gambaudo, J.-M., and Ghys, {\'E}.}
\newblock Enlacements asymptotiques.
\newblock {\em Topology 36}, 6 (1997), 1355--1379.

\bibitem{gambaudoghys2}
{\sc Gambaudo, J.-M., and Ghys, {\'E}.}
\newblock Signature asymptotique d'un champ de vecteurs en dimension 3.
\newblock {\em Duke Math. J. 106}, 1 (2001), 41--79.

\bibitem{gambaudoghys3}
{\sc Gambaudo, J.-M., and Ghys, {\'E}.}
\newblock Braids and signatures.
\newblock {\em Bull. Soc. Math. France 133}, 4 (2005), 541--579.

\bibitem{gantmacher}
{\sc Gantmacher, F.~R.}
\newblock {\em The theory of matrices. {V}ols. 1, 2}.
\newblock Translated by K. A. Hirsch. Chelsea Publishing Co., New York, 1959.

\bibitem{gauss}
{\sc Gauss, C.~F.}
\newblock Disquisitiones arithmeticae, 1801.

\bibitem{ghys}
{\sc Ghys, {\'E}.}
\newblock Knots and dynamics.
\newblock In {\em International {C}ongress of {M}athematicians. {V}ol. {I}}.
  Eur. Math. Soc., Z\"urich, 2007, pp.~247--277.

\bibitem{greub}
{\sc Greub, W.}
\newblock {\em Linear algebra}, fourth~ed.
\newblock Springer-Verlag, New York-Berlin, 1975.
\newblock Graduate Texts in Mathematics, No. 23.

\bibitem{grigorchuk}
{\sc Grigorchuk, R.~I.}
\newblock Some results on bounded cohomology.
\newblock In {\em Combinatorial and geometric group theory ({E}dinburgh,
  1993)}, vol.~204 of {\em London Math. Soc. Lecture Note Ser.} Cambridge Univ.
  Press, Cambridge, 1995, pp.~111--163.

\bibitem{gromov}
{\sc Gromov, M.}
\newblock Volume and bounded cohomology.
\newblock {\em Inst. Hautes \'Etudes Sci. Publ. Math.}, 56 (1982), 5--99
  (1983).

\bibitem{guilloumarin}
{\sc Guillou, L., and Marin, A.}
\newblock Commentaires sur les quatre articles pr\'ec\'edents de {V}. {A}.
  {R}ohlin.
\newblock In {\em \`{A} la recherche de la topologie perdue}, vol.~62 of {\em
  Progr. Math.} Birkh\"auser Boston, Boston, MA, 1986, pp.~25--95.

\bibitem{gundelfinger}
{\sc Gundelfinger, S.}
\newblock \"{U}ber die {T}ransformation einer quadratischen {F}orm in eine
  {S}umme von {Q}uadraten.
\newblock {\em J. Reine Angew. Math. 91\/} (1881), 221--237.

\bibitem{gutt}
{\sc Gutt, J.}
\newblock Normal forms for symplectic matrices.
\newblock {\em Port. Math. 71}, 2 (2014), 109--139.

\bibitem{hachettepoisson}
{\sc Hachette, J., and Poisson, S.}
\newblock Additions au m\'emoire pr\'ec\'edent, 1802.

\bibitem{hawkins}
{\sc Hawkins, T.}
\newblock Cauchy and the spectral theory of matrices.
\newblock {\em Historia Math. 2\/} (1975), 1--29.

\bibitem{helmke}
{\sc Helmke, U.}
\newblock Rational functions and {B}ezout forms: a functorial correspondence.
\newblock {\em Linear Algebra Appl. 122/123/124\/} (1989), 623--640.

\bibitem{helmkefuhrmann}
{\sc Helmke, U., and Fuhrmann, P.~A.}
\newblock Bezoutians.
\newblock {\em Linear Algebra Appl. 122/123/124\/} (1989), 1039--1097.

\bibitem{hermite}
{\sc Hermite, C.}
\newblock Extrait d'une lettre de {M}. {C}. {H}ermite \`a {M}. {B}orchardt sur
  l'invariabilit\'e du nombre des carr\'es positifs et des carr\'es n\'egatifs
  dans la transformation des polyn\^omes homog\`enes du second degr\'e.
\newblock {\em Journal f\"ur die reine und angewandte Mathematik 53\/} (1857),
  271--274.

\bibitem{hermiteoeuvres}
{\sc Hermite, C.}
\newblock {\em {\OE}uvres de Charles Hermite, tome 1, Publi\'ees par Emile
  Picard}.
\newblock Gauthier Villars, 1905.

\bibitem{hermitecours}
{\sc Hermite, C.}
\newblock {\em {\OE}uvres de Charles Hermite, tome 2, Publi\'ees par Emile
  Picard}.
\newblock Gauthier Villars, 1908.

\bibitem{hermite1890}
{\sc Hermite, C.}
\newblock {\em {\OE}uvres de Charles Hermite, tome 4, Publi\'ees par Emile
  Picard}.
\newblock Gauthier Villars, 1917.

\bibitem{hirschneuwirth}
{\sc Hirsch, M.~W.}
\newblock On piecewise linear immersions.
\newblock {\em Proc. Amer. Math. Soc. 16\/} (1965), 1029--1030.

\bibitem{hirzebruch1953}
{\sc Hirzebruch, F.}
\newblock On {S}teenrod's reduced powers, the index of inertia, and the {T}odd
  genus.
\newblock {\em Proc. Nat. Acad. Sci. U. S. A. 39\/} (1953), 951--956.

\bibitem{hirzebruchvier}
{\sc Hirzebruch, F.}
\newblock \"{U}ber vierdimensionale {R}iemannsche {F}l\"achen mehrdeutiger
  analytischer {F}unktionen von zwei komplexen {V}er\"anderlichen.
\newblock {\em Math. Ann. 126\/} (1953), 1--22.

\bibitem{hirzebruch1971}
{\sc Hirzebruch, F.}
\newblock The signature theorem: reminiscences and recreation.
\newblock In {\em Prospects in mathematics ({P}roc. {S}ympos., {P}rinceton
  {U}niv., {P}rinceton, {N}.{J}., 1970)}. Princeton Univ. Press, Princeton,
  N.J., 1971, pp.~3--31. Ann. of Math. Studies, No. 70.


\bibitem{hirzebruchmayer}
{\sc Hirzebruch, F., and Mayer, K.~H.}
\newblock {\em {${\rm O}(n)$}-{M}annigfaltigkeiten, exotische {S}ph\"aren und
  {S}ingularit\"aten}.
\newblock Lecture Notes in Mathematics, No. 57. Springer-Verlag, Berlin-New
  York, 1968.

\bibitem{hirzebruchneumannkoh}
{\sc Hirzebruch, F., Neumann, W.~D., and Koh, S.~S.}
\newblock {\em Differentiable manifolds and quadratic forms}.
\newblock Marcel Dekker, Inc., New York, 1971.
\newblock Appendix II by W. Scharlau, Lecture Notes in Pure and Applied
  Mathematics, Vol. 4.

\bibitem{hirzebruchzagier}
{\sc Hirzebruch, F., and Zagier, D.}
\newblock {\em The {A}tiyah-{S}inger theorem and elementary number theory}.
\newblock Publish or Perish, Inc., Boston, Mass., 1974.
\newblock Mathematics Lecture Series, No. 3.


\bibitem{hirzebruch1973}
{\sc Hirzebruch, F.}
\newblock Hilbert modular surfaces.
\newblock {\em Enseignement Math. (2) 19\/} (1973), 183--281.


\bibitem{jacobi1}
{\sc Jacobi, C.~G.}
\newblock \"{U}ber die {R}eduction der quadratischen {F}ormen auf die kleinste
  {A}nzahl {G}lieder.
\newblock {\em Journal f\"ur die reine und angewandte Mathematik 39\/} (1850),
  290--292.

\bibitem{jacobi2}
{\sc Jacobi, C.~G.}
\newblock \"{U}ber eine elementare {T}ransformation eins in {B}ezug auf jedes
  von zwei {V}ariablen-{S}ystemen linearen und homogenen {A}usdrucks.
\newblock {\em Journal f\"ur die reine und angewandte Mathematik 53\/} (1857),
  265--270.

\bibitem{jacobi3}
{\sc Jacobi, C.~G.}
\newblock \"{U}ber einen algebraischen {F}undamentalsatz und seine
  {A}nwendungen.
\newblock {\em Journal f\"ur die reine und angewandte Mathematik 53\/} (1857),
  275--280.

\bibitem{jung}
{\sc Jung, H. W.~E.}
\newblock Darstellung der {F}unktionen eines algebraischen {K}\"orpers zweier
  unabh\"angigen {V}er\"anderlichen $x,y$ in der {U}mgebung einer {S}telle
  {$x=a$, $y=b$}.
\newblock {\em Journal f\"ur die reine und angewandte Mathematik 133\/} (1908),
  289--314.

\bibitem{karoubi}
{\sc Karoubi, M.}
\newblock Localisation de formes quadratiques. {I}, {II}.
\newblock {\em Ann. Sci. \'Ecole Norm. Sup. (4) 7\/} (1974), 359--403 (1975);
  ibid. (4) 8 (1975), 99--155.

\bibitem{karpenkov}
{\sc Karpenkov, O.}
\newblock {\em Geometry of continued fractions}, vol.~26 of {\em Algorithms and
  Computation in Mathematics}.
\newblock Springer, Heidelberg, 2013.

\bibitem{kauffman}
{\sc Kauffman, L.~H.}
\newblock {\em On knots}, vol.~115 of {\em Annals of Mathematics Studies}.
\newblock Princeton University Press, Princeton, NJ, 1987.

\bibitem{kauffmantaylor}
{\sc Kauffman, L.~H., and Taylor, L.~R.}
\newblock Signature of links.
\newblock {\em Trans. Amer. Math. Soc. 216\/} (1976), 351--365.

\bibitem{kearton}
{\sc Kearton, C.}
\newblock Cobordism of knots and {B}lanchfield duality.
\newblock {\em J. London Math. Soc. (2) 10}, 4 (1975), 406--408.

\bibitem{kervaireunstable}
{\sc Kervaire, M.~A.}
\newblock Some nonstable homotopy groups of {L}ie groups.
\newblock {\em Illinois J. Math. 4\/} (1960), 161--169.

\bibitem{kervairenoeuds}
{\sc Kervaire, M.~A.}
\newblock Les n\oe uds de dimensions sup\'erieures.
\newblock {\em Bull. Soc. Math. France 93\/} (1965), 225--271.

\bibitem{kervairemilnor}
{\sc Kervaire, M.~A., and Milnor, J.~W.}
\newblock Groups of homotopy spheres. {I}.
\newblock {\em Ann. of Math. (2) 77\/} (1963), 504--537.

\bibitem{kirbymelvin}
{\sc Kirby, R., and Melvin, P.}
\newblock Dedekind sums, {$\mu$}-invariants and the signature cocycle.
\newblock {\em Math. Ann. 299}, 2 (1994), 231--267.

\bibitem{klein}
{\sc Klein, F.}
\newblock \"{U}ber die {T}ransformation der elliptischen {F}unktionen und die
  {A}uflosung der {G}leichungen funften {G}rades.
\newblock {\em Math. Ann. 14}, 1 (1878), 111--172.

\bibitem{kreinneimark}
{\sc Kre{\u\i}n, M.~G., and Na{\u\i}mark, M.~A.}
\newblock The method of symmetric and {H}ermitian forms in the theory of the
  separation of the roots of algebraic equations.
\newblock {\em Linear and Multilinear Algebra 10}, 4 (1981), 265--308.
\newblock Translated from the Russian by O. Boshko and J. L. Howland.

\bibitem{lagrange1759}
{\sc Lagrange, J.}
\newblock Recherches sur la m\'ethode de maximis et minimis.
\newblock {\em Miscellanea {T}aurinensia 1\/} (1759), 18--32.
\newblock Lagrange vol. 1, 3-22.

\bibitem{lagrange}
{\sc Lagrange, J.}
\newblock Solution de diff\'erents probl\`emes de calcul.
\newblock {\em Miscellanea {T}aurinensia 3\/} (1762-65), 633--636.
\newblock Lagrange vol. 1, 471-668.

\bibitem{lam}
{\sc Lam, T.~Y.}
\newblock {\em Introduction to quadratic forms over fields}, vol.~67 of {\em
  Graduate Studies in Mathematics}.
\newblock American Mathematical Society, Providence, RI, 2005.

\bibitem{leray}
{\sc Leray, J.}
\newblock Compl\'ement \`a la th\'eorie d'{A}rnold de l'indice de {M}aslov.
\newblock In {\em Symposia {M}athematica, {V}ol. {XIV} ({C}onvegno di
  {G}eometria {S}implettica e {F}isica {M}atematica, {INDAM}, {R}ome, 1973)}.
  Academic Press, London, 1974, pp.~33--51.

\bibitem{levine1}
{\sc Levine, J.}
\newblock Invariants of knot cobordism.
\newblock {\em Invent. Math. 8 (1969), 98--110; addendum, ibid. 8\/} (1969),
  355.

\bibitem{levine3}
{\sc Levine, J.}
\newblock Knot cobordism groups in codimension two.
\newblock {\em Comment. Math. Helv. 44\/} (1969), 229--244.

\bibitem{levine2}
{\sc Levine, J.}
\newblock An algebraic classification of some knots of codimension two.
\newblock {\em Comment. Math. Helv. 45\/} (1970), 185--198.

\bibitem{lickorish}
{\sc Lickorish, W. B.~R.}
\newblock {\em An introduction to knot theory}, vol.~175 of {\em Graduate Texts
  in Mathematics}.
\newblock Springer-Verlag, New York, 1997.

\bibitem{longdong}
{\sc Long, Y., and Dong, D.}
\newblock Normal forms of symplectic matrices.
\newblock {\em Acta Math. Sin. (Engl. Ser.) 16}, 2 (2000), 237--260.

\bibitem{maslov}
{\sc Maslov, V.}
\newblock {\em Theory of perturbations and asymptotic methods}.
\newblock French translation of Russian original (1965), Gauthier-Villars
  (1972), 1972.

\bibitem{matumoto}
{\sc Matumoto, T.}
\newblock On the signature invariants of a non-singular complex sesquilinear
  form.
\newblock {\em J. Math. Soc. Japan 29}, 1 (1977), 67--71.

\bibitem{mcduffsalamon}
{\sc McDuff, D., and Salamon, D.}
\newblock {\em Introduction to symplectic topology}, second~ed.
\newblock Oxford Mathematical Monographs. The Clarendon Press, Oxford
  University Press, New York, 1998.

\bibitem{mctague}
{\sc McTague, C.}
\newblock Computing {H}irzebruch {$L$}-polynomials, {O}nline {E}ncyclopedia of
  {I}nteger {S}equences.
\newblock \url{https://oeis.org/A237111}.

\bibitem{meyer}
{\sc Meyer, W.}
\newblock Die {S}ignatur von lokalen {K}oeffizientensystemen und
  {F}aserb\"undeln.
\newblock {\em Bonn. Math. Schr.}, 53 (1972), viii+59.

\bibitem{milnorexotic}
{\sc Milnor, J.}
\newblock On manifolds homeomorphic to the {$7$}-sphere.
\newblock {\em Ann. of Math. (2) 64\/} (1956), 399--405.

\bibitem{milnorplumb}
{\sc Milnor, J.}
\newblock Differentiable structures on spheres.
\newblock {\em Amer. J. Math. 81\/} (1959), 962--972.

\bibitem{milnorsingular}
{\sc Milnor, J.}
\newblock {\em Singular points of complex hypersurfaces}.
\newblock Annals of Mathematics Studies, No. 61. Princeton University Press,
  Princeton, N.J.; University of Tokyo Press, Tokyo, 1968.

\bibitem{milnorinner}
{\sc Milnor, J.}
\newblock On isometries of inner product spaces.
\newblock {\em Invent. Math. 8\/} (1969), 83--97.

\bibitem{milnorhusemoller}
{\sc Milnor, J., and Husemoller, D.}
\newblock {\em Symmetric bilinear forms}.
\newblock Springer-Verlag, New York-Heidelberg, 1973.
\newblock Ergebnisse der Mathematik und ihrer Grenzgebiete, Band 73.

\bibitem{milnorinfinite}
{\sc Milnor, J.~W.}
\newblock Infinite cyclic coverings.
\newblock In {\em Conference on the {T}opology of {M}anifolds ({M}ichigan
  {S}tate {U}niv., {E}. {L}ansing, {M}ich., 1967)}. Prindle, Weber \& Schmidt,
  Boston, Mass., 1968, pp.~115--133.

\bibitem{milnorstasheff}
{\sc Milnor, J.~W., and Stasheff, J.~D.}
\newblock {\em Characteristic classes}.
\newblock Princeton University Press, Princeton, N. J.; University of Tokyo
  Press, Tokyo, 1974.
\newblock Annals of Mathematics Studies, No. 76.

\bibitem{mishchenko}
{\sc {Mishchenko}, A.}
\newblock {Homotopy invariants of nonsimply connected manifolds. III: {H}igher
  signatures.}
\newblock {\em {Math. USSR, Izv.} 5\/} (1972), 1325--1364.

\bibitem{mumfordserieswright}
{\sc Mumford, D., Series, C., and Wright, D.}
\newblock {\em Indra's pearls}.
\newblock Cambridge University Press, New York, 2002.
\newblock The vision of Felix Klein.

\bibitem{murasugi}
{\sc Murasugi, K.}
\newblock On a certain numerical invariant of link types.
\newblock {\em Trans. Amer. Math. Soc. 117\/} (1965), 387--422.

\bibitem{novikov1965}
{\sc Novikov, S.~P.}
\newblock Rational {P}ontrjagin classes. {H}omeomorphism and homotopy type of
  closed manifolds. {I}.
\newblock {\em Izv. Akad. Nauk SSSR Ser. Mat. 29\/} (1965), 1373--1388.

\bibitem{novikov1971}
{\sc Novikov, S.~P.}
\newblock Pontrjagin classes, the fundamental group and some problems of stable
  algebra.
\newblock In {\em Essays on {T}opology and {R}elated {T}opics ({M}\'emoires
  d\'edi\'es \`a {G}eorges de {R}ham)}. Springer, New York, 1970, pp.~147--155.

\bibitem{ojanguren}
{\sc Ojanguren, M.}
\newblock On {K}aroubi's theorem: {$W(A)=W(A[t])$}.
\newblock {\em Arch. Math. (Basel) 43}, 4 (1984), 328--331.

\bibitem{palmer}
{\sc Palmer, C.}
\newblock A chain level {S}eifert form, {E}xtract from {E}dinburgh {P}h.{D}.
  thesis (2015).
\newblock reprinted in this volume.

\bibitem{pfister}
{\sc Pfister, A.}
\newblock Quadratische {F}ormen in beliebigen {K}\"orpern.
\newblock {\em Invent. Math. 1\/} (1966), 116--132.

\bibitem{poincare}
{\sc Poincar\'e, H.}
\newblock M\'ethodes nouvelles de la m\'ecanique c\'eleste.
\newblock {\em Gauthier-Villars, Paris\/} (1892).

\bibitem{poincare1892}
{\sc Poincar\'e, H.}
\newblock Sur l'analysis situs.
\newblock {\em C.R. de l'Acad\'emie des Sciences 115\/} (1892), 633--636.

\bibitem{py}
{\sc Py, P.}
\newblock Indice de {M}aslov et th\'eor\`eme de {N}ovikov-{W}all.
\newblock {\em Bol. Soc. Mat. Mexicana (3) 11}, 2 (2005), 303--331.

\bibitem{quarez}
{\sc Quarez, R.}
\newblock Sturm and {S}ylvester algorithms revisited via tridiagonal
  determinantal representations.
\newblock {\em Linear Algebra Appl. 433}, 6 (2010), 1082--1100.

\bibitem{rademachergrosswald}
{\sc Rademacher, H., and Grosswald, E.}
\newblock {\em Dedekind sums}.
\newblock The Mathematical Association of America, Washington, D.C., 1972.
\newblock The Carus Mathematical Monographs, No. 16.

\bibitem{ranicki8}
{\sc Ranicki, A.}
\newblock Eight in algebra, topology and mathematical physics.
\newblock Website \url{http://www.maths.ed.ac.uk/}.

\bibitem{ranickimaslov}
{\sc Ranicki, A.}
\newblock The {M}aslov index.
\newblock Website \url{http://www.maths.ed.ac.uk/~aar/maslov.htm}.

\bibitem{ranickil4}
{\sc Ranicki, A.}
\newblock Algebraic {$L$}-theory {IV}. {P}olynomial  extension rings.
\newblock {\em Comm. Math. Helv. 49} (1974), 137--167. 

\bibitem{ranickiats1}
{\sc Ranicki, A.}
\newblock The algebraic theory of surgery. {I}. {F}oundations.
\newblock {\em Proc. London Math. Soc. (3) 40}, 1 (1980), 87--192.

\bibitem{ranickiats2}
{\sc Ranicki, A.}
\newblock The algebraic theory of surgery. {II}. {A}pplications to topology.
\newblock {\em Proc. London Math. Soc. (3) 40}, 2 (1980), 193--283.

\bibitem{ranickiexact}
{\sc Ranicki, A.}
\newblock {\em Exact sequences in the algebraic theory of surgery}, vol.~26 of
  {\em Mathematical Notes}.
\newblock Princeton University Press, Princeton, N.J.; University of Tokyo
  Press, Tokyo, 1981.

\bibitem{ranickiknot}
{\sc Ranicki, A.}
\newblock {\em High-dimensional knot theory}.
\newblock Springer Monographs in Mathematics. Springer-Verlag, New York, 1998.
\newblock Algebraic surgery in codimension 2, With an appendix by Elmar
  Winkelnkemper.

\bibitem{ranickisurgery}
{\sc Ranicki, A.}
\newblock {\em Algebraic and geometric surgery}.
\newblock Oxford Mathematical Monographs. The Clarendon Press, Oxford
  University Press, Oxford, 2002.
\newblock Oxford Science Publications.

\bibitem{ranickibs}
{\sc Ranicki, A.}
\newblock Blanchfield and {S}eifert algebra in high-dimensional knot theory.
\newblock {\em Mosc. Math. J. 3}, 4 (2003), 1333--1367.

\bibitem{ranickitopman}
{\sc Ranicki, A.}
\newblock {\em Algebraic {$L$}-theory and topological manifolds}, vol.~102 of
  {\em Cambridge Tracts in Mathematics}.
\newblock Cambridge University Press, Cambridge, 1992.

\bibitem{rawnsley}
{\sc Rawnsley, J.}
\newblock On the universal covering group of the real symplectic group.
\newblock {\em J. Geom. Phys. 62}, 10 (2012), 2044--2058.

\bibitem{ruelle}
{\sc Ruelle, D.}
\newblock Rotation numbers for diffeomorphisms and flows.
\newblock {\em Ann. Inst. H. Poincar\'e Phys. Th\'eor. 42}, 1 (1985), 109--115.

\bibitem{seifert}
{\sc Seifert, H.}
\newblock \"{U}ber das {G}eschlecht von {K}noten.
\newblock {\em Math. Ann. 110}, 1 (1935), 571--592.

\bibitem{serrebil}
{\sc Serre, J.-P.}
\newblock Formes bilin\'eaires sym\'etriques enti\`eres \'a discriminant
±1. 
\newblock In {\em S\'eminaire Henri Cartan, 1961/62, Exp. 14-15.} Secr\'etariat
math\'ematique, Paris, 1961/1962, p. 16.

\bibitem{serrearith}
{\sc Serre, J.-P.}
\newblock {\em A course in arithmetic}.
\newblock Springer-Verlag, New York-Heidelberg, 1973.
\newblock Translated from the French, Graduate Texts in Mathematics, No. 7.

\bibitem{serret}
{\sc Serret, J.-A.}
\newblock {\em Cours d'alg\`ebre sup\'erieure. {T}ome {I}}.
\newblock Les Grands Classiques Gauthier-Villars. [Gauthier-Villars Great
  Classics]. \'Editions Jacques Gabay, Sceaux, 1992.
\newblock Reprint of the fourth (1877) edition.

\bibitem{sheiham}
{\sc Sheiham, D.}
\newblock Invariants of boundary link cobordism.
\newblock {\em Mem. Amer. Math. Soc. 165}, 784 (2003), x+110.

\bibitem{shelukhin}
{\sc Shelukhin, E.}
\newblock The action homomorphism, quasimorphisms and moment maps on the space
  of compatible almost complex structures.
\newblock {\em Comment. Math. Helv. 89}, 1 (2014), 69--123.

\bibitem{siegel}
{\sc Siegel, C.~L.}
\newblock Symplectic geometry.
\newblock {\em Amer. J. Math. 65\/} (1943), 1--86.

\bibitem{sossinsky}
{\sc Sossinsky, A.}
\newblock {\em Knots}.
\newblock Harvard University Press, Cambridge, MA, 2002.
\newblock Mathematics with a twist, Translated from the 1999 French original by
  Giselle Weiss.

\bibitem{stallings}
{\sc Stallings, J.~R.}
\newblock Constructions of fibred knots and links.
\newblock In {\em Algebraic and geometric topology ({P}roc. {S}ympos. {P}ure
  {M}ath., {S}tanford {U}niv., {S}tanford, {C}alif., 1976), {P}art 2}, Proc.
  Sympos. Pure Math., XXXII. Amer. Math. Soc., Providence, R.I., 1978,
  pp.~55--60.

\bibitem{stillwell}
{\sc Stillwell, J.}
\newblock Poincar\'e and the early history of 3-manifolds.
\newblock {\em Bull. Amer. Math. Soc. (N.S.) 49}, 4 (2012), 555--576.

\bibitem{sturm1842}
{\sc Sturm, C.}
\newblock D\'emonstration d'un th\'eor\`eme d'alg\`ebre de {M}. {S}ylvester,
\newblock{\em J. de Liouville\/} VII (1842), 356--368.

\bibitem{sturm}
{\sc Sturm, C.}
\newblock {\em Extrait d'un M\'emoire presente \`a l'Acad\'emie des Sciences,
  Collected works of Charles Fran\c{c}ois Sturm. (French), Edited by
  Jean-Claude Pont in collaboration with Flavia Padovani}.
\newblock Birkh\"auser Verlag, Basel, 2009.

\bibitem{sylvester1841}
{\sc Sylvester, J.}
\newblock On the relation of {S}turm's auxiliary functions to the roots of an
  algebraic equation.
\newblock {\em Plymouth British Association Report 5\/} (1841), 23--24.

\bibitem{sylvester1852}
{\sc Sylvester, J.}
\newblock A demonstration of the theorem that every homogeneous quadratic
  polynomial is reducible by real orthogonal substitution to the form of a sum
  of positive and negative squares.
\newblock {\em Philosophical Magazine IV\/} (1852), 138--142.

\bibitem{sylvester1853}
{\sc Sylvester, J.}
\newblock On a remarkable modification of {S}turm's theorem.
\newblock {\em Philosophical Magazine V\/} (1853), 446--456.

\bibitem{sylvester1853b}
{\sc Sylvester, J.}
\newblock On a theory of the syzygetic relations of two rational integral
  functions, comprising an application to the theory of {S}turm's functions,
  and that of the greatest algebraical common measure.
\newblock {\em Philosophical Transactions of the Royal Society of London 143\/}
  (1853), 407--548.

\bibitem{thom}
{\sc Thom, R.}
\newblock Quelques propri\'et\'es globales des vari\'et\'es diff\'erentiables.
\newblock {\em Comment. Math. Helv. 28\/} (1954), 17--86.

\bibitem{tietze}
{\sc Tietze, H.}
\newblock \"{U}ber die topologischen {I}nvarianten mehrdimensionaler
  {M}annigfaltigkeiten.
\newblock {\em Monatsh. Math. Phys. 19}, 1 (1908), 1--118.

\bibitem{tristram}
{\sc Tristram, A.~G.}
\newblock Some cobordism invariants for links.
\newblock {\em Proc. Cambridge Philos. Soc. 66\/} (1969), 251--264.

\bibitem{trotter}
{\sc Trotter, H.~F.}
\newblock Homology of group systems with applications to knot theory.
\newblock {\em Ann. of Math. (2) 76\/} (1962), 464--498.

\bibitem{usmani}
{\sc Usmani, R.~A.}
\newblock Inversion of {J}acobi's tridiagonal matrix.
\newblock {\em Comput. Math. Appl. 27}, 8 (1994), 59--66.

\bibitem{viro}
{\sc Viro, O.~Y.}
\newblock The signature of a branched covering.
\newblock {\em Mat. Zametki 36}, 4 (1984), 549--557.

\bibitem{wall1971}
{\sc {Wall}, C. T.~C.}
\newblock {\em {S}urgery on compact manifolds}, 2nd~ed.
\newblock Providence, RI: American Mathematical Society, 1999.

\bibitem{walleven}
{\sc Wall, C. T.~C.}
\newblock Classification of {$(n-1)$}-connected {$2n$}-manifolds.
\newblock {\em Ann. of Math. (2) 75\/} (1962), 163--189.

\bibitem{Wall1963}
{\sc Wall, C. T.~C.}
\newblock Quadratic forms on finite groups, and related topics.
\newblock {\em Topology 2\/} (1963), 281--298.

\bibitem{wallodd}
{\sc Wall, C. T.~C.}
\newblock Classification problems in differential topology. {VI}.
  {C}lassification of {$(s-1)$}-connected {$(2s+1)$}-manifolds.
\newblock {\em Topology 6\/} (1967), 273--296.

\bibitem{wall1969}
{\sc Wall, C. T.~C.}
\newblock Non-additivity of the signature.
\newblock {\em Invent. Math. 7\/} (1969), 269--274.

\bibitem{weinstein}
{\sc Weinstein, A.}
\newblock Symplectic geometry.
\newblock {\em Bull. Amer. Math. Soc. (N.S.) 5}, 1 (1981), 1--13.

\bibitem{weylsign}
{\sc Weyl, H.}
\newblock Analisis situs combinatorio.
\newblock {\em Revista Matematica Hispano-Americana 5\/} (1923), 390--432.

\bibitem{weylclass}
{\sc Weyl, H.}
\newblock {\em The classical groups}.
\newblock Princeton Landmarks in Mathematics. Princeton University Press,
  Princeton, NJ, 1997.
\newblock Their invariants and representations, Fifteenth printing, Princeton
  Paperbacks.

\bibitem{williamson}
{\sc Williamson, J.}
\newblock On the {A}lgebraic {P}roblem {C}oncerning the {N}ormal {F}orms of
  {L}inear {D}ynamical {S}ystems.
\newblock {\em Amer. J. Math. 58}, 1 (1936), 141--163.

\bibitem{wimmer}
{\sc Wimmer, H.~K.}
\newblock On the history of the {B}ezoutian and the resultant matrix.
\newblock {\em Linear Algebra Appl. 128\/} (1990), 27--34.

\bibitem{witt}
{\sc Witt, E.}
\newblock Theorie der quadratischen {F}ormen in beliebigen {K}\"orpern.
\newblock {\em J. Reine Angew. Math. 176\/} (1937), 31--44.

\bibitem{yakubovich}
{\sc Yakubovich, V.~A., and Starzhinskii, V.~M.}
\newblock {\em Linear differential equations with periodic coefficients. 1, 2}.
\newblock Halsted Press [John Wiley \& Sons]\ New York-Toronto, Ont.,; Israel
  Program for Scientific Translations, Jerusalem-London, 1975.
\newblock Translated from Russian by D. Louvish.

\end{thebibliography}

\end{document}